\newtheorem{theorem}{Theorem}[section]
\newtheorem{lemma}[theorem]{Lemma}
\theoremstyle{definition}
\newtheorem{definition}[theorem]{Definition}
\newtheorem{prop}[theorem]{Proposition}
\newtheorem{cor}[theorem]{Corollary}
\theoremstyle{remark}
\newtheorem{remark}[theorem]{Remark}
\numberwithin{equation}{section}
\newcommand{\diam}{\mathrm{diam}}
\newcommand{\mres}{\mathbin{\vrule height 1.6ex depth 0pt width
0.13ex\vrule height 0.13ex depth 0pt width 1.3ex}}
\begin{document}

\title[Elliptic equations with Hausdorff measures]{On elliptic equations involving surface measures}

\author{Marius Müller}
\address{Universität Augsburg, Institut für Mathematik, 86135 Augsburg}
\email{marius1.mueller@uni-a.de}

%

\subjclass[2020]{Primary: 35R06, 35J15 
Secondary: 28A75, 35R35}

\date{\today}


\keywords{Interfaces, Elliptic equations, Regularity theory, Measure-valued PDE \\
\textit{Acknowledgements.} The author would like to thank the anonymous referee for helpful suggestions.}

\begin{abstract}
We show optimal Lipschitz regularity for very weak solutions of the (measure-valued) elliptic PDE  $-\mathrm{div}(A(x) \nabla u) = Q \; \mathcal{H}^{n-1} \mres \Gamma$ in a smooth domain $\Omega \subset \mathbb{R}^n$. Here $\Gamma$ is a $C^{1,\alpha}$-regular hypersurface,  $Q\in C^{0,\alpha}$ is a density on $\Gamma$, and the coefficient matrix $A$ is symmetric, uniformly elliptic and $W^{1,q}$-regular $(q > n)$. We also discuss optimality of these assumptions on the data. The equation can be understood as a special coupling of two $A$-harmonic functions with an interface $\Gamma$. As such it plays an important role in several free boundary problems, as we shall discuss. 
\end{abstract}
\maketitle


\section{Introduction} \label{section:intro}

In this article we examine regularity of solutions of the following \emph{measure-valued Dirichlet problem} 

\begin{equation}\label{eq:1.1}
    \begin{cases}
    -\mathrm{div}(A(x) \nabla u) = Q \; \mathcal{H}^{n-1} \mres \Gamma & \textrm{in }\Omega,  \\ \qquad \qquad \quad \quad  u = 0 & \textrm{on } \partial \Omega,
    \end{cases}
\end{equation}
on some smooth bounded domain $\Omega \subset \mathbb{R}^n$, $n \geq 2$. Here,  $\mathcal{H}^{n-1} \mres \Gamma$ denotes the $n-1$-dimensional Hausdorff measure restricted to an \emph{interface} $\Gamma = \partial \Omega'$ for some suitably regular domain $\Omega' \subset \subset \Omega$ and  $Q : \Gamma \rightarrow \mathbb{R}$ is some suitably regular \emph{density}. Further, the (nonconstant) coefficient matrix $(A(x))_{x \in \Omega}$ is suitably regular as well as \emph{symmetric and uniformly elliptic}  in the sense that
\begin{equation}\label{eq:unifellip}
    a_{ij} = a_{ji} \quad \forall i,j = 1,...,n  \quad \textrm{and} \quad \exists \lambda > 0 : \xi^T A(x) \xi \geq \lambda |\xi|^2 \quad \forall x \in \Omega \quad \forall \xi \in \mathbb{R}^n.  
\end{equation}
Such equations appear often in the description of \emph{free boundary problems}, as we shall illustrate in Section \ref{sec:applic}.

The notion of solutions we study is the following
\begin{definition}[Very weak solutions]\label{def:weaksol}
Let $A \in W^{1,2}(\Omega; \mathbb{R}^{n\times n})$ be symmetric and uniformly elliptic and $Q \in L^1(\Gamma)$.
 A function $u \in L^2(\Omega)$ is called a \emph{very weak solution} of \eqref{eq:1.1} if 
 \begin{equation}
   - \int_\Omega  u(x)  \; \mathrm{div}(A(x) \nabla \phi(x))  \; \mathrm{d}x = \int_\Gamma Q(y) \phi(y) \; \mathrm{d}\mathcal{H}^{n-1}(y)  \quad \forall \phi \in C^2(\overline{\Omega}) : \phi \vert_{\partial \Omega} = 0.
\end{equation}
\end{definition}
In Appendix \ref{app:weakform}, we see that under some suitable further smoothness assumptions on the coefficients $Q,A,\Gamma$ one can equivalently demand that $u \in W_0^{1,2}(\Omega)$ and 
\begin{equation}\label{eq:weaaksol}
    \int_\Omega (A(x) \nabla u(x) , \nabla \phi(x) ) \; \mathrm{d}x = \int_\Gamma Q(y) \phi(y) \; \mathrm{d}\mathcal{H}^{n-1}(y) \quad \forall \phi \in C_0^\infty(\Omega). 
\end{equation}
Solutions in the sense of \eqref{eq:weaaksol} are called \emph{weak solutions}, and might appear more intuitive at first sight --- in particular since they can easily be seen to be unique. 
The concept of very weak solutions is however the established definition of general \emph{measure-valued Poisson problems}, cf. \cite[Section 3.1]{Ponce} and will also prove helpful during the course of our proof.  


We are interested in the optimal regularity of solutions,  under suitable further assumptions on the data  $Q$, $A$, $\Gamma$. 
The main result in this article is  

\begin{theorem}[Main Theorem]\label{thm:main}
Fix $\alpha > 0$ and $q > n$. Let $\Omega \subset \mathbb{R}^n$ be a smooth bounded domain and $\Omega' \subset \subset \Omega$ be a $C^{1,\alpha}$-subdomain.  If  $\Gamma = \partial \Omega'$, $Q \in C^{0,\alpha}(\Gamma)$ and $A \in W^{1,q}(\Omega;\mathbb{R}^{n\times n})$ satisfies \eqref{eq:unifellip}, then the unique solution $u$ of \eqref{eq:1.1}
lies in $W^{1,\infty}(\Omega)$.
\end{theorem}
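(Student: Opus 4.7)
The plan is to localize near $\Gamma$ and extract Lipschitz regularity by subtracting an explicit single-layer potential corrector.

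\smallskip

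\emph{Setup and reduction.} By Appendix~\ref{app:weakform}, the very weak solution coincides with the unique weak solution $u \in W^{1,2}_0(\Omega)$ given by Lax--Milgram (the right-hand side defines a bounded linear functional on $W^{1,2}_0(\Omega)$ via the $L^2$-trace on the Lipschitz hypersurface $\Gamma$). Away from $\Gamma$, $u$ satisfies $-\mathrm{div}(A \nabla u) = 0$ in the weak sense; since $q > n$ gives $A \in C^{0,\alpha'}$ and $\mathrm{div}\,A \in L^q$, standard Calder\'on--Zygmund and Morrey embedding yield $u \in C^{1,\alpha'}_{\mathrm{loc}}(\Omega \setminus \Gamma)$ on each side of $\Gamma$. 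The content of the theorem is thus a uniform Lipschitz bound in a neighborhood of $\Gamma$. Fix $x_0 \in \Gamma$ and apply a $C^{1,\alpha}$-diffeomorphism straightening $\Gamma$ to $\{y_n = 0\}$ in a small ball. The transformed problem has the same form with $\tilde A \in W^{1,q}$ (using stability of $W^{1,q}$ under composition with $C^{1,\alpha}$-maps when $q > n$) and $\tilde Q \in C^{0,\alpha}$, so it suffices to show that the transformed solution $\tilde u$ lies in $W^{1,\infty}$ near the origin.

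\smallskip

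\emph{Main construction.} Let $\Phi_0$ be the fundamental solution of the frozen constant-coefficient operator $-\mathrm{div}(\tilde A(0) \nabla \cdot)$ on $\mathbb{R}^n$, and define the single-layer corrector
\begin{equation}
    w(y) := \int_{\{z_n = 0\}} \Phi_0(y - z) \, \tilde Q(z) \, \mathrm{d}z'.
\end{equation}
Classical single-layer potential theory on a flat interface with $C^{0,\alpha}$-density yields $w \in W^{1,\infty}_{\mathrm{loc}}$ (even one-sided $C^{1,\alpha}$), with $-\mathrm{div}(\tilde A(0) \nabla w) = \tilde Q \, \mathcal{H}^{n-1}\mres\{y_n=0\}$. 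The remainder $v := \tilde u - w$ then satisfies, in the distributional sense,
\begin{equation}
    -\mathrm{div}(\tilde A \nabla v) = \mathrm{div}\bigl((\tilde A(0) - \tilde A)\nabla w\bigr) + \bigl[(\tilde A - \tilde A(0))\nabla w \cdot e_n\bigr] \, \mathcal{H}^{n-1}\mres\{y_n=0\},
\end{equation}
where the bracket denotes the jump across $\{y_n = 0\}$. Because $\tilde A$ is H\"older continuous and $\tilde A(0) - \tilde A(0) = 0$, the surface residue has density $O(|y|^{\alpha'})$ near the origin. A Campanato-type iteration on dyadically shrinking balls, re-freezing $\tilde A$ at each new center, absorbs the small singular residue at each scale and yields $v \in C^{1,\alpha'}_{\mathrm{loc}}$. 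Combined with the Lipschitz bound on $w$, this proves $\tilde u \in W^{1,\infty}_{\mathrm{loc}}$, which upon pulling back gives the theorem.

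\smallskip

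\emph{Main obstacle.} The principal difficulty is the iteration in the final step: the corrector $w$ only annihilates the singular source up to the small residue above, and a direct energy estimate on $v$ with this residue yields only H\"older (not Lipschitz) regularity. The iteration must be carefully organized in a Morrey--Campanato framework simultaneously tracking the decay of the residual singular source, the elliptic excess of $v$, and the freezing error across scales, aligned with the exponents $\alpha$ (from $Q$ and $\Gamma$) and $\alpha' = 1 - n/q$ (from $A$). A secondary subtle point is the sharp Lipschitz bound on the model single-layer $w$ itself: it depends crucially on $\tilde Q \in C^{0,\alpha}$ (to deliver boundedness of tangential derivatives via Calder\'on-type cancellations) and is known to be sharp within classical boundary potential theory, which explains the optimality of the hypotheses.
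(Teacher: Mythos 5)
Your proposal is a genuinely different route from the paper's: the paper uses a signed-distance-function comparison (Lemma~\ref{lem:31}) to get $W^{1,\infty}$ and $BV$-regularity for smooth data, blow-up and trace identities (Lemmas~\ref{lem:Taylor},~\ref{lem:traces}) to understand the gradient at the interface, a BV maximum principle applied to the differentiated equation plus a test-function identity for the tangential slope $\theta$ to obtain a priori bounds depending only on $[\Gamma]_{1,\alpha}$, $\|Q\|_{C^{0,\alpha}}$, $\|A\|_{W^{1,q}}$, $\lambda$, and then an approximation step. You instead straighten the interface and subtract a frozen constant-coefficient single-layer potential corrector, intending to close by a Campanato iteration. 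That is a legitimate alternative strategy, but as written it has several concrete gaps.

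First, the claim that ``the transformed problem has the same form with $\tilde A \in W^{1,q}$'' is false. Under a $C^{1,\alpha}$-diffeomorphism $\Phi$, the transformed coefficient matrix is of the form $\tilde A(y) = (D\Phi^{-1})^T (A\circ\Phi) (D\Phi^{-1}) |\det D\Phi|$, which involves $D\Phi \in C^{0,\alpha}$ multiplicatively; differentiating $\tilde A$ once would require $D^2\Phi$, which does not exist for $\Gamma \in C^{1,\alpha}$. One only obtains $\tilde A \in C^{0,\beta}$ with $\beta = \min(\alpha, 1-n/q)$. Your subsequent argument only uses H\"older continuity of $\tilde A$, so this error is not fatal, but it is an error — and notably it means that, if your scheme did close, it would yield a strictly stronger theorem requiring only $A \in C^{0,\alpha}$ rather than $A \in W^{1,q}$ (the Meyers-type counterexample in Section~\ref{sec:optimalitydisc} has $A$ discontinuous at a point and does not rule this out). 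The paper deliberately avoids straightening in order to track how its constants depend on $[\Gamma]_{1,\alpha}$ through the approximation step.

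Second, the conclusion ``yields $v \in C^{1,\alpha'}_{\mathrm{loc}}$'' cannot be correct as stated. The jump of $\partial_n \tilde u$ across $\{y_n = 0\}$ at a point $y'$ equals $-\tilde Q(y')/(\tilde A(y')e_n,e_n)$, whereas the jump of $\partial_n w$ equals $-\tilde Q(y')/(\tilde A(0)e_n,e_n)$, so $\nabla v = \nabla\tilde u - \nabla w$ still jumps across the interface, with jump size of order $|y'|^\beta$. Thus $v$ is not $C^1$ in any neighborhood of the origin; at best a pointwise $C^1$-type estimate at the centering point is available, and you must then run the argument uniformly over all interface points to deduce a neighborhood $W^{1,\infty}$ bound. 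Relatedly, your displayed equation for $v$ splits the source into a bulk divergence plus a surface residue; but the classical part of $\mathrm{div}\bigl((\tilde A(0)-\tilde A)\nabla w\bigr)$ involves $\nabla\tilde A$ (which, as above, need not exist in any $L^p$) and $\nabla^2 w$ (which is unbounded near the interface). The only clean formulation is in divergence form $-\mathrm{div}(\tilde A\nabla v) = \mathrm{div}(F)$ with $F := (\tilde A - \tilde A(0))\nabla w$ bounded and $|F(y)|\lesssim |y|^\beta$; the iteration should be organized around this.

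Third and most importantly, the Campanato iteration is the entire content of the theorem and is only gestured at. You acknowledge this is ``the main obstacle,'' but the obstacles you identify (simultaneous decay of the residual singular source, elliptic excess, freezing error; the jump in $\nabla w$; the sharp bound on the model single-layer) are precisely the places where the real work lies, and the sketch provides no mechanism for resolving them. As it stands this is a plausible plan, not a proof; the paper's proof is entirely different and self-contained in a way this outline is not.
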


The class $W^{1,\infty}(\Omega)$ is the best expectable regularity in terms of Sobolev spaces. Indeed $W^{2,1}$-regularity is not possible, since for each $u \in W^{2,1}$ and $A \in W^{1,q}, (q> n)$ the left hand side of \eqref{eq:1.1} lies in $L^1$, whereas the (measure-valued) right hand side does not lie in $L^1$ (unless $Q = 0$). Therefore the optimal Sobolev class has to be below $W^{2,1}$. Moreover, we will see that $W^{1,p}$-regularity for each $p \in (1,\infty)$ is true and a  somewhat immediate consequence of the observations in \cite{Auscher}, which deal with equations of the form $-\mathrm{div}(A \nabla u ) = \mathrm{div}(F)$. To make these results applicable, a needed prerequisite is $A \in VMO(\Omega;\mathbb{R}^{n\times n})$. Recall that for any $q >n$ one has $W^{1,q} \hookrightarrow C^{0,\gamma} \hookrightarrow VMO$ with $\gamma = 1-\frac{n}{q}$. 
All in all, $W^{1,\infty}$-regularity is the only open question in terms of Sobolev regularity. 

It could theoretically happen that $W^{1,\infty}$-regularity can be further improved upon in \emph{classical function spaces} $C^{k,\alpha}$ (recalling that $W^{1,\infty}(\Omega) = C^{0,1}(\overline{\Omega})$). However, such improvement can not be obtained --- we will show that $C^1$-regularity is impossible, cf. Section \ref{sec:notC1}.

If $A = Id_{n\times n}$ (or $A$ is smooth), \eqref{eq:1.1} has already been studied by means of \emph{potential theory}, cf. \cite[Theorem 14.V]{Miranda} and \cite{Giraud}. In this special case the conclusion of Theorem \ref{thm:main} is already known to hold true.

We also show that the regularity assumptions on $Q, A, \Gamma$ are optimal for the conclusion of the theorem. A previous article of the author (\cite{PoissonMarius}) asserts (falsely) that if $A= Id_{n \times n}$ then the conclusion of Theorem \ref{thm:main} holds true even if $\Omega'$ is only a Lipschitz subdomain. Such result is not true (by counterexample, cf. \cite{PoissonMariusErratum}). Notice that the present article is self-contained and does not use any results in \cite{PoissonMarius}. 

We discuss the optimality of our assumptions by giving counterexamples to the conclusion of Theorem \ref{thm:main} if $\Omega'$ is only a Lipschitz domain and if $A \in W^{1,q}$ only for some $q<n$, cf. Section \ref{sec:optimalitydisc}. This is remarkable in the following way: In \cite{Kilpelainen} solutions of the general (Radon-)measure-valued problem 
\begin{equation}\label{eq:measeqgen}
    \begin{cases}
      -\mathrm{div}(A(x) \nabla u(x) ) = \mu & \textrm{in } \Omega, \\  \qquad \qquad \quad  \quad \quad \;    u = 0 & \textrm{on } \partial \Omega
    \end{cases}
\end{equation}
are studied and it is shown (cf. \cite[Theorem 2.9]{Kilpelainen}) that for each $\alpha \in (0,1)$
\begin{equation}
    \mu(B_r(x)) \leq C r^{n-2+ \alpha} \; \forall r > 0,x \in \mathbb{R}^n \; \Leftrightarrow  \; \textrm{The weak solution $u$ of \eqref{eq:measeqgen} lies in $C^{0,\alpha}(\overline{\Omega})$}.
\end{equation}
Our counterexample will reveal that ``$\Rightarrow$'' does in general not hold true in the case of $\alpha = 1$. This reveals that growth properties of the measure are in this case not sufficient to make regularity assertions. The geometry of $\mu$ plays a much bigger role.

Because of the mild requirements on our coefficient matrix we need an approach that is \emph{detached} from potential theory. We will first establish results for smooth data $Q,A,\Gamma$ by means of a comparison of solutions with the \emph{(signed) distance function}. This comparison yields also $BV$-estimates for $\nabla u$, which are crucial for our approach.

We will then perform a blow-up analysis and use special test functions to obtain a-priori estimates in terms of the data. An approximation procedure will then imply the result.

Because of the absence of potential theory, our approach provides methodological progress that can in the future also be applied for nonlinear measure-valued equations.

\section{Preliminaries}
\label{sec:prelim}

\subsection{Notation}

In the following we will always assume unless stated otherwise that $A$ is symmetric and uniformly elliptic, cf. \eqref{eq:unifellip}.
For an arbitrary closed set $C \subset \mathbb{R}^n$, we say that $f \in C^{0,\alpha}(C)$ if there exists some constant $H > 0$ such that 
\begin{equation}\label{eq:Hoeldiest}
    |f(x)-f(y)| \leq H |x-y|^\alpha \quad \forall x,y \in C.
\end{equation}
We call $ [f]_{C^{0,\alpha}(C)}:= \inf \{ H > 0 : \eqref{eq:Hoeldiest} \textrm{ holds} \} $ the \emph{Hölder seminorm} of $f$ and define the \emph{Hölder norm} $||f||_{C^{0,\alpha}(C)} := \sup_{x \in C} |f(x)| + [f]_{C^{0,\alpha}(C)}$. We remark that if $C$ is a $C^1$-submanifold of $\mathbb{R}^n$ then there would also be an alternative way to define the Hölder-seminorm, namely taking the \emph{intrinsic distance} $\mathrm{dist}_C(x,y)$ instead of $|x-y|$ in \eqref{eq:Hoeldiest}. These notions of Hölder continuity are only equivalent on \emph{chord arc submanifolds}, cf. \cite{Blatt}. We will thus consistently adhere to the notion in \eqref{eq:Hoeldiest}.

In the sequel $\Omega$ always denotes a smooth bounded domain and $\Omega' \subset \subset \Omega$ always denotes a suitably smooth and compactly contained subdomain. Moreover, $\Gamma = \partial \Omega'$ denotes its boundary.  Further we introduce the shorthand notation $\Omega'' := \Omega \setminus \overline{\Omega'}$.

In what follows we will assume tacitly and without loss of generality that $\Gamma =\partial \Omega'$ is connected, since otherwise we could look at the (finitely many) connected components of $\Gamma$ separately. (If we do so, it however needs to be ensured that all connected components $\Gamma$ can also be regarded as boundaries of subdomains of $\Omega$. This is true by the Jordan-Brouwer separation theorem.)  

We say (for $k \in \mathbb{N}_0$ and $\alpha \geq 0$)  that $\Gamma= \partial \Omega'$ is a \emph{$C^{k,\alpha}$-boundary} if it can be written as 
\begin{equation}\label{eq:Gaama}
   \Gamma = \bigcup_{i = 1}^M  O_i [ (U_i \times V_i) \cap \{ (x', f_i(x')): x' \in U_i \}] =: \bigcup_{i = 1}^M \Gamma_i
\end{equation}
where $M \in \mathbb{N}$ is finite, $O_i\in \mathbb{R}^{n\times n}$ are orthogonal matrices, $U_i \subset \mathbb{R}^{n-1}$ are open balls, $V_i \subset \mathbb{R}$ are open intervals and $f_i\in C^{k,\alpha}(4\overline{U_i};V_i)$ are such that the subgraph of $f_i$ lies in $\Omega'$. Here $4 U_i$ denotes the ball with the same center and the quadruple radius of $U_i$. This enlargement of $U_i$ will be needed for technical purposes, cf. Appendix \ref{app:C1alpha}. 
We say that $\Gamma$ is \emph{represented} by $(O_i,U_i,V_i,f_i$, $i = 1,...,M)$ if $O_i,U_i,V_i,f_i$ are as in \eqref{eq:Gaama} and $\Gamma_{i+1} \cap \Gamma_i \neq \emptyset$ for all $i = 1,...,M$. We remark that the condition that $\Gamma_{i+1} \cap \Gamma_i \neq \emptyset$ can always be arranged by relabeling, since $\Gamma_i \subset \Gamma$ is open for all $i$ and $\Gamma$ is connected. We set  
\begin{align}
    [\Gamma]_{k,\alpha}  & := 
       \inf \left\lbrace \sum_{i = 1}^M (1+ \sup_{x\in V_i}|x| + |U_i|+ ||f_i||_{C^{k,\alpha}(4\overline{U_i})}) : \Gamma  \textrm{ rep. by $O_i,U_i,V_i,f_i$, $i =1,...,M$} \right\rbrace.
\end{align}
We use the shorthand notation $\Gamma = \partial \Omega' \in C^{k,\alpha}$ if $\Gamma$ is a $C^{k,\alpha}$-boundary of $\Omega'\subset \subset \Omega$ and we denote the outward pointing unit normal $\nu_{\Omega'}$ simply by $\nu$ for the sake of simplicity. The signed measure $\mu = Q \; \mathcal{H}^{n-1} \mres \Gamma$ is defined via 
\begin{equation}
    \mu(A) := \int_{A\cap \Gamma} Q \; \mathrm{d}\mathcal{H}^{n-1} \quad \forall A \subset \mathbb{R}^n  \textrm{ Borel set.}
\end{equation}
From now on, also $(\cdot,\cdot): \mathbb{R}^n \times \mathbb{R}^n \rightarrow \mathbb{R}$ denotes the standard Euclidean inner product.

 

\subsection{Existence, Uniqueness and $W^{1,p}$-regularity for $p< \infty$}

First we establish existence and $W^{1,p}$-regularity for all $p \in (1,\infty)$. To this end we employ the \emph{duality method}. We show that the measure term on the right hand side of \eqref{eq:1.1} lies the \emph{dual space} $(W_0^{1,p'}(\Omega))^*$ for any $p' \in (1,\infty)$ and then illustrate how this can be used to show that $u \in W_0^{1,p}(\Omega)$ for $p \in (1,\infty)$ such that $\frac{1}{p}+ \frac{1}{p'} = 1$. This is related to the study of the equation $-\mathrm{div}(A \nabla u) = \mathrm{div}(F)$, conducted under minimal assumptions in \cite{Auscher}. 
\begin{lemma}\label{lem:measterm}
Let $\Gamma=\partial \Omega' \in C^{0,1}$ and  $Q \in L^\infty(\Gamma)$. Then the distribution
\begin{equation}\label{eq:distriT}
    T(\psi) := \int_\Gamma Q \psi \; \mathrm{d}\mathcal{H}^{n-1}(x)  \quad (\psi \in C_0^\infty(\Omega))
\end{equation}
extends to  an element of  $W_0^{1,1}(\Omega)^*$ and $||T||_{(W_0^{1,1}(\Omega)^*} \leq C(n) [\Gamma]_{0,1}||Q||_{L^\infty(\Gamma)}$. Moreover there exists some $F \in L^\infty(\Omega;\mathbb{R}^n)$ such that $Q \mathcal{H}^{n-1} \mres \Gamma = \mathrm{div}(F)$ distributionally, i.e. 
\begin{equation}\label{eq:Fdarst}
    \int_\Gamma Q \psi \; \mathrm{d}\mathcal{H}^{n-1} = \int_\Omega (F, \nabla \psi) \; \mathrm{d}x \quad \forall \psi \in C_0^\infty(\Omega).
\end{equation}
Further, one can choose $F$ in such a way that $||F||_{L^\infty(\Omega)} \leq C(n) [\Gamma]_{0,1} ||Q||_{L^\infty(\Gamma)}$.
\end{lemma}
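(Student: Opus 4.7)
The strategy is to prove the bound $|T(\psi)| \leq C(n)[\Gamma]_{0,1}\|Q\|_{L^\infty(\Gamma)}\|\nabla\psi\|_{L^1(\Omega)}$ for $\psi\in C_0^\infty(\Omega)$ by a direct local trace-type estimate, and then to deduce the divergence representation via Hahn--Banach duality, thereby avoiding an explicit construction of $F$ through an auxiliary Poisson problem.

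For the norm bound, I would fix a representation $(O_i,U_i,V_i,f_i)_{i=1}^M$ of $\Gamma$ nearly realizing the infimum defining $[\Gamma]_{0,1}$, set $W_i := O_i(U_i\times V_i)$, and choose a smooth partition of unity $\{\eta_i\}_{i=0}^M$ on $\Omega$ subordinate to $\{W_i\}_{i=1}^M\cup\{\Omega\setminus\Gamma\}$ with $\eta_0$ vanishing near $\Gamma$. In the rotated coordinates $y=O_i^{-1}x$ the function $\tilde\psi_i := (\psi\eta_i)\circ O_i$ has compact support in $U_i\times V_i$, so the fundamental theorem of calculus combined with $\tilde\psi_i(y',\sup V_i)=0$ yields
\[
|\tilde\psi_i(y',f_i(y'))| \leq \int_{f_i(y')}^{\sup V_i} |\partial_{y_n}\tilde\psi_i(y',y_n)|\,\mathrm{d}y_n.
\]
Multiplying by $\sqrt{1+|\nabla f_i|^2}\leq 1+\|f_i\|_{C^{0,1}}$, integrating over $y'\in U_i$, and using the graph parametrization of $\Gamma_i$ would give
\[
\int_{\Gamma_i} |Q\psi\eta_i|\,\mathrm{d}\mathcal{H}^{n-1} \leq \|Q\|_{L^\infty(\Gamma)}(1+\|f_i\|_{C^{0,1}})\int_{W_i}|\nabla(\psi\eta_i)|\dx.
\]
Summing over $i$, expanding $\nabla(\psi\eta_i)=\eta_i\nabla\psi+\psi\nabla\eta_i$, and invoking Poincaré's inequality to absorb $\int|\psi||\nabla\eta_i|\dx$ into a multiple of $\|\nabla\psi\|_{L^1}$ would produce the desired estimate. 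By density of $C_0^\infty(\Omega)$ in $W_0^{1,1}(\Omega)$, $T$ then extends to an element of $W_0^{1,1}(\Omega)^*$ with the claimed norm bound.

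For the divergence representation I would argue abstractly: endow $W_0^{1,1}(\Omega)$ with the Poincaré-equivalent norm $\psi\mapsto\|\nabla\psi\|_{L^1}$, making the gradient map $\nabla:W_0^{1,1}(\Omega)\to L^1(\Omega;\mathbb{R}^n)$ a linear isometry onto a closed subspace $X$. On $X$, $T$ induces a continuous linear functional of norm $\|T\|_{W_0^{1,1}(\Omega)^*}$, which Hahn--Banach extends to a continuous functional on all of $L^1(\Omega;\mathbb{R}^n)$ of the same norm; by Riesz representation (applicable since $\Omega$ has finite measure) this extension is realised by pairing with some $F\in L^\infty(\Omega;\mathbb{R}^n)$ of matching norm. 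Then $T(\psi)=\int_\Omega(F,\nabla\psi)\dx$ holds for all $\psi\in W_0^{1,1}(\Omega)$ and in particular on $C_0^\infty(\Omega)$, giving the sought-after representation. The principal technical obstacle is concentrated in the first step: the partition-of-unity error $\|\nabla\eta_i\|_{L^\infty}$ scales like the inverse chart size, and absorbing it into $C(n)[\Gamma]_{0,1}$ demands a scale-matched choice of partition of unity together with the geometric information (the $|U_i|$ and $\sup|V_i|$ entries) carried by $[\Gamma]_{0,1}$; once the norm bound is in hand, the abstract duality argument delivers the divergence representation essentially for free.
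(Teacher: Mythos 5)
Your proposal follows the same high-level structure as the paper — first a bound $|T(\psi)|\lesssim [\Gamma]_{0,1}\|Q\|_{L^\infty}\|\nabla\psi\|_{L^1}$, then a duality argument producing $F$ — but the two steps are implemented quite differently. For the second step, your Hahn--Banach argument (view $\nabla:W_0^{1,1}\to L^1(\Omega;\mathbb{R}^n)$ as an isometry onto a closed subspace, extend $T\circ\nabla^{-1}$ norm-preservingly, apply Riesz in $L^1$) is correct, and it is in fact essentially the proof of the result the paper simply cites, namely Torres's Lemma~6.6. For the first step, the paper does not use a partition of unity at all: it verifies the Ahlfors-type growth condition $\int_{\Gamma\cap B_r(x)}|Q|\,\mathrm{d}\mathcal{H}^{n-1}\le C(n)[\Gamma]_{0,1}\|Q\|_{L^\infty}\,r^{n-1}$ directly from the Lipschitz-graph cover (overcounting overlaps is fine because only an upper bound is needed) and then invokes the trace inequality of Ponce (Prop.~17.17 together with Prop.~B.3), which is a scale-invariant, local estimate that requires no cutoffs.

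The genuine gap in your proposal is exactly the point you flag but do not resolve: the term $\sum_i\|\nabla\eta_i\|_{L^\infty}\int_{W_i}|\psi|$. The seminorm $[\Gamma]_{0,1}$ only bounds $|U_i|$, $\sup_{V_i}|x|$, $\|f_i\|_{C^{0,1}}$ and the number of charts $M$ from above; it imposes no lower bound on the chart sizes. A representation nearly achieving the infimum could therefore have small charts, making $\|\nabla\eta_i\|_{L^\infty}$ large and destroying the claimed constant $C(n)[\Gamma]_{0,1}$. To close this one would have to argue either that some near-infimizing representation can be chosen with charts of comparable size (so that a scale-matched partition of unity with $\|\nabla\eta_i\|_{L^\infty}\cdot(\text{chart size})\lesssim 1$ exists), or package the error via Gagliardo--Nirenberg as $\|\nabla\eta_i\|_{L^\infty}|W_i|^{1/n}\lesssim 1$ — neither of which is automatic from the definition of $[\Gamma]_{0,1}$. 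The paper's route through the growth condition sidesteps the issue entirely, since the $r^{n-1}$ bound is established chart by chart for each ball $B_r(x)$ and no cutoff functions enter; it is the more robust argument, and the reason your ``principal technical obstacle'' does not arise in it.
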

\begin{proof} To prove that $T \in W_0^{1,1}(\Omega)^*$ it suffices to show (by applying \cite[Proposition 17.17]{Ponce} with $\mu_{\pm}= Q^{\pm} \mathcal{H}^{n-1} \mres \Gamma$ and $k  =1$  and \cite[Proposition B.3]{Ponce} with $\delta=\infty$) that 
\begin{equation}\label{eq:mugrowth}
    \int_{\Gamma \cap B_r(x)}  |Q|  \; \mathrm{d}\mathcal{H}^{n-1}(x) \leq C r^{n-1} \quad \forall x \in \mathbb{R}^n \quad \forall r > 0. 
\end{equation}
We first estimate 
\begin{equation}\label{eq:doublingprop}
    \int_{\Gamma \cap B_r(x)}  |Q|  \; \mathrm{d}\mathcal{H}^{n-1}(x) \leq ||Q||_{L^\infty} \mathcal{H}^{n-1}(\Gamma \cap B_r(x)). 
\end{equation}
Notice that $\Gamma$ is covered by finitely many  sets of the form 
\begin{equation}
  E_i :=  O_i[ (U_i \times V_i) \cap \{(x',f_i(x')) : x' \in U_i \}  \quad i= 1,..., M,
\end{equation}
where $O_i$ is a rotation matrix, $U_i \subset \mathbb{R}^{n-1}$ is an  open set and $V_i \subset \mathbb{R}$ is open and $f_i \in C^{0,1}(4\overline{U_i})$ are such that $ \sum_{i = 1}^M ( 1 + ||Df_i||_{L^\infty}) \leq [\Gamma]_{0,1}+ \epsilon$ for some arbitrary but fixed $\epsilon > 0$. Now let $r > 0$ and $x \in \Gamma$ be arbitrary. For all $i = 1,...,M$ there holds  (with $\tilde{x}_i := O_i^T x$ and $y_i'$ denoting the first $n-1$ components of $\tilde{x}_i$) 
\begin{align}
    O_i^T (E_i \cap (\Gamma \cap B_r(x)) )  &  \subset \{(x', f_i(x')) : x' \in U_i, (x',f_i(x')) \in B_r(\tilde{x}_i)  \}  \\ & \subset  \{(x', f_i(x')) : x' \in U_i, x' \in B_r(y_i') \}. 
\end{align}
Hence (with $\mathcal{L}^{n-1}$ denoting the Lebesgue measure on $\mathbb{R}^{n-1}$) one finds
\begin{align}\label{eq:growthprop}
    \mathcal{H}^{n-1}(\Gamma \cap B_r(x)) & \leq \sum_{i = 1}^M \int_{U_i \cap B_r(y_i')} \sqrt{1+ |Df_i(x')|^2} \; \mathrm{d}x' \\ & \leq \sum_{i = 1}^M \sqrt{1 + ||Df_i||_{\infty}^2} \mathcal{L}^{n-1}(B_r(y_i)) \leq \mathcal{L}^{n-1}(B_1(0)) ([\Gamma]_{0,1}+ \epsilon) r^{n-1}.
\end{align}
Together with \eqref{eq:doublingprop} we infer that 
\begin{equation}
    \int_{\Gamma \cap B_r(x)} |Q| \; \mathrm{d}\mathcal{H}^{n-1}(x) \leq  C(n) ||Q||_{L^\infty} ([\Gamma]_{0,1}+ \epsilon) r^{n-1}.
 \end{equation} 
 This and \cite[Proposition 17.17]{Ponce} yield that $||T||_{W_0^{1,1}(\Omega)^*} \leq  C(n) ||Q||_{L^\infty} ([\Gamma]_{0,1}+ \epsilon)$. Arbitraryness of $\epsilon > 0$ yields the desired bound for $||T||_{W_0^{1,1}(\Omega)^*}$.
 By \cite[Lemma 6.6]{Torres} we infer the existence of $F \in L^\infty(\Omega; \mathbb{R}^n)$ such that $||T||_{(W_0^{1,1}(\Omega))^*} = ||F||_{L^\infty(\Omega)}$ and 
 \begin{equation}
     T(\psi) = \int_\Omega (F, \nabla \psi) \; \mathrm{d}x \quad \forall \psi \in W_0^{1,1}(\Omega). 
 \end{equation}
 This together with the definition of $T$ in \eqref{eq:distriT} implies \eqref{eq:Fdarst}. Finally, we have 
 \begin{equation}
     ||F||_{L^\infty(\Omega)}= ||T||_{(W_0^{1,1}(\Omega))^*} \leq C(n) [\Gamma]_{0,1} ||Q||_{L^\infty(\Gamma)}. 
 \end{equation}
 The claim follows.
\end{proof}

\begin{remark}
The proof exposes a special \emph{growth property} of the measure $\mu = Q \; \mathcal{H}^{n-1} \mres \Gamma$, namely $|\mu|(B_r(x)) \leq Cr^{n-1}$, holding true whenever $\Gamma = \partial \Omega' \in C^{0,1}$. 
\end{remark}

\begin{remark}
We recall that if $\Gamma = \partial \Omega' \in C^{0,1}$ then there exists a continuous \emph{trace operator} $\mathrm{tr}_{\partial \Omega'} : W^{1,1}(\Omega') \rightarrow L^1(\Gamma)$. One readily checks that the extension of $T$, defined as in \eqref{eq:distriT} to $W_0^{1,1}(\Omega)$ is given by 
\begin{equation}\label{eq:Ttrace}
    T(\psi) = \int_\Gamma Q \; \mathrm{tr}_{\partial \Omega'} (\psi) \; \mathrm{d}\mathcal{H}^{n-1}(x).
\end{equation}
We will from now on write $\psi$ instead of $\mathrm{tr}_{\partial \Omega'} (\psi)$ and tacitly regard the evaluation as an evaluation in the sense of Sobolev traces. 
\end{remark}

Next we prove the asserted $W^{1,p}$-regularity. To this end we will need that under suitable regularity requirements on $A$, very weak solutions of the equation are also \emph{weak solutions}. Which regularity requirements are exactly needed is discussed in Appendix \ref{app:weakform}. 

\begin{lemma}\label{lem:W1preg}
Let $\Gamma = \partial \Omega' \in C^{0,1}$, $Q \in L^\infty(\Omega)$ and $A \in W^{1,q}(\Omega; \mathbb{R}^{n\times n})$, $q > n$. Then there exists a unique very weak solution $u \in L^2(\Omega)$ to 
\begin{equation}
    \begin{cases}
        -\mathrm{div}(A(x) \nabla u ) = Q \; \mathcal{H}^{n-1} \mres \Gamma  & \textrm{in } \Omega, \\ \qquad \qquad \quad \; \; \; u = 0 & \textrm{on } \partial \Omega 
    \end{cases}
\end{equation}
and it lies in $W_0^{1,p}(\Omega)$ for all $ p \in (1,\infty)$. Moreover, there exists a constant $C = C(p, n, \Omega)$ such that
\begin{equation}
    ||u||_{W_0^{1,p}(\Omega)} \leq  C [\Gamma]_{0,1}  ||Q||_{L^\infty}. 
    \end{equation}
\end{lemma}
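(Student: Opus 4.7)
The plan is to combine Lemma \ref{lem:measterm} with the Calderón--Zygmund theory of \cite{Auscher} for equations with $\mathrm{div}(F)$-data and $VMO$-coefficients, and then to argue uniqueness by a duality argument exploiting $H^2$-regularity of the formal adjoint problem.

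\textbf{Step 1: Reformulation.} By Lemma \ref{lem:measterm}, there is $F \in L^\infty(\Omega;\mathbb{R}^n)$ with $Q \mathcal{H}^{n-1}\mres \Gamma = \mathrm{div}(F)$ distributionally and $\|F\|_{L^\infty} \leq C(n)[\Gamma]_{0,1}\|Q\|_{L^\infty}$. Hence \eqref{eq:weaaksol} takes the form $-\mathrm{div}(A\nabla u) = \mathrm{div}(F)$ in $\Omega$ with $u = 0$ on $\partial\Omega$. Since $q > n$, Sobolev embedding gives $A \in C^{0,1-n/q}(\overline{\Omega}) \hookrightarrow VMO(\Omega;\mathbb{R}^{n\times n})$, so the hypotheses of \cite{Auscher} are satisfied.

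\textbf{Step 2: Existence and $W^{1,p}$-estimate.} Applying the $W^{1,p}$-theory from \cite{Auscher}, for every $p \in (1,\infty)$ there exists a unique weak solution $u \in W_0^{1,p}(\Omega)$ with
\begin{equation}
\|u\|_{W_0^{1,p}(\Omega)} \leq C(p,n,\Omega,A)\|F\|_{L^p(\Omega)} \leq C(p,n,\Omega,A)|\Omega|^{1/p}\|F\|_{L^\infty(\Omega)} \leq C[\Gamma]_{0,1}\|Q\|_{L^\infty}.
\end{equation}
Different choices of $p$ yield the same function (since any two weak solutions have difference in $W_0^{1,2}$ and the bilinear form is coercive), so we have a single $u \in \bigcap_{p < \infty} W_0^{1,p}(\Omega)$.

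\textbf{Step 3: From weak to very weak.} We need to promote \eqref{eq:weaaksol} to Definition \ref{def:weaksol}. Given $\phi \in C^2(\overline{\Omega})$ with $\phi\vert_{\partial\Omega} = 0$, we have $\phi \in W_0^{1,p'}(\Omega)$ for all $p' \in (1,\infty)$, so by density \eqref{eq:weaaksol} extends to such $\phi$. Next, since $A \in W^{1,q}$ and $\nabla\phi \in C^1(\overline\Omega)$, we have $A\nabla\phi \in W^{1,q}(\Omega;\mathbb{R}^n)$, and $u \in W_0^{1,q'}(\Omega)$. Integration by parts (justified by density of $C_c^\infty(\Omega)$ in $W_0^{1,q'}(\Omega)$) yields
\begin{equation}
\int_\Omega (A\nabla u, \nabla\phi)\,dx = -\int_\Omega u\,\mathrm{div}(A\nabla\phi)\,dx,
\end{equation}
so the very weak formulation follows from the weak one. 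Details of these manipulations are collected in Appendix \ref{app:weakform}.

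\textbf{Step 4: Uniqueness of very weak solutions.} Suppose $u \in L^2(\Omega)$ satisfies $-\int_\Omega u\,\mathrm{div}(A\nabla\phi)\,dx = 0$ for all admissible $\phi$. For arbitrary $g \in L^2(\Omega)$, solve the adjoint Dirichlet problem $-\mathrm{div}(A\nabla\phi) = g$ in $\Omega$, $\phi = 0$ on $\partial\Omega$. Since $A \in W^{1,q}$ with $q > n$ (in particular $A \in C^0(\overline\Omega)$) and $\Omega$ is smooth, classical $W^{2,2}$-regularity gives a solution $\phi \in W^{2,2}(\Omega)\cap W_0^{1,2}(\Omega)$. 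Approximating $\phi$ by $\phi_k \in C^2(\overline\Omega)$ with $\phi_k\vert_{\partial\Omega} = 0$ in the $W^{2,2}$-norm (which is possible since $\Omega$ is smooth) and passing to the limit gives $\int_\Omega u g\,dx = 0$ for all $g \in L^2(\Omega)$, hence $u \equiv 0$.

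\textbf{Main obstacle.} The only point requiring care is the bookkeeping of Step 3 and the $W^{2,2}$-approximation in Step 4, both resting on the Hölder continuity of $A$ obtained from $q > n$; these technicalities are deferred to Appendix \ref{app:weakform}. Otherwise the proof is a direct combination of Lemma \ref{lem:measterm} with the $VMO$-Calderón--Zygmund estimates of \cite{Auscher}.
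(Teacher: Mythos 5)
Your proof is correct and follows essentially the same route as the paper: Lemma \ref{lem:measterm} to write the measure as $\mathrm{div}(F)$ with $F \in L^\infty$, the Calder\'on--Zygmund theory of \cite{Auscher} for $W^{1,p}$ existence and estimates under $VMO$ coefficients, the appendix results (Lemmas \ref{lem:stronsolution} and \ref{lem:weakveryweak}) to pass between weak and very weak formulations, and a duality argument via the adjoint $W^{2,2}$-problem for uniqueness. The only organizational difference is that the paper first constructs the $W_0^{1,2}$ solution via Lax--Milgram before invoking \cite{Auscher} to upgrade it, whereas you go to \cite{Auscher} directly; both are valid, and your noted dependence of the constant on $A$ is in fact the more accurate bookkeeping.
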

\begin{proof}
\textbf{Step 1.} Existence. We show first that there exists $u \in W_0^{1,2}(\Omega)$ such that
\begin{equation}\label{eq:LaxMilgr}
    \int_{\Omega} (A \nabla u, \nabla \phi) \; \mathrm{d}x = \int_\Gamma Q \phi \; \mathrm{d}\mathcal{H}^{n-1} \quad \forall \phi \in W_0^{1,2}(\Omega).
\end{equation}
To this end we employ the Lax-Milgram lemma. One readily checks that (due to the fact that $A \in W^{1,q} \subset L^\infty$) one has that $a : W_0^{1,2}(\Omega) \times W_0^{1,2}(\Omega) \rightarrow \mathbb{R}$ given by 
\begin{equation}
    a(u,v) := \int_\Omega (A \nabla u, \nabla v) \; \mathrm{d}x 
\end{equation}
defines a continuous and coercive bilinear form on the Hilbert space $H := W_0^{1,2}(\Omega)$. Next let $T$ be the distribution in \eqref{eq:distriT}. The previous lemma shows that $T \in W_0^{1,1}(\Omega)^* \subset H^*$. Therefore the Lax-Milgram lemma yields that there exists a unique solution $u \in H$ such that $a(u,v) = T(v)$ for all $v \in H$. Using \eqref{eq:Ttrace} we conclude  that $u$ solves \eqref{eq:LaxMilgr}. Lemma \ref{lem:weakveryweak} shows that $u$ is also a very weak solution. \\
 \textbf{Step 2.} Uniqueness. Suppose that $w \in L^2(\Omega)$ is another very weak solution. By Lemma \ref{lem:weakveryweak}  we have 
 \begin{equation}
     - \int_\Omega u \; \mathrm{div}(A \nabla \phi) \; \mathrm{d}x = - \int_\Omega w \; \mathrm{div}(A \nabla \phi) \; \mathrm{d}x= T(\phi) \quad \forall \phi \in W^{2,2}(\Omega) \cap W_0^{1,2}(\Omega),
 \end{equation}
 yielding 
 \begin{equation}\label{eq:kernel}
      \int_\Omega (u-w) \; \mathrm{div}(A \nabla \phi) \; \mathrm{d}x = 0 \quad \forall \phi \in W^{2,2}(\Omega) \cap W_0^{1,2}(\Omega).
 \end{equation}
 Next we let $g \in L^2(\Omega)$ be arbitrary.  Lemma \ref{lem:stronsolution} implies that there exists some $\phi_g \in W^{2,2}(\Omega) \cap W_0^{1,2}(\Omega)$ such that $\mathrm{div}(A \nabla \phi_g) = g$ pointwise almost everywhere. Using $\phi = \phi_g$ in  \eqref{eq:kernel} we find
 \begin{equation}
     \int_\Omega (u-w) g \; \mathrm{d}x = 0 \quad \forall g \in L^2(\Omega).
 \end{equation}
 This implies $u- w =0$ a.e. \\
 \textbf{Step 3.} Regularity. Let $p \in [2,\infty)$ be arbitrary. Let $F \in L^\infty(\Omega;\mathbb{R}^n)$ be as in \eqref{eq:Fdarst}. By \cite[Theorem 1]{Auscher} there exists some $u_p \in W_0^{1,p}(\Omega)$ such that 
 \begin{equation}\label{eq:u_p}
     \int_\Omega (A \nabla u_p , \nabla \phi ) \; \mathrm{d}x  = \int_\Omega (F, \nabla \phi) \; \mathrm{d}x \quad \forall \phi \in C_0^\infty(\Omega). 
 \end{equation}
 We infer from linearity and \eqref{eq:Fdarst}
 \begin{equation}
     \int_\Omega (A \nabla (u_p - u) , \nabla \phi ) \; \mathrm{d}x = \int_\Omega (F, \nabla \phi) \; \mathrm{d}x - \int_\Gamma Q \phi \; \mathrm{d}\mathcal{H}^{n-1} = 0   \quad \forall \phi \in C_0^\infty(\Omega).
 \end{equation}
 Since $u_p - u \in W_0^{1,2}(\Omega)$ and $A \in L^\infty(\Omega)$ we infer by approximation 
 \begin{equation}
     0 = \int_\Omega (A \nabla (u_p - u) , \nabla (u_p - u) ) \; \mathrm{d}x  = a(u_p-u, u_p - u),
 \end{equation}
 yielding that $u_p = u$ by coercivity of the bilinear form $a$. Hence $u= u_p \in W_0^{1,p}(\Omega)$. Thus \eqref{eq:u_p}, \cite[Theorem 1.1]{Auscher} and Lemma \ref{lem:measterm} yield the estimate
 \begin{equation}
     ||u||_{W_0^{1,p}} = ||u_p||_{W_0^{1,p}} \leq C(p,n,\Omega)||F||_{L^p} \leq C(p,n,\Omega) ||F||_{L^\infty} \leq C(p,n, \Omega) ||Q||_\infty [\Gamma]_{0,1}. 
 \end{equation}
\end{proof}


\subsection{Impossibility of $C^1$-regularity}\label{sec:notC1}
In this section we discuss why $C^1$-regularity is impossible to obtain for this problem. This will be a consequence of the classical Hopf-Oleinik boundary point lemma for elliptic operators, discovered by Hopf and independently by Oleinik, cf. \cite{Hopf} for elliptic equations in $C^2$-smooth domains. Meanwhile there have also been found versions of this lemma for less regular, e.g. $C^{1,\alpha}$-regular,  boundaries, cf. \cite[Theorem 1.4]{Mazya}.
\begin{lemma}[{Hopf-Oleinik \cite[Special case of Theorem 1.4]{Mazya}}]\label{lem:25}
 Suppose that $D$ is a $C^{1,\alpha}$ domain for some $\alpha > 0$ and  $A \in C^1(D; \mathbb{R}^{n\times n})$ is symmetric and uniformly elliptic. Let 
 $v \in C^2(D)\cap C^0(\overline{D})$ be nonconstant and satisfy $\mathrm{div}(A \nabla v) = 0$ pointwise on $D$. Suppose that  $x_0 \in \partial D$ is such that $v(x_0) = \max_{x \in \overline{D}} v(x)$. Then 
 \begin{equation}
     \lim_{t \rightarrow 0-} \frac{v(x_0+t\nu_D(x_0))- v(x_0)}{t} > 0.
 \end{equation}
\end{lemma}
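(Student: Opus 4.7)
The plan is to reduce to a classical Hopf--Oleinik barrier argument, adapted to the $C^{1,\alpha}$-boundary setting as in the cited \cite{Mazya}. First I would apply the strong maximum principle for $A$-harmonic functions with $C^1$ coefficients: since $v$ is a non-constant classical solution of $\mathrm{div}(A \nabla v) = 0$, it cannot attain its maximum at an interior point, hence $v(x) < v(x_0)$ for all $x \in D$. Setting $u := v(x_0) - v \geq 0$, the claim reduces to showing that the non-negative $A$-harmonic function $u$, which vanishes at $x_0 \in \partial D$, has strictly positive one-sided inward normal derivative at $x_0$.

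Next I localize using $\partial D \in C^{1,\alpha}$. After a translation and rotation sending $x_0$ to $0$ and $\nu_D(x_0)$ to $e_n$, the boundary is locally the graph $\{y_n = f(y')\}$ with $f \in C^{1,\alpha}$, $f(0)=0$, $\nabla f(0) = 0$; in particular $|f(y')| \leq C |y'|^{1+\alpha}$. Consequently there exist $\rho, K > 0$ such that the interior ``paraboloidal'' region
\[
    P := \{y \in \mathbb{R}^n : |y| < \rho, \ y_n < -K|y'|^{1+\alpha}\}
\]
is contained in $D$ and touches $\partial D$ only at the origin. The heart of the proof is to construct a barrier $w \in C^2(\overline{P})$ satisfying $\mathrm{div}(A \nabla w) \geq 0$ in $P$, $w \geq 0$ on $\overline{P}$, $w(0) = 0$, and with strictly positive one-sided inward normal derivative at $0$. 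Once such a barrier is in hand, the strong maximum principle ensures $u > 0$ on the compact set $\partial P \setminus \{0\}$, so that for $\varepsilon > 0$ sufficiently small $\varepsilon w \leq u$ on $\partial P$. Since $\mathrm{div}(A\nabla(u - \varepsilon w)) \leq 0$ in $P$, the weak comparison principle yields $u \geq \varepsilon w$ throughout $P$; evaluating along $x_0 + t \nu_D(x_0)$ and sending $t \to 0-$ produces the strict inequality.

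The main obstacle is the barrier construction itself: for $\alpha < 1$ the domain $D$ need not satisfy an interior ball condition, so the classical Hopf barrier $w(x) = e^{-\gamma|x-y_0|^2}- e^{-\gamma R^2}$ on an interior ball is unavailable. The remedy, which is the content of \cite[Theorem 1.4]{Mazya}, is to build the barrier directly on $P$ in the form $w = \eta(\sigma)$, where $\sigma$ is a smooth defining function of $P$ vanishing at $x_0$ and $\eta$ is an explicit profile chosen so that the chain rule
\[
    \mathrm{div}(A \nabla w) = \eta''(\sigma)\,(A \nabla \sigma, \nabla \sigma) + \eta'(\sigma)\, \mathrm{div}(A \nabla \sigma)
\]
is non-negative in $P$, while $\eta'(0) \neq 0$ ensures a nontrivial inward normal derivative at $x_0$. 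The $C^{1,\alpha}$-regularity of $\partial D$ (rather than mere Lipschitz regularity) is exactly what is needed to control $\sigma$, $\nabla \sigma$ and $\mathrm{div}(A \nabla \sigma)$ finely enough near the vertex $x_0$ for this construction to succeed; accordingly I would simply invoke the cited result rather than redo the detailed computation.
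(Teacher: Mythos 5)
The paper does not prove this lemma; it is stated as a direct citation of Maz'ya et al.\ (Theorem 1.4 in that reference), so there is no internal argument to compare against. Your outline is a correct and well-organized account of the standard barrier strategy behind that theorem: reduce to a nonnegative $A$-harmonic function vanishing at $x_0$ via the strong maximum principle, flatten the boundary using the $C^{1,\alpha}$-chart, observe that an interior ball condition is unavailable for $\alpha<1$ so the classical exponential Hopf barrier cannot be used on a ball, replace it by a barrier built on a paraboloidal region of opening $|y'|^{1+\alpha}$, and conclude by comparison. The one technical point you pass over slightly quickly is that the comparison $u \geq \varepsilon w$ on $P$ together with $u(x_0)=w(x_0)=0$ a priori only yields a lower bound on the one-sided \emph{liminf} of the difference quotient, not existence of the limit asserted in the statement; the cited theorem handles this (via local boundary regularity of $A$-harmonic functions, or by proving the two-sided estimate directly), and since you explicitly defer the barrier construction and its fine estimates to \cite{Mazya}, this is an acceptable omission at the level of a proof sketch.
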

We remark that the existence of some $x_0 \in \partial D$ with the properties demanded in the previous lemma follows from the \emph{elliptic maximum principle.}, cf. \cite[Theorem 1, Section 6.4.1]{Evans}. We also remark that in the situation of the previous lemma there holds for any $y_0 \in \partial D$
\begin{equation}
    v(y_0) = \min_{x \in \overline{D}} v(x) \quad \Rightarrow \quad  \lim_{t \rightarrow 0-} \frac{v(y_0+t\nu_D(y_0))- v(y_0)}{t} < 0.
\end{equation}
This is readily checked when applying the previous lemma to $-v$. 

Based on this lemma we prove the following result 

\begin{lemma}\label{lem:notC1}
 Let $\Omega \subset \mathbb{R}^n$ be a smooth domain and $u \in L^2(\Omega)$ be the solution of \eqref{eq:1.1} with $A \in C^{1,1}(\overline{\Omega};\mathbb{R}^n)$, $Q \in L^\infty(\mathbb{R}^n)$ and $\Gamma = \partial \Omega' \in C^{1,\alpha}$. Then $u \not \in C^1(\Omega)$ unless $Q = 0$. 
\end{lemma}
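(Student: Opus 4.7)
The plan is to argue by contradiction via a jump formula for the co\nobreakdash-normal derivative across $\Gamma$. Assume $u \in C^1(\Omega)$ and $Q \not\equiv 0$; I aim to derive a contradiction.

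First I would note that, by Lemma \ref{lem:W1preg}, $u \in W^{1,p}_0(\Omega)$ for every finite $p$ and satisfies the weak formulation \eqref{eq:weaaksol}. Testing against $\phi \in C_0^\infty(\Omega')$ (whose trace on $\Gamma$ vanishes) shows $u|_{\Omega'}$ is weakly $A$\nobreakdash-harmonic, and analogously on $\Omega''$. Since $A \in C^{1,1}$, standard interior elliptic regularity upgrades $u$ to a classical $A$\nobreakdash-harmonic function on each of the open sets $\Omega'$ and $\Omega''$; in particular $\mathrm{div}(A\nabla u) = 0$ pointwise on $\Omega' \cup \Omega''$.

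Next, using the standing assumption $u \in C^1(\Omega)$, I would set $w := A\nabla u$, which is then continuous on all of $\Omega$. For $\phi \in C_0^\infty(\Omega)$ I would split
\begin{equation*}
\int_\Omega (w, \nabla\phi)\,\mathrm{d}x = \int_{\Omega'} (w, \nabla \phi)\,\mathrm{d}x + \int_{\Omega''} (w, \nabla \phi) \,\mathrm{d}x
\end{equation*}
and apply the divergence theorem on each side. The bulk terms vanish by the previous step, and the two boundary contributions on $\Gamma$ combine into $\int_\Gamma [w\cdot\nu]\,\phi\,\mathrm{d}\mathcal{H}^{n-1}$, where $[w\cdot\nu]$ denotes the jump across $\Gamma$. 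Under $u \in C^1(\Omega)$ together with continuity of $A$, the two one-sided traces of $w$ coincide pointwise on $\Gamma$, so $[w\cdot\nu]=0$ and the whole left-hand side vanishes. Comparing with the weak formulation $\int_\Omega (w, \nabla \phi)\,\mathrm{d}x = \int_\Gamma Q\phi\,\mathrm{d}\mathcal{H}^{n-1}$, I conclude $\int_\Gamma Q \phi \,\mathrm{d}\mathcal{H}^{n-1} = 0$ for every $\phi \in C_0^\infty(\Omega)$. Since traces on $\Gamma$ of such $\phi$ are dense in $C^0(\Gamma)$, this forces $Q = 0$, a contradiction.

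The principal obstacle will be justifying the divergence theorem on each of $\Omega'$ and $\Omega''$. With only $u \in C^1(\Omega)$ the field $w$ is merely continuous up to $\Gamma$, which is borderline for a classical Gauss--Green statement. The cleanest fix is to invoke boundary Schauder regularity on $\Gamma \in C^{1,\alpha}$ for the $A$\nobreakdash-harmonic function $u|_{\Omega'}$ (available since $A \in C^{1,1}$ and the Dirichlet datum inherited from $u \in C^1(\Omega)$ is $C^1$), giving $u \in C^{1,\alpha'}(\overline{\Omega'})$ and the analogous statement locally on the $\Omega''$ side; then the integration by parts is standard. Alternatively, Lemma \ref{lem:25} (Hopf--Oleinik) offers a more pointwise route: at any boundary extremum of $u|_{\Omega'}$ on $\Gamma$ the inward normal derivative is strictly signed, and the parallel statement on the $\Omega''$ side produces the opposite sign, which is incompatible with $C^1$\nobreakdash-continuity of $u$ across $\Gamma$ unless $u$ is constant on one side, from which $Q = 0$ again follows.
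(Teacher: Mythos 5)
Your primary route (the distributional jump formula for the conormal derivative) is a genuinely different approach from the paper's, which instead uses the Hopf--Oleinik boundary point lemma. Conceptually your argument is sound: if $w := A\nabla u$ is continuous across $\Gamma$ with $\mathrm{div}\,w = 0$ on either side, then integration by parts on $\Omega'$ and $\Omega''$ identifies $-\mathrm{div}\,w$ (as a distribution on $\Omega$) with the jump of $w\cdot\nu$ across $\Gamma$, which vanishes by continuity, forcing $Q = 0$. This is cleaner than the paper's argument and is essentially a qualitative version of the trace formulas the paper later develops in Lemma~\ref{lem:traces}.

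However, the obstacle you flag -- justifying Gauss--Green on each of $\Omega'$, $\Omega''$ -- is a real gap, and your proposed Schauder fix does not resolve it. Boundary Schauder estimates up to $\Gamma$ would require $C^{1,\alpha}$ Dirichlet data on $\Gamma$, but the trace $u|_\Gamma$ inherited from $u \in C^1(\Omega)$ is only $C^1$, so the conclusion $u \in C^{1,\alpha'}(\overline{\Omega'})$ is not available. You do already have $u \in C^1(\overline{\Omega'})$ for free (since $\overline{\Omega'} \subset\subset \Omega$), so what is actually needed is a Gauss--Green theorem for a vector field that is merely continuous up to the boundary of a $C^{1,\alpha}$ (hence Lipschitz) domain with $\mathrm{div}\,w = 0$ in the interior; this is available (e.g.\ via interior approximation of $\Omega'$ by smooth domains and weak-$*$ convergence of surface measures, in the spirit of Lemma~\ref{lem:approx}, or by citing the divergence-measure field literature), but it needs to be invoked explicitly, and it is more machinery than the paper's route requires.

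Your alternative route is essentially the paper's proof, but your sketch elides the central step. It is not enough to take an arbitrary boundary extremum of $u|_{\Omega'}$ on $\Gamma$: to apply Hopf--Oleinik simultaneously on both $\Omega'$ and $\Omega''$ at the same point, you must first show that the global maximum (or minimum) of $u$ on $\overline{\Omega}$ is attained at some $x_0 \in \Gamma$. The paper achieves this by combining the interior maximum principle on both $\Omega'$ and $\Omega''$ with the zero Dirichlet condition on $\partial\Omega$ (otherwise $u \equiv 0$, contradicting $Q \not\equiv 0$). Only then is $x_0$ an extremum for both restrictions, and the two one-sided normal derivatives get forced to have incompatible signs. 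One must also verify, as the paper does, that $u$ is nonconstant on $\Omega''$ before invoking Lemma~\ref{lem:25}.
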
 
\begin{proof}
Assume that $u \in C^1(\Omega)$ and $Q \not \equiv 0$. We remark that also by Lemma \ref{lem:W1preg} $u \in C(\overline{\Omega}$). In particular $\min_{x \in \overline{\Omega}} u(x)$ and $\max_{x \in \overline{\Omega}} u(x)$ are attained. 
We claim that there holds $\mathrm{div}(A \nabla u ) = 0$ in $\Omega' \cup \Omega''$. Indeed, for each $\phi \in C_0^\infty(\Omega')$ one has (after extending $\phi$ by zero on $\Omega$) 
\begin{equation}
    \int_{\Omega'} (A \nabla u , \nabla \phi) \; \mathrm{d}x = \int_\Omega (A \nabla u , \nabla \phi) \; \mathrm{d}x = \int_\Gamma Q \phi \; \mathrm{d}\mathcal{H}^{n-1} = 0,
\end{equation}
since $\phi \vert_\Gamma = 0$. One infers that $\mathrm{div}(A\nabla u) =0 $ weakly in $\Omega'$. One can proceed analogously for $\phi \in C_0^\infty(\Omega'')$.
We infer from \cite[Theorem 9.19]{GilTru} and the assumptions on $A$ that $u \in W^{3,q}_{loc}(\Omega') \cap W^{3,q}_{loc}(\Omega'')$ for each $q < \infty$. In particular, by Sobolev embedding, $u \in C^2(\Omega')  \cap C^2(\Omega'')$. As a consequence $u \in C^2(\Omega') \cap C(\overline{\Omega'})$ and $u \in C^2(\Omega'') \cap C(\overline{\Omega''})$.  Hence $u$ is on both sets $\Omega'$ and $\Omega''$ a classical solution of $\mathrm{div}(A \nabla u) = 0$.  We next show the following\\
\textbf{Intermediate claim.} One value $\max_{x \in \overline{\Omega}} u(x)$ or $\min_{x \in \overline{\Omega}} u(x)$ is attained on $\Gamma$. By the elliptic maximum principle one has that $\max_{x \in \overline{\Omega'}} u(x)$ is attained on $\partial \Omega' = \Gamma$ and  $\max_{x \in \overline{\Omega''}} u(x)$ is attained on $\partial \Omega'' = \Gamma \cup \partial \Omega$. The same can be observed for the minima. 
One concludes that $\max_{x \in \overline{\Omega}} u(x)$ and $\min_{x \in \overline{\Omega}} u(x) $ must be attained on $\Gamma \cup \partial \Omega$. If both are only attained on $\partial \Omega$ then $u \vert_{\partial \Omega} = 0$ yields 
$
    \max_{x \in \overline{\Omega}} u(x) = \min_{x \in \overline{\Omega}} u(x) = 0,
$
and as a result $u \equiv 0$ in $\Omega$. A contradiction to \eqref{eq:1.1} as $Q \not \equiv 0$. The intermediate claim is shown. To proceed we assume without loss of generality that $\max_{x \in \overline{\Omega}} u(x)$ is attained at some $x_0 \in \Gamma$. 
We note that $u \not \equiv \mathrm{const.}$ in $\Omega''$. Indeed, assuming the opposite would yield $u \equiv 0$ in $\overline{\Omega''}$ and hence also on $\Gamma$. This would  however imply that $u\vert_{\Omega'}$ is a classical solution of the Dirichlet problem
\begin{equation}
    \begin{cases}
      \mathrm{div}(A \nabla u) = 0 & \textrm{in }\Omega', \\ \qquad  \quad  \; \; \;   u = 0 & \textrm{on } \Gamma = \partial \Omega'
    \end{cases}
\end{equation}
and thus (by uniqueness for the Dirichlet problem) $u \vert_{\Omega'} = 0$, resulting in $u \equiv 0$ and contradicting $Q \not \equiv 0$. 
Applying Lemma \ref{lem:25} with $D= \Omega''$  and using that $u \in C^1(\Omega)$ we infer that 
\begin{equation}\label{eq:geq}
  0 <  \lim_{t \rightarrow 0-} \frac{u(x_0+t\nu_{\Omega''}(x_0))- u(x_0)}{t} = (\nabla u(x_0), \nu_{\Omega''}(x_0)).  
\end{equation}
On the other hand we have 
\begin{equation}\label{eq:leq}
    (\nabla u(x_0), \nu_{\Omega''}(x_0)) = - (\nabla u(x_0) , \nu_{\Omega'}(x_0) ) = - \lim_{t \rightarrow 0-} \frac{u(x_0+ t \nu_{\Omega'}(x_0)) - u(x_0)}{t} \leq 0, 
\end{equation}
where the last inequality is due to the fact that $u$ attains a maximum at $x_0$ and thus  $u(x_0 + t \nu_{\Omega'} (x_0) )  \leq u(x_0)$ for all $t \leq 0$. 
Since \eqref{eq:geq} and \eqref{eq:leq} contradict each other we obtain that our assumption can not be satisfied. Hence $u \not \in C^1(\Omega)$.
\end{proof}

\section{Proof of the main theorem}
\subsection{Regularity for smooth initial data}\label{sec:smoothinput}

In this section we establish $W^{1,\infty}$-regularity of solutions for (suitably) smooth data $Q,A,\Gamma$.  We will achieve this by comparison with a \emph{prototype solution} that involves the \emph{signed distance function} $d_\Gamma$, a notion that we recall in Appendix \ref{app:signdist}. The comparison with the signed distance reveals not just $W^{1,\infty}$-regularity but also that solutions $u$ of \eqref{eq:1.1} with suitably smooth initial data satisfy $\nabla u \in BV(\Omega)$. This will be important as it allows us to form \emph{traces} of gradients of solutions in the sequel. 

In this section we will always require that $\Gamma = \partial \Omega' \in C^k$ for some $k \geq 2$. We further fix some $\epsilon \in (0,\epsilon_0)$ where $\epsilon_0 = \epsilon_0(\Gamma) < \mathrm{dist}(\Gamma, \partial \Omega)$ is chosen as in Appendix \ref{app:signdist}, i.e. such that $d_\Gamma \in C^k(\overline{B_{\epsilon_0}(\Gamma)})$. Here we denote $B_{\epsilon_0}(\Gamma) := \{ x \in \mathbb{R}^n : \mathrm{dist}(x,\Gamma) < \epsilon_0 \}$.

We will make frequent use of the formula in Lemma \ref{lem:distprep}, i.e. that distributionally on $C_0^\infty(B_\epsilon(\Gamma))'$ one has for any suitably smooth function $Q_0$ (with $\nu:= \nu_{\Omega'}$) 
\begin{equation}
     \partial^2_{ij}\left( \frac{Q_0}{2}|d_\Gamma| \right)  = Q_0 \nu_i \nu_j \mathcal{H}^{n-1}\mres \Gamma + \partial^2_{ij}( \frac{Q_0}{2} d_\Gamma) ( \chi_{\Omega''} - \chi_{\Omega'}).
\end{equation}

\begin{lemma}[Lipschitz-regularity for suitably smooth input data]\label{lem:31}
Let $\Gamma = \partial \Omega' \in C^3$, $Q \in W^{2,s}(\Omega)$ and $A =(A_{ij}) \in W^{2,s}(\Omega; \mathbb{R}^{n\times n})$ for some $s > n$. Let $u \in L^2(\Omega)$ be the unique very weak solution to \eqref{eq:1.1}. Then $u \in W^{1,\infty}(\Omega)$. 
\end{lemma}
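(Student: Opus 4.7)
The plan is to peel off an explicit singular ansatz built from the signed distance function, thereby reducing the problem to an equation with an $L^s$ right-hand side, which will be Lipschitz (in fact $C^1$) by standard elliptic regularity.

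\textbf{Step 1: construction of the ansatz.} Using the uniform ellipticity bound $\nu^T A \nu \geq \lambda > 0$ on $\Gamma$ and the fact that, by Morrey, $Q$ and $A$ are $C^{1,\alpha}$ ($\alpha = 1 - n/s$), the quantity $q := Q/(\nu^T A \nu)$ is well-defined and Hölder continuous on $\Gamma$. Since $\Gamma \in C^3$, the normal coordinates $\Phi(y,t) = y + t\nu(y)$ give a $C^2$-diffeomorphism of $\Gamma \times (-\epsilon_0, \epsilon_0)$ onto $B_{\epsilon_0}(\Gamma)$; extending $q$ constantly along normals (or, more flexibly, by any right-inverse of the $W^{2,s}$-trace operator) yields a function $Q_0 \in W^{2,s}(B_{\epsilon_0}(\Gamma))$ with $Q_0|_\Gamma = Q/(\nu^T A \nu)$. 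Fixing a cutoff $\eta \in C_c^\infty(B_{\epsilon_0}(\Gamma))$ with $\eta \equiv 1$ on $B_{\epsilon_0/2}(\Gamma)$, define the prototype
$$v := \eta \cdot \tfrac{Q_0}{2} |d_\Gamma|,$$
extended by zero to $\Omega$. Note that $v \in W_0^{1,\infty}(\Omega)$ since $|d_\Gamma|$ is $1$-Lipschitz, $\eta Q_0$ is bounded Lipschitz, and $v \equiv 0$ near $\partial \Omega$.

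\textbf{Step 2: computation of $\mathrm{div}(A\nabla v)$.} By expanding
$$\mathrm{div}(A \nabla v) = \sum_{i,j} (\partial_j A_{ij})(\partial_i v) + \sum_{i,j} A_{ij}\, \partial^2_{ji} v$$
and applying the distributional identity from Lemma \ref{lem:distprep} to $\eta Q_0$ in place of $Q_0$, the singular part of $\partial^2_{ji} v$ on $\Gamma$ produces
$$\Big(\sum_{i,j} A_{ij} \nu_i \nu_j\Big) Q_0 \,\mathcal{H}^{n-1}\mres \Gamma = (\nu^T A \nu)\, Q_0 \,\mathcal{H}^{n-1}\mres \Gamma = Q\, \mathcal{H}^{n-1} \mres \Gamma$$
by the defining property of $Q_0$. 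The remaining absolutely continuous terms involve products of functions in $W^{2,s}$ with factors in $C^2$ (the derivatives of $d_\Gamma$, $\eta$, and $A$), together with $L^\infty \cdot L^s$ terms from $(\partial_j A_{ij})(\partial_i v)$, so they lie in $L^s(\Omega)$. Hence
$$-\mathrm{div}(A \nabla v) = -Q\,\mathcal{H}^{n-1}\mres\Gamma - g, \qquad g \in L^s(\Omega).$$

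\textbf{Step 3: reduction and elliptic regularity.} Set $w := u + v$. By Lemma \ref{lem:W1preg}, $u \in W_0^{1,2}(\Omega)$, and since $v$ vanishes in a neighborhood of $\partial \Omega$ with $v \in W^{1,\infty}_0(\Omega)$, also $w \in W_0^{1,2}(\Omega)$. Subtracting the equations for $u$ and $v$ (which hold in the very weak and weak senses via Appendix \ref{app:weakform}) yields
$$-\mathrm{div}(A \nabla w) = -g \in L^s(\Omega), \qquad w|_{\partial \Omega} = 0.$$
Rewriting in non-divergence form gives a Calderón--Zygmund equation with continuous (indeed $C^{1,\alpha}$) leading coefficients $A_{ij}$ and $L^\infty$ lower order coefficients $\partial_j A_{ij}$. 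The global $W^{2,s}$-estimates (e.g.\ Gilbarg--Trudinger, Thm.~9.15) combined with $s>n$ and $\partial \Omega$ smooth give $w \in W^{2,s}(\Omega) \cap W_0^{1,2}(\Omega)$, and Morrey's embedding yields $w \in C^1(\overline{\Omega}) \subset W^{1,\infty}(\Omega)$. Therefore $u = w - v \in W^{1,\infty}(\Omega)$.

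The main technical obstacle is Step 1: one must show that the prescribed trace $Q/(\nu^T A\nu)$ on $\Gamma$ admits an extension belonging to $W^{2,s}$ of a tubular neighborhood, so that the ``leftover'' term $g$ ends up in $L^s$ rather than merely in a Hölder class. Everything else is bookkeeping of the identity from Lemma \ref{lem:distprep} plus a direct appeal to standard elliptic regularity. As a byproduct of this decomposition, $\nabla v \in BV$ (with jump on $\Gamma$) and $\nabla w \in W^{1,s} \subset BV$, so $\nabla u \in BV(\Omega)$, as anticipated in the introduction to this subsection.
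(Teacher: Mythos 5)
Your proposal is essentially the paper's argument --- comparison of $u$ with the prototype $\tfrac{1}{2}Q_0\,|d_\Gamma|$ built from the signed distance --- but packaged differently, and the difference is worth noting. You multiply the prototype by a cutoff $\eta$ so that it vanishes near $\partial\Omega$, and then apply a single global $W^{2,s}$-estimate on $\Omega$ to $w=u+v$. The paper instead works first on the tubular neighborhood $B_\epsilon(\Gamma)$ with interior regularity (giving $u+\tfrac{1}{2}\tilde{Q}|d_\Gamma|\in W^{2,s}_{\mathrm{loc}}(B_\epsilon(\Gamma))$), and then separately treats $\Omega\setminus B_{\epsilon/2}(\Gamma)$ as a Dirichlet problem for $\mathrm{div}(A\nabla u)=0$ with boundary data given by the trace of $u$ on $\partial B_{\epsilon/2}(\Gamma)$. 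Your cutoff version avoids the two-region bookkeeping, which is a genuine simplification of exposition; both routes rely on the same Calder\'on--Zygmund input (\cite[Thm.~9.15]{GilTru}) and on Lemma \ref{lem:distprep}.

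There is, however, a flaw in one of the two options you give in Step 1. Extending $q=Q/(\nu^T A\nu)\vert_\Gamma$ \emph{constantly along normals} does not produce a function in $W^{2,s}(B_{\epsilon_0}(\Gamma))$: $q$ on $\Gamma$ lies in the fractional trace space $W^{2-1/s,s}(\Gamma)$, and the constant-in-$t$ extension in normal coordinates retains only that fractional tangential regularity, which is strictly weaker than $W^{2,s}$. The alternative you offer --- a continuous right-inverse of the $W^{2,s}$-trace operator --- does work and salvages the step. But the whole restrict-and-extend detour (which you flag as ``the main technical obstacle'') is unnecessary: since $Q\in W^{2,s}(\Omega)$ on the \emph{bulk}, $A\in W^{2,s}$, and $\nu=\nabla d_\Gamma$ has a $C^2$ extension on $B_{\epsilon_0}(\Gamma)$ (here is where $\Gamma\in C^3$ is used), the quotient $\tilde{Q}:=Q/(A\nu,\nu)$ is already a $W^{2,s}(B_{\epsilon_0}(\Gamma))$ function, because $W^{2,s}\hookrightarrow C^1$ for $s>n$ makes $W^{2,s}$ an algebra and the denominator is uniformly bounded away from zero by ellipticity. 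This is what the paper does, and it dissolves the apparent obstacle. With $Q_0:=\tilde{Q}$ so defined, the rest of your computation and the reduction to the regular equation for $w$ is correct, including the byproduct $\nabla u\in BV(\Omega)$.
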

\begin{proof}
Let $Q,A,\Gamma$ be as in the statement. 
Let $\tilde{Q} := \frac{Q}{(A\nu, \nu)}$ and notice that $\tilde{Q} \in W^{2,s}(B_\epsilon(\Gamma))$ as $\nu$ possesses a $C^2$-extension on $B_\epsilon(\Gamma)$ (since $\nu = \nabla d_\Gamma \vert_{\Gamma}$ and $\Gamma = \partial \Omega' \in C^3$). Now we look at the following expression for arbitrary $\phi \in C_0^\infty(B_\epsilon(\Gamma))$. 
\begin{align}
    \int_{B_\epsilon(\Gamma)} & \left(A \nabla (\tfrac{1}{2}\tilde{Q} |d_\Gamma|) , \nabla \phi \right) \; \mathrm{d}x  =  - \int_{B_\epsilon(\Gamma)} \tfrac{1}{2} \tilde{Q} |d_\Gamma| \mathrm{div}(A \nabla \phi) \; \mathrm{d}x
    \\ & =  - \int_{B_\epsilon(\Gamma)} \tfrac{1}{2} \tilde{Q} |d_\Gamma| \sum_{i,j=1}^n  \partial_i (A_{ij} \partial_j \phi)   \; \mathrm{d}x
    \\ & = - \sum_{i,j=1}^n\int_{B_\epsilon(\Gamma)} \tfrac{1}{2} \tilde{Q} |d_\Gamma| \partial_i (A_{ij}) \partial_j \phi \; \mathrm{d}x - \sum_{i,j=1}^n \int_{B_\epsilon(\Gamma)} \tfrac{1}{2} A_{ij} \tilde{Q} |d_\Gamma|  \partial^2_{ij} \phi \; \mathrm{d}x 
    \\ & =  \sum_{i,j=1}^n \int_{B_\epsilon(\Gamma)} \partial_{j}\left( \tfrac{1}{2} \tilde{Q} |d_\Gamma| \partial_i A_{ij} \right) \phi \; \mathrm{d}x -
    \sum_{i,j=1}^n \int_{B_\epsilon(\Gamma)} \tfrac{1}{2} A_{ij} \tilde{Q} |d_\Gamma|  \partial^2_{ij} \phi \; \mathrm{d}x. 
\end{align}
Using Lemma \ref{lem:distprep} in the last summand (with $Q_0=  \tilde{Q} A_{ij}$) we find 
\begin{align}\label{eq:allterms}
     \int_{B_\epsilon(\Gamma)} &  \left(A \nabla (\tfrac{1}{2}\tilde{Q} |d_\Gamma|) , \nabla \phi \right) \; \mathrm{d}x  =  \sum_{i,j=1}^n \int_{B_\epsilon(\Gamma)} \partial_{j}\left( \tfrac{1}{2} \tilde{Q} |d_\Gamma| \partial_i A_{ij} \right) \phi \; \mathrm{d}x 
     \\ & - \sum_{i,j=1}^n \int_{\Gamma} A_{ij} \tilde{Q} \nu_i \nu_j  \phi \; \mathrm{d}\mathcal{H}^{n-1} - \sum_{i,j=1}^n \int_{B_\epsilon(\Gamma)} \partial^2_{ij}\left( \frac{\tilde{Q}A_{ij}}{2}d_\Gamma\right) (\chi_{\Omega'}- \chi_{\Omega''}) \phi \; \mathrm{d}x.
\end{align}
Notice next that 
\begin{equation}
    \sum_{i,j=1}^n A_{ij} \tilde{Q} \nu_i\nu_j  = (A\nu,\nu) \tilde{Q} = Q. 
\end{equation}
Further observe that Lipschitz continuity of $|d_\Gamma|$ and the prerequisites on $Q$ and $A$ imply
\begin{equation}
    \partial_{j}\left( \tfrac{1}{2} \tilde{Q} |d_\Gamma| \partial_i A_{ij} \right) \in L^s(B_\epsilon(\Gamma))
\end{equation}
as well as 
\begin{equation}
    \partial^2_{ij}\left( \frac{\tilde{Q}A_{ij}}{2}d_\Gamma\right) (\chi_{\Omega'}- \chi_{\Omega''}) \in L^s(B_\epsilon(\Gamma)).
\end{equation}
We infer from this and \eqref{eq:allterms} that for all $\phi \in C_0^\infty(B_\epsilon(\Gamma))$ one has
\begin{equation}
    \int_{B_\epsilon(\Gamma)}   \left(A \nabla (\tfrac{1}{2}\tilde{Q} |d_\Gamma|) , \nabla \phi \right) \; \mathrm{d}x = -\int_\Gamma Q \phi \; \mathrm{d}x  +  \int_{B_\epsilon(\Gamma) } g \phi \; \mathrm{d}x 
\end{equation}
for $g := \partial_{j}\left( \tfrac{1}{2} \tilde{Q} |d_\Gamma| \partial_i A_{ij} \right) -  \partial^2_{ij}\left( \frac{\tilde{Q}A_i^j}{2}d_\Gamma\right) (\chi_{\Omega'}- \chi_{\Omega''}) \in L^s(B_\epsilon(\Gamma))$. Now let $u \in L^2(\Omega)$ be the solution of \eqref{eq:1.1}. With Lemma \ref{lem:weakveryweak} and the previous formula we have that $u \in W_0^{1,2}(\Omega)$ and for each $\phi \in C_0^\infty(B_\epsilon(\Gamma))$ one has 
\begin{equation}\label{eq:argu1}
    \int_{B_\epsilon(\Gamma)} \left( A \nabla ( u + \tfrac{1}{2}\tilde{Q}|d_\Gamma|) , \nabla \phi \right) \; \mathrm{d}x = \int_{B_\epsilon(\Gamma)} g \phi \; \mathrm{d}x.
 \end{equation}
 Elliptic regularity (cf. \cite[Theorem 9.15]{GilTru}) now implies that $u + \tfrac{1}{2}\tilde{Q}|d_\Gamma| \in W^{2,s}_{loc}(B_\epsilon(\Gamma)) \subset W^{1,\infty}_{loc}(B_\epsilon(\Gamma))$. Since however also $ \tfrac{1}{2}\tilde{Q}|d_\Gamma| \in W^{1,\infty}(B_\epsilon(\Gamma))$ we infer that $u = u + \tfrac{1}{2}\tilde{Q}|d_\Gamma| - \tfrac{1}{2}\tilde{Q}|d_\Gamma| \in W^{1,\infty}_{loc}(B_\epsilon(\Gamma))$. To complete the proof we show  that $u \in W^{1,\infty}(\Omega \setminus B_\frac{\epsilon}{2}(\Gamma))$. To this end we notice that on 
 $\partial B_\frac{\epsilon}{2}(\Gamma)$ we have 
 \begin{equation}
     u = u+ \frac{1}{2}\tilde{Q} |d_\Gamma| -  \frac{1}{2}\tilde{Q} \frac{\epsilon}{2}.
 \end{equation}
 Since $u+ \frac{1}{2}\tilde{Q} |d_\Gamma| \in W^{2,s}_{loc}(B_\epsilon(\Gamma))$ we infer that $u \big\vert_{\partial B_\frac{\epsilon}{2}(\Gamma))}$ is a restriction of a $W^{2,s}$-function on $\partial B_{\frac{\epsilon}{2}}(\Gamma)$. Let now $\varphi: \partial B_{\frac{\epsilon}{2}}(\Gamma) \cup \partial \Omega \rightarrow \mathbb{R}$ be given by
 \begin{equation}
     \varphi(x) = \begin{cases}
     u(x) & x \in \partial B_\frac{\epsilon}{2}(\Gamma), \\ 0 & x \in \partial \Omega.
     \end{cases}
 \end{equation}
 Then $\varphi$ is the restriction of a function in  $W^{2,s}(\Omega \setminus B_{\frac{\epsilon}{2}}(\Gamma))$ to $\partial (\Omega \setminus B_{\frac{\epsilon}{2}}(\Gamma)) = \partial \Omega \cup \partial B_{\frac{\epsilon}{2}}(\Gamma)$.
Moreover
 $u\big\vert_{\Omega \setminus B_\frac{\epsilon}{2}(\Gamma)}$ solves  (weakly)
 \begin{equation}\label{eq:argu2}
     \begin{cases}
    -  \mathrm{div}(A \nabla u)  = 0 & \textrm{in } \Omega \setminus B_{\frac{\epsilon}{2}}(\Gamma), \\  \qquad \quad \quad \; \;  u = \varphi  & \textrm{on } \partial (\Omega \setminus B_{\frac{\epsilon}{2}}(\Gamma)).  
     \end{cases}
 \end{equation}
 This and the regularity of $A,\varphi, \Gamma$ imply (cf.  \cite[Lemma 9.15]{GilTru}) that $u \in W^{2,s}(\Omega \setminus B_\frac{\epsilon}{2}(\Gamma)) \hookrightarrow W^{1,\infty}(\Omega  \setminus B_\frac{\epsilon}{2}(\Gamma))$.
\end{proof}

\begin{remark}\label{rem:C3C2}
The prerequsite $\Gamma = \partial \Omega' \in C^3$ has been only used once in the beginning of the previous proof to deduce that $ \tilde{Q} = \frac{Q}{(A \nu , \nu)} \in W^{2,s}$. If $A = \mathrm{Id}_{n\times n}$, we have that $\tilde{Q} =Q \in W^{2,s}$ even if one has only $\Gamma = \partial \Omega' \in C^2$.
\end{remark}

The properties of the signed distance function and the findings in the previous proof also imply the announced $BV$-regularity statement.
\begin{cor}[$BV$-regularity of the gradient]
Let $\Gamma = \partial \Omega' \in C^3$, $Q \in W^{2,s}(\Omega)$ and $A \in W^{2,s}(\Omega; \mathbb{R}^{n\times n})$ for some $s > n$. Let $u \in L^2(\Omega)$ be the very weak solution to \eqref{eq:1.1}. Then $\nabla u \in BV(\Omega)$.
\end{cor}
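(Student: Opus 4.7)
The plan is to extract $BV$-regularity of $\nabla u$ from the decomposition of $u$ that was constructed in the proof of Lemma \ref{lem:31}. Setting $v := u + \tfrac{1}{2}\tilde Q |d_\Gamma|$, that proof established $v \in W^{2,s}_{\mathrm{loc}}(B_\epsilon(\Gamma))$ and $u \in W^{2,s}(\Omega \setminus B_{\epsilon/2}(\Gamma))$. Consequently the only obstruction to $\nabla u$ possessing an $L^1$ weak derivative comes from the term $\tfrac{1}{2}\tilde Q |d_\Gamma|$ near $\Gamma$, whose gradient is well known to have a jump of $2\tilde{Q}\nu$ across $\Gamma$.

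The main step is to show that $\nabla\bigl(\tfrac{1}{2}\tilde Q |d_\Gamma|\bigr) \in BV(B_\epsilon(\Gamma);\mathbb{R}^n)$. This is essentially the content of Lemma \ref{lem:distprep}: applied with $Q_0 = \tilde Q$, the formula decomposes the distributional Hessian of $\tfrac{1}{2}\tilde Q |d_\Gamma|$ as a surface term $\tilde Q \nu_i \nu_j\, \mathcal{H}^{n-1}\mres\Gamma$ plus a bulk term $\partial^2_{ij}(\tfrac{1}{2}\tilde Q\, d_\Gamma)(\chi_{\Omega''}-\chi_{\Omega'})$. The bulk term lies in $L^s(B_\epsilon(\Gamma)) \subset L^1$ by the product rule, using $\tilde Q \in W^{2,s}$ and $d_\Gamma \in C^k(\overline{B_{\epsilon_0}(\Gamma)})$; the surface term has total variation at most $\|\tilde Q\|_{L^\infty(\Gamma)}\, \mathcal{H}^{n-1}(\Gamma) < \infty$, since $\Gamma$ is compact. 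Hence the distributional Hessian of $\tfrac{1}{2}\tilde Q |d_\Gamma|$ is a bounded Radon measure, which is precisely the statement that $\nabla(\tfrac{1}{2}\tilde Q |d_\Gamma|) \in BV(B_\epsilon(\Gamma);\mathbb{R}^n)$.

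It remains to assemble the pieces globally. Fix $\epsilon' \in (\epsilon/2,\epsilon)$; applying the proof of Lemma \ref{lem:31} with $\epsilon$ replaced by a value slightly larger than $\epsilon'$, one upgrades $v \in W^{2,s}_{\mathrm{loc}}$ to $v \in W^{2,s}(B_{\epsilon'}(\Gamma))$, and therefore $\nabla v \in W^{1,s}(B_{\epsilon'}(\Gamma)) \hookrightarrow BV$. Combining with the main step, $\nabla u = \nabla v - \tfrac{1}{2}\nabla(\tilde Q |d_\Gamma|) \in BV(B_{\epsilon'}(\Gamma);\mathbb{R}^n)$. On $\Omega \setminus B_{\epsilon/2}(\Gamma)$ we already have $\nabla u \in W^{1,s} \hookrightarrow BV$ directly from Lemma \ref{lem:31}. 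A smooth partition of unity subordinate to the finite open cover $\{B_{\epsilon'}(\Gamma),\Omega \setminus \overline{B_{\epsilon/2}(\Gamma)}\}$ of the bounded set $\Omega$ then assembles the two local $BV$-bounds into the global statement $\nabla u \in BV(\Omega;\mathbb{R}^n)$.

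The only conceptual obstacle is the main step, but Lemma \ref{lem:distprep} already isolates it: the entire singular behaviour of $\nabla u$ sits in the $(n-1)$-dimensional jump of $\nabla|d_\Gamma|$ across $\Gamma$, which integrates against $\mathcal{H}^{n-1}\mres\Gamma$ to a finite measure because $\Gamma$ is compact. Everything else is routine bookkeeping between elliptic regularity and a partition-of-unity argument.
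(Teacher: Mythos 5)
Your proposal is correct and follows essentially the same route as the paper: decompose $u$ as $v - \tfrac{1}{2}\tilde Q|d_\Gamma|$ near $\Gamma$, with $v\in W^{2,s}$ by elliptic regularity and $\nabla(\tfrac12\tilde Q|d_\Gamma|)\in BV$ via the distributional Hessian formula of Lemma \ref{lem:distprep}, then glue with the interior $W^{2,s}$-regularity from \eqref{eq:argu2}. The only difference is that you inline the content of Corollary \ref{cor:signdistBV} rather than citing it, and you are somewhat more careful than the paper in passing from the $W^{2,s}_{\mathrm{loc}}(B_\epsilon(\Gamma))$ conclusion of Lemma \ref{lem:31} to a genuine $W^{2,s}(B_{\epsilon'}(\Gamma))$ estimate on a slightly smaller tube before gluing.
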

\begin{proof}
We know from the discussion after \eqref{eq:argu2} that $u \in W^{2,s}(\Omega \setminus B_{\frac{\epsilon}{2}}(\Gamma))$, and hence in particular $\nabla u \in W^{1,s}(\Omega \setminus B_{\frac{\epsilon}{2}}(\Gamma)) \subset BV(\Omega \setminus B_{\frac{\epsilon}{2}}(\Gamma))$. Next we show that $u \in BV(B_\epsilon(\Gamma))$ which will then prove the claim. 
The discussion after \eqref{eq:argu1} yields that $u+\tfrac{1}{2}\tilde{Q} |d_\Gamma| \in W^{2,s}(B_\epsilon(\Gamma))$ for some $\tilde{Q} \in W^{2,s}(B_\epsilon(\Gamma))$. Hence $\nabla \left(u+\tfrac{1}{2}\tilde{Q} |d_\Gamma| \right) \in BV(B_\epsilon(\Gamma))$. Using that by Corollary \ref{cor:signdistBV} $\nabla \left( \tfrac{1}{2} \tilde{Q} |d_\Gamma| \right) \in BV(B_\epsilon(\Gamma))$ we find 
\begin{equation}
    \nabla u = \nabla \left( u + \tfrac{1}{2}\tilde{Q} |d_\Gamma| \right) - \nabla \left( \tfrac{1}{2} \tilde{Q} |d_\Gamma| \right) \in BV(B_\epsilon(\Gamma)). 
\end{equation}
The claim follows.
\end{proof}
\begin{remark}
For the same reason as in Remark \ref{rem:C3C2}, the prerequisite $\Gamma = \partial \Omega' \in C^3$ can be replaced by $\Gamma = \partial \Omega' \in C^2$ if $A = \mathrm{Id}_{n\times n}$. 
\end{remark}
Using the same comparison techniques one also obtains more regularity away from the interface $\Gamma$.
\begin{cor}[Further regularity away from the interface]\label{cor:33}
Let $\Gamma = \partial \Omega' \in C^3$ and $Q \in W^{2,s}(\Omega)$, $A \in W^{2,s}(\Omega;\mathbb{R}^{n\times n})$ for some $s >n$. Let $u \in L^2(\Omega)$ be the weak solution to \eqref{eq:1.1}. Then $u \in W^{2,s}(\Omega') \cap W^{2,s}(\Omega'')$.
\end{cor}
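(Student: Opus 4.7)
The plan is to revisit the decomposition established in the proof of Lemma 3.1 and exploit the crucial observation that, although $|d_\Gamma|$ fails to be in $W^{2,s}$ across the interface $\Gamma$, its restriction to either side of $\Gamma$ coincides with $\pm d_\Gamma$, which is of class $C^3$ on the whole tubular neighbourhood $B_\epsilon(\Gamma)$ by the choice of $\epsilon_0$ in Appendix \ref{app:signdist}.

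First I would recall that the auxiliary function
\[
v := u + \tfrac{1}{2}\tilde{Q}|d_\Gamma|, \qquad \tilde Q = \frac{Q}{(A\nu,\nu)},
\]
constructed in the proof of Lemma \ref{lem:31} satisfies
\[
\int_{B_\epsilon(\Gamma)} (A \nabla v, \nabla \phi) \dx = \int_{B_\epsilon(\Gamma)} g\phi \dx \qquad \forall \phi \in C_0^\infty(B_\epsilon(\Gamma))
\]
for some $g \in L^s(B_\epsilon(\Gamma))$. Standard interior $W^{2,s}$-estimates for linear elliptic equations in divergence form (cf. \cite[Theorem 9.15]{GilTru}) then yield $v \in W^{2,s}_{\mathrm{loc}}(B_\epsilon(\Gamma))$.

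Next I would use the one-sided smoothness of $|d_\Gamma|$: on $\Omega' \cap B_\epsilon(\Gamma)$ one has $|d_\Gamma| = -d_\Gamma$ and on $\Omega'' \cap B_\epsilon(\Gamma)$ one has $|d_\Gamma| = +d_\Gamma$, each of which is $C^3$ on the whole of $B_\epsilon(\Gamma)$. Hence $\tfrac{1}{2}\tilde{Q}|d_\Gamma|$ lies in $W^{2,s}$ on each of these one-sided neighbourhoods, and so therefore does $u = v - \tfrac{1}{2}\tilde{Q}|d_\Gamma|$ on any compactly contained subset of $B_\epsilon(\Gamma)$. In particular $u \in W^{2,s}(\Omega' \cap B_{3\epsilon/4}(\Gamma))$ and $u \in W^{2,s}(\Omega'' \cap B_{3\epsilon/4}(\Gamma))$.

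Finally I would combine this with the observation made in the last paragraph of the proof of Lemma \ref{lem:31}, namely $u \in W^{2,s}(\Omega \setminus B_{\epsilon/2}(\Gamma))$. Since $\Omega' = (\Omega' \cap B_{3\epsilon/4}(\Gamma)) \cup (\Omega' \setminus B_{\epsilon/2}(\Gamma))$ is covered by two open sets on each of which $u$ lies in $W^{2,s}$, a routine gluing argument gives $u \in W^{2,s}(\Omega')$, and the same reasoning applied to $\Omega''$ completes the proof. The one subtle point to stress is conceptual rather than computational: the $W^{2,s}_{\mathrm{loc}}$-regularity of $v$ across $\Gamma$ does \emph{not} transfer to $u$ globally, since $|d_\Gamma| \notin W^{2,s}(B_\epsilon(\Gamma))$, but it \emph{does} transfer on each side separately, which is precisely what yields the asserted one-sided regularity and nothing more.
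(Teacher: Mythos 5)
Your proof is correct and follows essentially the same route as the paper: decompose $u$ as $v - \tfrac{1}{2}\tilde{Q}|d_\Gamma|$ near $\Gamma$, obtain $W^{2,s}$-regularity of $v$ by elliptic theory, use the one-sided smoothness of $|d_\Gamma|$ (which is exactly the content of Lemma~\ref{lem:F3}, whose proof you have simply inlined), and glue with the $W^{2,s}$-regularity of $u$ away from $\Gamma$ established at the end of the proof of Lemma~\ref{lem:31}. The only cosmetic difference is that the paper states $v \in W^{2,s}(B_\epsilon(\Gamma))$ while you work with $W^{2,s}_{\mathrm{loc}}(B_\epsilon(\Gamma))$ and a two-set covering of $\Omega'$, which is a harmless variant since $\epsilon$ was taken strictly less than $\epsilon_0$.
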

\begin{proof}
We have seen in \eqref{eq:argu1} and the discussion below that for $\tilde{Q} = \frac{Q}{(A\nu,\nu)} \in W^{2,s}(B_\epsilon(\Gamma))$ there holds
\begin{equation}
    u + \tfrac{\tilde{Q}}{2} |d_\Gamma| \in W^{2,s}(B_\epsilon(\Gamma)) \subset W^{2,s}(\Omega' \cap B_\epsilon(\Gamma)) \cap W^{2,s}(\Omega'' \cap B_\epsilon(\Gamma)) .
\end{equation}
Moreover, Lemma \ref{lem:F3} implies that 
\begin{equation}
    \tfrac{\tilde{Q}}{2} |d_\Gamma| \in W^{2,s}(\Omega' \cap B_\epsilon(\Gamma)) \cap W^{2,s}(\Omega'' \cap B_\epsilon(\Gamma)). 
\end{equation}
This implies that $u \in W^{2,s}(\Omega' \cap B_\epsilon(\Gamma)) \cap W^{2,s}(\Omega'' \cap B_\epsilon(\Gamma)) $. Since the discussion below \eqref{eq:argu2} also yields that $u \in W^{2,s}(\Omega \setminus B_{\frac{\epsilon}{2}}(\Gamma)) \subset W^{2,s}(\Omega' \setminus B_{\frac{\epsilon}{2}}(\Gamma)) \cap W^{2,s}(\Omega'' \setminus B_{\frac{\epsilon}{2}}(\Gamma))$ the claim follows. 
\end{proof}

\subsection{Blow-up arguments}
 
We have observed in the last section that for suitably regular data $Q,A,\Gamma$ solutions of \eqref{eq:1.1} lie in $W^{1,\infty}(\Omega)$ and their gradients lie in $BV(\Omega)$. By Lemma \ref{lem:notC1}, better global regularity can not be expected. However on $\Omega'$ and $\Omega''$, solutions are $A$-harmonic and therefore more regular (cf. Corollary \ref{cor:33}).  Hence there must occur a \emph{loss of regularity} on $\Gamma$. In this section we use blow-up techniques to understand the behavior on $\Gamma$ precisely.

\begin{lemma}[A Taylor-type expansion]\label{lem:Taylor}
Suppose that $\Gamma = \partial \Omega' \in C^1,$ $A,Q \in C^{0,\alpha}(\Omega)$ for some $\alpha > 0$ and $u \in L^2(\Omega)$ is a solution of \eqref{eq:1.1}. If $u \in W^{1,\infty}(\Omega)$ and $\nabla u \in BV(\Omega)$ then for $\mathcal{H}^{n-1}$ a.e. $x_0 \in \Gamma$  there exists some $\theta(x_0) \in \mathbb{R}^n$ such that 
\begin{equation}
    u(x)= u(x_0) + (\theta(x_0), x-x_0) - \tfrac{1}{2}\tfrac{Q(x_0)}{ (A(x_0) \nu(x_0), \nu(x_0))} |(x-x_0,\nu(x_0))| + o(|x-x_0|). 
\end{equation}
Moreover, one has (for $\mathcal{H}^{n-1}$ a.e. $x_0 \in \Gamma$)
\begin{equation}\label{eq:meanvaluelimit}
    \theta(x_0) = \lim_{r \rightarrow 0} \fint_{B_r(x_0)} \nabla u \; \mathrm{d}x. 
\end{equation}
In particular, the map $\theta: \Gamma \rightarrow \mathbb{R}^n$, $x_0 \mapsto \theta(x_0)$ is Borel measurable and lies in $L^\infty(\Gamma)$.
\end{lemma}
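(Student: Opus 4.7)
The plan is a blow-up analysis at $x_0 \in \Gamma$. Set $v_r(y) := (u(x_0 + ry) - u(x_0))/r$ for $r > 0$; since $u \in W^{1,\infty}(\Omega)$, the family $\{v_r\}$ is equi-Lipschitz on any fixed compact $K \subset \R^n$ with $v_r(0) = 0$, so by Arzelà--Ascoli, along a subsequence $r_k \to 0$ one has $v_{r_k} \to v_0$ in $C^0_{\mathrm{loc}}(\R^n)$ for some Lipschitz $v_0$ with $v_0(0) = 0$. The Taylor expansion will fall out by writing $x = x_0 + ry$, $r = \abs{x - x_0}$, and using uniform convergence on $\overline{B_1(0)}$ once $v_0$ has been identified explicitly.

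To identify $v_0$, I would apply the BV structure theorem (e.g.\ Ambrosio--Fusco--Pallara) to $\nabla u \in BV(\Omega; \R^n)$: at $\mathcal{H}^{n-1}$-a.e.\ $x_0 \in \Omega$, either $x_0$ is a Lebesgue point of $\nabla u$, or $x_0$ is an approximate jump point with well-defined one-sided limits $(\nabla u)^{\pm}(x_0)$ and a jump direction $\nu_g(x_0) \in \partial B_1(0)$ such that
\[
\nabla u(x_0 + r\,\cdot\,) \to (\nabla u)^+(x_0)\,\chi_{H^+_{\nu_g}} + (\nabla u)^-(x_0)\,\chi_{H^-_{\nu_g}} \quad \text{in } L^1_{\mathrm{loc}}(\R^n) \text{ as } r \to 0^+,
\]
with $H^\pm_{\nu_g} := \{y : \pm(y,\nu_g) > 0\}$. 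Since $\nabla v_r = \nabla u(x_0 + r\,\cdot\,)$, passing this to the distributional limit, together with $v_{r_k} \to v_0$ in $C^0_{\mathrm{loc}}$, identifies $\nabla v_0$ with the step function on the right-hand side, so $v_0$ is affine on each half-space $H^\pm_{\nu_g}$.

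The main obstacle is matching $\nu_g(x_0) = \pm\nu(x_0)$ at $\mathcal{H}^{n-1}$-a.e.\ $x_0 \in \Gamma$. I would turn the weak equation into a measure identity: since $A \in W^{1,q}$ with $q > n$ (hence continuous) and $\nabla u \in L^\infty \cap BV$, the product rule yields $\divi(A\nabla u) = (\partial_i A_{ij})\partial_j u + A_{ij}\,D_i \partial_j u$, the first summand lying in $L^q$. Equating singular parts with $-Q\,\mathcal{H}^{n-1}\mres\Gamma$, and using that $\Gamma$ has finite $\mathcal{H}^{n-1}$-measure (so the Cantor part of $D\nabla u$ vanishes on $\Gamma$), one finds that the jump part $((\nabla u)^+ - (\nabla u)^-)_j(\nu_g)_i\,\mathcal{H}^{n-1}\mres J_{\nabla u}$ of $D_i\partial_j u$ contracts against $-A_{ij}(x_0)$ to give $Q\,\mathcal{H}^{n-1}\mres\Gamma$. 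Comparing supports forces $Q = 0$ $\mathcal{H}^{n-1}$-a.e.\ on $\Gamma \setminus J_{\nabla u}$ (where the lemma reduces to the trivial Lebesgue-point affine expansion with $(\nabla u)^+ = (\nabla u)^-$), while at $\mathcal{H}^{n-1}$-a.e.\ $x_0 \in \Gamma \cap J_{\nabla u}$ the tangent planes of $\Gamma$ and $J_{\nabla u}$ coincide, yielding $\nu_g = \pm\nu$ together with the Rankine--Hugoniot identity $(A(x_0)((\nabla u)^-(x_0) - (\nabla u)^+(x_0)), \nu(x_0)) = Q(x_0)$.

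Continuity of $v_0$ across $\{(y,\nu(x_0)) = 0\}$ then forces the tangential components of $(\nabla u)^\pm$ to agree, so $(\nabla u)^+ - (\nabla u)^-$ is a scalar multiple of $\nu$, pinned by Rankine--Hugoniot to $-Q(x_0)/(A(x_0)\nu(x_0),\nu(x_0))$. Setting $\theta(x_0) := \tfrac12((\nabla u)^+(x_0) + (\nabla u)^-(x_0))$, the explicit formula for $v_0$ reads
\[
v_0(y) = (\theta(x_0), y) - \frac{1}{2}\frac{Q(x_0)}{(A(x_0)\nu(x_0),\nu(x_0))}\abs{(y, \nu(x_0))},
\]
which is exactly the claimed expansion; uniqueness of the limit promotes subsequential convergence to full convergence $v_r \to v_0$. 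Finally, \eqref{eq:meanvaluelimit} follows from dominated convergence applied to $\fint_{B_r(x_0)} \nabla u\,\dx = \fint_{B_1(0)} \nabla u(x_0 + rz)\,\dz$ (whose limit is $\tfrac12((\nabla u)^++(\nabla u)^-) = \theta$); this realises $\theta$ as an $\mathcal{H}^{n-1}$-a.e.\ pointwise limit of continuous averages, hence Borel measurable, with the trivial bound $\|\theta\|_{L^\infty(\Gamma)} \leq \|\nabla u\|_{L^\infty(\Omega)}$.
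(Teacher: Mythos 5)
Your argument is genuinely different from the paper's. The paper passes the PDE to the blow-up limit: it shows (using Hölder continuity of $A$ and $Q$ and the scaling structure of the weak formulation) that $\bar u = \lim v_{r_j}$ solves the frozen-coefficient equation $-\mathrm{div}(A(x_0)\nabla \bar u) = Q(x_0)\,\mathcal{H}^{n-1}\mres T_{x_0}\Gamma$ on all of $\mathbb{R}^n$, exhibits the explicit particular solution $-\tfrac12 \tfrac{Q(x_0)}{(A(x_0)\nu,\nu)}|(\cdot,\nu)|$, and then applies Liouville's theorem for constant-coefficient elliptic operators to conclude that the remainder is linear. Independence of $\theta$ from the subsequence is proved by computing $\fint_{B_r(x_0)}\nabla u$ via the divergence theorem and invoking the BV fine properties theorem. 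You instead bring the blow-up structure for free from the Federer--Vol'pert / De Giorgi structure theorem applied to $\nabla u \in BV$, and then use the PDE only as a measure identity, matching singular parts to locate $J_{\nabla u}$ on $\Gamma$ and to read off the Rankine--Hugoniot relation; this is "dual" to the paper's argument and in fact absorbs as a corollary the subsequent Lemma~\ref{lem:traces} on one-sided traces, which the paper proves separately. The trade-offs: the paper's proof is elementary beyond Arzel\`a--Ascoli and Liouville, and uses the equation only through its weak form; yours leans on heavier measure theory (the structure theorem, vanishing of the Cantor part on $\sigma$-finite $\mathcal{H}^{n-1}$ sets, and the fact that two $(n-1)$-rectifiable sets have coinciding approximate tangent planes at $\mathcal{H}^{n-1}$-a.e.\ point of their intersection), and the Leibniz rule you apply to write $D(A_{ij}\partial_j u)=A_{ij}\,D\partial_j u+\partial_j u\,\nabla A_{ij}\,\mathcal{L}^n$ needs justification when $A$ is only $W^{1,q}\cap C^{0,\gamma}$ rather than Lipschitz (it does hold, but is not the textbook product rule). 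Note also that your singular-part decomposition requires $A\in W^{1,q}$ with $q>n$, which is more than the lemma's stated $A\in C^{0,\alpha}$; however, the paper's own proof implicitly relies on Lemma~\ref{lem:weakveryweak} (which also needs $W^{1,q}$) to pass to the weak formulation, so this is not a defect specific to your route. One small inaccuracy: the convergence $\fint_{B_r(x_0)}\nabla u\,\dx\to\theta(x_0)$ is not dominated convergence but simply $L^1(B_1(0))$-convergence of the rescalings, which is exactly what the approximate jump/continuity properties give; the conclusion is the same.
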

\begin{proof}
Define for $r>0$ the function $u_r: \mathbb{R}^n \rightarrow \mathbb{R}^n$ via
\begin{equation}
    u_r(x) := \begin{cases}
        \frac{u(x_0+rx) - u(x_0)}{r} & x_0 + r x \in \Omega, \\ -\frac{u(x_0)}{r} &  \textrm{otherwise}.
    \end{cases}
\end{equation}
Observe that $u_r$ is Lipschitz continuous for all $r>0$, $u_r(0)=0$ for all $r> 0$ and $||\nabla u_r||_{L^\infty(\mathbb{R}^n)} \leq || \nabla u ||_{L^\infty(\Omega)}$. We conclude by Lemma \ref{lem:B6} that for each sequence $r_j \rightarrow 0$ there exists a subsequence $r_{l_j} \rightarrow 0$ and some $\bar{u} \in W^{1,\infty}_{loc}(\mathbb{R}^n)$ with $\nabla \bar{u} \in L^\infty(\mathbb{R}^n)$ such that $u_{r_{l_j}} \rightarrow \bar{u}$ locally uniformly and $\nabla u_{r_{l_j}} \rightarrow \nabla \bar{u}$ weakly in $W^{1,2}(B_R(0))$ for all $R > 0$. 
As a shorthand notation we write $\bar{r}_j = r_{l_j}$. 
We next describe $\bar{u}$ by means of the equation it solves. For arbitrary $\phi \in C_0^\infty(\mathbb{R}^n)$ there holds 
\begin{align}
     \int_{\mathbb{R}^n}  &(A(x_0) \nabla \bar{u}(x), \nabla \phi(x) ) \; \mathrm{d}x    = \lim_{j \rightarrow \infty} \int_{\mathbb{R}^n} (A(x_0) \nabla u_{\bar{r}_j}(x)  ,\nabla \phi(x))  \; \mathrm{d}x
    \\ &  = \lim_{j \rightarrow \infty} \int_{x_0 + \bar{r}_j x \in \Omega } (A(x_0) \nabla u(x_0 + \bar{r}_j x) , \nabla \phi(x)  ) \; \mathrm{d}x
    \\ & = \lim_{j \rightarrow \infty} \left( \int_{x_0 + \bar{r}_j x \in \Omega } (A(x_0+\bar{r}_j x) \nabla u(x_0 + \bar{r}_j x) , \nabla \phi(x)  ) \; \mathrm{d}x \right. \\ & \qquad \quad  \left. + \int_{x_0 + \bar{r}_j x \in \Omega } ((A(x_0)- A(x_0+\bar{r}_j x)) \nabla u(x_0 + \bar{r}_j x) , \nabla \phi(x)  ) \; \mathrm{d}x \right) .
\end{align}
The absolute value of the second summand is now estimated by 
\begin{equation}
    [A]_{C^{0,\alpha}} \bar{r}_j^\alpha \mathrm{diam}(\mathrm{spt}(\phi))^\alpha ||\nabla u||_{L^\infty} \int_{\mathbb{R}^n}|\nabla \phi| \; \mathrm{d}x = o(1) \quad (j \rightarrow \infty).
\end{equation}
Hence we obtain 
\begin{align}
    \int_{\mathbb{R}^n}  &(A(x_0) \nabla \bar{u}(x), \nabla \phi(x) ) \; \mathrm{d}x  =  \lim_{j \rightarrow \infty} \int_{x_0 + \bar{r}_j x \in \Omega } (A(x_0+\bar{r}_j x) \nabla u(x_0 + \bar{r}_j x) , \nabla \phi(x)  ) \; \mathrm{d}x
    \\ & = \lim_{j \rightarrow \infty} \frac{1}{\bar{r}_j^n} \int_{\Omega } (A(z) \nabla u(z) , \nabla \phi(\tfrac{z-x_0}{\bar{r}_j})  ) \; \mathrm{d}z  = \lim_{j \rightarrow \infty} \frac{1}{\bar{r}_j^{n-1}} \int_{\Omega } (A(z) \nabla u(z) , \nabla \phi_{\bar{r}_j}(z)  ) \; \mathrm{d}z,
\end{align}
where $\phi_{\bar{r}_j}(z) := \phi(\frac{z-x_0}{\bar{r}_j})$. Using Lemma \ref{lem:weakveryweak} we obtain
\begin{align}
    \int_{\mathbb{R}^n}  &(A(x_0) \nabla \bar{u}(x), \nabla \phi(x) ) \; \mathrm{d}x  =  \lim_{j \rightarrow \infty} \frac{1}{\bar{r}_j^{n-1}}\int_{\Gamma}  Q \phi_r \; \mathrm{d}\mathcal{H}^{n-1}(x)
    \\ & = \lim_{j \rightarrow \infty} \frac{1}{\bar{r}_j^{n-1}}\int_{\Gamma}  Q(x) \phi(\tfrac{x-x_0}{\bar{r}_j}) \; \mathrm{d}\mathcal{H}^{n-1}(x)
     = \lim_{j \rightarrow \infty} \int_{\frac{\Gamma-x_0}{\bar{r}_j}}  Q(x_0+ \bar{r}_j z) \phi(z) \; \mathrm{d}\mathcal{H}^{n-1}(z)
    \\ & = \lim_{j \rightarrow \infty} \left(\int_{\frac{\Gamma-x_0}{\bar{r}_j}} Q(x_0) \phi(z) \; \mathrm{d}\mathcal{H}^{n-1}(z) - \int_{\frac{\Gamma-x_0}{\bar{r}_j}} (Q(x_0)-Q(x_0+\bar{r}_jz)) \phi(z) \; \mathrm{d}\mathcal{H}^{n-1}(z) \right).
\end{align}
The absolute value of the last summand is bounded by 
\begin{equation}
    [Q]_{C^{0,\alpha}} \bar{r}_j^\alpha \mathrm{diam}(\mathrm{spt}(\phi))^\alpha \int_{\frac{\Gamma - x_0}{\bar{r}_j}} |\phi(z)| \; \mathrm{d}\mathcal{H}^{n-1}(z) = o(1) \quad (j \rightarrow \infty),
\end{equation}
where we used that for all $\psi \in C_0^0(\mathbb{R}^n)$ there holds
\begin{equation}
    \int_{\frac{\Gamma - x_0}{\bar{r}_j}} \psi(z) \; \mathrm{d}\mathcal{H}^{n-1}(z) \rightarrow \int_{T_{x_0}\Gamma} \psi(y) \; \mathrm{d}\mathcal{H}^{n-1}(y) \quad (j \rightarrow \infty). 
\end{equation}
Using this once more we obtain 
\begin{equation}
     \int_{\mathbb{R}^n}  (A(x_0) \nabla \bar{u}(x), \nabla \phi(x) ) \; \mathrm{d}x = \lim_{j \rightarrow \infty}  Q(x_0) \int_{\frac{\Gamma-x_0}{\bar{r}_j}}   \phi \; \mathrm{d}\mathcal{H}^{n-1} = Q(x_0) \int_{T_{x_0}\Gamma} \phi \; \mathrm{d}\mathcal{H}^{n-1}.
\end{equation}
As a consequence, $\bar{u} \in W^{1,\infty}_{loc}(\mathbb{R}^n)$ is a weak solution of 
\begin{equation}\label{eq:weakblowup}
    \begin{cases}
        -\mathrm{div}(A(x_0) \nabla \bar{u}) = Q(x_0) \mathcal{H}^{n-1}\mres T_{x_0} \Gamma  & \textrm{on } \mathbb{R}^n, \\ \bar{u}(0) = 0 , \nabla{\bar{u}} \in L^\infty(\mathbb{R}^n). 
    \end{cases}
\end{equation}
We claim next that there exists some $\theta(x_0) \in \mathbb{R}^n$ such that
\begin{equation}
    \bar{u}(z) = (\theta(x_0),z) -  \tfrac{1}{2} \tfrac{Q(x_0)}{(A(x_0) \nu(x_0), \nu(x_0))} |(\nu(x_0), z)| .
\end{equation}
To this end we first compute that $v(z) :=  -\tfrac{1}{2} \tfrac{Q(x_0)}{(A(x_0) \nu(x_0), \nu(x_0))}  |(\nu(x_0), z)|$ defines a weak solution of \eqref{eq:weakblowup}. One readily checks that
\begin{equation}
    \nabla v(z)  = -\tfrac{1}{2} \tfrac{Q(x_0)}{(A(x_0) \nu(x_0), \nu(x_0))}  \mathrm{sgn}(\nu(x_0), z) \nu(x_0) .
\end{equation}
With this formula we compute for each $\phi \in C_0^\infty( \mathbb{R}^n)$ (with the shorthand notation $B := \tfrac{1}{2} \tfrac{Q(x_0)}{(A(x_0) \nu(x_0), \nu(x_0))}$)
\begin{align}
   & \int_{\mathbb{R}^n} ( A(x_0) \nabla v, \nabla \phi) \; \mathrm{d}x   = - B \int_{T_{x_0}\Gamma} \int_{\mathbb{R}}  \mathrm{sgn}( \nu(x_0), w + t \nu(x_0) ) (A(x_0) \nu(x_0), \nabla \phi) \; \mathrm{d}t \; \mathrm{d}w \\ &
   = -B \int_{T_{x_0}\Gamma} \int_{\mathbb{R}}  \mathrm{sgn}(t) (A(x_0) \nu(x_0), \nabla \phi) \; \mathrm{d}t \; \mathrm{d}w 
   \\ & = - B\left( \int_{{T_{x_0}\Gamma} + (0,\infty)\nu(x_0)} \mathrm{div}( \phi A(x_0) \nu(x_0))  \; \mathrm{d}x  -  \int_{{T_{x_0}\Gamma} + (-\infty,0)\nu(x_0)} \mathrm{div}( \phi A(x_0) \nu(x_0))  \; \mathrm{d} x \right) \\ & 
   = 2B  \int_{T_{x_0}\Gamma} \phi ( A(x_0) \nu(x_0), \nu(x_0)) \; \mathrm{d}\mathcal{H}^{n-1}   =   Q(x_0) \int_{T_{x_0}\Gamma} \phi \; \mathrm{d}\mathcal{H}^{n-1}.
\end{align}
This and \eqref{eq:weakblowup} yield that  $ w := \bar{u}+ \tfrac{1}{2}\tfrac{Q(x_0)}{(A(x_0) \nu(x_0) , \nu(x_0) )}|(\nu(x_0), \cdot )|$ solves (weakly) 
\begin{equation}\label{eq:equationw}
    \begin{cases}
        -\mathrm{div}(A(x_0) \nabla w) =  0 & \textrm{in } \mathbb{R}^n, \\ w(0) = 0 , \nabla w  \in L^\infty(\mathbb{R}^n). 
    \end{cases}
\end{equation}
We claim that this implies that there exists $\theta(x_0) \in \mathbb{R}^n$ such that $w(z) = (\theta(x_0),z)$ for all $z \in \mathbb{R}^n$. Observe first that by elliptic regularity each solution lies in $C^\infty(\mathbb{R}^n)$. Differentiating the equation one sees that for all $i = 1,...,n$ the map $\partial_i w$ satisfies  $-\mathrm{div}(A(x_0) \nabla (\partial_i w)) = 0$. Moreover, \eqref{eq:equationw} yields that $\partial_i w$ is bounded on $\mathbb{R}^n$. Liouville's theorem for elliptic operators (with constant coefficents, cf. \cite{Priola}),  implies now that $\partial_i w$ is constant. We conclude that $\nabla w$ is constant and therefore $w(z) = (\theta(x_0), z) + D$ for some $\theta(x_0) \in \mathbb{R}^n$ and some $D \in \mathbb{R}$. Notice however that $D= 0$ as $w(0)= 0$. We infer from the definiton of $w$ that 
\begin{equation}\label{eq:balu}
    \bar{u}(x) = (\theta(x_0),x) - \tfrac{1}{2} \tfrac{Q(x_0)}{(A(x_0) \nu(x_0) , \nu(x_0))} | ( \nu(x_0) , x) | .
\end{equation}
We remark that $\theta(x_0)$ could in our construction depend on the chosen subsequence $(\bar{r}_j)_{j \in \mathbb{N}}$. We will next show that $\theta(x_0)$ is actually independent of the chosen subsequence for $\mathcal{H}^{n-1}$ a.e. $x_0 \in \Gamma$. This would then imply that $u_r \rightarrow \bar{u}$  locally uniformly as $r \rightarrow 0$ for $\mathcal{H}^{n-1}$ a.e. $x_0 \in \Gamma$. 

\textbf{Intermediate claim.} We claim that 
\begin{equation}
    \theta(x_0) = \lim_{j \rightarrow \infty}  \fint_{B_{\bar{r}_j}(x_0)} \nabla u \; \mathrm{d}x.
\end{equation}
Since by \cite[Theorem 5.20]{EvGar} for $\mathcal{H}^{n-1}$ a.e. $x_0 \in \Gamma$ there exists 
\begin{equation}
    \lim_{r \rightarrow 0} \fint_{B_r(x_0)} \nabla u \; \mathrm{d}x 
\end{equation}
the intermediate claim will imply that for $\mathcal{H}^{n-1}$ almost every $x_0 \in \Gamma$ the value $\theta(x_0) $ is independent of the chosen subsequence and 
\begin{equation}
    \theta(x_0) = \lim_{r \rightarrow 0} \fint_{B_r(x_0)} \nabla u \; \mathrm{d}x  \quad \textrm{for } \mathcal{H}^{n-1} \;  \textrm{a.e.} \; x_0 \in \Gamma. 
\end{equation}
To show the intermediate claim we compute with the Gauss divergence theorem defining $\alpha_n := |B_1(0)|$
\begin{align}
     &\fint_{B_{\bar{r}_j}(x_0)} \nabla u   \; \mathrm{d}x  = \frac{1}{\alpha_n\bar{r}_j^n} \int_{B_{\bar{r}_j}(x_0)} \nabla (u-u(x_0)) \; \mathrm{d}x  \\ & = \frac{1}{\alpha_n\bar{r}_j^n} \int_{\partial B_{\bar{r}_j}(x_0)} (u(x)-u(x_0)) \cdot \frac{x-x_0}{\bar{r}_j} \; \mathrm{d}\mathcal{H}^{n-1}
    \\ & = \frac{1}{\alpha_n\bar{r}_j} \int_{\partial B_1(0)} (u(x_0 + \bar{r}_j z )-u(x_0)) \cdot z \; \mathrm{d}\mathcal{H}^{n-1}
    \\ &  = \frac{1}{\alpha_n}\int_{\partial B_1(0)} \left( \frac{u(x_0 + \bar{r}_j z )-u(x_0)}{\bar{r}_j} \right) \cdot z \; \mathrm{d}\mathcal{H}^{n-1} \underset{j \rightarrow  \infty }{\longrightarrow} \frac{1}{\alpha_n}\int_{\partial B_1(0)} \bar{u}(z) \cdot z \; \mathrm{d}\mathcal{H}^{n-1}
    \\ & = \frac{1}{\alpha_n}\int_{\partial B_1(0)} \left( ( \theta(x_0), z) - \frac{1}{2} \tfrac{Q(x_0)}{(A(x_0) \nu(x_0), \nu(x_0))} |( \nu(x_0),z)| \right) \cdot z \; \mathrm{d}\mathcal{H}^{n-1}.
\end{align}
Notice that $z \mapsto \frac{1}{2} \tfrac{Q(x_0)}{(A(x_0) \nu(x_0), \nu(x_0))} |( \nu(x_0),z)| $ is even with respect to the transformation $z \mapsto -z$. Since however $z \mapsto z$ is odd with respect to this transformation and $-\partial B_1(0) = \partial B_1(0)$ we obtain that the second summand vanishes. Therefore
\begin{equation}
    \lim_{j \rightarrow \infty } \fint_{B_{\overline{r}_j}(x_0)} \nabla u \; \mathrm{d}x = \frac{1}{\alpha_n}\int_{\partial B_1(0)} (\theta(x_0),z) \cdot z = \frac{1}{\alpha_n} \int_{B_1(0)} \nabla (\theta(x_0),z) \; \mathrm{d}\mathcal{H}^{n-1}(z) = \theta(x_0). 
\end{equation}
As explained above this shows independence of $\theta(x_0)$ of the chosen subsequence for $\mathcal{H}^{n-1}$ a.e. $x_0 \in \Gamma$ and this yields that for $\mathcal{H}^{n-1}$ a.e. $x_0 \in \Gamma$ one has (locally uniformly in $r$)
\begin{equation}
    \lim_{r \rightarrow 0 }  u_r(x) = (\theta(x_0), x) - \tfrac{1}{2} \tfrac{Q(x_0)}{(A(x_0) \nu(x_0), \nu(x_0)} |(\nu(x_0) , x)| .
\end{equation}
Since $u_r(x) = \frac{u(x_0 + rx)- u(x_0)}{r}$ for all $x$ close to $x_0$ we conclude
\begin{equation}
    u(x_0 + r x)  = u(x_0) + (\theta(x_0), rx) -\tfrac{1}{2} \tfrac{Q(x_0)}{(A(x_0) \nu(x_0), \nu(x_0)} |(\nu(x_0) , rx)| + o(r),
\end{equation}
whereupon $y := x_0 + rx$ yields the desired formula
\begin{equation}
    u(y) = u(x_0) + (\theta(x_0),y-x_0) -  \tfrac{1}{2} \tfrac{Q(x_0)}{(A(x_0) \nu(x_0), \nu(x_0)} |(\nu(x_0) , y-x_0)| + o(|y-x_0|). 
\end{equation}
\end{proof}
This blow-up result enables us to look at the \emph{boundary trace} of $\nabla u$ on $\Gamma$. This boundary trace result becomes important for the study of $||\nabla u||_{L^\infty(\Omega)}$ with the aid of \emph{maximum principles}.
We recall from \cite[Theorem 2.10]{Giusti} that for each $f \in BV(D)$   one has 
\begin{equation}\label{eq:Giusti}
    \mathrm{tr}_{\partial D}( f) (z)   = \lim_{r \rightarrow 0} \fint_{B_r(z) \cap D} f(y) \; \mathrm{d}y \quad \textrm{for $\mathcal{H}^{n-1}$ a.e. $z \in \partial D$.}
\end{equation}
This notion is also consistent with the notion of traces of Sobolev functions in the sense that if $f \in W^{1,1}(D)$ then the $BV$-trace coincides with the classical Sobolev trace of $f$. 

\begin{lemma}[Boundary traces of the gradient]\label{lem:traces}
Suppose that $\Gamma = \partial \Omega' \in C^1$, $A,Q \in C^{0,\alpha}$ and $u \in L^2(\Omega)$ is a very weak solution of \eqref{eq:1.1}. Let $\theta: \Gamma \rightarrow \mathbb{R}$ be as in Lemma \ref{lem:Taylor}. If $u \in W^{1,\infty}(\Omega)$ and $\nabla u \in BV(\Omega)$ then $\mathcal{H}^{n-1}$ a.e. on $\Gamma$ one has
\begin{equation}
    \mathrm{tr}_{\Omega'}(\nabla u ) =  \theta + \tfrac{1}{2} \tfrac{Q}{(A \nu, \nu) } \nu ,
\end{equation}
\begin{equation}
    \mathrm{tr}_{\Omega''}(\nabla u ) =  \theta - \tfrac{1}{2} \tfrac{Q}{(A \nu, \nu) } \nu.
\end{equation}
\end{lemma}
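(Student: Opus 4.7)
The plan is to apply the BV trace formula \eqref{eq:Giusti} on the Lipschitz domains $\Omega'$ and $\Omega''$: for $\mathcal{H}^{n-1}$-a.e.\ $x_0 \in \Gamma$ one has $\mathrm{tr}_{\Omega'}(\nabla u)(x_0) = \lim_{r \to 0} \fint_{B_r(x_0) \cap \Omega'} \nabla u \, \mathrm{d}x$, and analogously for $\Omega''$. At a point $x_0$ also satisfying the conclusions of Lemma \ref{lem:Taylor}, this limit can be evaluated through the blow-up $u_r(y) := \frac{u(x_0+ry) - u(x_0)}{r}$. Writing $\nu_0 := \nu(x_0)$, $\theta_0 := \theta(x_0)$, $\beta := \tfrac{1}{2} \tfrac{Q(x_0)}{(A(x_0)\nu_0, \nu_0)}$ and $\bar u(y) := (\theta_0, y) - \beta\,|(\nu_0, y)|$, Lemma \ref{lem:Taylor} (with the intermediate claim establishing uniqueness of $\theta(x_0)$) yields $u_r \to \bar u$ locally uniformly on $\mathbb{R}^n$ as the \emph{full} limit $r \to 0$, not merely along subsequences.

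The first step is the rescaling identity $\fint_{B_r(x_0) \cap \Omega'} \nabla u \, \mathrm{d}x = \fint_{B_1(0) \cap \Omega_r'} \nabla u_r \, \mathrm{d}y$, where $\Omega_r' := (\Omega' - x_0)/r$ and $\nabla u_r(y) = \nabla u(x_0+ry)$. Using a local $C^1$ graph representation of $\Gamma$ at $x_0$, one sees that $\chi_{\Omega_r'} \to \chi_{H^-}$ pointwise a.e.\ on $B_1(0)$, where $H^- := \{y : (\nu_0, y) < 0\}$; by dominated convergence this upgrades to $L^1(B_1)$-convergence, so in particular $|B_1(0) \cap \Omega_r'| \to \tfrac{\alpha_n}{2}$. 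At the same time $\|\nabla u_r\|_{L^\infty(B_R)} \leq \|\nabla u\|_{L^\infty(\Omega)}$ uniformly for $r$ small, and the locally uniform convergence $u_r \to \bar u$ passes to distributional derivatives; combined with weak-$\ast$ compactness in $L^\infty(B_R)$, this forces $\nabla u_r \overset{\ast}{\rightharpoonup} \nabla \bar u$ in $L^\infty(B_R)$ for each $R>0$.

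I then combine these two convergences by splitting
\begin{equation*}
\int_{B_1} \chi_{\Omega_r'} \nabla u_r \, \mathrm{d}y = \int_{B_1} (\chi_{\Omega_r'} - \chi_{H^-}) \nabla u_r \, \mathrm{d}y + \int_{B_1} \chi_{H^-} \nabla u_r \, \mathrm{d}y.
\end{equation*}
The first summand is bounded by $\|\nabla u\|_{L^\infty(\Omega)} \|\chi_{\Omega_r'} - \chi_{H^-}\|_{L^1(B_1)} \to 0$, and the second converges to $\int_{H^- \cap B_1} \nabla \bar u \, \mathrm{d}y$ by weak-$\ast$ testing against $\chi_{H^-} \in L^1$. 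On $H^-$ the absolute value simplifies to $|(\nu_0,y)| = -(\nu_0,y)$, so $\bar u(y) = (\theta_0 + \beta\nu_0, y)$ and thus $\nabla \bar u \equiv \theta_0 + \beta\nu_0$ there; dividing by the limit $\alpha_n/2$ of $|B_1 \cap \Omega_r'|$ yields exactly $\mathrm{tr}_{\Omega'}(\nabla u)(x_0) = \theta(x_0) + \tfrac{1}{2}\tfrac{Q(x_0)}{(A(x_0)\nu(x_0),\nu(x_0))}\nu(x_0)$. The formula for $\mathrm{tr}_{\Omega''}$ is obtained by the identical argument with $\Omega'$ replaced by $\Omega''$: one has $\chi_{\Omega_r''} \to \chi_{H^+}$ and on $H^+$ the sign flips to give $\bar u(y) = (\theta_0 - \beta\nu_0, y)$. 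The main technical obstacle I expect is the careful $L^1$-justification of $\chi_{\Omega_r'} \to \chi_{H^-}$ via a local $C^1$ chart, together with the identification of the weak-$\ast$ limit of $\nabla u_r$; both are standard once the structural ingredients of Lemma \ref{lem:Taylor} are in hand.
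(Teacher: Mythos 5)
Your proof is correct, and it takes a genuinely different route from the paper's. The paper also starts from the BV trace formula \eqref{eq:Giusti}, but then applies the Gauss divergence theorem to convert $\fint_{B_r(z)\cap\Omega'}\partial_i u$ into a boundary integral over $\partial(B_r(z)\cap\Omega')$, substitutes the pointwise Taylor expansion from Lemma \ref{lem:Taylor}, estimates the $o(|x-z|)$ contribution directly on the boundary (requiring a surface-measure growth bound $\mathcal{H}^{n-1}(\partial(B_r\cap\Omega'))\lesssim r^{n-1}$ and the Lipschitz-domain property of $B_r\cap\Omega'$), then reverses Gauss--Green and finishes with the volume-density results of \cite[Theorem 5.13]{EvGar}. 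You instead rescale the trace integral via $\fint_{B_r(x_0)\cap\Omega'}\nabla u = \fint_{B_1\cap\Omega_r'}\nabla u_r$, exploit the \emph{full} (not merely subsequential) locally uniform convergence $u_r\to\bar u$ that Lemma \ref{lem:Taylor} indeed delivers, upgrade this to weak-$*$ convergence $\nabla u_r\overset{*}{\rightharpoonup}\nabla\bar u$ in $L^\infty(B_R)$ via the uniform gradient bound and distributional stability, and combine it with $\chi_{\Omega_r'}\to\chi_{H^-}$ in $L^1(B_1)$. This sidesteps the double Gauss--Green manipulation, the surface-measure estimates for the $o$-term, and the Carbone Lipschitz-domain lemma altogether; the trade-off is that you lean directly on the uniform blow-up convergence (rather than just the pointwise expansion), but since Lemma \ref{lem:Taylor} provides exactly that, the argument is complete. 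Both proofs use the same density-type fact that $|B_1\cap\Omega_r'|\to\alpha_n/2$ (phrased through EvGar in the paper, through $L^1$ convergence of indicators on your side). Your version is arguably cleaner and easier to verify.
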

\begin{proof}
We only show the first formula, the second formula is analogous. The main tool we use is \eqref{eq:Giusti}.
Using the Gauss divergence theorem (which is allowed as by \cite[Proposition 2.5.4]{Carbone} $\Omega' \cap B_r(z)$ is a Lipschitz domain) we find
\begin{align}
    & \lim_{r \rightarrow 0} \fint_{B_r(z) \cap \Omega'} \partial_i u(x) \; \mathrm{d}x   = \lim_{r \rightarrow 0} \tfrac{1}{|B_r(z) \cap \Omega'| } \int_{B_r(z) \cap \Omega'} \partial_i u(x) \; \mathrm{d}x  \\ & = \lim_{r \rightarrow 0} \tfrac{1}{|B_r(z) \cap \Omega'| } \int_{\partial (B_r(z) \cap \Omega') }  u(x) \nu_i(x)  \; \mathrm{d}\mathcal{H}^{n-1}(x)  
    \\ & = \lim_{r \rightarrow 0} \tfrac{1}{|B_r(z) \cap \Omega'| }  \left( \int_{\partial (B_r(z) \cap \Omega') }  (u(x)-u(z))  \nu_i(x)  \; \mathrm{d}\mathcal{H}^{n-1}(x)  + u(z) \int_{\partial (B_r(z) \cap \Omega') }   \nu_i \; \mathrm{d}\mathcal{H}^{n-1} \right)
    \\ & = \lim_{r \rightarrow 0} \tfrac{1}{|B_r(z) \cap \Omega'| }  \left( \int_{\partial (B_r(z) \cap \Omega') }  (u(x)-u(z))  \nu_i(x)  \; \mathrm{d}\mathcal{H}^{n-1}(x)  + u(z) \int_{ (B_r(z) \cap \Omega') }   \partial_i (1) \; \mathrm{d}\mathcal{H}^{n-1} \right)
    \\ & = \lim_{r \rightarrow 0} \tfrac{1}{|B_r(z) \cap \Omega'| }  \int_{\partial (B_r(z) \cap \Omega') }  (u(x)-u(z))  \nu_i(x)  \; \mathrm{d}\mathcal{H}^{n-1}(x)
    \\ & =  \lim_{r \rightarrow 0} \tfrac{1}{|B_r(z) \cap \Omega'| }  \int_{\partial (B_r(z) \cap \Omega') } [(\theta(z),x-z) - \tfrac{1}{2} \tfrac{Q(z)}{(A(z) \nu(z), \nu(z))} |(\nu(z),x-z)| \\ & \qquad \qquad \qquad\qquad\qquad \qquad \qquad \qquad\qquad\qquad \qquad \qquad \; \; \;   + o(|x-z|) ]  \nu_i(x)  \; \mathrm{d}\mathcal{H}^{n-1}(x).
\end{align}
Now note that 
\begin{equation}
    \lim_{r \rightarrow 0} \tfrac{1}{|B_r(z) \cap \Omega'|} \left\vert \int_{\partial (B_r(z) \cap \Omega') } o(|x-z|) \nu_i(x) \; \mathrm{d}\mathcal{H}^{n-1}(x) \right\vert \leq \limsup_{r \rightarrow 0} o(1) \frac{r \mathcal{H}^{n-1}(\partial (B_r(z) \cap \Omega'))}{|B_r(z) \cap \Omega'|}.
\end{equation}
Arguing as in  \eqref{eq:growthprop} we obtain
\begin{equation}
    \mathcal{H}^{n-1}(\partial (\Omega' \cap B_r(z))) \leq \mathcal{H}^{n-1} ( \partial \Omega' \cap B_r(z) ) + \mathcal{H}^{n-1}(\Omega' \cap B_r(z)) \leq C([\partial \Omega']_{0,1})  r^{n-1} + \omega_n r^{n-1} 
\end{equation}
and thus
\begin{align}
     \lim_{r \rightarrow 0} \tfrac{1}{|B_r(z) \cap \Omega'|} \int_{\partial (B_r(z) \cap \Omega') } o(|x-z|) \nu_i(x) \; \mathrm{d}\mathcal{H}^{n-1}(x) & \leq C\limsup_{r\rightarrow 0} \frac{o(1) r^n}{|B_r(z) \cap \Omega'| }  \\ & =  C\limsup_{r\rightarrow 0} o(1) \frac{1 }{|B_1(0) \cap \tfrac{\Omega'- z}{r}| } = 0,
\end{align}
where we used that $\frac{1 }{|B_1(0) \cap \tfrac{\Omega'- z}{r}| }$ is uniformly bounded in $r$ by \cite[Theorem 5.13]{EvGar}. We obtain therefore that 
\begin{align}
       & \mathrm{tr}_{\Omega'}(\partial_i u)(z)  \label{eq:quot} \\  & = \lim_{r \rightarrow 0} \tfrac{1}{|B_r(z) \cap \Omega'| }  \int_{\partial (B_r(z) \cap \Omega') } [(\theta(z),x-z) - \tfrac{1}{2} \tfrac{Q(z)}{(A(z) \nu(z), \nu(z))} |(\nu(z),x-z)| ] \nu_i(x) \; \mathrm{d}\mathcal{H}^{n-1}(x) \\
    & = \lim_{r \rightarrow 0} \tfrac{1}{|B_r(z) \cap \Omega'| } \int_{B_r(z) \cap \Omega' } \partial_{x_i}  [\theta(z),x-z) - \tfrac{1}{2} \tfrac{Q(z)}{(A(z) \nu(z), \nu(z))} |(\nu(z),x-z)| ] \; \mathrm{d}x
    \\ & = \lim_{r \rightarrow 0} \tfrac{1}{|B_r(z) \cap \Omega'| } \int_{B_r(z) \cap \Omega' } [\theta_i(z)  - \tfrac{1}{2} \tfrac{Q(z)}{(A(z) \nu(z), \nu(z))} \mathrm{sgn}(\nu(z), x-z) \nu_i(z) ] \; \mathrm{d}x 
    \\ & =  \theta_i(z) - \tfrac{1}{2}\tfrac{Q(z)}{(A \nu(z) , \nu(z))} \nu_i(z) \lim_{r \rightarrow 0} \frac{|B_r(z) \cap \Omega' \cap (z+H^+)|- |B_r(z) \cap \Omega' \cap (z+H^-)|}{|B_r(z) \cap \Omega'| }, 
\end{align}
where $H^{\pm} =\{y \in \mathbb{R}^n : \pm (y, \nu(z)) \geq 0 \}$. By 
\cite[Theorem 5.13]{EvGar} we have
\begin{equation}
  \lim_{r\rightarrow 0} \frac{1}{\alpha_n r^n} |B_r(z) \cap \Omega' \cap (z+H^{\pm})| =  \lim_{r \rightarrow 0} \frac{1}{\alpha_n} |B_1(0) \cap H^{\pm} \cap \tfrac{\Omega'- z}{r} | =  \begin{cases}
     0 & \textrm{Case '$+$'}, \\ \frac{1}{2} & \textrm{Case '$-$'},
   \end{cases}
\end{equation}
as well as
\begin{equation}
   \lim_{r\rightarrow 0} \frac{1}{\alpha_n r^n} |B_r(z) \cap \Omega'|  = \frac{1}{2}.
\end{equation}
Dividing and multiplying the quotient in the previous computation by $\alpha_n r^n$ we infer from \eqref{eq:quot}
\begin{equation}
    \mathrm{tr}_{\Omega'}(\partial_i u)(z) = \theta_i(z)  + \tfrac{1}{2}\tfrac{Q(z)}{(A \nu(z) , \nu(z))} \nu_i(z).
\end{equation}
This proves the claim. 
\end{proof}
\begin{remark}\label{rem:4.3}
The previous lemma reproduces a very classical result from potential theory, to be found in \cite[Eq. (14.15)]{Miranda}. For $\Gamma = \partial \Omega' \in C^{1,\alpha}$ and $Q \in C^{0,\alpha}$ one can define the \emph{single layer potential} $\mathcal{P}: \Omega \rightarrow \mathbb{R}$ given by
\begin{equation}
    \mathcal{P}(x) := \int_\Gamma F(x-y) Q(y) \; \mathrm{d}\mathcal{H}^{n-1}(y) \quad \textrm{with $F$ being the fundamental solution of $-\Delta$.} 
\end{equation}
 The result is that for all $x_0 \in \Gamma$ there exist $\partial^{\pm}_{\nu} \mathcal{P}(x_0) := \lim_{t \rightarrow 0 \pm } \frac{\mathcal{P}(x_0 + t \nu(x_0) )- \mathcal{P}(x_0)}{t}$ and one has the following \emph{normal jump formula}
\begin{equation}\label{eq:normalju}
    \partial_{\nu}^+\mathcal{P}(x_0) -   \partial_{\nu}^-\mathcal{P}(x_0)  = - Q(x_0). 
\end{equation}
A similar result can be also obtained if one replaces $-\Delta$ with an elliptic operator $-\mathrm{div}(A\nabla(\cdot))$, provided that $A$ is smooth enough.
The observation of \eqref{eq:normalju} is related to our findings in Lemma \ref{lem:traces}.  
If $A = \mathrm{Id}_{n\times n}$ the solution $u$ of \eqref{eq:1.1} is (cf. Proposition \ref{prop:Green}) given by 
\begin{equation}
    u(x)  = \int_\Gamma G_\Omega(x,y) Q(y) \; \mathrm{d}\mathcal{H}^{n-1}(y),
\end{equation}
where $G_\Omega$ denotes Green's function for $-\Delta$. Since $G_\Omega(x,y)$ and $F(x-y)$ coincide up to a smooth function in $\Omega$ we obtain that $u = \mathcal{P} + g$ for some $g \in C^\infty(\Omega)$. In particular 
\begin{equation}
    \partial_\nu^+ u(x_0) - \partial_\nu^- u(x_0) = \partial_\nu^+ \mathcal{P}(x_0) - \partial_\nu^- \mathcal{P}(x_0) = - Q(x_0).
\end{equation}
If one now replaces  $\partial_{\nu}^+ u$ by $(\mathrm{tr}_{\Omega''}(\nabla u), \nu)$ and $\partial_{\nu}^- u$ by $(\mathrm{tr}_{\Omega'}(\nabla u), \nu)$ one can obtain the same normal jump with the formulas from Lemma \ref{lem:traces}. 
\end{remark}
The previous remark says that the behavior of \emph{normal derivatives} of solutions was previously understood by means of potential theory. The upshot in our new approach is that we also get an explicit understanding of the \emph{tangential derivatives}, by means of the function $\theta$ obtained in Lemma \ref{lem:Taylor}. This helps us also characterize the regularity of the tangential derivatives. 
\begin{cor}
Suppose that $\Gamma = \partial \Omega' \in C^3$, $Q \in W^{2,s}(\Omega)$ and $A \in W^{2,s}(\Omega;\mathbb{R}^n)$ for some $s >n$. Then $\theta$ possesses an extension in $W^{1,s}(\mathbb{R}^n)$. 
\end{cor}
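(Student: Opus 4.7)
The plan is to exploit the boundary trace identity from Lemma \ref{lem:traces}, which reads
\begin{equation}
\theta = \mathrm{tr}_{\Omega'}(\nabla u) - \tfrac{1}{2}\tfrac{Q}{(A\nu,\nu)}\nu \qquad \mathcal{H}^{n-1}\text{-a.e.\ on }\Gamma,
\end{equation}
and to construct a $W^{1,s}(\mathbb{R}^n;\mathbb{R}^n)$-function whose Sobolev trace on $\Gamma$ reproduces the right-hand side. Thus the extension $\tilde\theta$ will be built as a difference of two pieces: an extension of $\nabla u\big\vert_{\Omega'}$ and a smooth correction that reproduces the jump term on $\Gamma$.

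First, by Corollary \ref{cor:33} we have $\nabla u\big\vert_{\Omega'} \in W^{1,s}(\Omega';\mathbb{R}^n)$. Since $\Omega' \in C^3$ is in particular a Lipschitz domain, the Stein extension theorem furnishes $V \in W^{1,s}(\mathbb{R}^n;\mathbb{R}^n)$ with $V\big\vert_{\Omega'} = \nabla u\big\vert_{\Omega'}$ almost everywhere, and the Sobolev trace of $V$ on $\Gamma$ agrees with the $BV$-trace $\mathrm{tr}_{\Omega'}(\nabla u)$ appearing in Lemma \ref{lem:traces}, since the two notions coincide on $W^{1,1}$ (as noted in the paragraph preceding that lemma).

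Second, I build a $W^{2,s}$ representative of the correction term near $\Gamma$. On the tube $B_\epsilon(\Gamma)$ from Appendix \ref{app:signdist}, set $\tilde\nu := \nabla d_\Gamma$, which lies in $C^2(\overline{B_\epsilon(\Gamma)})$ because $\Gamma \in C^3$, satisfies $|\tilde\nu|\equiv 1$, and restricts to $\nu$ on $\Gamma$. Uniform ellipticity gives $(A\tilde\nu,\tilde\nu) \geq \lambda > 0$ throughout $B_\epsilon(\Gamma)$. Because $s > n$ implies $2s > n$, $W^{2,s}(B_\epsilon(\Gamma))$ embeds into $C^1$ and is a Banach algebra that is closed under inversion of functions bounded away from zero; thus
\begin{equation}
G := \frac{Q}{(A\tilde\nu,\tilde\nu)}\,\tilde\nu \in W^{2,s}(B_\epsilon(\Gamma);\mathbb{R}^n).
\end{equation}
Choosing $\chi \in C_c^\infty(B_\epsilon(\Gamma))$ with $\chi \equiv 1$ on a smaller neighborhood of $\Gamma$, the product $\chi G$, extended by zero, lies in $W^{2,s}(\mathbb{R}^n;\mathbb{R}^n) \hookrightarrow W^{1,s}(\mathbb{R}^n;\mathbb{R}^n)$.

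Finally, I set $\tilde\theta := V - \tfrac{1}{2}\chi G \in W^{1,s}(\mathbb{R}^n;\mathbb{R}^n)$. On $\Gamma$ one has $\chi \equiv 1$ and $\tilde\nu = \nu$, so the trace of $\chi G$ equals $\tfrac{Q}{(A\nu,\nu)}\nu$, while the trace of $V$ equals $\mathrm{tr}_{\Omega'}(\nabla u)$; hence by Lemma \ref{lem:traces} the trace of $\tilde\theta$ on $\Gamma$ is exactly $\theta$. The main technical obstacle is the algebra-and-inversion verification for $G$, which is precisely the step that simultaneously requires the $W^{2,s}$-hypothesis on $Q,A$ and the $C^3$-hypothesis on $\Gamma$ (the latter through $\tilde\nu \in C^2$); everything else reduces to standard extension and cut-off arguments.
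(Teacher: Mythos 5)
Your proof is correct and follows essentially the same route as the paper: use Lemma \ref{lem:traces} to write $\theta = \mathrm{tr}_{\Omega'}(\nabla u) - \tfrac12 \tfrac{Q}{(A\nu,\nu)}\nu$, extend $\nabla u\vert_{\Omega'}$ from Corollary \ref{cor:33} via the Sobolev extension operator, and handle the correction term using the $C^2$-extension $\nabla d_\Gamma$ of $\nu$ and the $W^{2,s}$-regularity of $Q$ and $A$. The only difference is one of exposition: you make explicit the Banach-algebra-and-inversion step for $G = Q(A\tilde\nu,\tilde\nu)^{-1}\tilde\nu$ and the cut-off argument, which the paper's proof leaves implicit behind ``the claim follows.''
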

\begin{proof}
By Corollary \ref{cor:33} one has that $u \in W^{2,s}(\Omega') \cap W^{2,s}(\Omega'')$ which implies that $\nabla u \in W^{1,s}(\Omega') \cap W^{1,s}(\Omega'')$. In particular $\nabla u \vert_{\overline{\Omega'}}$ possesses an extension in $W^{1,s}(\mathbb{R}^n)$ (as $\Omega'$ is smooth enough for the existence of an extension operator). 
Now observe that on $ \Gamma = \partial \Omega'$ there holds by Lemma \ref{lem:traces}
\begin{equation}
   \theta =  \nabla u \vert_{\overline{\Omega'}}  - \tfrac{1}{2} \tfrac{Q}{(A \nu, \nu)} \nu. 
\end{equation}
Since $\nu = \nabla d_\Gamma \vert_{\Gamma}$ possesses a $C^2$-extension to $\mathbb{R}^n$ and $Q$ possesses a $W^{2,s}$-extension the claim follows.
\end{proof}

\subsection{A priori bounds}

In the following section we use the results of the previous blow-up analysis to bound $||\nabla u||_{L^\infty(\Omega)}$ in terms of our data $Q,A, \Gamma$, which are still assumed appropriately smooth in this section. We will however check carefully that the constants in our a priori estimates on $||\nabla u||_{L^\infty(\Omega)}$ depend only on $[\Gamma]_{1,\alpha}$. 
Notice in particular that this means that standard elliptic regularity estimates should not be used as the constants would then depend on $[\Gamma]_{2,0}$. However,
 for equations with (left -and right hand side) in divergence form, some regularity estimates hold true with constants only depending on $[\Gamma]_{1,0}$, cf.  \cite[Eq. (3)]{Auscher}. 

A second key ingredient for our estimates is the \emph{elliptic maximum principle}, which we however want to apply to the \emph{differentiated equation} to obtain estimates for $\nabla u$. The problem here is that one needs to investigate in what sense $\nabla u$ actually solves the differentiated equation: Since $\nabla u$ is a priori only $BV$-regular, one has to investigate carefully in what way differentiation of the equation makes sense. 
Further, there will appear very weak concepts of solutions, so called \emph{BV-solutions}, cf. Definition \ref{def:BVsol}. For these types of solutions it needs to be carefully checked, whether an elliptic maximum principle is at all available. We have dedicated Appendix \ref{app:maxpr} to this question. 



\begin{lemma}\label{lem:apriobound}
Suppose that $\Gamma = \partial \Omega' \in C^\infty$, $Q \in W^{1,q}(\Omega)$ and  $A \in W^{1,q}(\Omega; \mathbb{R}^{n\times n})$  for some $q >n$. Let  $u \in L^2(\Omega)$ be a very weak solution of \eqref{eq:1.1} with $\nabla u \in BV(\Omega) \cap L^\infty(\Omega)$. Then for $r := \frac{(n+q)q}{q-n}$ there holds
\begin{equation}
    ||\nabla u ||_{L^\infty(\Omega)} \leq C(n,q,\Omega, [\Gamma]_{1,0}) ||DA||_{L^q(\Omega)} ||\nabla u||_{L^{r}(\Omega)} + ||\mathrm{tr}_{\partial\Omega'}(\nabla u)||_{L^\infty(\Gamma)} + ||\mathrm{tr}_{\partial \Omega''} (\nabla u )||_{L^\infty(\Gamma)}. 
\end{equation}
\end{lemma}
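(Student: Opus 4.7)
The plan is to differentiate the equation $-\mathrm{div}(A\nabla u) = 0$, which holds classically away from $\Gamma$, and to apply a divergence-form maximum principle separately on the two pieces $\Omega'$ and $\Omega''$. The boundary data on $\Gamma$ are furnished by the one-sided traces of $\nabla u$, which exist by the hypothesis $\nabla u \in BV(\Omega)$. Formally, for $v_k := \partial_k u$ one obtains on each of $\Omega'$ and $\Omega''$
$$-\mathrm{div}(A\nabla v_k) = \mathrm{div}(F_k), \qquad F_k := (\partial_k A)\,\nabla u.$$
Because $\nabla u$ is only $BV$, this identity has to be read in the $BV$-solution sense of Definition \ref{def:BVsol}, precisely so that the maximum principle prepared in Appendix \ref{app:maxpr} becomes available.

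The exponent $r$ in the statement is dictated by H\"older: $F_k \in L^p$ with $\tfrac{1}{p} = \tfrac{1}{q}+\tfrac{1}{r}$, and we require $p>n$ so that the Stampacchia $L^\infty$-bound for a divergence-form right-hand side applies. The balanced choice $p=(n+q)/2>n$ pins down precisely $r = q(n+q)/(q-n)$, and then $\|F_k\|_{L^{(n+q)/2}(\Omega)} \leq \|\partial_k A\|_{L^q(\Omega)}\,\|\nabla u\|_{L^r(\Omega)}$. Applying the maximum principle of Appendix \ref{app:maxpr} to the differentiated equation on $\Omega'$ yields
$$\|v_k\|_{L^\infty(\Omega')} \leq \|\mathrm{tr}_{\partial\Omega'}(v_k)\|_{L^\infty(\Gamma)} + C(n,q,[\Gamma]_{1,0})\,\|F_k\|_{L^{(n+q)/2}(\Omega')},$$
and analogously on $\Omega''$. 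Summing over $k=1,\dots,n$ and using $\sum_k \|\partial_k A\|_{L^q} \leq C(n)\,\|DA\|_{L^q}$ delivers the two trace terms on $\Gamma$ and the bulk term of the claim.

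The remaining boundary component of $\Omega''$ is $\partial\Omega$, which does not appear in the statement and has to be hidden in the constant. Since $u = 0$ on $\partial\Omega$ and $u$ is $A$-harmonic in a fixed one-sided neighborhood of $\partial\Omega$ disjoint from $\Gamma$, standard smooth-boundary $W^{2,r}$ regularity for divergence-form equations with $W^{1,q}$ coefficients yields
$$\|\nabla u\|_{L^\infty(\partial\Omega)} \leq C(n,q,\Omega)\,\|\nabla u\|_{L^r(\Omega'')},$$
which is absorbed into the first summand of the asserted inequality.

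The main obstacle is that $\nabla u$ is merely $BV$, so one must both justify the differentiated equation in the $BV$-solution sense and have at one's disposal a Stampacchia-type $L^\infty$ maximum principle at this low regularity; both ingredients are the content of Appendix \ref{app:maxpr} and are not entirely standard. A secondary but essential constraint is that every constant depends on $\Gamma$ only through $[\Gamma]_{1,0}$, since the approximation step proving Theorem \ref{thm:main} will only retain uniform $C^{1,\alpha}$ control on $\Gamma$. This forbids the use of Schauder-type estimates (whose constants would involve $[\Gamma]_{2,0}$) and dictates the use of divergence-form bounds of the type recalled in \cite[Eq.~(3)]{Auscher}, whose constants depend only on the Lipschitz data of the boundary.
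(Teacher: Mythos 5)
Your strategy coincides with the paper's: differentiate the equation, read the resulting identity in the $BV$-solution sense, and estimate $\partial_k u$ on each of $\Omega'$ and $\Omega''$ separately, with the boundary values on $\Gamma$ furnished by the $BV$-traces; the exponent $r$ then drops out of H\"older exactly as you write (taking $p=(n+q)/2>n$). You are also right to flag that Schauder-type estimates would ruin the $[\Gamma]_{1,\alpha}$-only dependence and that Auscher's divergence-form $W^{1,p}$ estimate is the tool to reach for.

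The one point where the proposal cuts a corner is the sentence ``Applying the maximum principle of Appendix \ref{app:maxpr} to the differentiated equation \dots yields $\|v_k\|_{L^\infty(\Omega')} \le \|\mathrm{tr}_{\partial\Omega'}(v_k)\|_{L^\infty(\Gamma)} + C\,\|F_k\|_{L^{(n+q)/2}(\Omega')}$.'' Lemma \ref{lem:maxBV} gives an $L^\infty$ bound only for a $BV$-solution of the \emph{homogeneous} equation $-\mathrm{div}(A\nabla\psi)=0$; it contains no Stampacchia-type term for a $\mathrm{div}(F)$ right-hand side. The correct mechanics, and the route the paper actually takes, is to split $w_j := \partial_j u\vert_{\Omega'}$ as $w_j = \tilde w_j + \psi$, where $\tilde w_j \in W_0^{1,(n+q)/2}(\Omega')$ is the Auscher solution of $-\mathrm{div}(A\nabla\tilde w_j)=\mathrm{div}(\partial_j A\,\nabla u)$ with \emph{zero} boundary data, hence controlled in $L^\infty(\Omega')$ via the Sobolev embedding $W^{1,(n+q)/2}\hookrightarrow L^\infty$ (embedding constant depending on $[\Gamma]_{1,0}$), while $\psi = w_j - \tilde w_j$ is a $BV$-solution of the \emph{homogeneous} problem with boundary trace $\mathrm{tr}_{\partial\Omega'}(\partial_j u)$, to which Lemma \ref{lem:maxBV} applies. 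The inequality you wrote down is the correct end product of this decomposition, but it is not a single application of the appendix lemma, and the decomposition step is where Auscher's estimate is actually invoked --- your final paragraph hints at this but does not wire it into the argument. Finally, your observation that $\partial\Omega$ has to be handled for the $\Omega''$ piece is legitimate (the lemma's right-hand side only carries traces on $\Gamma$); the paper's proof is terse on this point, and the $u=0$ boundary condition plus elliptic regularity near $\partial\Omega$ as you sketch is a reasonable way to absorb it into the constant depending on $\Omega$.
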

\begin{proof}
Define $w_j := \partial_j u\vert_{\Omega'} \in BV(\Omega') \cap L^\infty(\Omega')$. Naturally, $\mathrm{tr}_{\partial \Omega'}(w_j) = \mathrm{tr}_{\partial \Omega'}(\partial_j u)$.
We intend to obtain an equation for $w_j$ on $\Omega'$ and $\Omega''$ to which the maximum principle can be applied.
To this end we compute for all $\phi \in C_0^\infty(\Omega')$
\begin{align}
  -  \int_{\Omega'}  w_j \mathrm{div}(A(x) \nabla \phi) \; \mathrm{d}x  & = \int_{\Omega'}  \partial_j u  \;  \mathrm{div}(A(x) \nabla \phi) \; \mathrm{d}x \\ &  =  \int_{\Omega'} u \;  \mathrm{div} (\partial_j A(x) \nabla \phi + A(x) \nabla (\partial_j \phi) ) \; \mathrm{d}x
  \\ & = - \int_{\Omega'} (\partial_j A(x) \nabla u , \nabla \phi)  \; \mathrm{d}x + \int_{\Omega'}  (A(x)  \nabla u , \nabla ( \partial_j \phi) )  \; \mathrm{d}x 
  \\ & = - \int_{\Omega'} (\partial_j A(x) \nabla u , \nabla \phi) \; \mathrm{d}x,  \label{eq:deriveq}
\end{align}
where we have used in the last step that $u$ solves \eqref{eq:1.1}. 
Notice that by Hölder's inequality
\begin{equation}\label{eq:hoeldi}
    || (\partial_j A) \nabla u ||_{L^{\frac{n+q}{2}}(\Omega')} \leq ||\partial_j A||_{L^q(\Omega)} ||\nabla u||_{L^r(\Omega)}.
\end{equation}
Hence by \cite[Theorem 1.1]{Auscher} there exists a unique weak solution $\tilde{w}_j \in W_0^{1,\frac{n+q}{2}}(\Omega')$ 
\begin{equation}
    \begin{cases}
     - \mathrm{div}(A(x) \nabla \tilde{w}_j) = \mathrm{div}((\partial_j A(x)) \nabla u )  & \textrm{in} \; \Omega', \\ \tilde{w}_j = 0 & \textrm{on} \; \partial \Omega',  
    \end{cases}
\end{equation}
in the sense that $\tilde{w}_j \in W_0^{1,\frac{n+q}{2}}(\Omega')$  and 
\begin{equation} \label{eq:tilwj}
    \int_{\Omega'} (A(x) \nabla \tilde{w}_j , \nabla \eta ) \; \mathrm{d}x =- \int_{\Omega'} (\partial_j A(x) \nabla u , \nabla \eta) \; \mathrm{d}x  \quad \forall \eta \in C_0^\infty(\Omega'). 
\end{equation}
By \cite[Eq. (3) and Theorem 1.1]{Auscher} and \eqref{eq:hoeldi} we infer also that
\begin{equation}
    ||\tilde{w}_j||_{W_0^{1,\frac{n+q}{2}}(\Omega')} \leq C([\partial \Omega']_{1,0})||\partial_j A \nabla u||_{L^{\frac{n+q}{2}}} \leq C([\Gamma]_{1,0})||DA||_{L^q} \; || \nabla u||_{L^{r}}.
\end{equation}
Now defining $\psi := w_j - \tilde{w}_j$ we infer that $\psi \in BV(\Omega')$ and $\mathrm{tr}_{\partial \Omega'}(\psi) = \mathrm{tr}_{\partial \Omega'}(\partial_j u)$. Moreover \eqref{eq:deriveq} and \eqref{eq:tilwj} imply that for all $\eta \in C_0^\infty(\Omega')$ there holds
\begin{equation}
    \int_\Omega \psi \; \mathrm{div}(A(x) \nabla \eta) \; \mathrm{d}x = \int_\Omega (w_j - \tilde{w}_j) \mathrm{div}(A(x) \nabla \eta) \; \mathrm{d}x = 0.
\end{equation}
Hence $\psi$ is (in the sense of Definition \ref{def:BVsol}) a $BV$-solution of
\begin{equation}
    \begin{cases}
      -\mathrm{div}(A \nabla \psi) = 0 & \textrm{in }\Omega', \\ \qquad \quad \quad \; \;   \psi = \mathrm{tr}_{\partial \Omega'}(\partial_j u) & \textrm{on }\partial \Omega'.
    \end{cases}
\end{equation}
Therefore $\psi$ satisfies all prerequisites for the announced elliptic maximum principle (Lemma \ref{lem:maxBV}), from which we conclude that $||\psi||_{L^\infty(\Omega')} \leq  || \mathrm{tr}_{\partial \Omega'} (\partial_j u) ||_{L^\infty(\partial \Omega')}$. 
Using this and the  Sobolev embedding $W^{1, \frac{n+q}{2}} \hookrightarrow L^\infty$ (with embedding constant $D = D([\partial \Omega']_{1,0})$) we find
\begin{align}
    ||w_j||_{L^\infty(\Omega')}  & \leq ||\tilde{w}_j||_{L^\infty(\Omega')} + ||\psi||_{L^\infty(\Omega')}
    \leq D([\partial \Omega']_{1,0}) ||\tilde{w}_j||_{W^{1,\frac{n+q}{2}}(\Omega')}+  ||\psi||_{L^\infty(\Omega')}
    \\ & \leq D([\Gamma]_{1,0})C([\Gamma]_{1,0}) ||DA||_{L^q(\Omega)} ||\nabla u||_{L^{r}(\Omega)} + ||\mathrm{tr}_{\partial \Omega'} (\partial_j u) ||_{L^\infty}. 
\end{align}
For $\Omega''$ one can derive the same bound and thereupon infers the claim.
\end{proof}

\begin{cor}\label{cor:4.4}
Suppose that $\Gamma = \partial \Omega' \in C^\infty$, $Q \in W^{1,q}(\Omega)$ and $A \in W^{1,q}(\Omega;\mathbb{R}^{n\times n})$ for some $q> n$. Let $u \in L^2(\Omega)$ be a very  weak solution of \eqref{eq:1.1} with $\nabla u \in BV(\Omega) \cap L^\infty(\Omega)$. Let $\theta : \Gamma \rightarrow \mathbb{R}$ be as in Lemma \ref{lem:Taylor}. Then 
\begin{equation}\label{eq:gradestfastcomplete}
    ||\nabla u||_{L^\infty(\Omega)} \leq C(n,q, \Omega, [\Gamma]_{1,0})||DA||_{L^q(\Omega)} ||Q||_{L^\infty(\Gamma)} + 2||\theta||_{L^\infty} +  \frac{1}{\lambda  }||Q||_{L^\infty(\Gamma)}.
\end{equation}
\end{cor}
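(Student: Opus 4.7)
The plan is to combine the three main results already established: the a priori estimate of Lemma \ref{lem:apriobound}, the trace formulas of Lemma \ref{lem:traces}, and the $W^{1,p}$ bound of Lemma \ref{lem:W1preg}. The prerequisites of each are met: $\Gamma\in C^\infty$ is in particular $C^1$ and Lipschitz, and the Sobolev embedding $W^{1,q}\hookrightarrow C^{0,\alpha}$ (valid since $q>n$, with $\alpha = 1-n/q$) gives $A,Q\in C^{0,\alpha}$ so Lemma \ref{lem:traces} applies.

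First, I would invoke Lemma \ref{lem:apriobound} to obtain the starting inequality
\begin{equation}
    \|\nabla u\|_{L^\infty(\Omega)} \le C(n,q,\Omega,[\Gamma]_{1,0})\|DA\|_{L^q(\Omega)}\|\nabla u\|_{L^r(\Omega)} + \|\mathrm{tr}_{\partial\Omega'}(\nabla u)\|_{L^\infty(\Gamma)} + \|\mathrm{tr}_{\partial\Omega''}(\nabla u)\|_{L^\infty(\Gamma)},
\end{equation}
with $r=\tfrac{(n+q)q}{q-n}$. The two trace terms are then handled directly via Lemma \ref{lem:traces}: using the explicit formulas $\mathrm{tr}_{\Omega'}(\nabla u) = \theta + \tfrac{1}{2}\tfrac{Q}{(A\nu,\nu)}\nu$ and $\mathrm{tr}_{\Omega''}(\nabla u) = \theta - \tfrac{1}{2}\tfrac{Q}{(A\nu,\nu)}\nu$, together with the uniform ellipticity bound $(A\nu,\nu)\ge \lambda$ (since $|\nu|=1$), one gets
\begin{equation}
    \|\mathrm{tr}_{\partial\Omega'}(\nabla u)\|_{L^\infty(\Gamma)} + \|\mathrm{tr}_{\partial\Omega''}(\nabla u)\|_{L^\infty(\Gamma)} \le 2\|\theta\|_{L^\infty(\Gamma)} + \tfrac{1}{\lambda}\|Q\|_{L^\infty(\Gamma)}.
\end{equation}

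Second, to control the bulk term $\|\nabla u\|_{L^r(\Omega)}$, I would apply Lemma \ref{lem:W1preg} with $p=r$: since $r\in (1,\infty)$ is determined by $n$ and $q$, the lemma yields
\begin{equation}
    \|\nabla u\|_{L^r(\Omega)} \le C(n,q,\Omega)\,[\Gamma]_{0,1}\,\|Q\|_{L^\infty(\Gamma)}.
\end{equation}
Since $[\Gamma]_{0,1}$ is controlled by $[\Gamma]_{1,0}$ (any $C^{1,0}$ representation is a fortiori a Lipschitz one with comparable norm), the resulting constant can be absorbed into $C(n,q,\Omega,[\Gamma]_{1,0})$. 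Substituting both bounds into the initial inequality produces exactly \eqref{eq:gradestfastcomplete}.

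This is essentially a bookkeeping exercise once the three lemmas are in place, so I do not expect a substantial obstacle. The only small point to be careful about is ensuring that all constants that depend on the geometry of $\Gamma$ are indexed only by $[\Gamma]_{1,0}$ (and not by higher-order seminorms), which is guaranteed because Lemma \ref{lem:apriobound} and Lemma \ref{lem:W1preg} have already been stated with exactly this dependence; the trace identities of Lemma \ref{lem:traces} are pointwise and contribute no geometric constant at all. Hence the announced a priori bound follows.
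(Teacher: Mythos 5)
Your proposal is correct and follows essentially the same route as the paper: invoke Lemma \ref{lem:apriobound} for the starting inequality, substitute the trace formulas from Lemma \ref{lem:traces} together with the ellipticity bound $(A\nu,\nu)\ge\lambda$ to produce the $2\|\theta\|_{L^\infty}+\tfrac{1}{\lambda}\|Q\|_{L^\infty}$ terms, and close the bulk term with Lemma \ref{lem:W1preg}. Your explicit check that the hypotheses of each lemma are met, and the remark that $[\Gamma]_{0,1}$ is controlled by $[\Gamma]_{1,0}$, are both correct and in line with (if slightly more careful than) the paper's own brief argument.
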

\begin{proof}
By Lemma \ref{lem:traces} and Lemma \ref{lem:apriobound} we have that for $r := \frac{q(n+q)}{q-n}$
\begin{equation}
    ||\nabla u||_{L^\infty(\Omega)} \leq C(n,q,\Omega,[\Gamma]_{1,0}) ||DA||_{L^q} ||u||_{W^{1,r}} + || \theta - \tfrac{1}{2} \tfrac{Q}{(A\nu,\nu)} ||_{L^\infty(\Gamma)}+|| \theta + \tfrac{1}{2} \tfrac{Q}{(A\nu,\nu)} ||_{L^\infty(\Gamma)}.
\end{equation}
Now notice that by Lemma \ref{lem:W1preg} we have
\begin{equation}
    ||u||_{W^{1,r}} \leq C(r,[\Gamma]_{0,1}) ||Q||_{L^\infty(\Gamma)} .
\end{equation}
Using this, the triangle inequality and $\frac{1}{(A\nu, \nu)} \leq \frac{1}{\lambda}$ the claimed estimate follows. 
\end{proof}

The estimate \eqref{eq:gradestfastcomplete} has (except for the second summand) constants depending only on $n, q, \Omega, [\Gamma]_{1,\alpha}, ||A||_{W^{1,q}}, \lambda$ and $||Q||_{C^{0,\alpha}}$. 
We also will achieve such control in the second summand by estimating $||\theta||_{L^\infty(\Gamma)}$ in terms of the same quantities. We will obtain a control of $||\theta||_{L^\infty(\Gamma)}$ using the Taylor expansion in Lemma \ref{lem:Taylor} and a special test function in \eqref{eq:1.1}. 
\begin{lemma}
Suppose that $\Gamma = \partial \Omega' \in C^\infty$, $Q \in C^{0,\alpha}(\Omega)$ and $A \in W^{1,q}(\Omega;\mathbb{R}^{n\times n })$ for some $q> n$. Let $u \in L^2(\Omega)$ be a very weak solution of \eqref{eq:1.1}  such that $\nabla u \in L^\infty(\Omega) \cap BV(\Omega)$ and let $\theta$ be as in Lemma \ref{lem:Taylor}. Then we have
\begin{equation}\label{eq:erg:qunatities}
    ||\theta||_{L^\infty(\Gamma)} \leq C(n , q, \alpha, \lambda ,  ||Q||_{C^{0,\alpha}}, ||A||_{W^{1,q}}, [\Gamma]_{1,\alpha} ,\mathrm{dist}(\Gamma, \partial \Omega) ).
\end{equation}
\end{lemma}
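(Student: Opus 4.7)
At any point $x_0 \in \Gamma$ at which Lemma \ref{lem:Taylor} applies (a set of full $\mathcal{H}^{n-1}$-measure), the goal is to bound $|\theta(x_0)|$ by a constant of the form stated in \eqref{eq:erg:qunatities}. The plan is to localize the weak formulation \eqref{eq:weaaksol} at scale $r$ around $x_0$ with the scalar test function
\[
\phi_{r,e}(x) := \eta_r(x)(e,x-x_0), \qquad e\in S^{n-1},
\]
where $\eta_r \in C_0^\infty(B_{2r}(x_0))$ is a radial bump, $\eta_r \equiv 1$ on $B_r(x_0)$, $|\nabla\eta_r|\le C/r$, and $r>0$ is chosen small but fixed depending only on $[\Gamma]_{1,\alpha}$ and $\mathrm{dist}(\Gamma,\partial\Omega)$. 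Expanding $\nabla\phi_{r,e} = \eta_r e + (e,x-x_0)\nabla\eta_r$ and rearranging yields
\begin{equation}\label{eq:propidentity}
\int_{B_{2r}(x_0)} \eta_r (A\nabla u,e)\,dx = \int_{\Gamma\cap B_{2r}(x_0)} Q\eta_r(e,x-x_0)\,d\mathcal{H}^{n-1} - \int_{B_{2r}(x_0)\setminus B_r(x_0)} (A\nabla u,\nabla\eta_r)(e,x-x_0)\,dx.
\end{equation}

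Dividing \eqref{eq:propidentity} by $|B_r(x_0)|$, the LHS converges as $r\to 0$ to $c(A(x_0)\theta(x_0),e)$ for an explicit positive constant $c = c(\tilde\eta)$: indeed, splitting $A = A(x_0) + (A-A(x_0))$ the perturbation contributes $O(r^\gamma)$ by $A\in W^{1,q}\hookrightarrow C^{0,\gamma}$ with $\gamma = 1-n/q$, while the principal term is a weighted average of $\nabla u$ which converges to $c\,\theta(x_0)$ by the mean-value identity \eqref{eq:meanvaluelimit} combined with the symmetry of the $\pm\tfrac{1}{2}\tilde Q\nu$ contribution in the trace formulas of Lemma \ref{lem:traces}. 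Thus it suffices to bound both RHS terms, divided by $|B_r(x_0)|$, uniformly in $x_0$ and (sufficiently small) $r$, by a constant of the desired form.

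The \emph{boundary integral} is the main step. Parametrizing $\Gamma\cap B_{2r}(x_0)$ as the graph of some $C^{1,\alpha}$ function $f$ over $T_{x_0}\Gamma$ with $f(0)=0$, $\nabla f(0)=0$, and hence $|f(y)|\le [\Gamma]_{1,\alpha}|y|^{1+\alpha}$, the splitting $e=(e,\nu(x_0))\nu(x_0)+e_T$ gives $(e,x-x_0)=(e_T,y)+f(y)(e,\nu(x_0))$. Since $(e_T,y)$ is odd and $\tilde\eta$ is even on balls in $T_{x_0}\Gamma$, the tangential contribution cancels to leading order, and all the remaining terms (including $C^{0,\alpha}$-error in $Q$ and the graph correction in the area element) are of order $r^{n+\alpha}$ with constants controlled by $[\Gamma]_{1,\alpha}$ and $\|Q\|_{C^{0,\alpha}}$. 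Hence dividing by $|B_r|$ gives $O(r^\alpha)\to 0$. The \emph{annulus integral} is bounded by Hölder's inequality as $C\|A\|_{L^\infty}\|\nabla u\|_{L^p(\Omega)}r^{n(1-1/p)}$; by Lemma \ref{lem:W1preg}, $\|\nabla u\|_{L^p}\le C(p,[\Gamma]_{0,1})\|Q\|_{L^\infty}$, and $\|A\|_{L^\infty}\le C\|A\|_{W^{1,q}}$ by Sobolev embedding. Choosing $p$ large and then optimizing $r$, this contribution is also controllable by the allowed quantities. The uniform ellipticity \eqref{eq:unifellip} inverts $A(x_0)$, and the arbitrariness of $e\in S^{n-1}$ completes the pointwise bound.

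\textbf{Main obstacle.} The delicate point is the boundary-integral estimate: the nontrivial gain of a factor $r^\alpha$ (beyond the naïve $r^n$ scaling, which would be fatal after dividing by $|B_r|$) relies on the $C^{1,\alpha}$-graph structure of $\Gamma$ together with tangential-odd cancellation. This is precisely the step that forces the dependence on $[\Gamma]_{1,\alpha}$ only; replacing it by any Schauder- or $W^{2,s}$-type argument would introduce $[\Gamma]_{2,0}$ into the constants and thus defeat the purpose, since the ultimate goal is to approximate $\Gamma$ by smooth domains with $[\Gamma]_{2,0}$ blowing up.
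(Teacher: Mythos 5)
The core difficulty is that the single–scale blow-up relation \eqref{eq:propidentity}, after dividing by $|B_r(x_0)|$ and letting $r\to0$, collapses to $0=0$ and carries no information about $\theta(x_0)$. To see this, write the blow-up decomposition $u(x)=u(x_0)+(\theta(x_0),x-x_0)+v(x)+R(x)$ near $x_0$, with $v(x)=-\tfrac12\tfrac{Q(x_0)}{(A(x_0)\nu(x_0),\nu(x_0))}|(\nu(x_0),x-x_0)|$ and $R=o(|x-x_0|)$, and freeze $A\approx A(x_0)$. For the affine part one has
\begin{equation}
\int_{B_{2r}(x_0)}\bigl(A(x_0)\theta(x_0),\nabla\phi_{r,e}\bigr)\,dx=\Bigl(A(x_0)\theta(x_0),\int\nabla\phi_{r,e}\,dx\Bigr)=0
\end{equation}
because $\phi_{r,e}$ has compact support: the bulk contribution $\int\eta_r(A(x_0)\theta(x_0),e)\,dx$ is \emph{exactly} cancelled by the annulus contribution $\int(A(x_0)\theta(x_0),\nabla\eta_r)(e,x-x_0)\,dx$. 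For the kink part, $-\mathrm{div}\bigl(A(x_0)\nabla v\bigr)=Q(x_0)\,\mathcal{H}^{n-1}\mres(x_0+T_{x_0}\Gamma)$, so $\int(A(x_0)\nabla v,\nabla\phi_{r,e})=Q(x_0)\int_{x_0+T_{x_0}\Gamma}\phi_{r,e}$, which matches the boundary integral to leading order and vanishes by the same tangential–odd cancellation you invoke for the RHS. What remains involves only $R$, $A-A(x_0)$ and $Q-Q(x_0)$, i.e.\ the identity becomes $o(r^n)=O(r^{n+\alpha})+O(r^{n+\gamma})$. Your claim that the LHS of \eqref{eq:propidentity}, divided by $|B_r|$, converges to a \emph{nonzero} multiple of $(A(x_0)\theta(x_0),e)$ conflates the bulk term with the full test-function contribution; once the annulus piece is included, the limit is zero, and no bound on $\theta(x_0)$ results.

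Even setting aside the cancellation, the treatment of the annulus term is circular. With a fixed $r$, the bound $\fint_{B_r}|\text{annulus}|\lesssim\|A\|_\infty\|\nabla u\|_{L^\infty}$ is $O(1)$ but involves $\|\nabla u\|_{L^\infty}$, which (via Corollary \ref{cor:4.4}) contains $\|\theta\|_{L^\infty}$ with a constant of order one — too large to absorb. Switching to $\|\nabla u\|_{L^p}$, the bound is $\|\nabla u\|_{L^p}\,r^{-n/p}$, which diverges as $r\to0$ for any $p<\infty$; ``choosing $p$ large and optimizing $r$'' does not reconcile these two failure modes. The paper avoids both issues by obtaining a \emph{quadratic} relation: the test function $\phi=(\theta(x_0),x-x_0)\,\eta(|A_0^{-1/2}(x-x_0)|)$ and the sphere-integral monotonicity quantity $h(r)/r^{n+1}$ produce $\lim_{r\to0}h(r)/r^{n+1}=\tfrac{\omega_n}{n}|A_0^{1/2}\theta(x_0)|^2$, and integrating the resulting ODE over $r\in(0,r_0)$ yields a term in $\|\nabla u\|_{L^\infty}$ that carries an extra factor $r_0^\alpha$; the latter smallness, together with the quadratic left-hand side and the choice of $x_0$ with $|\theta(x_0)|\ge\tfrac12\|\theta\|_{L^\infty}$, permits absorption. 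The sphere integral (unlike the ball-plus-cutoff average) has no built-in cancellation, which is precisely why it retains the $|\theta(x_0)|^2$ term. These structural features — the quadratic monotonicity quantity and the integrated ODE over a range of scales — are missing from your proposal and cannot be supplied by a single-scale averaging argument.
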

\begin{proof} Without loss of generality we may assume that $ \alpha < 1- \frac{n}{q} $, so that $W^{1,q} \hookrightarrow C^{0,\alpha}$. (If this is not the case then regard $Q$ as an element of $C^{0,\alpha'}$ for some $\alpha' < \alpha$ and work with $\alpha'$ instead). 
Let $x_0 \in \Gamma$ be an arbitrary Lebesgue point of $\theta$ such that the formula in Lemma \ref{lem:Taylor} holds and $|\theta(x_0)| \geq \frac{1}{2}||\theta||_{L^\infty(\Gamma)}.$
For this proof we set $A_0 := A(x_0)$. 
Fix $r \in (0, \tfrac{1}{2}\mathrm{dist}(\Gamma, \partial \Omega))$ and define for $\epsilon> 0$ small $\eta_\epsilon : [0,\infty) \rightarrow \mathbb{R}$ via 
\begin{equation}
    \eta_\epsilon(s) := \begin{cases}
        1 & 0 \leq s \leq r, \\ 1 - \frac{s-r}{\epsilon} & r < s \leq r+ \epsilon, \\ 0 & r \geq r+ \epsilon. 
    \end{cases}
\end{equation}
Let additionally $\eta_\epsilon^\delta := \eta_\epsilon * \psi_\delta$, where $(\psi_\delta)_{\delta > 0}$ is a standard mollifier on $\mathbb{R}$. Define
\begin{equation}
    \phi(x) := (\theta(x_0) ,x-x_0) \eta_\epsilon^\delta( |A_0^{-1/2}(x-x_0)| ), 
\end{equation}
where $A_0^{-1/2}$ is a symmetric positive definite matrix such that $(A_0^{-1/2})^2 = A_0^{-1}$. Using $\nabla |A_0^{-1/2}(x-x_0)| = \frac{A_0^{-1}(x-x_0)}{|A_0^{-1/2}(x-x_0)|}$ we compute 
\begin{equation}
  \nabla \phi(x)  = \theta(x_0) \eta_\epsilon^\delta(|A_0^{-1/2}(x-x_0)|) + (\theta(x_0),x-x_0) ( \eta_\epsilon^\delta)'(|A_0^{-1/2}(x-x_0)|) \frac{A_0^{-1}(x-x_0)}{|A_0^{-1/2}(x-x_0)|}.
\end{equation}
By symmetry of $A_0$ one has 
\begin{equation}
    \left( \frac{A_0^{-1} (x-x_0)}{|A_0^{-1/2}(x-x_0)|} , A_0 \theta(x_0)  \right) = \frac{(\theta(x_0),x-x_0)}{|A_0^{-1/2}(x-x_0)|} 
\end{equation}
and 
\begin{equation}
    \mathrm{div} \left( \frac{x-x_0}{|A_0^{-1/2}(x-x_0)|} \right) = \frac{n}{|A_0^{-1/2}(x-x_0)|} - \frac{(A_0^{-1}(x-x_0), x-x_0)}{|A_0^{-1/2}(x-x_0)|^3} = \frac{n-1}{|A_0^{-1/2}(x-x_0)|}, 
\end{equation}
whereupon one readily checks 
\begin{equation}
    \mathrm{div}(A_0\nabla \phi) = (\theta(x_0),x-x_0)  \left[(n+1)  \frac{(\eta_\epsilon^\delta)'(|A_0^{-1/2}(x-x_0)|)}{|A_0^{-1/2}(x-x_0)|} +  (\eta_\epsilon^\delta)''(|A_0^{-1/2}(x-x_0)|) \right].  
\end{equation}
Hence there holds
\begin{align}
   &\int_{\Gamma} Q \phi \; \mathrm{d}\mathcal{H}^{n-1} =  \int_\Omega (A \nabla u, \nabla \phi) \; \mathrm{d}x  = \int_\Omega ( (A-A_0) \nabla u , \nabla \phi) \; \mathrm{d}x  - \int_\Omega u  \; \mathrm{div}(A_0 \nabla \phi) \; \mathrm{d}x
    \\ & =- \int_\Omega u  (\theta(x_0),x-x_0) \Big[(n+1)  \frac{(\eta_\epsilon^\delta)'(|A_0^{-1/2}(x-x_0)|)}{|A_0^{-1/2}(x-x_0)|}      
    + (\eta_\epsilon^\delta)''(|A_0^{-1/2}(x-x_0)|) \Big] \; \mathrm{d}x  \\ &  \quad + S_\epsilon^\delta(r),
\end{align}
where 
\begin{align}\label{eq:Sphi}
    S_\epsilon^\delta(r)  &  := \int_\Omega ((A-A_0) \nabla u , \nabla \phi) \; \mathrm{d}x
    \\ &   = \int_\Omega ((A-A_0) \nabla u , \eta_\epsilon^\delta(|A_0^{-1/2}(x-x_0)|) \theta(x_0)) \; \mathrm{d}x \\ &  + \left((A-A_0) \nabla u ,  (\theta(x_0) , x- x_0) (\eta_\epsilon^\delta)'(|A_0^{-1/2}(x-x_0)|) \frac{A_0^{-1}(x-x_0)}{|A_0^{-1/2}(x-x_0)|} \right) \; \mathrm{d}x.
\end{align}
We remark that $S_\epsilon^\delta(r)$ depends on $r$ via the dependence of $\eta_\epsilon^\delta$ of $r$.
Notice that the supports of $(\eta_\epsilon^\delta)'(|A_0^{-1/2}(\cdot - x_0)|)$ and $(\eta_\epsilon^\delta)''(|A_0^{-1/2}(\cdot - x_0)|)$ lie in the closure of $x_0 + A_0^{1/2}(B_{r+\epsilon}(0) \setminus B_r(0)) \subset  \subset \Omega$ (if $\epsilon$ is appropriately small). Hence we can substitute $y = A_0^{-1/2}(x - x_0)$ and obtain
\begin{align}\label{eq:monoformu}
     & \int_\Gamma Q \phi \; \mathrm{d}\mathcal{H}^{n-1} - S_\epsilon^\delta(r)  \\ &= - \mathrm{det} (A_0^{1/2})\int_{\mathbb{R}^n}  u(x_0 + A_0^{1/2}y)(A_0^{1/2} \theta(x_0), y) \Big[(n+1) \frac{(\eta_\epsilon^\delta)'(|y|)}{|y|}  +  (\eta_\epsilon^\delta)''(|y|)) \Big] \; \mathrm{d}y. 
\end{align}
Looking at the second summand of integral we compute 
\begin{align}
    & \int_{\mathbb{R}^n} u(x_0+ A_0^{1/2}y) \big( (A_0^{1/2} \theta(x_0), y) (\eta_\epsilon^\delta)''(|y|) \big) \; \mathrm{d}y 
    \\ & = \int_0^\infty  (\eta_\epsilon^\delta)''(s) \int_{\partial B_s(0)} u(x_0+ A_0^{1/2}y)  (A_0^{1/2} \theta(x_0), y) \; \mathrm{d}\mathcal{H}^{n-1}(y) \; \mathrm{d}s 
    \\ & = - \int_0^\infty  (\eta_\epsilon^\delta)'(s) \frac{d}{ds} \left( \int_{\partial B_s(0)} u(x_0+ A_0^{1/2}y)  (A_0^{1/2} \theta(x_0), y) \; \mathrm{d}\mathcal{H}^{n-1}(y) \right) \; \mathrm{d}s, 
\end{align}
where (a.e.-)existence (and boundedness) of the derivative of the expression before is ensured by Lemma \ref{lem:Lipschitzmeanvalue}.
We define now  
\begin{equation}\label{eq:defha}
    h(s) :=  \int_{\partial B_s(0)} u(x_0+ A_0^{1/2}y)  (A_0^{1/2} \theta(x_0), y) \; \mathrm{d}\mathcal{H}^{n-1}(y)
\end{equation}
and observe that by \eqref{eq:monoformu}
\begin{equation}
   \frac{1}{\mathrm{det}(A_0^{1/2})} \left( \int_\Gamma Q \phi \; \mathrm{d}\mathcal{H}^{n-1}- S_\epsilon^\delta(r) \right) = \int_0^\infty (\eta_\epsilon^\delta)'(s) h'(s) \; \mathrm{d}s - \int_0^\infty (\eta_\epsilon^\delta)'(s) \frac{n+1}{s} h(s) \; \mathrm{d}s. 
\end{equation}
Our goal is now to let $\delta \rightarrow 0$. We remark that $(\eta_\epsilon^\delta)'$ converges in $L^1((\frac{r}{2},2r))$ to $\frac{1}{\epsilon}\chi_{[r,r+\epsilon]}$. 
Since $h'$ and $s \mapsto \frac{n+1}{s}h(s)$ are uniformly bounded in $(\frac{r}{2},2r)$ and for $\epsilon, \delta$ suitably small $(\eta_\epsilon^\delta)'$ is only supported in $(\tfrac{r}{2},2r)$ one can pass to the limit and obtains
\begin{align}
    &\frac{1}{\mathrm{det}(A_0^{1/2}) } \left(  \int_\Gamma Q(x)  (\theta(x_0) ,x-x_0) \eta_\epsilon(|A_0^{-1/2}(x-x_0)|) \; \mathrm{d}\mathcal{H}^{n-1} - S_\epsilon(r) \right)  \\ &  = \frac{1}{\epsilon} \int_{r}^{r+\epsilon} \left( h'(s)  - \frac{n+1}{s} h(s) \right) \; \mathrm{d}s,\label{eq:LimesSepsilongegen0}
\end{align}
where 
\begin{equation}
    S_\epsilon(r) := \lim_{\delta \rightarrow 0} S_\epsilon^\delta(r).
\end{equation}
The existence of this limit we will show later.
Letting also $\epsilon \rightarrow 0$ we obtain for a.e. $r \in (0,\frac{1}{2}\mathrm{dist}(\Gamma, \partial \Omega))$
\begin{equation}
    h'(r) - \frac{n+1}{r} h(r) = \frac{1}{\mathrm{det}(A_0^{1/2}) } \left( \int_{\Gamma\cap \mathcal{E}_r(x_0)} Q(x) (\theta(x_0) ,x-x_0) \; \mathrm{d}\mathcal{H}^{n-1} + S(r) \right),
\end{equation}
where $S(r) := \lim_{\epsilon \rightarrow 0 } S_\epsilon(r)$ (which due to \eqref{eq:LimesSepsilongegen0} must exist whenever  $\lim_{\epsilon \rightarrow 0} \int_{r}^{r+\epsilon} \left( h'(s)  - \frac{n+1}{s} h(s) \right) \; \mathrm{d}s$  exists, i.e. at each Lebesgue point of $h'$) and 
\begin{equation}
    \mathcal{E}_r(x_0) := \{ x \in \mathbb{R}^n : |A_0^{-1/2}(x-x_0)| < r \} = A_0^{1/2} B_r(A_0^{-1/2}x_0) . 
\end{equation}
We infer 
\begin{equation}\label{eq:der}
    \frac{d}{dr} \left( \frac{h(r)}{r^{n+1}}  \right) = \frac{1}{\mathrm{det}(A_0^{1/2})} \frac{1}{r^{n+1}} \int_{\Gamma \cap \mathcal{E}_r(x_0)} Q(x)  (\theta(x_0), x-x_0) \; \mathrm{d}\mathcal{H}^{n-1}(x) + \frac{1}{\mathrm{det}(A_0^{1/2})} \frac{1}{r^{n+1}} S(r).
\end{equation}
We claim next that (with $\omega_n := \mathcal{H}^{n-1}(\partial B_1(0))$)
\begin{equation}\label{eq:limithfunc}
    \lim_{r\rightarrow 0}  \left( \frac{h(r)}{r^{n+1}}  \right)= \frac{\omega_n}{n} |A_0^{1/2}\theta(x_0)|^2.
\end{equation}
To this end notice that by the Taylor expansion in Lemma  \ref{lem:Taylor} one has
\begin{align}
    \lim_{r \rightarrow 0} & \left( \frac{h(r)}{r^{n+1}} \right)   = \lim_{r \rightarrow 0 } \frac{1}{r^{n+1}} \int_{\partial B_r(0)}  u(x_0 + A_0^{1/2} y ) (A_0^{1/2} \theta(x_0) ,y) \; \mathrm{d}\mathcal{H}^{n-1}(y) 
    \\ & = \lim_{r \rightarrow 0 } \frac{1}{r^{n+1}} \int_{\partial B_r(0)}  \left[ u(x_0) - \tfrac{1}{2} \tfrac{Q(x_0)}{(A_0 \nu(x_0), \nu(x_0)} |(A_0^{1/2}y, \nu(x_0))| + (\theta(x_0), A_0^{1/2} y) + o(r) \right] \\  & \quad \quad \quad \quad \quad \quad \quad \quad \quad \quad \quad \quad \quad \quad \quad \quad \quad \quad \quad \quad \quad \quad \quad \quad \quad \cdot (A_0^{1/2} \theta(x_0) ,y) \; \mathrm{d}\mathcal{H}^{n-1}(y).
\end{align}
Since $y \mapsto \left[ u(x_0) - \tfrac{1}{2} \tfrac{Q(x_0)}{(A_0 \nu(x_0), \nu(x_0)} |(A_0^{1/2}y, \nu(x_0))|  \right] $ is even with respect to the transformation $y \mapsto -y$ and $y \mapsto (A^{-1/2} \theta(x_0) ,y)$ is odd w.r.t this transformation we infer
\begin{equation}
    \frac{1}{r^{n+1}} \int_{\partial B_r(0)} \left[ u(x_0) - \tfrac{1}{2} \tfrac{Q(x_0)}{(A_0 \nu(x_0), \nu(x_0)} |(A_0^{1/2}y, \nu(x_0))|  \right]  (A^{-1/2} \theta(x_0) ,y) \; \mathrm{d}\mathcal{H}^{n-1}(y) =  0. 
\end{equation}
Moreover, 
\begin{equation}
    \left\vert \frac{o(r)}{r^{n+1}} \int_{\partial B_r(0)} (A_0^{1/2}\theta(x_0), y) \; \mathrm{d}\mathcal{H}^{n-1}(y) \right\vert \leq \frac{o(1)}{r^n} \omega_n r^{n-1} |A_0^{1/2} \theta(x_0)| r  = o(1),  
\end{equation}
as $r \rightarrow 0$. Hence 
\begin{align}
    \lim_{r \rightarrow 0} \left(\frac{h(r)}{r^{n+1}} \right) & = \lim_{r \rightarrow 0 } \frac{1}{r^{n+1}} \int_{\partial B_r(0)}  (A_0^{1/2} \theta(x_0) , y)^2 \; \mathrm{d}\mathcal{H}^{n-1}(y) \\ & = |A_0^{1/2} \theta(x_0)|^2  \lim_{r \rightarrow 0 } \frac{1}{r^{n+1}} \int_{\partial B_r(0)}  (\tfrac{A_0^{1/2} \theta(x_0)}{|A_0^{1/2} \theta(x_0)|} , y)^2 \; \mathrm{d}\mathcal{H}^{n-1}(y).   
\end{align}
We may now apply an orthogonal transformation that maps  $\tfrac{A_0^{1/2} \theta(x_0)}{|A_0^{1/2} \theta(x_0)|}$ to $e_1$ and use the symmetry of $\partial B_r(0)$ to find 
\begin{align}
    \lim_{r \rightarrow 0} \left( \frac{h(r)}{r^{n+1}} \right) & = |A_0^{1/2} \theta(x_0)|^2 \lim_{r \rightarrow 0} \frac{1}{r^{n+1}}\int_{\partial B_r(0)} z_1^2 \; \mathrm{d}\mathcal{H}^{n-1}(z) 
    \\ & =  
    \frac{1}{n}|A_0^{1/2} \theta(x_0)|^2 \lim_{r \rightarrow 0} \frac{1}{r^{n+1}}\int_{\partial B_r(0)} (z_1^2+ ...+ z_n^2) \; \mathrm{d}\mathcal{H}^{n-1}(z)
    \\ & = \frac{1}{n}|A_0^{1/2} \theta(x_0)|^2 \lim_{r \rightarrow 0} \frac{1}{r^{n+1}} \int_{\partial B_r(0)}r^2 \; \mathrm{d}\mathcal{H}^{n-1}(z)
     \\ & = \frac{1}{n}|A_0^{1/2} \theta(x_0)|^2 \lim_{r \rightarrow 0} \frac{1}{r^{n+1}} \omega_n r^{n+1} = \frac{\omega_n}{n}|A_0^{1/2} \theta(x_0)|^2. 
\end{align}
This shows \eqref{eq:limithfunc}. Using this and \eqref{eq:der} we obtain for any $r_0 \in ( 0, \tfrac{1}{2} \mathrm{dist}(\Gamma,\partial\Omega))$ 
\begin{align}
    \frac{\omega_n}{n} |A_0^{1/2}\theta(x_0)|^2  & = \lim_{r \rightarrow 0} \frac{h(r)}{r^{n+1}}  \leq \frac{h(r_0)}{r_0^{n+1}} + \int_0^{r_0} \left\vert \frac{d}{dr} \frac{h(r)}{r^{n+1}} \right\vert \; \mathrm{d}r
    \\ & = \frac{h(r_0)}{r_0^{n+1}} + \int_0^{r_0} \left\vert \frac{1}{\mathrm{det}(A_0^{1/2})} \frac{1}{r^{n+1}} \int_{\Gamma \cap \mathcal{E}_r(x_0)} Q(x) (\theta(x_0), x-x_0) \; \mathrm{d}\mathcal{H}^{n-1}(x)  \right\vert \; \mathrm{d}r \\ &  + \int_0^{r_0} \frac{1}{\mathrm{det}(A_0^{1/2})} \frac{1}{r^{n+1}} |S(r)| = \textrm{(I) + (II) + (III)}. \label{eq:Term1+2+3}
\end{align}
We will estimate all the three summands appearing in this sum.\\
\textbf{Term (I).}
Estimating the integrand in \eqref{eq:defha} by its $L^\infty$-norm and using Lemma \ref{lem:W1preg} (with $W^{1,p} \hookrightarrow L^\infty$ for $p > n$)  we find
\begin{equation}
    \left\vert  \frac{h(r_0)}{r_0^{n+1}}  \right\vert \leq \frac{1}{r_0^{n+1}} ||u||_{L^\infty(\Omega)} |A^{1/2} \theta(x_0)|  \omega_n r_0^n \leq \frac{1}{r_0}C(\Omega,[\Gamma]_{0,1})  |A_0^{1/2} \theta(x_0)| \; ||Q||_{L^\infty}.
\end{equation}
\textbf{Term (II).} Notice that
\begin{align}\label{eq:absorbQ}
    &\frac{1}{r^{n+1}}\int_{\Gamma \cap \mathcal{E}_r(x_0)} Q(x)  (\theta(x_0), x-x_0) \; \mathrm{d}\mathcal{H}^{n-1}(x)   =  \frac{Q(x_0)}{r^{n+1}} \int_{\Gamma \cap \mathcal{E}_r(x_0)}(\theta(x_0), x-x_0) \; \mathrm{d}\mathcal{H}^{n-1}(x)  \\ & \quad + \frac{1}{r^{n+1}} \int_{\Gamma \cap \mathcal{E}_r(x_0)} (Q(x)- Q(x_0)) (\theta(x_0), x-x_0) \; \mathrm{d}\mathcal{H}^{n-1}(x).
\end{align}
The last summand is bounded by 
\begin{align}\label{eq:boundabsorbedQ}
    \frac{1}{r^{n+1}}||Q||_{C^{0,\alpha}} & r^\alpha |\theta(x_0)| r \mathcal{H}^{n-1}(\Gamma \cap \mathcal{E}_r(x_0))  \\ &\leq \frac{||Q||_{C^{0,\alpha}}|\theta(x_0)|}{r^{n-\alpha}}  \mathcal{H}^{n-1}(\Gamma \cap B_{\frac{r}{\sqrt{\lambda}}}(x_0)) \leq C(n, \lambda,||Q||_{C^{0,\alpha}}, [\Gamma]_{0,1}) \frac{1}{r^{1-\alpha}}|\theta(x_0)|,
\end{align}
which is integrable on $(0,r_0)$. 
For the first summand we obtain
\begin{align}
    \frac{Q(x_0)}{r^{n+1}}\int_{\Gamma \cap \mathcal{E}_r(x_0)} & (\theta(x_0), x-x_0) \; \mathrm{d}\mathcal{H}^{n-1}(x)   =  \frac{Q(x_0)}{r^{n+1}} \int_{\Gamma \cap A_0^{1/2} B_r (A_0^{-1/2}x_0)}  (\theta(x_0), x-x_0) \; \mathrm{d}\mathcal{H}^{n-1}(x)
    \\ & =  \frac{Q(x_0)}{r^{n+1}} \int_{A_0^{-1/2}\Gamma \cap  B_r (A_0^{-1/2}x_0)}  (\theta(x_0), A_0^{1/2}x-x_0) \; \mathrm{J}_{A_0^{1/2}}(x)\; \mathrm{d}\mathcal{H}^{n-1}(x),
\end{align}
where $\mathrm{J}_{A_0^{1/2}}(x) := \mathrm{det}(A_0^{1/2}\tau_i(x), A_0^{1/2}\tau_j(x))_{i,j=1,...,n-1}$ and $\{ \tau_1(x),...,\tau_{n-1}(x) \}$ denotes an orthonormal basis of $T_x \Gamma$.  With Lemma \ref{lem:ballgraphsystem} it is easily checked that $\mathrm{J}_{A_0^{1/2}}$ lies in $C^{0,\alpha}(\Gamma)$ with $C^{0,\alpha}$-norm bounded by a constant $D(||A||_{L^\infty(\Omega)},[\Gamma]_{1,\alpha})$. We can therefore proceed as above \eqref{eq:absorbQ} and \eqref{eq:boundabsorbedQ} and write 
\begin{align}
    \frac{1}{r^{n+1}} & \int_{\Gamma \cap \mathcal{E}_r(x_0)} Q(x) (\theta(x_0), x-x_0) \; \mathrm{d}\mathcal{H}^{n-1}(x)  \\ & = \frac{Q(x_0)\mathrm{J}_{A_0^{1/2}}(x_0)}{r^{n+1}} \int_{A_0^{-1/2}\Gamma \cap  B_r (A_0^{-1/2}x_0)}  (\theta(x_0), A_0^{1/2}x-x_0)\; \mathrm{d}\mathcal{H}^{n-1}(x) + R(r)
\end{align}
where $R(r)$ is integrable and $\int_0^{r_0} R(r) \; \mathrm{d}r \leq C(\alpha, ||Q||_{C^{0,\alpha}}, ||A||_{L^\infty}, [\Gamma]_{1,\alpha}) r_0^\alpha$, which can be bounded by the desired quantities as $r_0 \leq \mathrm{dist}(\Gamma, \partial \Omega)$.  Now using Lemma \ref{lem:Lipschitzmeanvalue} with $\tilde{\Gamma} = A_0^{-1/2} \Gamma$ we can estimate
\begin{align}
     \frac{1}{r^{n+1}} & \int_{\Gamma \cap \mathcal{E}_r(x_0)} Q(x) (\theta(x_0), x-x_0) \; \mathrm{d}\mathcal{H}^{n-1}(x) \\ & = \frac{Q(x_0)\mathrm{J}_{A_0^{1/2}}(x_0)}{r^{n+1}} \left( A_0^{1/2}\theta(x_0) , \int_{A_0^{-1/2}\Gamma \cap  B_r (A_0^{-1/2}x_0)} (x- A_0^{-1/2} x_0) \; \mathrm{d}\mathcal{H}^{n-1}(x) \right) +R(r)
     \\ & \leq \frac{1}{r^{n+1}} ||Q||_{L^\infty} |A_0^{1/2}| \;  |A_0^{1/2} \theta(x_0) | C( [A_0^{-1/2} \Gamma]_{1,\alpha}) r^{n+\alpha} + R(r) \\ & =  ||Q||_{L^\infty} |A_0^{1/2}| \;  |A_0^{1/2} \theta(x_0) | \;  C( [A_0^{-1/2} \Gamma]_{1,\alpha}) \frac{1}{r^{1-\alpha}} + R(r).
\end{align}
 Noticing that $\frac{1}{r^{1-\alpha}}$ is integrable, $[A_0^{-1/2}\Gamma]_{1,\alpha} \leq C(\lambda, [\Gamma]_{1,\alpha})$, $r_0 \leq \mathrm{dist}(\Gamma, \partial \Omega)$ and $|A_0^{1/2}| \leq ||A||_{L^\infty}^{1/2}$ we infer that 
\begin{align}
    \int_0^{r_0} & \frac{1}{r^{n+1}} \left\vert \int_{\Gamma \cap \mathcal{E}_r(x_0)} Q(x) (\theta(x_0) ,x-x_0) \; \mathrm{d}\mathcal{H}^{n-1} \right\vert \\ & \leq C(\alpha, \lambda, ||A||_{C^{0,\alpha}}, ||Q||_{C^{0,\alpha}}, [\Gamma]_{1,\alpha} ) \mathrm{dist}(\Gamma, \partial \Omega)^\alpha |A_0^{1/2} \theta(x_0)| .
\end{align}
\textbf{Term (III).}
We next need to estimate 
\begin{equation}
    \int_0^{r_0} \frac{1}{r^{n+1}}|S(r)| \; \mathrm{d}r.
\end{equation}
To that end we notice that (by the dominated convergence theorem and the fact that $\nabla u \in L^\infty(\Omega)$)
\begin{align}
     & S(r)  = \lim_{\epsilon \rightarrow 0 } \lim_{\delta \rightarrow 0} (S_\epsilon^\delta)(r) \\ & = \lim_{\epsilon \rightarrow 0}  \left( \int \big((A-A_0) \nabla u , \eta_\epsilon ( |A_0^{-1/2}(x-x_0)|)  \theta(x_0) \big)  \right. \\ & + \left. \int \Big( (A- A_0) \nabla u , (\theta(x_0) , x-x_0) \frac{1}{\epsilon} \chi_{[r,r+\epsilon]}(|A_0^{-1/2}(x-x_0)| ) \tfrac{A_0^{-1}(x-x_0)}{|A_0^{-1/2}(x-x_0)|} \Big)  \right). \label{eq:secsumS}
\end{align}
For the first summand we obtain 
\begin{align}
    \lim_{\epsilon \rightarrow 0}  & \left\vert  \int_\Omega \big((A-A_0) \nabla u , \eta_\epsilon ( |A_0^{-1/2}(x-x_0)|) \theta(x_0) \big) \; \mathrm{d}x \right\vert   =  \left\vert \int_{\Omega \cap  \mathcal{E}_r(x_0)} \big((A- A_0) \nabla u, \theta(x_0) \big) \; \mathrm{d}x   \right\vert  \\ & \leq ||A-A_0||_{L^\infty(\mathcal{E}_r(x_0))} ||\nabla u||_{L^1(\Omega)} |\theta(x_0)| \;  |\mathcal{E}_r(x_0)|  .
\end{align}
Since $\mathcal{E}_r(x_0) \subset B_{\frac{r}{\sqrt{\lambda}}}(x_0)$ and (by Lemma \ref{lem:W1preg}) $||\nabla u||_{L^1} \leq C([\Gamma]_{0,1})||Q||_{L^\infty(\Gamma)}$ we obtain 
\begin{align}
    \lim_{\epsilon \rightarrow 0} & \left\vert  \int_\Omega \big((A-A_0) \nabla u , \eta_\epsilon ( |A_0^{-1/2}(x-x_0)|) \theta(x_0) \big) \; \mathrm{d}x \right\vert \leq ||A||_{C^{0,\alpha}} \frac{r^\alpha}{\sqrt{\lambda}^\alpha} C([\Gamma]_{0,1}) |\theta(x_0)| | \mathcal{E}_r(x_0)|   \\ & \leq C(\lambda, [\Gamma]_{0,1} ) ||A||_{C^{0,\alpha}} |\theta(x_0)| r^{n+\alpha}. \label{eq:est1}
\end{align}
For the second summand in \eqref{eq:secsumS} we find after substitution $y = A_0^{-1/2}(x-x_0)$
\begin{align}
    & \lim_{\epsilon \rightarrow 0 } \left\vert \int \Big( (A- A_0) \nabla u , (\theta(x_0) , x-x_0) \frac{1}{\epsilon} \chi_{[r,r+\epsilon]}(|A_0^{-1/2}(x-x_0)| ) \tfrac{A_0^{-1}(x-x_0)}{|A_0^{-1/2}(x-x_0)|} \Big)  \right\vert 
    \\ &  = \lim_{\epsilon \rightarrow 0 } \left\vert \mathrm{det}(A_0^{1/2}) \int \Big( (A(x_0+ A_0^{1/2}y)- A_0) \nabla u(x_0+ A_0^{1/2} y) , (A_0^{1/2}\theta(x_0) , y) \frac{1}{\epsilon} \chi_{[r,r+\epsilon]}(|y| ) \tfrac{A_0^{-1/2}y}{|y|}  \Big) \right\vert
     \\   & = \lim_{\epsilon \rightarrow 0 } \left\vert \mathrm{det}(A_0^{1/2})  \frac{1}{\epsilon}\int_{r}^{r+\epsilon} \int_{\partial B_s(0)} \Big( (A(x_0+ A_0^{1/2}y)- A_0) \nabla u(x_0+ A_0^{1/2} y) , (A_0^{1/2}\theta(x_0) , y)  \tfrac{A_0^{-1/2}y}{|y|}  \Big) \right\vert.
\end{align}
In the last step we used the radial integration formula for an integrand that lies in $L^\infty(\mathbb{R}^n) \cap BV(\mathbb{R}^n)$. Radial integration in this case is somewhat delicate since the integrand is technically not defined on sets of Lebesgue measure zero. Nevertheless it holds true if one  takes the \emph{precise representative} $(\nabla u)^*$ (cf. Lemma \ref{lem:layercake}). With this precise representative it is also allowed to pass to the limit as $\epsilon \rightarrow 0$ (cf. Lemma  \ref{lem:contimeanval}) and obtain 
\begin{align}
    = & \left\vert \mathrm{det}(A_0^{1/2})   \int_{ \partial B_r(0)} ( (A(x_0+ A_0^{1/2}y)- A_0) (\nabla u)^*(x_0+ A_0^{1/2} y) , (A_0^{1/2}\theta(x_0) , y)  \frac{A_0^{-1/2}y}{|y|} \; \mathrm{d}\mathcal{H}^{n-1}(y) \right\vert
    \\ & \leq  \mathrm{det}(A_0^{1/2})  \sup_{y \in \partial B_r(0)} |A(x_0+ A_0^{1/2}y)- A_0| \;  ||\nabla u||_{L^\infty(\Omega)}  |A_0^{1/2} \theta(x_0)| \;  r  \;  |A_0^{-1/2}|   \omega_n r^{n-1}.
\end{align}
Using that $\sup_{y \in \partial B_r(0)} |A(x_0+ A_0^{1/2}y)- A_0|  \leq ||A||_{C^{0,\alpha}} \sup_{y \in \partial B_r(0)}   |A_0^{1/2} y|^\alpha \leq ||A||_{C^{0,\alpha}} |A_0^{1/2}| r^\alpha$ we find 
\begin{align}
    & \lim_{\epsilon \rightarrow 0 } \left\vert \int ( (A- A_0) \nabla u , (\theta(x_0) , x-x_0) \frac{1}{\epsilon} \chi_{[r,r+\epsilon]}(|A_0^{-1/2}(x-x_0)| ) \frac{A_0^{-1}(x-x_0)}{|A_0^{-1/2}(x-x_0)|} \right\vert  \\ &  \leq \omega_n \mathrm{det}(A_0^{1/2}) |A_0^{1/2} \theta(x_0)| \;   ||\nabla u||_{L^\infty} r^{n+\alpha}.
    \end{align}
Together with \eqref{eq:est1} we obtain 
\begin{equation}
    |S(r)| \leq C(||A||_{C^{0,\alpha}}, \lambda,[\Gamma]_{0,1} )  (1+ ||\nabla u||_{L^\infty}) \; |A_0^{1/2} \theta(x_0)|  \;  r^{n+\alpha}.
\end{equation}
Hence 
\begin{equation}
    \int_0^{r_0} \frac{1}{r^{n+1}} |S(r)| \; \mathrm{d}r \leq C(||A||_{C^{0,\alpha}}, \lambda, [\Gamma]_{0,1} ) ( 1+ ||\nabla u||_{L^\infty}) \; |A_0^{\frac{1}{2}} \theta(x_0)| \;  \frac{1}{\alpha} r_0^\alpha.
 \end{equation}
Term (I), (II) and (III) are now estimated. Going back to \eqref{eq:Term1+2+3} we find 
\begin{align}
    & \frac{\omega_n}{n} |A_0^{1/2} \theta(x_0)|^2 \\    & \leq \frac{1}{r_0} C_1 |A_0^{1/2}\theta(x_0)|  +C_2 \frac{1}{\alpha} \mathrm{dist}(\Gamma, \partial \Omega)^\alpha |\theta(x_0)|    + C_3 ( 1+ ||\nabla u||_{L^\infty(\Omega)}) |A_0^{1/2} \theta(x_0)| \frac{1}{\alpha} r_0^\alpha,
\end{align}
with constants $C_1,C_2,C_3$ that depend only on the desired quantities on the right hand side of \eqref{eq:erg:qunatities}. 
Define $C_4 := \max\{C_1,C_2,C_3\}$. 
Using that $ |A_0^{1/2} \theta(x_0)|^2 \geq \sqrt{\lambda }|\theta(x_0)|^2 \geq \frac{1}{4}\sqrt{\lambda}||\theta||^2_{L^\infty}$ we obtain
\begin{align}
    & \frac{\omega_n}{n} \frac{\sqrt{\lambda}}{4} ||\theta||_{L^\infty}^2 \\ &  \leq C_4 \left( \frac{1}{r_0}||A||_{L^\infty}^{1/2}||\theta||_{L^\infty} + \frac{1}{\alpha} \mathrm{dist}(\Gamma, \partial \Omega)^\alpha  ||\theta||_{L^\infty} + \frac{1}{\alpha}  ||A||_{L^\infty}^{1/2} (1+ ||\nabla u||_{L^\infty}) ||\theta||_{L^\infty} r_0^\alpha \right)  .  
\end{align}
Now we use that by Corollary \ref{cor:4.4} one has
\begin{align}
    ||\nabla u||_{L^\infty(\Omega)}  & \leq C([\Gamma]_{1,\alpha}) ||DA||_{L^q} ||Q||_{L^\infty} + \frac{1}{\lambda } ||Q||_{L^\infty} + 2 ||\theta||_{L^\infty} \\ &  = 2 ||\theta||_{L^\infty} + C([\Gamma]_{1,\alpha}, ||A||_{W^{1,q}} , ||Q||_{L^\infty}) .
\end{align}
Therefore we have
\begin{align}
    & \frac{\omega_n}{n} \frac{\sqrt{\lambda}}{4} ||\theta||_{L^\infty}^2   \leq \tilde{C}_4  \left( \frac{1}{r_0}||\theta||_{L^\infty} + ||\theta||_{L^\infty} + C_5 r_0^\alpha ||\theta||_{L^\infty}^2  + ||\theta||_{L^\infty} C_6  r_0^\alpha \right) .   
\end{align}
Now choosing $r_0$ such that $ \tilde{C}_4 C_5 r_0^\alpha = \min \{ \frac{1}{4}\tilde{C}_4 C_5 \mathrm{dist}(\Gamma,\partial \Omega)^\alpha, \frac{\omega_n}{n} \frac{\sqrt{\lambda}}{8} \} 
$ (which automatically implies $r_0 < \mathrm{dist}(\Gamma, \partial \Omega)$)
we infer that 
\begin{align}
  &  \frac{\omega_n}{n} \frac{\sqrt{\lambda}}{4} ||\theta||_{L^\infty}^2  \leq \frac{\omega_n}{n} \frac{\sqrt{\lambda}}{8} ||\theta||_{L^\infty}^2   + \tilde{C}_4 \left( \frac{1}{r_0} + 1 +  \tilde{C}_6 r_0^\alpha \right) ||\theta||_{\infty}. 
\end{align}
Using that the chosen $r_0$ now also only depends on desired quantities we obtain the asserted bound for $||\theta||_{L^\infty}$. 
\end{proof}

 \begin{cor}\label{cor:prep}
 Let $\Gamma = \partial \Omega' \in C^\infty$, $Q \in W^{2,s}(\Omega)$ and $A \in W^{2,s}(\Omega;\mathbb{R}^{n\times n})$ for some $s >n$. Then for each $q> n$ and $\alpha > 0$ the solution $u \in L^2(\Omega)$ of \eqref{eq:1.1} satisfies $u \in W^{1,\infty}(\Omega)$ and 
 \begin{equation}
     || u||_{W^{1,\infty}} \leq C(n ,q , \alpha, \lambda,  ||Q||_{C^{0,\alpha}}, ||A||_{W^{1,q}}, [\Gamma]_{1,\alpha} ,\mathrm{dist}(\Gamma, \partial \Omega) ).
 \end{equation}
 \end{cor}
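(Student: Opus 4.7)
The plan is to assemble the three preceding results of this section --- Corollary \ref{cor:4.4}, the $\theta$-bound lemma, and Lemma \ref{lem:W1preg} --- into the stated estimate, together with the results of Section \ref{sec:smoothinput} that provide the preliminary qualitative regularity needed to invoke them. The hypotheses are strong enough that all of these lemmas apply: since $s>n$ we have $W^{2,s}(\Omega) \hookrightarrow W^{1,q}(\Omega)$ for any chosen $q>n$, and $W^{2,s}(\Omega) \hookrightarrow C^{0,\alpha}(\overline{\Omega})$ for any $\alpha \in (0,1-n/s)$, while $\Gamma \in C^\infty$ is certainly $C^3$. Therefore Lemma \ref{lem:31} gives $u \in W^{1,\infty}(\Omega)$ and the subsequent BV-corollary gives $\nabla u \in BV(\Omega) \cap L^\infty(\Omega)$, which are exactly the qualitative hypotheses required by Lemma \ref{lem:apriobound}, Corollary \ref{cor:4.4}, and the $\theta$-bound lemma.

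First I would apply the $\theta$-bound lemma to obtain
\begin{equation}
    \|\theta\|_{L^\infty(\Gamma)} \leq C\bigl(n,q,\alpha,\lambda,\|Q\|_{C^{0,\alpha}},\|A\|_{W^{1,q}},[\Gamma]_{1,\alpha},\mathrm{dist}(\Gamma,\partial\Omega)\bigr).
\end{equation}
Next I would feed this bound into Corollary \ref{cor:4.4}, whose right-hand side contains only $\|DA\|_{L^q}\|Q\|_{L^\infty}$, $\lambda^{-1}\|Q\|_{L^\infty}$ and $\|\theta\|_{L^\infty}$, with a multiplicative constant depending solely on $n,q,\Omega,[\Gamma]_{1,0}$. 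Substituting yields
\begin{equation}
    \|\nabla u\|_{L^\infty(\Omega)} \leq C\bigl(n,q,\alpha,\lambda,\|Q\|_{C^{0,\alpha}},\|A\|_{W^{1,q}},[\Gamma]_{1,\alpha},\mathrm{dist}(\Gamma,\partial\Omega),\Omega\bigr).
\end{equation}
To upgrade this to a full $W^{1,\infty}$-bound I would control $\|u\|_{L^\infty(\Omega)}$ by fixing any $p>n$ and invoking Lemma \ref{lem:W1preg}, which gives $\|u\|_{W^{1,p}_0(\Omega)} \leq C(p,n,\Omega)[\Gamma]_{0,1}\|Q\|_{L^\infty(\Gamma)}$; Morrey embedding then produces the desired $L^\infty$-bound, with all constants depending only on the permitted quantities (noting $[\Gamma]_{0,1} \leq [\Gamma]_{1,\alpha}$).

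I do not expect any real obstacle in this argument, since all the heavy lifting has already been done. The only thing that actually needs checking is the bookkeeping of constants: one must ensure that no dependence on $\|A\|_{W^{2,s}}$, on $[\Gamma]_{k,0}$ for $k\geq 2$, or on higher norms of $Q$ sneaks in through the chain of estimates. This is precisely why Lemma \ref{lem:apriobound} was phrased using the divergence-form estimates of \cite{Auscher} (whose constants depend only on $[\Gamma]_{1,0}$) rather than classical $W^{2,p}$-elliptic theory, and why the $\theta$-bound lemma was formulated with explicit dependence only on $[\Gamma]_{1,\alpha}$ and $\|A\|_{W^{1,q}}$. Combining these with the $L^\infty$-bound on $u$ then gives the corollary.
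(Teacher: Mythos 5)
Your proof is correct and takes essentially the same route as the paper: it assembles Lemma \ref{lem:31} and the $BV$-corollary for the qualitative hypotheses, the $\theta$-bound lemma and Corollary \ref{cor:4.4} for the gradient estimate, and then controls $\|u\|_{L^\infty}$. The only (minor) difference is the last step: you invoke Lemma \ref{lem:W1preg} and Morrey embedding to bound $\|u\|_{L^\infty(\Omega)}$, whereas the paper more directly uses $u\vert_{\partial\Omega}=0$ to get $\|u\|_{W^{1,\infty}(\Omega)} \leq (1+\mathrm{diam}(\Omega))\|\nabla u\|_{L^\infty(\Omega)}$; both are valid and introduce no unwanted constant dependencies.
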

 \begin{proof}
 That $u \in W^{1,\infty}(\Omega)$ follows from Lemma \ref{lem:31}. Corollary \ref{cor:4.4} now yields that 
 \begin{equation}
     ||\nabla u||_{L^\infty} \leq C(n,q, \Omega , [\Gamma]_{0,1})||DA||_{L^q(\Omega)} ||Q||_{L^\infty} + 2 ||\theta||_{L^\infty} + \frac{1}{\lambda} ||Q||_{L^\infty}.
 \end{equation}
 Together with the fact that by the previous lemma 
 \begin{equation}
     ||\theta||_\infty \leq C(n , q, \alpha, \lambda,  ||Q||_{C^{0,\alpha}}, ||A||_{W^{1,q}}, [\Gamma]_{1,\alpha} ,\mathrm{dist}(\Gamma, \partial \Omega) )
 \end{equation}
 we infer that 
 \begin{equation}
     ||\nabla u||_{L^\infty} \leq C(n ,q , \alpha, \lambda,  ||Q||_{C^{0,\alpha}}, ||A||_{W^{1,q}}, [\Gamma]_{1,\alpha} ,\mathrm{dist}(\Gamma, \partial \Omega) ).
 \end{equation}
 Since $u \vert_{\partial \Omega} \equiv 0$ we may estimate 
 \begin{equation}
     ||u||_{W^{1,\infty}(\Omega)} \leq (1+ \mathrm{diam}(\Omega))||\nabla u||_{L^\infty(\Omega)}.
 \end{equation}
 The claim follows.
 \end{proof}

\subsection{Completion of the Proof}
\begin{proof}[Proof of Theorem \ref{thm:main}]
Let $Q \in C^{0,\alpha}(\Gamma)$, $A \in W^{1,q}(\Omega;\mathbb{R}^{n\times n})$ and $\Gamma = \partial \Omega' \in C^{1,\alpha}$ be as in the statement.  Choose an approximating sequence $(\Gamma_j = \partial \Omega_j')_{j \in \mathbb{N}}$ of smooth boundaries as in Lemma \ref{lem:approx}, in particular for $j$ large enough, point (1) in Lemma \ref{lem:approx} yields that $\Omega_j' \subset \subset \Omega$ for all $j \in \mathbb{N}$ and point (3) in Lemma \ref{lem:approx} yield that 
$([\Gamma_j]_{1,\alpha})_{j \in \mathbb{N}}$ is uniformly bounded in $j$. Moreover choose $(A_j)_{j \in \mathbb{N}} \subset  C^\infty(\overline{\Omega}; \mathbb{R}^{n\times n})$ such that $A_j \rightarrow A$ in $W^{1,q}(\Omega; \mathbb{R}^{n\times n}).$ By Lemma  \ref{lem:Hoelextension} we may assume that $Q \in C^{0,\alpha}(\overline{\Omega})$ without changing $||Q||_{C^{0,\alpha}}$. By Lemma \ref{lem:Hoelapprox} it is also possible to approximate $Q$ uniformly on $K := \overline{B_\epsilon(\Gamma)}$,$ \epsilon > 0$ suitably chosen,  with some sequence $(Q_j)_{j \in \mathbb{N}} \subset C^\infty(\Omega)$ such that $||Q||_{C^{0,\alpha}(K)} = \lim_{j \rightarrow \infty} ||Q_j||_{C^{0,\alpha}(K)}$.
For $j \in \mathbb{N}$ let $u_j \in L^2(\Omega)$ be the (unique) very weak solution to 
\begin{equation}\label{eq:auxprob}
    \begin{cases}
      -\mathrm{div}(A_j(x) \nabla u_j ) = Q_j \; \mathcal{H}^{n-1} \mres \Gamma_j & \textrm{in }\Omega \\ u = 0 & \textrm{on }\partial \Omega.
    \end{cases}
\end{equation}
By Corollary \ref{cor:prep} we infer that $u_j \in W^{1,\infty}(\Omega)$ and that  there exists some $C > 0$ such that 
\begin{equation}\label{eq:u_jbound}
    ||u_j||_{W^{1,\infty}} \leq C(||Q_j||_{C^{0,\alpha}}, \lambda_j, ||A_j||_{W^{1,q}} , [\Gamma_j]_{1,\alpha}, \mathrm{dist}(\Gamma_j, \partial \Omega)) \leq C.
\end{equation}
Notice that here we used that by Point (1) of Lemma $\ref{lem:approx}$ one has $\mathrm{dist}(\Gamma_j, \partial\Omega) \rightarrow \mathrm{dist}(\Gamma, \Omega)$, i.e. $\mathrm{dist}(\Gamma_j, \partial \Omega)$ is uniformly bounded above and below. One infers by Lemma \ref{lem:B6} that up to a subsequence (which we do not relabel) there exists $u \in W^{1,\infty}(\Omega)$ such that $u_j \rightarrow u$ uniformly on $\overline{\Omega}.$ We claim that $u$ solves \eqref{eq:1.1}. To this end notice that for all $\phi \in C^2(\overline{\Omega})$ such that $\phi \vert_{\partial \Omega} = 0$ there holds 
\begin{equation}
    \int_\Omega u \; \mathrm{div}(A \nabla \phi) \; \mathrm{d}x = \lim_{j \rightarrow \infty}  \int_\Omega u_j \mathrm{div}(A_j \nabla \phi)  \; \mathrm{d}x
\end{equation}
since $u_j \rightarrow u$ in $L^\infty(\Omega)$ and (by the product rule) $\mathrm{div}(A_j \nabla \phi) \rightarrow \mathrm{div}(A \nabla \phi)$ in $L^q(\Omega)$. Since $u_j$ solves \eqref{eq:auxprob} we infer 
\begin{equation}\label{eq:udiffeq}
    \int_\Omega u \; \mathrm{div}(A \nabla \phi) \; \mathrm{d}x = \lim_{j \rightarrow \infty}  \int_\Omega u_j \mathrm{div}(A_j \nabla \phi)  \; \mathrm{d}x = \lim_{j \rightarrow \infty} \int_{\Gamma_j} Q_j \phi \; \mathrm{d}\mathcal{H}^{n-1}
\end{equation}
Now observe that 
\begin{equation}
    \int_{\Gamma_j} Q_j \phi \; \mathrm{d}\mathcal{H}^{n-1}  =  \int_{\Gamma_j} (Q_j-Q) \phi \; \mathrm{d}\mathcal{H}^{n-1}  + \int_{\Gamma_j} Q \phi \; \mathrm{d}\mathcal{H}^{n-1}.  
\end{equation}
By Lemma \ref{lem:approx} the last term converges to $\int_\Gamma Q \phi \; \mathrm{d}\mathcal{H}^{n-1}$ and the first term can be estimated for large enough $j$ by  
\begin{equation}
    \left\vert \int_{\Gamma_j} (Q_j-Q) \phi \; \mathrm{d}\mathcal{H}^{n-1} \right\vert \leq ||Q_j- Q||_{L^\infty(K)} \mathcal{H}^{n-1}(\Gamma_j) \rightarrow 0 \quad (j \rightarrow \infty) 
\end{equation}
where we used in the last step that $\mathcal{H}^{n-1}(\Gamma_j)$ is bounded uniformly in $j$ (cf. Lemma \ref{lem:C5}). We infer that 
\begin{equation}
    \int_\Omega u \; \mathrm{div}(A\nabla \phi) \; \mathrm{d}x = \int_\Gamma Q \phi \; \mathrm{d}x \quad \forall \phi \in C^2(\overline{\Omega})  :  \phi \vert_{\partial \Omega} = 0 . 
\end{equation}
and hence $u$ is the unique very weak solution of \eqref{eq:1.1}. As we have already discussed we find that $u \in W^{1,\infty}(\Omega)$ is (the unique) solution of \eqref{eq:1.1}. The claim follows.
\end{proof}

\begin{remark}
Examining the proof once again we can infer from \eqref{eq:u_jbound} and the fact that by Lemma \ref{lem:B6} the Lipschitz norm is lower semicontinuous with respect to uniform convergence that for a solution $u$ of $\eqref{eq:1.1}$ with data $A, Q , \Gamma$ as in Main Theorem \ref{thm:main} one has 
\begin{equation}
    ||u||_{W^{1,\infty}} \leq \tilde{C}(||Q||_{C^{0,\alpha}}, \lambda, ||A||_{W^{1,q}} , [\Gamma]_{1,\alpha}, \mathrm{dist}(\Gamma, \partial \Omega)).
\end{equation}
Here we have used that for $(\Gamma_j)_{j \in \mathbb{N}}$  chosen as  in the previous proof one has $\mathrm{dist}(\Gamma_j, \partial\Omega) \rightarrow \mathrm{dist}(\Gamma, \partial \Omega)$ and $[\Gamma_j]_{1,\alpha} \leq C([\Gamma]_{1,\alpha})$. Notice that the latter estimate leads to the complication that $\tilde{C}$ might be larger than the constant in \eqref{eq:u_jbound}. 
\end{remark}

\section{Optimality discussion}\label{sec:optimalitydisc}

\subsection{Optimality of the condition on $\Gamma$}
The regularity requirement that $\Gamma = \partial \Omega' \in C^{1,\alpha}$ can not be substantially weakened. Indeed, we show that for some $\Gamma = \partial \Omega' \in C^{0,1}$ the conclusion of Main Theorem \ref{thm:main} is false.  

For our argument we will use an explicit representation for solutions in terms of the \emph{Green's function} for $-\Delta$.  

\begin{prop}[{\cite[p.44]{Ponce}}]\label{prop:Green}
Let $\Gamma = \partial \Omega' \in C^{0,1}, Q \in L^\infty(\Gamma)$ and let $G_\Omega : \Omega \times \Omega \rightarrow \mathbb{R}\cup \{\infty\}$ be Green's function for $-\Delta$ in $\Omega$. Then the unique solution of
\begin{equation}
    \begin{cases}
        -\Delta u = Q \; \mathcal{H}^{n-1}\mres \Gamma,  & \textrm{in } \Omega \\ \quad    \; \; u = 0 & \textrm{on } \partial \Omega
    \end{cases}
\end{equation}
is given by 
\begin{equation}
    u(x) = \int_\Gamma G_\Omega(x,y) Q(y) \; \mathrm{d}\mathcal{H}^{n-1}(y) \quad \textrm{for } \mathrm{a.e.} \;  x \in \Omega. 
\end{equation}
\end{prop}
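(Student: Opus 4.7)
The plan is to verify that the function defined by the right-hand side is a very weak solution in the sense of Definition \ref{def:weaksol}, and then invoke the uniqueness statement already established in Lemma \ref{lem:W1preg} (with $A = \mathrm{Id}_{n \times n}$) to conclude that it coincides with the unique very weak solution $u$.

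First I would set $v(x) := \int_\Gamma G_\Omega(x,y) Q(y) \, \mathrm{d}\mathcal{H}^{n-1}(y)$ and check that $v \in L^1(\Omega)$ (which is enough, as uniqueness in Lemma \ref{lem:W1preg} extends to $L^1$ by the same duality). This uses the standard Green's function estimate $|G_\Omega(x,y)| \leq C_n |x-y|^{2-n}$ (logarithmic for $n=2$) together with $Q \in L^\infty(\Gamma)$ and the surface growth bound $\mathcal{H}^{n-1}(\Gamma \cap B_r(x)) \leq C [\Gamma]_{0,1} r^{n-1}$ that was derived in the proof of Lemma \ref{lem:measterm}.

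Next, for an arbitrary test function $\phi \in C^2(\overline{\Omega})$ with $\phi\vert_{\partial \Omega} = 0$, I would compute
\begin{equation*}
-\int_\Omega v(x) \Delta \phi(x) \, \mathrm{d}x = -\int_\Omega \int_\Gamma G_\Omega(x,y) Q(y) \, \mathrm{d}\mathcal{H}^{n-1}(y) \, \Delta \phi(x) \, \mathrm{d}x,
\end{equation*}
and exchange the order of integration by Fubini's theorem. The integrability hypothesis $(x,y) \mapsto G_\Omega(x,y) Q(y) \Delta \phi(x) \in L^1(\Omega \times \Gamma)$ follows from $\Delta \phi \in L^\infty$, $Q \in L^\infty$, and the fact that $\sup_{y \in \Gamma} \int_\Omega |x-y|^{2-n} \, \mathrm{d}x < \infty$. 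For fixed $y \in \Omega$ (in particular for $y \in \Gamma \subset \Omega$), the inner integral equals $\phi(y)$ by the Green's representation formula
\begin{equation*}
    -\int_\Omega G_\Omega(x,y) \Delta \phi(x) \, \mathrm{d}x = \phi(y),
\end{equation*}
which holds for every $\phi \in C^2(\overline{\Omega})$ vanishing on $\partial \Omega$ by two integrations by parts, using that $-\Delta_x G_\Omega(\cdot,y) = \delta_y$ distributionally and that $G_\Omega(\cdot,y)\vert_{\partial \Omega} = 0$. Combining these observations yields
\begin{equation*}
    -\int_\Omega v(x) \Delta \phi(x) \, \mathrm{d}x = \int_\Gamma Q(y) \phi(y) \, \mathrm{d}\mathcal{H}^{n-1}(y),
\end{equation*}
so $v$ is a very weak solution; by uniqueness $v = u$ almost everywhere.

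The only genuine obstacle is the Fubini step, since the integrand has a singularity along the diagonal; but this is immediately tamed by the pointwise bound on $G_\Omega$ and the $(n-1)$-dimensional growth of $\mu = Q \, \mathcal{H}^{n-1} \mres \Gamma$ proved in Lemma \ref{lem:measterm}. Everything else is a formal manipulation of the representation formula.
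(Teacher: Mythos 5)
Your argument is correct. Note that the paper itself does not prove Proposition \ref{prop:Green} but cites it directly from \cite[p.44]{Ponce}, so what you have written is a self-contained proof where the paper relies on reference. Your route --- defining the potential $v(x) = \int_\Gamma G_\Omega(x,y)Q(y)\,\mathrm{d}\mathcal{H}^{n-1}(y)$, establishing $v \in L^1(\Omega)$ via the pointwise bound $|G_\Omega(x,y)| \leq C_n|x-y|^{2-n}$, applying Fubini together with the Green representation formula $-\int_\Omega G_\Omega(x,y)\Delta\phi(x)\,\mathrm{d}x = \phi(y)$, and concluding by uniqueness --- is the standard one and is essentially the argument underlying the cited source.

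Two small points deserve to be made precise. First, the appeal to Lemma \ref{lem:W1preg} for uniqueness requires a word, since Definition \ref{def:weaksol} and that lemma are stated for solutions in $L^2(\Omega)$, whereas you only verify $v \in L^1(\Omega)$; indeed for $n \geq 4$ the sections $G_\Omega(\cdot,y)$ do not lie in $L^2(\Omega)$, so an a priori $L^2$ bound on $v$ is not immediate. Your parenthetical remark that uniqueness extends to $L^1$ is correct, but it should be spelled out: since $A = \mathrm{Id}_{n\times n}$ and $\Omega$ is smooth, for every $g \in C_0^\infty(\Omega)$ the Dirichlet problem $\Delta\phi_g = g$ in $\Omega$, $\phi_g\vert_{\partial\Omega}=0$, has a solution $\phi_g \in C^\infty(\overline{\Omega}) \subset C^2(\overline{\Omega})$ by classical Schauder theory, and testing the difference of the two very weak formulations against $\phi_g$ yields $\int_\Omega (u-v)g\,\mathrm{d}x = 0$ for all such $g$, hence $u=v$ a.e.\ even when $u-v$ is merely integrable. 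This is a cleaner justification than saying the $L^2$-duality of Lemma \ref{lem:W1preg} "extends." Second, the "two integrations by parts" underpinning the Green representation identity need the usual care near the diagonal --- either excise a small ball $B_\epsilon(y)$, integrate by parts on $\Omega\setminus \overline{B_\epsilon(y)}$ where $G_\Omega(\cdot,y)$ is smooth, and let $\epsilon \downarrow 0$, or decompose $G_\Omega(\cdot,y) = F(\cdot-y) + h_y$ with $F$ the fundamental solution and $h_y$ harmonic and smooth on $\overline{\Omega}$ and use the known representation for $F$ --- but this is a textbook step and does not affect the validity of the argument.
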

Using this we find the following nonexistence result for Lipschitz solutions. 
\begin{lemma}\label{lem:51}
There exists a Lipschitz hypersurface $\Gamma = \partial \Omega' \in C^{0,1}$ such that in $\Omega = B_1(0) \subset \mathbb{R}^2$ the problem 
\begin{equation}\label{eq:probnosol}
    \begin{cases}
        -\Delta u = \mathcal{H}^{n-1}\mres \Gamma & \textrm{in } B_1(0), \\ \quad \; \;  u = 0  & \textrm{on } \partial B_1(0)
    \end{cases}
\end{equation}
admits no solution in $W^{1,\infty}(\Omega)$.
\end{lemma}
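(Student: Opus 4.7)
The plan is to take $\Omega = B_1(0)$, $\Omega'$ equal to the open square $(0,1/2)^2 \subset \subset B_1(0)$ (which is a Lipschitz domain with a corner at the origin, $\Gamma = \partial \Omega' \in C^{0,1}$), and $Q\equiv 1$, and to show directly that the unique very weak solution produced by Proposition \ref{prop:Green} is not Lipschitz. Any hypothetical $W^{1,\infty}(\Omega)$ solution is automatically a very weak $L^2$-solution, so by uniqueness (cf. Lemma \ref{lem:W1preg}) it must coincide with the explicit Green's representation; it therefore suffices to show that this representation fails to lie in $W^{1,\infty}(B_1)$.

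Using the standard splitting $G_{B_1}(x,y) = -\tfrac{1}{2\pi}\log|x-y| + h(x,y)$ with $h\in C^\infty(B_1\times B_1)$, Proposition \ref{prop:Green} gives
\[
u(x) = -\tfrac{1}{2\pi}\int_\Gamma \log|x-y|\,\mathrm{d}\mathcal{H}^1(y) + g(x), \qquad g\in C^\infty(B_1).
\]
Decompose $\Gamma$ into its four straight edges. Only the edges $\sigma_1 = [0,\tfrac12]\times\{0\}$ and $\sigma_2 = \{0\}\times [0,\tfrac12]$ meet the corner $0$; the two remaining edges are bounded away from $0$ and contribute a function that is smooth in a neighborhood of the origin. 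Similarly, the $h$-part of the Green's function contributes a smooth function.

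Since $-\Delta u = 0$ in $B_1\setminus \Gamma$, $u$ is smooth off $\Gamma$, so classical and weak derivatives agree there a.e. An explicit one-variable antidifferentiation gives
\[
\partial_{x_1}\!\left(-\tfrac{1}{2\pi}\int_{\sigma_1}\log|x-y|\,\mathrm{d}\mathcal{H}^1(y)\right)
= \tfrac{1}{4\pi}\Bigl[\log((x_1-\tfrac12)^2 + x_2^2) - \log(x_1^2+x_2^2)\Bigr],
\]
which diverges like $-\tfrac{1}{2\pi}\log|x|$ as $x\to 0$. By the substitution $w = x_2 - s$, the corresponding $\partial_{x_1}$-contribution from $\sigma_2$ reduces to a difference of arctangents, hence stays uniformly bounded near the origin. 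Combining these observations,
\[
\partial_{x_1} u(x) = -\tfrac{1}{2\pi}\log|x| + O(1) \qquad (x\to 0),
\]
so $\partial_{x_1} u$ is unbounded in every neighborhood of $0$, and thus $u\notin W^{1,\infty}(B_1)$.

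The main obstacle is bookkeeping: one must carefully argue that every piece other than the single singular logarithm is locally bounded near the corner, so that the logarithmic blow-up of the one distinguished term cannot be cancelled by the others. Once this isolation is done, the failure of Lipschitz regularity is essentially a one-line explicit computation.
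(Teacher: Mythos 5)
Your proof is correct and takes essentially the same route as the paper: the paper uses the rectangular isosceles triangle with right angle at the origin instead of your square $(0,\tfrac12)^2$, but the corner at $0$ is a $90^\circ$ corner in both cases, so the Green's-function decomposition, the isolation of the two edges meeting the corner, the explicit antiderivative producing a $-\tfrac{1}{2\pi}\log|x|$ term from the edge parallel to $e_1$, and the arctangent bound on the perpendicular edge are all identical computations. (Incidentally, your bookkeeping of the $\tfrac{1}{2\pi}$ prefactor is more careful than the paper's, which drops it in the final displayed formula; the conclusion is of course unaffected.)
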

\begin{proof}
We look at the following rectangular equilateral triangle
\begin{equation}
    \Gamma := ([0,\tfrac{1}{2}] \times \{0\}) \cup (\{ 0 \} \times [0,\tfrac{1}{2}] ) \cup \{ (t,\tfrac{1}{2}-t) : t \in [0,\tfrac{1}{2}] \} =: \Gamma_1 \cup \Gamma_2 \cup \Gamma_3.
\end{equation}
Then clearly $\Gamma = \partial \Omega'$ for a domain $\Omega'\subset \subset B_1(0)$ with Lipschitz boundary. Let now $G : B_1(0) \times B_1(0) \rightarrow \mathbb{R}$ be Green's function for $-\Delta$ in $B_1(0)$. Proposition \ref{prop:Green} yields that then 
\begin{equation}\label{eq:greenrep}
    u(x) = \int_\Gamma G(x,y) \; \mathrm{d}\mathcal{H}^{n-1}(y) \quad \forall x \in B_1(0).
\end{equation}
Now we recall that 
\begin{equation}
    G(x,y) = -\frac{1}{2\pi} \left(  \log |x-y| + h_x(y) \right) \quad \forall x,y \in B_1(0), 
\end{equation}
where $h_x \in C^\infty(\overline{B_1(0)})$ is the unique harmonic function such that $h_x(z) = - \log|x-z|$ for all $z \in \partial B_1(0)$. Let $\epsilon \in ( 0, \tfrac{1}{4}) $ be such that $B_{2\epsilon}(0) \cap \Gamma_3 = \emptyset$. Since $x \mapsto h_x$ depends smoothly on $x$ in $B_\epsilon(0)$ we have by  \eqref{eq:greenrep} that 
\begin{equation}
   -2\pi u(x) = \int_{\Gamma_1} \log|x-y| \; \mathrm{d}\mathcal{H}^{n-1}(y) +  \int_{\Gamma_2} \log|x-y| \; \mathrm{d}\mathcal{H}^{n-1}(y)  + R(x) \quad \forall x \in B_\epsilon(0)
\end{equation}
for some function $R$ that is smooth on $\overline{B_\epsilon(0)}$. This implies that for $x \in B_\epsilon(0) \setminus (\Gamma_1 \cup \Gamma_2)$ there holds 
\begin{equation}
   -2\pi \partial_1 u (x) = \int_{\Gamma_1} \frac{x_1-y_1}{|x-y|^2} \; \mathrm{d}\mathcal{H}^{n-1}(y) + \int_{\Gamma_2} \frac{x_1-y_1}{|x-y|^2} \; \mathrm{d}\mathcal{H}^{n-1}(y) + \partial_1 R(x).
\end{equation}
 Now for $x = (x_1,x_2) \in B_\epsilon(0) \setminus \Gamma$ one has
 \begin{equation}
     \int_{\Gamma_1} \frac{x_1-y_1}{|x-y|^2} \; \mathrm{d}\mathcal{H}^{n-1}(y) = \int_0^{\frac{1}{2}} \frac{x_1 - t}{(x_1-t)^2+ x_2^2} \; \mathrm{d}t = -\frac{1}{2} \log ((x_1-\tfrac{1}{2})^2 + x_2^2) + \frac{1}{2} \log (x_1^2 + x_2^2)
 \end{equation}
 and 
 \begin{equation}
     \int_{\Gamma_2} \frac{x_1-y_1}{|x-y|^2} \; \mathrm{d}\mathcal{H}^{n-1}(y) = \int_0^{\frac{1}{2}} \frac{x_1 }{x_1^2+ (x_2-t)^2} \; \mathrm{d}t = -\mathrm{arctan}\left( \frac{x_2-\tfrac{1}{2}}{x_1} \right) +  \mathrm{arctan}\left( \frac{x_2}{x_1} \right) . 
 \end{equation}
 Notice that the second integral lies in $L^\infty(B_\epsilon(0))$ and the first integral takes the form $\log|x| + \tilde{R}$ for some $\tilde{R} \in L^\infty(B_\epsilon(0))$ (recalling that $\epsilon < \frac{1}{4}< \frac{1}{2}$). We infer that on $B_\epsilon(0) \setminus \Gamma$ there holds 
 \begin{equation}
     \partial_1 u(x) = \log|x| + \bar{R}(x),
 \end{equation}
 where $\bar{R}$ lies in $L^\infty(B_\epsilon(0))$. It follows that $\partial_1 u \not \in L^\infty(B_1(0))$. 
\end{proof}

\begin{remark}
We have discussed in the introduction that for $\alpha \in [0,1)$ one has that the \emph{growth property} $\mu(B_r(x)) \leq Cr^{n-2+\alpha}$ (for all $x \in \mathbb{R}^n$ and $r> 0$) implies that the solution of the measure-valued Dirichlet problem \eqref{eq:measeqgen} lies in $C^{0,\alpha}(\overline{\Omega})$. For Lipschitz manifolds, the growth property of the measure $\mu = \mathcal{H}^{n-1} \mres \Gamma$ is satisfied with $\alpha = 1$, cf. Lemma \ref{lem:measterm}. However as we have seen, the solution does not necessarily lie in $C^{0,1}(\overline{\Omega})$. In \cite[Discussion after Theorem 2.9]{Kilpelainen} it is said that the limit case of the regularity statement for $\alpha = 1$ is not yet well-understood. Our counterexample shows that the growth property is not sufficient to deduce regularity. 
\end{remark}

\begin{remark}
This example can also be seen as an explicit example for failure of the \emph{duality method}  in the case of
$p = \infty$, which we shall explain now. It is a classical result that for all $F \in L^p(\Omega)$, $1< p < \infty$ the equation
\begin{equation}
   \begin{cases}
     -\Delta u = \mathrm{div}(F) & \textrm{in }\Omega,  \\ u = 0 &  \textrm{on }\partial \Omega
   \end{cases}
\end{equation}
has a unique weak solution $u \in W_0^{1,p}(\Omega)$, cf. \cite{Auscher} and references therein. For $F \in L^\infty(\Omega)$, it is in general not true that $u \in W_0^{1,\infty}(\Omega)$. This is shown by a very implicit contradiction argument in \cite[Remark 3.7]{PoissonMarius}. Using the insight above we can now give an explicit counterexample: Let $\Gamma \subset B_1(0)$ be the triangle in the proof of Lemma \ref{lem:51}. By Lemma \ref{lem:measterm} there must exist some $F \in L^\infty(B_1(0))$  such that in the sense of distributions $\mathcal{H}^{n-1} \mres \Gamma = \mathrm{div}(F)$. For such $F$ we have seen in Lemma \ref{lem:51} that 
\begin{equation}
    \begin{cases}
      - \Delta u = \mathrm{div}(F) [= \mathcal{H}^{n-1} \mres \Gamma] & \textrm{in } B_1(0), \\ u = 0 & \textrm{on } \partial B_1(0)
    \end{cases}
\end{equation}
has a solution which does not lie in $W^{1,\infty}(\Omega).$
\end{remark}
\subsection{Optimality of the condition on $A$}
We show in this section that it is not possible to weaken the assumption of Main Theorem \ref{thm:main} to $A \in W^{1,q}(\Omega;\mathbb{R}^{n\times n})$ for any $q<n$. The case $A \in W^{1,n}(\Omega;\mathbb{R}^{n\times n})$ is an interesting limit case and not yet fully understood, but the details would go beyond the scope of this article.  Our example is just a slight modification of \cite[Section 5]{Meyers}, where regularity of (the gradient of) $A$-harmonic functions for irregular coefficients $A$ is studied. The measure term actually has no major impact on the irregularity phenomenon exposed in \cite[Section 5]{Meyers}. 

\begin{lemma} Suppose that $n = 2$. 
Then for each $q <n $ there exists some $A \in W_0^{1,q}(\Omega;\mathbb{R}^{n\times n})$, some $\Gamma = \partial \Omega' \in C^\infty$ and some $Q \in C^\infty(\Gamma)$ such that the solution to \eqref{eq:1.1} does not lie in $W^{1,\infty}(\Omega)$.
\end{lemma}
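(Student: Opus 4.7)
The plan is to follow the hint in the paragraph preceding the statement and adapt the classical Meyers example \cite{Meyers} to the measure-valued setting. Fix any $\alpha \in (0,1)$ and set $\kappa := \alpha^{-2} > 1$. I would work with the symmetric matrix
\begin{equation*}
A_0(x) := I + (\kappa - 1)\,\frac{x \otimes x}{|x|^2}, \qquad x \in \mathbb{R}^2 \setminus \{0\},
\end{equation*}
and the function $u_0(x) := x_1 |x|^{\alpha - 1}$. Since $A_0$ has the radial direction as eigenvector with eigenvalue $\kappa$ and the tangential direction as eigenvector with eigenvalue $1$, writing $u_0 = r^\alpha \cos\theta$ in polar coordinates and plugging in gives $\mathrm{div}(A_0 \nabla u_0) = 0$ on $\mathbb{R}^2 \setminus \{0\}$ by a direct computation. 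The matrix $A_0$ is uniformly elliptic with constant $\lambda = 1$. Because $A_0$ is $0$-homogeneous in $x$, its weak gradient is of size $|x|^{-1}$, so $A_0 \in W^{1,q}(\Omega;\mathbb{R}^{2\times 2})$ precisely for every $q < 2 = n$ (and $A_0 \notin W^{1,2}$). Meanwhile $|\nabla u_0| \sim |x|^{\alpha-1}$, so $u_0 \in W^{1,p}_{\mathrm{loc}}$ for all $p < 2/(1-\alpha)$ but $u_0 \notin W^{1,\infty}_{\mathrm{loc}}$.

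Next I would take $\Omega := B_2(0)$, $\Omega' := B_1(0)$, and $\Gamma := \partial B_1(0) \in C^\infty$, and define the candidate solution $u$ by $u := u_0$ in $\Omega'$ and, in the annulus $\Omega'' := B_2 \setminus \overline{B_1}$, as the $A_0$-harmonic extension of the smooth trace $x_1|_\Gamma = \cos\theta$ with zero boundary values on $\partial B_2$. The separated ansatz $u = f(r)\cos\theta$ turns the equation on $\Omega''$ into the Euler ODE $\kappa r^2 f'' + \kappa r f' - f = 0$, whose general solution is $c_1 r^\alpha + c_2 r^{-\alpha}$; choosing $c_1,c_2$ so that $f(1)=1$ and $f(2)=0$ yields a smooth profile on $[1,2]$. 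By construction the inner and outer traces on $\Gamma$ coincide, so the resulting $u$ lies in $W^{1,2}_0(\Omega) \cap L^\infty(\Omega)$ but fails to be Lipschitz because $\nabla u_0$ is unbounded at $0 \in \Omega'$.

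The density $Q \in C^\infty(\Gamma)$ would then be defined via the jump of the conormal derivative across $\Gamma$: since the radial direction is an eigenvector of $A_0$, both one-sided limits of $A_0 \nabla u \cdot \nu_{\Omega'}$ equal $\kappa \partial_r u$ and are explicit scalar multiples of $\cos\theta$. To verify that $u$ is the weak solution of \eqref{eq:1.1}, that is,
\begin{equation*}
\int_\Omega (A_0 \nabla u, \nabla \phi)\,\mathrm{d}x = \int_\Gamma Q\,\phi\,\mathrm{d}\mathcal{H}^{n-1} \qquad \forall \phi \in C_0^\infty(\Omega),
\end{equation*}
I would integrate by parts on $\Omega''$ and on $\Omega' \setminus B_\varepsilon(0)$ separately and send $\varepsilon \to 0$; the boundary term on $\partial B_\varepsilon(0)$ is bounded by $C \kappa \alpha \varepsilon^{\alpha}$ and vanishes, while the remaining boundary contributions on $\Gamma$ combine (using opposite outer normals) to reproduce the jump, i.e.\ to give $\int_\Gamma Q \phi\,\mathrm{d}\mathcal{H}^{n-1}$.

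The main obstacle I expect is bookkeeping rather than substance: Definition \ref{def:weaksol} of \emph{very weak} solution formally requires $A \in W^{1,2}$, but $A_0$ just misses this space in dimension $n=2$. I would therefore phrase the counterexample at the level of the weak formulation \eqref{eq:weaaksol}, which is meaningful whenever $A \in L^\infty$ is uniformly elliptic, and note that this weak formulation is the one that coincides with the very weak one in the regime of Main Theorem \ref{thm:main} (so what is probed here is precisely the failure of the $W^{1,q}$-regularity of $A$ for $q > n$).
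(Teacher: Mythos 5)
Your proof is correct and takes essentially the same route as the paper: the same singular profile $r^\alpha\cos\theta$ in the inner disc, the same coefficient matrix up to a harmless positive scalar normalization $A_0=\kappa A$ (with $\alpha=\mu$), the same identification of $Q$ as the jump of the conormal derivative across $\Gamma=\partial B_1(0)$, and the same cutoff-and-integrate-by-parts verification with the $\partial B_\varepsilon$ term of size $O(\varepsilon^\alpha)$. The only cosmetic difference is that you build the annular piece directly as the $A_0$-harmonic extension of $\cos\theta$ via the explicit Euler ODE, whereas the paper writes the same function as $u_1+u_2$ with $u_2$ an auxiliary $A$-harmonic corrector whose boundary data cancels the trace of $u_1$ on $\partial B_2(0)$; your closing remark on the $A\notin W^{1,2}$ bookkeeping wrinkle corresponds to the paper's own caveat that $u$ is a weak solution of ``an equation of type \eqref{eq:1.1}.''
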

\begin{proof}
We consider for $\mu \in (0,1)$ the set $\Omega = B_2(0) \subset \mathbb{R}^2$ and $\Gamma = \partial B_1(0)$. We define $A: \Omega \rightarrow \mathbb{R}^{2\times 2}$ via
\begin{equation}
    A(x,y) = 
        \begin{pmatrix} 1- (1-\mu^2) \frac{y^2}{x^2+y^2}  & (1-\mu^2) \frac{xy}{x^2+y^2} \\ (1-\mu^2) \frac{xy}{x^2+y^2} & 1 - (1-\mu^2) \frac{x^2}{x^2+y^2} \end{pmatrix}. 
\end{equation}
One readily checks that $A$ is symmetric and uniformly elliptic and $A \in W^{1,q}(B_2(0),\mathbb{R}^{2\times 2})$ for all $q < 2$.
Next we define 
\begin{equation}
    u_1(x,y) := \frac{x}{(x^2+y^2)^\frac{1-\mu}{2}} \quad (x,y) \in B_2(0).
\end{equation}
Moreover let $u_2 \in C^\infty(\overline{B_2(0)} \setminus B_1(0))$ be the unique solution to the $A$-harmonic Dirichlet problem 
\begin{equation}
    \begin{cases}
        \mathrm{div}(A\nabla u_2) = 0 & \textrm{in } B_2(0) \setminus \overline{B_1(0)},  \\ \qquad \; \;    u_2(x,y) = 0 & \textrm{on }\partial B_1(0),  \\ \qquad \; \;  u_2(x,y) =- \frac{1}{2^{1-\mu}} x & \textrm{on }\partial B_2(0).
    \end{cases}
\end{equation}
Such smooth solution exists as the restriction of $A$ to $\overline{B_2(0)} \setminus B_1(0)$ satisfies $A \in C^\infty(\overline{B_2(0)} \setminus B_1(0))$ and the prescribed boundary data are smooth. We now extend $u_2$ to the whole of $B_2(0)$ by defining $u_2(x,y) = 0$ for all $(x,y) \in B_1(0)$. We will not rename this extension.  Observe that this extension is Lipschitz continuous on $\overline{B_2(0)}$. 
One readily checks that there holds (pointwise) $\mathrm{div}(A \nabla u_1) = 0$ on $B_2(0) \setminus \{0 \}$ and $\mathrm{div}(A \nabla u_2) = 0$ on $B_2(0) \setminus \partial B_1(0)$.
Next we define $u := u_1 + u_2$, which clearly has zero boundary trace on $\partial B_2(0)$. As shown in \cite[Section 5]{Meyers} one has that $\nabla u_1 \in L^p(B_2(0))$ if and only if $p < \frac{2}{1-\mu}$. In particular $\nabla u = \nabla u_1 + \nabla u_2 \not \in L^\infty(B_2(0))$. It remains to show that $u$ solves a problem of the form \eqref{eq:1.1}. 
As $2 < \frac{2}{1-\mu}$ we obtain that $u \in W_0^{1,2}(B_2(0))$ and check that for $\phi \in C_0^\infty(B_2(0))$ there holds
\begin{align}
    \int_\Omega (A \nabla u, \nabla \phi) \; \mathrm{d}x  & = \int_{B_2(0)} (A \nabla u_1 , \nabla \phi) + \int_{B_2(0) \setminus B_1(0)} (A \nabla u_2 , \nabla \phi) 
     \\  & = \lim_{\epsilon \rightarrow 0} \int_{B_2(0) \setminus B_\epsilon(0)} (A \nabla u_1, \nabla \phi) + \int_{\partial (B_2(0) \setminus B_1(0))} (A \nabla u_2 , \nu) \phi \; \mathrm{d}\mathcal{H}^{n-1} 
     \\ & = -  \lim_{\epsilon \rightarrow 0} \int_{\partial B_\epsilon(0)} (A \nabla u_1 , \nu) \phi \; \mathrm{d} \mathcal{H}^{n-1} - \int_{\partial B_1(0)} (A \nabla u_2 , (x,y)^T) \phi \; \mathrm{d}\mathcal{H}^{n-1}.
\end{align}
Noticing next that $A \in L^\infty(B_2(0);\mathbb{R}^{2\times 2})$ we infer that 
\begin{align}
    \left\vert  \int_{\partial B_\epsilon(0)} (A \nabla u_1 , \nu) \phi \; \mathrm{d} \mathcal{H}^{n-1} \right\vert  & \leq ||A||_{L^\infty} ||\phi||_{L^\infty} \int_{\partial B_\epsilon(0)} |\nabla u_1| \; \mathrm{d}\mathcal{H}^{n-1}  \\ & \leq \frac{C||A||_\infty||\phi||_\infty}{\epsilon^{1-\mu}} (2\pi \epsilon) \rightarrow 0  \quad (\epsilon \rightarrow 0). 
\end{align}
In particular we obtain that for $Q := -(A \nabla u_2 , (x,y)^T) \vert_{\partial B_1(0)} \in C^\infty( \partial B_1(0))$ one has 
\begin{equation}
    \int_\Omega (A \nabla u , \nabla \phi) \; \mathrm{d}x = \int_{\partial B_1(0)} Q \phi \; \mathrm{d}\mathcal{H}^{n-1} \quad \forall \phi \in C_0^\infty(\Omega) .
\end{equation}
By density and the fact that $A \in L^\infty$ this also holds true for $\phi \in W_0^{1,2}(\Omega)$.
In particular  for arbitrary $\phi \in W^{2,2}(\Omega) \cap W_0^{1,2}(\Omega)$ we may integrate by parts and obtain (as $u$ has zero boundary trace)
\begin{equation}
    \int_\Omega u  \; \mathrm{div}(A \nabla \phi)  \; \mathrm{d}x = \int_{\partial B_1(0)} Q \phi \;\mathrm{d}\mathcal{H}^{n-1}  \quad \forall \phi \in W^{2,2}(\Omega) \cap W_0^{1,2}(\Omega),
\end{equation}
showing that $u$ is a weak solution of an equation of type \eqref{eq:1.1}. The claim follows.
\end{proof}

\begin{remark}
The limit case $\mu \rightarrow 0$ in the previous example shows actually that for general (symmetric and uniformly elliptic) $A \in L^\infty$ no more regularity than $W_0^{1,2}(\Omega)$ can be obtained.
In particular we observe that the best possible regularity depends vitally on the $W^{1,q}$-regularity of the coefficients in $A$. For $q < n $ the optimal regularity is $W_0^{1,2}( \Omega)$ and for $q > n$ the optimal regularity is $W_0^{1,\infty}(\Omega)$. This jump shows again that the case $q=n$ is very interesting.   
\end{remark}

\section{Applications to Free boundary problems}\label{sec:applic}
%
Equations like \eqref{eq:1.1} appear in many free boundary problems, for example in the \emph{thin obstacle problem}, cf \cite[Equation 3.8]{Fernandez}. Here we illustrate two further examples specifically, both of which consider \emph{adhesive free boundary problems}.
\subsection{The Bernoulli problem}
A solution of Bernoulli's problem (in the sense of \cite{Henrot}) is a pair consisting of a suitably smooth function $u: \Omega \rightarrow \mathbb{R}$  and a suitably regular subset $S \subset \subset \Omega$ that satisfies 
\begin{equation}\label{eq:Bern}
    \begin{cases}
      \Delta u = 0 & \textrm{in }\Omega \setminus \overline{S}, \\ \quad u = 0 & \textrm{on } \partial \Omega, \\  \quad u = 1 & \textrm{on }\partial S, \\
     \; \;   \frac{\partial u}{\partial \nu} = Q & \textrm{on }S. 
    \end{cases}
\end{equation}
In \cite{Henrot} it is observed that one can find a one-parameter family of solutions around a given (suitably regular, hyperbolic) \emph{prototype solution}. This is done by means of a carefully chosen \emph{evolution equation}. For the derivation of the evolution law it is possible to profit from \cite[Proposition 4.3]{Henrot}, saying that $u$ is a solution  of \eqref{eq:Bern} if and only if 
\begin{equation}\label{eq:Alti}
    \begin{cases}
     - \Delta u = Q \; \mathcal{H}^{n-1} \mres \partial S  & \textrm{in } \Omega, \\
\quad \; \;       u = 0 & \textrm{on } \partial \Omega, \\  \quad \; \;  u \equiv 1 & \textrm{on }S.
    \end{cases}
\end{equation}
This result requires some regularity of $S$ and uses the \emph{normal jump of the single layer potential}, cf. Remark \ref{rem:4.3}. Regularity of \eqref{eq:Alti} can be studied with the methods developed in this article, with optimal regularity requirements on $S$. Possibly, this can also help for the study of the derived evolution equation under less regularity assumptions on the needed prototype solution. Moreover, the Bernoulli problem can be considered for other operators than $-\Delta$. 


\subsection{The biharmonic Alt-Caffarelli Problem}
Recently, the \emph{biharmonic Alt-Caffarelli problem} has raised a lot of interest, cf. \cite{Serena1}, \cite{Serena2}, \cite{AltCaffarelliMarius}, \cite{AltCaffarelliMariusGrunau}. This is the minimization of 
\begin{equation}
    \mathcal{E}(u) := \int_\Omega (\Delta u)^2 \; \mathrm{d}x + |\{ x \in \Omega : u(x) > 0 \}|
\end{equation}
among all $u \in W^{2,2}(\Omega)$ such that $u \vert_{\partial \Omega} = u_0$ for some $u_0 \in C^\infty(\overline{\Omega}), u_0 > 0$. Here $| \cdot |$ denotes the Lebesgue measure. Minimizers of this problem have to find an optimal balance between \emph{bending} (measured by the first summand) and \emph{positivity} (measured by the second summand). Since $u_0 > 0$ it will however require some bending to leave the positivity region, which is why the interests of both terms are conflicting. 
The set $\Gamma := \{ u = 0 \}$ is where the functional loses its regularity and is hence considered the \emph{free boundary} of the problem. In \cite[Theorem 1.4]{AltCaffarelliMarius} it is shown that in dimension two (i.e. if  $\Omega \subset \mathbb{R}^2$) minimizers lie in $C^2(\overline{\Omega})$ and satisfy $\nabla u \neq 0$ on $\Gamma = \{ u = 0\}$. Moreover $\Gamma = \partial \Omega'$ for some $C^2$-smooth domain $\Omega' \subset \subset \Omega$. Additionally, each minimizer $u$ satisfies the equation 
\begin{equation}\label{eq:Eulieq}
    \int_\Omega \Delta u \Delta \phi \; \mathrm{d}x = \int_\Gamma \frac{1}{|\nabla u|} \phi \; \mathrm{d}x \quad \forall \phi \in W^{2,2}(\Omega) \cap W_0^{1,2}(\Omega). 
\end{equation}
Our results can therefore be used to study further regularity of minimizers. Indeed, if we define  $w := \Delta u \in L^2(\Omega)$  then \eqref{eq:Eulieq} yields that $w$ is a very weak solution of  
\begin{equation}
    \begin{cases}
-\Delta w = Q \; \mathcal{H}^{n-1}\mres \Gamma & \textrm{in }\Omega, \\ w = 0 & \textrm{on }\partial \Omega, 
    \end{cases}
\end{equation}
with $Q := \frac{1}{|\nabla u|} \in C^1(\Gamma)$. We infer from our Main Theorem \ref{thm:main} that $w = \Delta u \in W^{1,\infty}(\Omega)$. By elliptic regularity this also implies $u \in W^{3,p}(\Omega)$ for all $p \in (1,\infty)$. It will be subject of future research whether also $u \in W^{3,\infty}(\Omega)$. This would make $\Gamma = \{ u = 0 \}$ a $C^{2,1}$-manifold as $\nabla u$ does not vanish on the level set $\Gamma$. We remark also that Theorem \ref{thm:main} paves the way for the study of the \emph{$A-$biharmonic Alt-Caffarelli problem}, i.e. the minimization of 
\begin{equation}
    \mathcal{E}(u) := \int_\Omega |\mathrm{div}(A(x) \nabla u(x)) |^2 \; \mathrm{d}x + |\{x \in \Omega:  u(x) > 0 \}| 
\end{equation}
under the same boundary conditions as above. 
\appendix

\section{Very weak solutions}\label{app:weakform}

In this appendix we show that (under mild conditions) very weak solutions are also weak solutions, as already discussed in the introduction. In the case of $A= Id_{n\times n}$ this is already studied in \cite[Section 3.1]{Ponce}. The mild condition we need additionally is that $A \in W^{1,q}(\Omega;\mathbb{R}^{n\times n})$ for some $q> n$. 

\begin{lemma}\label{lem:stronsolution}
Let $\Omega \subset \mathbb{R}^n$ be a smooth domain, $n \geq 2$. Suppose that $A \in W^{1,q}(\Omega;\mathbb{R}^{n\times n})$ for some $q >  n$ and let $p \in [2,n]$. Then for each $g \in L^p(\Omega)$ there exists a unique $\psi \in W^{2,p}(\Omega) \cap W_0^{1,p}(\Omega)$ such that 
\begin{equation}
   \mathrm{div}(A \nabla \psi ) = g \quad \textrm{pointwise a.e..}
\end{equation}
\end{lemma}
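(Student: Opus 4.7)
The plan is to realize the operator $T\psi := \mathrm{div}(A\nabla\psi)$ as a compact perturbation of an isomorphism between $W^{2,p}(\Omega)\cap W_0^{1,p}(\Omega)$ and $L^p(\Omega)$, so that the Fredholm alternative reduces existence/uniqueness to injectivity. Since $A \in W^{1,q}(\Omega;\mathbb{R}^{n\times n})$ with $q > n$, Sobolev embedding gives $A \in C^{0,\alpha}(\overline{\Omega}) \subset L^\infty$ with $\alpha = 1 - n/q$. Expanding,
\begin{equation*}
    T\psi = L_0\psi + K\psi, \qquad L_0\psi := \sum_{i,j=1}^n a_{ij}\, \partial^2_{ij}\psi, \qquad K\psi := \sum_{j=1}^n b_j\, \partial_j\psi,
\end{equation*}
where $b_j := \sum_i \partial_i a_{ij} \in L^q(\Omega)$. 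The principal part $L_0$ is a uniformly elliptic non-divergence operator with continuous leading coefficients on a smooth bounded domain, so the classical $L^p$-Calder\'on--Zygmund theory (e.g.\ \cite[Theorem 9.15]{GilTru}), combined with the Aleksandrov--Bakelman--Pucci maximum principle for uniqueness, shows that $L_0: W^{2,p}(\Omega)\cap W_0^{1,p}(\Omega) \to L^p(\Omega)$ is a topological isomorphism for every $p \in (1,\infty)$.

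Next I would verify that $K: W^{2,p}(\Omega)\cap W_0^{1,p}(\Omega) \to L^p(\Omega)$ is compact, which is the point where the hypothesis $q > n$ is essential. By Rellich--Kondrachov the embedding $W^{2,p}(\Omega) \hookrightarrow W^{1,r}(\Omega)$ is compact for every $r$ strictly below the Sobolev exponent $p^{*} = \frac{np}{n-p}$ (for $p = n$, for every $r < \infty$). An elementary computation using $q > n \geq p$ shows $\frac{pq}{q-p} < p^{*}$, so one may choose $r$ in the interval $(\frac{pq}{q-p}, p^{*})$; for such $r$, H\"older's inequality yields $\|K\psi\|_{L^t(\Omega)} \leq \|b\|_{L^q(\Omega)}\|\nabla\psi\|_{L^r(\Omega)}$ with $\frac{1}{t} = \frac{1}{q}+\frac{1}{r} < \frac{1}{p}$, and $L^t(\Omega) \hookrightarrow L^p(\Omega)$ continuously since $\Omega$ is bounded. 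Composing the compact embedding with multiplication by $b$ and the continuous inclusion $L^t \hookrightarrow L^p$ gives compactness of $K$. Hence $T = L_0\bigl(I + L_0^{-1}K\bigr)$ is the composition of an isomorphism with an identity-plus-compact, so $T$ itself is Fredholm of index zero.

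It remains to check that $T$ has trivial kernel. Suppose $\psi \in W^{2,p}(\Omega) \cap W_0^{1,p}(\Omega)$ satisfies $\mathrm{div}(A\nabla\psi) = 0$ a.e. Because $p \geq 2$ and $\Omega$ is bounded, $\psi \in W_0^{1,2}(\Omega)$, and since $A\nabla\psi \in W^{1,s}$ for suitable $s > 1$ integration by parts is legitimate, giving
\begin{equation*}
    0 = -\int_\Omega \psi\, \mathrm{div}(A\nabla\psi)\, \mathrm{d}x = \int_\Omega (A\nabla\psi,\nabla\psi)\, \mathrm{d}x \geq \lambda \int_\Omega |\nabla\psi|^2\, \mathrm{d}x,
\end{equation*}
so $\nabla\psi \equiv 0$ and the zero boundary trace forces $\psi \equiv 0$. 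The Fredholm alternative then upgrades $T$ to a bijection, which delivers existence and uniqueness simultaneously. The one delicate step is the compactness of $K$; it is precisely the strict inequality $q > n$ that secures the gain of integrability needed to land in $L^p$ after a compact Sobolev embedding, and the argument would just fail at the endpoint $q = n$, in line with the limiting nature of that case noted in Section \ref{sec:optimalitydisc}.
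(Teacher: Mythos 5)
Your argument is correct and takes a genuinely different route from the paper. The paper approximates $A$ by smooth matrices $A^{(k)}$ converging in $W^{1,q}$, solves $\mathrm{div}(A^{(k)}\nabla\psi_k)=g$ via \cite[Theorem 9.15]{GilTru}, and then works to show that the $W^{2,p}$-a-priori estimates from \cite[Lemma 9.17]{GilTru} are uniform in $k$ (using that the moduli of continuity and ellipticity constants of $A^{(k)}$ stabilize because $W^{1,q}\hookrightarrow C^{0,\gamma}$), including a Banach-triple interpolation step to absorb the $\|\nabla\psi_k\|_{L^{pq/(q-p)}}$ term and an energy estimate for $\|\psi_k\|_{W_0^{1,2}}$, before passing to a weak limit. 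You dispense with the approximation entirely: you decompose $\mathrm{div}(A\nabla\cdot)=L_0+K$ with $L_0$ the nondivergence principal part (an isomorphism $W^{2,p}\cap W_0^{1,p}\to L^p$ by the same Gilbarg--Trudinger results) and $K=(\mathrm{div}A,\nabla\cdot)$ a lower-order term whose coefficients are merely $L^q$, and you verify compactness of $K$. Your compactness computation is sound: since $q>n\geq p$ one has $pq/(q-p)<p^*$, so $r$ can be chosen in that gap, Rellich--Kondrachov gives $W^{2,p}\hookrightarrow W^{1,r}$ compactly, and H\"older then lands in $L^t$ for some $t>p\subset L^p$. The kernel-triviality argument by integration by parts is the same energy argument the paper uses for uniqueness, and it is legitimate here since $A\nabla\psi\in W^{1,s}$ for some $s>1$ and $\psi$ has vanishing trace. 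Your Fredholm approach is shorter and makes the role of the threshold $q>n$ transparent (it is exactly what forces $pq/(q-p)<p^*$); the paper's approximation scheme is longer but produces explicit quantitative bounds on $\|\psi\|_{W^{2,p}}$ in terms of $\|g\|_{L^p}$ and $\|\mathrm{div}A\|_{L^q}$ without invoking the open mapping theorem, which in your argument is where the inverse's boundedness comes from.
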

\if 0 
\begin{proof}
\textbf{Step 1.} Existence. 
From \cite[Theorem 9.15 and Lemma 9.17]{GilTru} we know that there exists $C= C(p,n,A)>0$  such that for each $f \in L^p(\Omega)$ there exists a unique strong solution $\phi_f \in W^{2,p}(\Omega) \cap W_0^{1,p}(\Omega)$ of the Dirichlet problem
\begin{equation}
   \begin{cases} \sum_{i,j = 1}^n a_{ij}(x)\partial^2_{ij} \phi_f = f &  \textrm{a.e. \;} x \in \Omega, \\ \phi_f = 0 & \textrm{on } \partial \Omega
   \end{cases}
\end{equation}
and one has $||\phi_f||_{W^{2,p}(\Omega)} \leq C  ||f||_{L^p(\Omega)}$.
\end{proof}
 Now choose $s \in (1,\infty)$ such that $s \geq 2$ and
 \begin{equation}
 \begin{cases}
     \frac{pq}{q-p} < s < \frac{pn}{n-p} & p < n, \\
     \frac{pq}{q-p} < s < \infty & p = n.
     \end{cases}
 \end{equation}
 Notice that achieving $s \geq 2$ is always possible as $\frac{pn}{n-p}>  p \geq 2$. 
 We define the map
\begin{equation}
    T : W_0^{1, s}(\Omega) \rightarrow W_0^{1,s}(\Omega), \quad T(\psi) := \phi
\end{equation}
where $\phi \in W^{2,p}(\Omega) \cap W_0^{1,p}(\Omega)$ is the unique solution of 
\begin{equation}\label{eq:defT}
   \begin{cases} \sum_{i,j = 1}^n a_{ij}(x)\partial^2_{ij} \phi = g - (\mathrm{div}(A), \nabla \psi)  &  \textrm{a.e. \;} x \in \Omega, \\ \phi = 0 & \textrm{on } \partial \Omega.
   \end{cases}
\end{equation}
We have to check well-definedness of $T$.
To this end we first check that the right hand side of this equation lies in $L^p$. We compute
\begin{equation}
   \int_\Omega |(\mathrm{div}(A), \nabla \psi)|^p \; \mathrm{d}x \leq \int_\Omega |\mathrm{div}(A)|^p |\nabla \psi|^p \; \mathrm{d}x \leq \left( \int_\Omega |\mathrm{div}(A)|^q  \; \mathrm{d}x \right)^\frac{p}{q} \left( \int_\Omega |\nabla \psi|^{\frac{pq}{q-p}} \; \mathrm{d}x \right)^\frac{q-p}{pq},
\end{equation}
i.e. 
\begin{equation}\label{eq:Hoeldereasy}
    ||(\mathrm{div}(A), \nabla \psi)||_{L^p} \leq || \mathrm{div}(A)||_{L^q} ||\nabla \psi||_{L^{\frac{qp}{q-p}}}.
\end{equation}

Since $\nabla \psi \in L^s(\Omega) \subset L^{\frac{pq}{q-p}}(\Omega)$ we infer that the integral in the previous equation is finite. Since $L^p$ is a vector space also $g - (\mathrm{div}(A), \nabla \psi)$ lies in $L^p$. 
For well-definedness of $T$ it also needs to be noted that $W^{2,p}(\Omega) \cap W_0^{1,p}(\Omega) \hookrightarrow W_0^{1,s}(\Omega)$ (as $W^{2,p}(\Omega) \hookrightarrow W^{1,\frac{pn}{n-p}}(\Omega)$ and $s < \frac{pn}{n-p}$).  
We show that $T$ has a fixed point. To that end we employ Schaefers fixed point theorem. For this we need to check three prerequisites
\begin{enumerate}
    \item[(a)] $T$ is continuous.
    \item[(b)] $T$ is compact.
    \item[(c)] The set $A := \{ \psi \in W_0^{1,s}(\Omega) : \kappa T(\psi) = \psi \; \textrm{for some }\kappa \in (0,1] \} \subset W_0^{1,s}(\Omega)$ is bounded. 
\end{enumerate}
\textbf{(a).} For the continuity observe that $ \phi_0 := T(\psi_1)- T(\psi_2)$ solves 
\begin{equation}
   \begin{cases} \sum_{i,j = 1}^n a_{ij}(x)\partial^2_{ij} \phi_0 =  (\mathrm{div}(A), \nabla (\psi_1 - \psi_2)  &  \textrm{a.e. \;} x \in \Omega, \\ \phi_0 = 0 & \textrm{on } \partial \Omega.
   \end{cases}
\end{equation}
We therefore have by \eqref{eq:Hoeldereasy}
\begin{align}
    ||T(\psi_1) - T(\psi_2)||_{W^{2,p}}  & \leq  ||\phi_0||_{W^{2,p}} \leq C||(\mathrm{div}(A) , \nabla (\psi_1 - \psi_2))||_{L^p}   \\ & \leq C||\mathrm{div}(A)||_{L^q} ||\nabla( \psi_1- \psi_2)||_{L^{\frac{qp}{q-p}}} \leq C||\mathrm{div}(A)||_{L^q} ||\psi_1- \psi_2||_{W_0^{1,s}} 
\end{align}
Since $W^{2,p}(\Omega) \cap W_0^{1,p}(\Omega) \hookrightarrow W_0^{1,s}(\Omega)$ we find that there exists $c > 0$ such that 
\begin{equation}
    c ||T(\psi_1) - T(\psi_2)||_{W_0^{1,s}}\leq C||\mathrm{div}(A)||_{L^q} ||\psi_1- \psi_2||_{W_0^{1,s}}, 
\end{equation}
implying that $T$ is (Lipschitz-)continuous.  \\
\textbf{(b).}
For the compactness we show that for each bounded sequence $(\psi_j)_{j \in \mathbb{N}} \subset W_0^{1,s}(\Omega)$ one has that $(T(\psi_j))_{j \in \mathbb{N}}$ is bounded in $W^{2,p}(\Omega)$. Since the embedding $W^{2,p}(\Omega) \cap W_0^{1,p}(\Omega) \hookrightarrow W_0^{1,s}(\Omega)$ is compact (as $s < \frac{np}{n-p}$) we infer the compactness of $T$. 
For the requires $W^{2,p}$-bound observe that for each $\psi \in W_0^{1,s}(\Omega)$ one has by \eqref{eq:defT} and \eqref{eq:Hoeldereasy}
\begin{align}\label{eq:coreestimate}
    ||T(\psi)||_{W^{2,p}} & \leq C||g- (\mathrm{div}(A), \nabla \psi)||_{L^p} \leq C ( ||g||_{L^p} + || (\mathrm{div}(A), \nabla \psi)||_{L^p}) 
    \\ & \leq C ( ||g||_{L^p} + ||\mathrm{div}(A)||_{L^q} ||\nabla \psi||_{L^{\frac{qp}{q-p}}}) 
    \\  & \leq C( ||g||_{L^p} + ||\mathrm{div}(A)||_{L^q} ||\psi||_{W_0^{1,s}}).
\end{align}
\textbf{(c).} Let $\psi \in A$. Notice that the fact that the range of $T$ lies in $W^{2,p}(\Omega) \cap W_0^{1,p}(\Omega)$ implies that $\psi \in W^{2,p}$. With \eqref{eq:coreestimate} we find that \begin{equation}\label{eq:w2pbound}
    ||\psi||_{W^{2,p}} = ||\kappa T(\psi)||_{W^{2,p}} \leq ||T(\psi)||_{W^{2,p}} \leq   C( ||g||_{L^p} + ||\mathrm{div}(A)||_{L^q} ||\psi||_{W_0^{1,s}}).
\end{equation}
Now since $W^{2,p} \cap W_0^{1,p} \overset{c}{\hookrightarrow}  W_0^{1,s} \hookrightarrow W_0^{1,2}$ is a Banach triple (meaning that the first embedding is compact and the second continuous) we obtain that for each $\epsilon > 0$ there exists $D(\epsilon) >0 $ (independent of $\psi$) such that 
\begin{equation}
    ||\psi||_{W_0^{1,s}} \leq \epsilon ||\psi||_{W^{2,p}} + D(\epsilon) ||\psi||_{W_0^{1,2}}. 
\end{equation}
Choosing $\epsilon = \frac{1}{2C ||\mathrm{div}(A)||_{L^q}+1}$ (and calling then $D(\epsilon) = D= D(A,n,p)$) we find from \eqref{eq:w2pbound} that 
\begin{equation}
    ||\psi||_{W^{2,p}} \leq \frac{1}{2} ||\psi||_{W^{2,p}} + C ||g||_{L^p} + C D ||\mathrm{div}(A)||_{L^q}||\psi||_{W_0^{1,2}}.
\end{equation}
We end up with the estimate 
\begin{equation}
     ||\psi||_{W^{2,p}} \leq C(1+D) (||g||_{L^p} + ||\mathrm{div}(A)||_{L^q} ||\psi||_{W_0^{1,2}}) .
\end{equation}

\fi 

\begin{proof}
 Let $(A^{(k)})_{k \in \mathbb{N}} \subset C^\infty(\overline{\Omega};\mathbb{R}^{n\times n})$ be such that $A^{(k)} \rightarrow A$ in $W^{1,q}(\Omega;\mathbb{R}^{n\times n})$. We denote the coefficients of $A^{(k)}$ by $a_{ij}^{(k)}$ for $i,j = 1,...,n$. By elliptic regularity (cf. \cite[Theorem 9.15]{GilTru}) there exist $\psi_k \in W^{2,p}(\Omega) \cap W_0^{1,p}(\Omega)$ such that (pointwise a.e.) there holds
 \begin{equation}\label{eq:DGL}
     \sum_{i,j = 1}^n  a_{ij}^{(k)} \partial^2_{ij} \psi_k  + (\mathrm{div}(A^{(k)}),\nabla \psi_k) = g,  
 \end{equation}
 i.e. pointwise a.e. there holds
\begin{equation}\label{eq:pointwisedivi}
    \mathrm{div}(A^{(k)} \nabla \psi_k) = g.
\end{equation} 
 We define next the differential operator
 \begin{equation}
     L_k u := \sum_{i,j = 1}^n  a_{ij}^{(k)} \partial^2_{ij} u.
 \end{equation}
 For all $k \in \mathbb{N}$, $L_k$ satisfies the prerequisites of \cite[Lemma 9.17]{GilTru} and therefore there exist constants $C_k$ (cf. \cite[Lemma 9.15]{GilTru}) depending only on $\Omega$, $p$, the ellipticity constant of $A^{(k)}$ and the $L^\infty$-norm of $A^{(k)}$ and the moduli of continuity of $A^{(k)}$ such that 
 \begin{equation}
     ||u||_{W^{2,p}} \leq C_k ||L_k u||_{L^p} \quad \forall u \in W^{2,p}(\Omega) \cap W_0^{1,p}(\Omega).
 \end{equation}
 As $A^{(k)}$ converges to $A$ in $C^{0,\gamma}$ for  $\gamma = 1 - \frac{n}{q} > 0$ the moduli of continuity and the ellipticity constants can all be uniformly controlled in $k$ and hence one can find a uniform $\bar{C} > 0$ such that for all $k \in \mathbb{N}$ there holds
 \begin{equation}\label{eq:eLLIPTICestimate}
     ||u||_{W^{2,p}} \leq \bar{C} ||L_k u||_{L^p} \quad \forall u \in W^{2,p}(\Omega) \cap W_0^{1,p}(\Omega).
 \end{equation}
 The PDE \eqref{eq:DGL} can now also be viewed as follows 
 \begin{equation}
     L_k \psi_k =   g - (\mathrm{div}(A^{(k)}),\nabla \psi_k) . 
 \end{equation}
  We infer from \eqref{eq:eLLIPTICestimate} and Hölder's inequality
  \begin{align}
      ||\psi_k||_{W^{2,p}} & \leq \bar{C} ||L_k \psi_k||_{L^p} \leq  \bar{C} || g- (\mathrm{div}(A^{k}),\nabla \psi_k) ||_{L^p}
      \\ & \leq  \bar{C} ( ||g||_{L^p} + ||(\mathrm{div}(A^{(k)}), \nabla \psi_k )||_{L^p})
      \\ & \leq  \bar{C} ( ||g||_{L^p} + ||\mathrm{div}(A^{(k)})||_{L^q} || \nabla \psi_k ||_{L^{\frac{qp}{q-p}}}) \\ &  =  \bar{C} ( ||g||_{L^p} + ||\mathrm{div}(A^{(k)})||_{L^q} ||  \psi_k ||_{W_0^{1,\frac{qp}{q-p}}}), \label{eq:W2peesti}
  \end{align}
  We next intend to bound the right hand side uniformly in $k$. 
  Notice that $q >n$ implies that $\frac{qp}{q-p} < \frac{np}{n-p}$ and hence $W^{2,p}(\Omega) \cap W_0^{1,p}(\Omega)$ embeds compactly into
  $W_0^{1, \frac{qp}{q-p}}(\Omega)$.  Since also $\frac{qp}{q-p}\geq p \geq 2$ we observe that 
  \begin{equation}
      W^{2,p}(\Omega) \cap W_0^{1,p}(\Omega) \overset{c}{\hookrightarrow} W_0^{1, \frac{qp}{q-p}}(\Omega) \hookrightarrow W_0^{1,2}(\Omega) 
  \end{equation}
  forms a \emph{Banach triple} (i.e. the first embedding is compact and the second is continuous). It follows that for each $\epsilon> 0$ there exists $D(\epsilon) > 0$ independent of $k$ such that 
  \begin{equation}
    ||  \psi_k ||_{W_0^{1,\frac{qp}{q-p}}} \leq \epsilon ||\psi_k||_{W^{2,p}} + D(\epsilon) ||\psi_k||_{W_0^{1,2}}. 
  \end{equation}
  Using this in \eqref{eq:W2peesti} with $\epsilon := \frac{1}{2 \bar{C} \sup_{k \in \mathbb{N}} ||\mathrm{div}(A^{(k)})||_{L^q} + 1} $ (and calling $D := D(\epsilon) = D(\bar{C}, ||A||_{W^{1,q}})$) we infer from \eqref{eq:W2peesti} that 
  \begin{equation}
      ||\psi_k||_{W^{2,p}} \leq \frac{1}{2}||\psi_k||_{W^{2,p}} +  \bar{C} ||g||_{L^p} + \bar{C}D ||\mathrm{div}(A^{(k)})||_{L^q} ||\psi_k||_{W_0^{1,2}}.
  \end{equation}
  In particular
  \begin{equation}\label{eq:weezweipee}
      ||\psi_k||_{W^{2,p}} \leq 2 \bar{C}( 1+ D) ( ||g||_{L^p} + ||\mathrm{div}(A^{(k)})||_{L^q} ||\psi_k||_{W_0^{1,2}}). 
  \end{equation}
  It remains to bound $||\psi_k||_{W_0^{1,2}}$ uniformly in $k$. To that end we use \eqref{eq:pointwisedivi}, 
  which yields (if $\lambda_k$ is the ellipticity constant of $A^{(k)}$)
  \begin{align}
      \int_\Omega |\nabla \psi_k|^2 \; \mathrm{d}x  & \leq \frac{1}{\lambda_k} \int_\Omega (A^{(k)}\nabla \psi_k, \nabla \psi_k) \; \mathrm{d}x  
       =   -\frac{1}{\lambda_k} \int_\Omega \mathrm{div}(A^{(k)} \nabla \psi_k) \psi_k \; \mathrm{d}x  = -\frac{1}{\lambda_k} \int_\Omega g \psi_k \; \mathrm{d}x.
  \end{align}
   We infer 
  \begin{equation}
      ||\psi_k||_{W_0^{1,2}}^2 \leq \frac{C_1(\Omega)}{\lambda_k} ||g||_{L^p}||\psi_k||_{L^2} \leq \frac{C_2(\Omega)}{\lambda_k} ||g||_{L^p} ||\psi_k||_{W_0^{1,2}},
  \end{equation}
  where $C_i(\Omega),i=1,2$ depend on the volume and the optimal Poincare constant of $\Omega$. Since $A^{(k)} \rightarrow A$ in $C^{0,\gamma}$ one can also bound the reciprocals of the ellipticity constants $\frac{1}{\lambda_k}$ uniformly in $k$ by $\frac{1}{\tilde{\lambda}}$. We infer 
  \begin{equation}
      ||\psi_k||_{W_0^{1,2}} \leq \frac{C_2(\Omega)}{\tilde{\lambda}} ||g||_{L^p}.
  \end{equation}
  Using this uniform bound in \eqref{eq:weezweipee} and the fact that $||\mathrm{div}(A^{(k)})||_{L^q}$ is uniformly bounded in $k$ we infer that $(\psi_k)_{k \in \mathbb{N}}$ is uniformly bounded in $W^{2,p}(\Omega)\cap W_0^{1,p}(\Omega)$. Hence we may extract a weakly convergent subsequence (which we do not relabel) and a limit $\psi \in W^{2,p}(\Omega) \cap W_0^{1,p}(\Omega)$ such that $\psi_k \rightharpoonup \psi$ weakly in $W^{2,p}$. We show now that $\mathrm{div}(A\nabla \psi) = g$ pointwise a.e.. To this end fix $\eta \in C_0^\infty(\Omega)$. Then using that $A^{(k)} \rightarrow A$ in $C^{0,\gamma}$ and $\psi_k \rightarrow \psi$ in $W^{1,p}$ we find 
  \begin{align}
      \int_\Omega \mathrm{div}(A \nabla \psi) \eta  & =- \int_\Omega (A \nabla \psi , \nabla \eta)  \; \mathrm{d}x  = -\lim_{k \rightarrow \infty} \int_\Omega (A^{(k)} \nabla \psi_k , \nabla \eta) \; \mathrm{d}x \\ &= \lim_{k \rightarrow \infty} \int_\Omega \mathrm{div}(A^{(k)} \nabla \psi_k)  \eta  \; \mathrm{d}x = \lim_{k \rightarrow \infty} \int_\Omega g  \eta \; \mathrm{d}x =  \int_\Omega g  \eta \; \mathrm{d}x.
  \end{align}
  Since $\eta \in C_0^\infty(\Omega)$ was arbitrary we conclude $\mathrm{div}(A \nabla \psi ) = g$ pointwise a.e.. \\
  \textbf{Step 2.} Uniqueness. By linearity it suffices to show that for each $\psi \in W^{2,p}(\Omega) \cap W_0^{1,p}(\Omega)$ such that $\mathrm{div}(A \nabla \psi) = 0$ a.e. in $\Omega$ there must hold $\psi = 0$ a.e. in $\Omega$. To this end we choose $\eta \in C_0^\infty(\Omega)$ arbitrary and infer 
  \begin{equation}
      0 = \int_{\Omega} \mathrm{div}(A \nabla \psi) \eta \; \mathrm{d}x = \int_\Omega (A \nabla \psi, \nabla \eta) \; \mathrm{d}x. 
  \end{equation}
  Approximating $\psi$ in $W_0^{1,2}$ with $C_0^\infty(\Omega)$- functions we infer 
  \begin{equation}
      0 = \int_\Omega (A \nabla \psi , \nabla \psi) \; \mathrm{d}x \geq \lambda \int_\Omega |\nabla \psi|^2 \; \mathrm{d}x, 
  \end{equation}
  whereupon we conclude that  $\psi \equiv \mathrm{const}.$ The fact that $\psi \in W_0^{1,p}(\Omega)$ yields that $\psi \equiv 0$. 
\end{proof}

\begin{lemma}\label{lem:weakveryweak}
Let $\Omega \subset \mathbb{R}^n, n \geq 2$ be a smooth domain and $\Gamma \subset \subset \Omega$ be a Lipschitz hypersurface. Further assume $Q \in L^\infty(\Gamma)$ and $A \in W^{1,q}(\Omega;\mathbb{R}^{n\times n})$. Let $u \in L^2(\Omega)$. Then the following are equivalent 

    \begin{equation}
    \mathrm{(1)} \quad -\int_\Omega u \;  \mathrm{div}(A \nabla \phi) \; \mathrm{d}x = \int_\Gamma Q \phi \; \mathrm{d}\mathcal{H}^{n-1} \quad \forall \phi \in W^{2,2}(\Omega)\cap W_0^{1,2}(\Omega), \quad \quad \quad 
    \end{equation}
    \begin{equation}
      \mathrm{(1')} \quad \; \; -\int_\Omega u \;  \mathrm{div}(A \nabla \phi) \; \mathrm{d}x = \int_\Gamma Q \phi \; \mathrm{d}\mathcal{H}^{n-1} \quad \forall \phi \in C^2(\overline{\Omega}) : \phi\vert_{\partial \Omega} = 0, \quad \quad \quad \; 
    \end{equation}
    \begin{equation}\label{eq:weaksmooth}
    \mathrm{(2)} \quad \textrm{ $u \in W_0^{1,2}(\Omega)$ and } \quad    \int_\Omega (A \nabla u , \nabla \phi) \; \mathrm{d}x = \int_\Gamma Q \phi \; \mathrm{d}x \quad \forall \phi \in C_0^\infty(\Omega). \; 
    \end{equation}
\end{lemma}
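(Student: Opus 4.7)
\textbf{Proof plan for Lemma \ref{lem:weakveryweak}.}

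The plan is to prove the chain (2) $\Rightarrow$ (1) $\Rightarrow$ (1') $\Rightarrow$ (1) $\Rightarrow$ (2), but rearranged so that the substantive work appears in two places: integration by parts to move from (2) to (1), and a duality/surjectivity argument to move from (1) to (2). The equivalence (1) $\Leftrightarrow$ (1') is essentially a density statement.

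First I would verify (1) $\Rightarrow$ (1'), which is immediate since every $\phi \in C^2(\overline{\Omega})$ with $\phi\vert_{\partial\Omega}=0$ lies in $W^{2,2}(\Omega) \cap W_0^{1,2}(\Omega)$ by smoothness of $\Omega$. For (1') $\Rightarrow$ (1), given $\phi \in W^{2,2} \cap W_0^{1,2}$, I would approximate it by a sequence $\phi_k \in C^2(\overline{\Omega})$ with $\phi_k\vert_{\partial\Omega}=0$ and $\phi_k \to \phi$ in $W^{2,2}$ (possible as $\Omega$ is smooth, e.g.\ by solving the Dirichlet problem for suitable smoothed right-hand sides, or by reflection/mollification near $\partial\Omega$). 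Since $A \in W^{1,q} \hookrightarrow L^\infty$ for $q>n$ and $\partial_i A \cdot \nabla \phi_k \to \partial_i A \cdot \nabla \phi$ in $L^2$ by the Hölder combination $\frac{1}{q}+\frac{1}{r}=\frac{1}{2}$ (with $\nabla\phi_k \to \nabla \phi$ in $L^r$ coming from $W^{2,2} \hookrightarrow W^{1,r}$ for $r=\frac{2q}{q-2}<\frac{2n}{n-2}$), the quantity $\mathrm{div}(A\nabla \phi_k)$ converges to $\mathrm{div}(A\nabla \phi)$ in $L^2(\Omega)$. Trace continuity on $\Gamma$ (a Lipschitz hypersurface) gives $\phi_k\vert_\Gamma \to \phi\vert_\Gamma$ in $L^1(\Gamma)$, so both sides of (1') pass to the limit.

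Next I would establish (2) $\Rightarrow$ (1). For $\phi \in W^{2,2} \cap W_0^{1,2}$ and $A \in W^{1,q}$ with $q>n$, the same Hölder argument as above shows $A\nabla \phi \in W^{1,2}(\Omega;\mathbb{R}^n)$ with $\mathrm{div}(A\nabla \phi) \in L^2(\Omega)$. Approximating $u \in W_0^{1,2}(\Omega)$ by $u_k \in C_0^\infty(\Omega)$, classical integration by parts yields
\begin{equation}
\int_\Omega (A\nabla u_k, \nabla \phi)\, \mathrm{d}x = -\int_\Omega u_k\, \mathrm{div}(A\nabla \phi)\, \mathrm{d}x,
\end{equation}
and passing to the limit (using $u_k \to u$ in $W_0^{1,2}$ and hence also in $L^2$) gives the identity for $u$. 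Since the density argument from above extends (2) from test functions in $C_0^\infty(\Omega)$ to test functions in $W_0^{1,2}(\Omega)$, and since $\phi \in W_0^{1,2}$, we obtain $\int_\Gamma Q\phi\, \mathrm{d}\mathcal{H}^{n-1} = -\int_\Omega u\, \mathrm{div}(A\nabla \phi)\, \mathrm{d}x$, which is (1). (Here I use implicitly that the trace of $\phi \in W_0^{1,2}$ on $\Gamma$ makes sense as in Lemma \ref{lem:measterm}.)

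The main step, and the one I expect to be the most delicate, is (1) $\Rightarrow$ (2). The strategy is a uniqueness argument in the spirit of Step 2 of the proof of Lemma \ref{lem:W1preg}. By the Lax--Milgram lemma applied to the continuous coercive bilinear form $(u,v)\mapsto \int_\Omega (A\nabla u,\nabla v)\, \mathrm{d}x$ on $W_0^{1,2}(\Omega)$, together with the fact that $T(\psi):=\int_\Gamma Q\psi\, \mathrm{d}\mathcal{H}^{n-1}$ defines an element of $W_0^{1,2}(\Omega)^*$ by Lemma \ref{lem:measterm}, there exists $w \in W_0^{1,2}(\Omega)$ satisfying (2). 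By the already-proven direction (2) $\Rightarrow$ (1), $w$ also satisfies (1), so
\begin{equation}
\int_\Omega (u-w)\, \mathrm{div}(A\nabla \phi)\, \mathrm{d}x = 0 \qquad \forall \phi \in W^{2,2}(\Omega) \cap W_0^{1,2}(\Omega).
\end{equation}
The key tool to conclude $u=w$ a.e.\ is Lemma \ref{lem:stronsolution}: for every $g \in L^2(\Omega)$ there exists $\phi_g \in W^{2,2}(\Omega) \cap W_0^{1,2}(\Omega)$ with $\mathrm{div}(A\nabla \phi_g) = g$ a.e., which is exactly where the hypothesis $A \in W^{1,q}$ for some $q>n$ gets used essentially. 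Testing the displayed identity with $\phi_g$ yields $\int_\Omega (u-w)g\, \mathrm{d}x=0$ for all $g \in L^2(\Omega)$, hence $u=w$ a.e.\ and in particular $u \in W_0^{1,2}(\Omega)$ satisfies (2).
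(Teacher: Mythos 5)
Your proof is correct, and the main implication $(1) \Rightarrow (2)$ is handled via a genuinely different, and arguably cleaner, route than the paper's. The paper proves $(1) \Rightarrow (2)$ directly: using Lemma \ref{lem:stronsolution} with right-hand side $\mathrm{div}(f)$ for $f \in C_0^\infty(\Omega;\mathbb{R}^n)$, it first shows that $S(f) = -\int_\Omega u\,\mathrm{div}(f)\,\mathrm{d}x$ extends to a bounded functional on $L^2(\Omega;\mathbb{R}^n)$, deducing $\nabla u \in L^2(\Omega)$; it then verifies the weak identity \eqref{eq:weaksmooth}; and finally, by a careful choice of test function $\phi = \psi d_\Omega$ near $\partial\Omega$ (with $d_\Omega$ the signed distance to $\partial\Omega$), it shows $\mathrm{tr}_{\partial\Omega}(u) = 0$, so $u \in W_0^{1,2}(\Omega)$. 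You instead first establish $(2) \Rightarrow (1)$ and then, for $(1) \Rightarrow (2)$, construct a candidate $w \in W_0^{1,2}(\Omega)$ satisfying (2) via Lax--Milgram and Lemma \ref{lem:measterm}, observe that $w$ also satisfies (1), and conclude $u = w$ a.e.\ by testing $\int_\Omega (u-w)\,\mathrm{div}(A\nabla\phi_g)\,\mathrm{d}x = 0$ against $\phi_g$ with $\mathrm{div}(A\nabla\phi_g) = g$ for arbitrary $g \in L^2(\Omega)$, again invoking Lemma \ref{lem:stronsolution}. Your route avoids the somewhat delicate boundary-trace argument entirely, at the cost of having to establish existence of a $W_0^{1,2}$-solution first and $(2)\Rightarrow(1)$ before $(1)\Rightarrow(2)$; the paper's direct route, while longer, works entirely with $u$ itself. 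Both approaches hinge on the same essential tool (Lemma \ref{lem:stronsolution}), which is where the hypothesis $q>n$ enters. Your treatment of $(1)\Leftrightarrow(1')$ via $W^{2,2}$-density and $(2)\Rightarrow(1)$ via double approximation matches the paper's argument (which cites Lemma \ref{lem:closure} and Lemma \ref{lem:C0andW12} for the relevant density facts).
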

\begin{proof}
Equivalence of $(1)$ and $(1')$ follows immediately by density, cf. Lemma \ref{lem:closure}. 
We show first $(1) \Rightarrow (2)$. For fixed $f \in C_0^\infty(\Omega; \mathbb{R}^n)$ observe that by Lemma \ref{lem:stronsolution} there exists some $\phi \in W^{2,2}(\Omega) \cap W_0^{1,2}(\Omega)$, such that there holds (pointwise a.e.) 
\begin{equation}
    \mathrm{div}(A \nabla \phi) = \mathrm{div}(f).
\end{equation}
Multiplying by some $\psi \in C_0^\infty(\Omega)$ and integrating we find
\begin{equation}
    \int_\Omega (A(x) \nabla \phi, \nabla \psi) \; \mathrm{d}x = -\int_\Omega \mathrm{div}(f) \psi \; \mathrm{d}x \quad \forall \psi \in C_0^\infty(\Omega). 
\end{equation}
Taking the closure, we find that this equation holds also true for all $\psi \in W_0^{1,2}(\Omega)$. 
We observe that 
\begin{equation}
    \lambda \int_\Omega |\nabla \phi|^2 \; \mathrm{d}x \leq \int_\Omega (A(x) \nabla \phi , \nabla \phi) \; \mathrm{d}x = - \int_\Omega \mathrm{div}(f) \phi \; \mathrm{d}x = \int_\Omega (f ,\nabla \phi) \; \mathrm{d}x. 
\end{equation}
The Cauchy-Schwarz inequality yields then  $||\phi||_{W_0^{1,2}}  = ||\nabla \phi||_{L^2} \leq \frac{1}{\lambda} ||f||_{L^2}$. 
For the computation to come we recall that by Lemma \ref{lem:measterm}
\begin{equation}
    T(\psi) := \int_\Gamma Q \psi \; \mathrm{d}\mathcal{H}^{n-1}  \quad ( \psi \in C_0^\infty(\Omega))
\end{equation}
extends to a linear continuous functional in $(W_0^{1,2}(\Omega))^*$. Using this we find \begin{align}
   - \int_\Omega u \; \mathrm{div}(f) \; \mathrm{d}x &=- \int_\Omega u \;  \mathrm{div}(A \nabla \phi) \; \mathrm{d}x = \int_\Gamma Q \phi \; \mathrm{d}\mathcal{H}^{n-1} \leq C_1||\phi||_{W_0^{1,2}(\Omega)} \leq C_2 ||f||_{L^2(\Omega; \mathbb{R}^n)}.
\end{align}
In particular we infer that 
\begin{equation}
    S: C_0^\infty(\Omega;\mathbb{R}^n) \rightarrow \mathbb{R}, \quad S(f) := - \int_\Omega u \;  \mathrm{div}(f) \; \mathrm{d}x 
\end{equation}
extends to a functional in $L^2(\Omega;\mathbb{R}^n)^*$, whereupon one finds $g \in L^2(\Omega;\mathbb{R}^n)$ such that 
\begin{equation}
    -\int_\Omega u \; \mathrm{div}(f) \; \mathrm{d}x = \int_\Omega (g,f) \; \mathrm{d}x \quad \forall f \in C_0^\infty(\Omega; \mathbb{R}^n).
\end{equation}
Setting $f= v e_j$ for $v \in C_0^\infty(\Omega)$ and $j = 1,...,n$ one readily checks that $u$ is weakly differentiable and $\nabla u = g \in L^2(\Omega)$. We have obtained that $u \in W^{1,2}(\Omega)$. We next show \eqref{eq:weaksmooth}. 
First notice that by smoothness of $\Omega$ there exist $(u_n)_{n \in \mathbb{N}} \subset C^\infty(\overline{\Omega})$ such that $u_n \rightarrow u$ in $W^{1,2}(\Omega)$. By the classical Gauss divergence theorem we then infer for each $\phi \in C^2(\overline{\Omega})$ s.t. $\phi \big\vert_{\partial \Omega} = 0$ that 
\begin{align}
   \int_{\Gamma} Q \phi \; \mathrm{d}\mathcal{H}^{n-1} & =  -\int_\Omega u \mathrm{div}(A \nabla \phi) \; \mathrm{d}x  = -\lim_{n \rightarrow \infty} \int_\Omega u_n \mathrm{div}(A \nabla \phi) \; \mathrm{d}x \\ &  =   \lim_{n \rightarrow \infty } \left( - \int_{\partial \Omega} u_n (A(x) \nabla \phi , \nu_\Omega) \; \mathrm{d}\mathcal{H}^{n-1} + \int_\Omega (A(x) \nabla u_n, \nabla \phi) \; \mathrm{d}x \right)
    \\ & =     - \int_{\partial \Omega} \mathrm{tr}_{\partial \Omega} (u)  (A(x) \nabla \phi , \nu_\Omega) \; \mathrm{d}\mathcal{H}^{n-1} + \int_\Omega (A(x) \nabla u, \nabla \phi) \; \mathrm{d}x,
\end{align}
where we have used continuity of the Sobolev trace operator in the last step. To simplify notation we write only $u$ instead of $\mathrm{tr}_{\partial \Omega} (u)$ in the sequel. 
Plugging in any $\phi \in C_0^\infty(\Omega)$ we infer that the integral formula in \eqref{eq:weaksmooth} holds true. It remains to show that $u \in W_0^{1,2}(\Omega)$, i.e. $\mathrm{tr}_{\partial \Omega}(u) = 0$. To that end look at $\phi = \psi d_\Omega$, where $d_\Omega: B_\epsilon(\partial \Omega) \rightarrow \mathbb{R}$, is the signed distance function for $\partial \Omega$, $\epsilon< \mathrm{dist}(\Gamma,\partial \Omega)$ is chosen small enough such that $d_\Omega$ is smooth, and $\psi \in C_0^\infty(B_\epsilon(\partial \Omega))$ is arbitrary. Then we have 
\begin{equation}\label{eq:zerotraceint}
    0 = \int_\Gamma Q ( \psi d_\Omega) \; \mathrm{d}\mathcal{H}^{n-1} = - \int_{\partial \Omega} u (A(x) \nabla (\psi d_\Omega), \nu_\Omega) \; \mathrm{d}\mathcal{H}^{n-1} + \int_\Omega (A(x) \nabla u , \nabla (\psi d_\Omega) ) \; \mathrm{d}x.
\end{equation}
Since $\nabla d_\Omega = \nu_\Omega$ on $\partial \Omega$ we infer 
\begin{equation}
     \int_{\partial \Omega} u (A(x) \nabla (\psi d_\Omega), \nu_\Omega) \; \mathrm{d}\mathcal{H}^{n-1} = \int_{\partial \Omega} u (A(x) \nu_\Omega, \nu_\Omega) \psi \; \mathrm{d}\mathcal{H}^{n-1}. 
\end{equation}
Moreover, observe that $\psi d_\Omega \big\vert_{\Omega} \in W_0^{1,2}(\Omega) \cap C(\overline{\Omega})$ and hence there exists $(\eta_m)_{m\in \mathbb{N}} \subset C_0^\infty(\Omega)$ such that $\eta_m \rightarrow \psi d_\Omega$ in $W^{1,2}(\Omega)$ and also uniformly, see Lemma \ref{lem:C0andW12}. 
Therefore we can rearrange the last summand in \eqref{eq:zerotraceint} (by using the already derived \eqref{eq:weaksmooth})
\begin{align}
    \int_\Omega (A(x) \nabla u , \nabla ( \psi d_\Omega) ) \; \mathrm{d}x & = \lim_{m \rightarrow \infty} \int_\Omega (A(x) \nabla u , \nabla \eta_m ) \; \mathrm{d}x  \\ & = \lim_{m \rightarrow \infty} \int_\Gamma Q \eta_m \; \mathrm{d}\mathcal{H}^{n-1} =  \int_\Gamma Q (\psi d_\Omega) \; \mathrm{d}\mathcal{H}^{n-1}= 0.
\end{align}
 Hence \eqref{eq:zerotraceint} yields \begin{equation}
     0 = \int_{\partial \Omega} u ( A(x) \nu_\Omega , \nu_\Omega) \psi \; \mathrm{d}\mathcal{H}^{n-1} \quad \forall \psi \in C_0^\infty(B_\epsilon(\partial \Omega)) .
 \end{equation}
 This implies $u (A(x) \nu_\Omega, \nu_\Omega) = 0$ a.e. on $\partial \Omega$ and thus by ellipticity of $A$ one infers $u= 0$ on $\partial \Omega$. In particular $u \in W_0^{1,2}(\Omega)$ and the claim is shown. Next we turn to $(2) \Rightarrow (1)$. If $u \in W_0^{1,2}(\Omega)$ then there exists $(u_k)_{k\in \mathbb{N}} \subset C_0^\infty(\Omega)$ such that $u_k \rightarrow u$ in $W^{1,2}(\Omega)$. For each $\phi \in C^2(\overline{\Omega})$ s.t. $\phi\big\vert_{\partial \Omega} = 0$ we then infer 
 \begin{align}
    - \int_\Omega u \;  \mathrm{div}(A(x) \nabla \phi) \; \mathrm{d}x & = -\lim_{k \rightarrow \infty} \int_\Omega u_k \;  \mathrm{div}(A(x) \nabla \phi) \; \mathrm{d}x = \lim_{k \rightarrow \infty} \int_\Omega (A(x) \nabla u_k, \nabla \phi) \; \mathrm{d}x 
    \\ & = \int_\Omega (A(x) \nabla u , \nabla \phi ) \; \mathrm{d}x.
 \end{align}
 Now we observe that $\phi \in W_0^{1,2}(\Omega) \cap C(\overline{\Omega})$ and we choose (due to Lemma \ref{lem:C0andW12}) $(\phi_n)_{n \in \mathbb{N}} \subset C_0^\infty(\Omega)$ such that $\phi_n \rightarrow \phi$ in $W^{1,2}(\Omega)$ and uniformly. Hence 
 \begin{equation}
    \int_\Omega (A(x) \nabla u , \nabla \phi ) \; \mathrm{d}x = \lim_{n \rightarrow \infty} \int_\Omega (A(x) \nabla u , \nabla \phi_n) \; \mathrm{d}x = \int_\Gamma Q \phi_n \; \mathrm{d}\mathcal{H}^{n-1} = \int_\Gamma Q \phi \; \mathrm{d}\mathcal{H}^{n-1}. 
 \end{equation}
 The previous two equations yield 
 \begin{equation}
     - \int_\Omega u \;  \mathrm{div}(A(x) \nabla \phi) \; \mathrm{d}x = \int_\Gamma Q \phi \; \mathrm{d}\mathcal{H}^{n-1} \quad \forall \phi \in  C^2(\overline{\Omega}) : \phi \big\vert_{\partial \Omega} = 0. 
 \end{equation}
\end{proof}

\section{Some approximation and regularity results}

Here we collect some technical lemmas that have been useful for the course of our argument.

\begin{lemma}\label{lem:C0andW12}
Let $\Omega \subset \mathbb{R}^n$ be $C^1$-smooth and $\eta \in W_0^{1,q}(\Omega) \cap C(\overline{\Omega})$ for some $q \in (1,\infty)$. Then there exists a sequence $(\eta_j)_{j \in \mathbb{N}} \subset C_0^\infty(\Omega)$ such that $\eta_j \rightarrow \eta$ in $W_0^{1,q}(\Omega)$ and $\eta_j \rightarrow \eta$ in $C(\overline{\Omega})$. 
\end{lemma}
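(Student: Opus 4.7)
The plan is to first replace $\eta$ by a truncated version $\eta_\varepsilon$ that is compactly supported in $\Omega$, continuous, and close to $\eta$ in both $W^{1,q}$ and the supremum norm; and then to mollify $\eta_\varepsilon$ to obtain a $C_0^\infty$-approximation. A diagonal argument produces the desired sequence.

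\emph{Step 1: Truncation.} For $\varepsilon>0$ set
\begin{equation}
    \eta_\varepsilon(x):=(\eta(x)-\varepsilon)^+ - (-\eta(x)-\varepsilon)^+,
\end{equation}
so that $\eta_\varepsilon=\eta-\varepsilon$ where $\eta>\varepsilon$, $\eta_\varepsilon=\eta+\varepsilon$ where $\eta<-\varepsilon$, and $\eta_\varepsilon=0$ where $|\eta|\le \varepsilon$. Then $|\eta_\varepsilon-\eta|\le \varepsilon$ pointwise, so $\eta_\varepsilon\to\eta$ uniformly on $\overline{\Omega}$. Since $\eta\in C(\overline{\Omega})$ and $\eta|_{\partial\Omega}=0$ (a consequence of $\eta\in W_0^{1,q}(\Omega)\cap C(\overline{\Omega})$, as the Sobolev trace of a continuous function up to the boundary agrees with its boundary values), there exists an open neighborhood $U_\varepsilon$ of $\partial\Omega$ in $\overline{\Omega}$ on which $|\eta|<\varepsilon$; hence $\mathrm{supp}(\eta_\varepsilon)\subset\overline{\Omega}\setminus U_\varepsilon\subset\subset\Omega$. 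Also $\eta_\varepsilon\in W^{1,q}(\Omega)$ with
\begin{equation}
    \nabla \eta_\varepsilon = \chi_{\{|\eta|>\varepsilon\}}\nabla\eta \quad \text{a.e.\ in }\Omega,
\end{equation}
by the standard chain rule for truncations of Sobolev functions.

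\emph{Step 2: $W^{1,q}$-convergence of truncations.} Using the fact that $\nabla \eta=0$ almost everywhere on $\{\eta=0\}$ (cf.\ \cite[Theorem 4.4(iii)]{EvGar}), the function $\chi_{\{|\eta|\le\varepsilon\}}\nabla\eta$ converges to $0$ pointwise a.e.\ as $\varepsilon\to 0$, and is dominated by $|\nabla\eta|\in L^q(\Omega)$. Dominated convergence therefore gives $\nabla\eta_\varepsilon\to\nabla\eta$ in $L^q(\Omega;\mathbb{R}^n)$, and together with the uniform convergence $\eta_\varepsilon\to\eta$ we obtain $\eta_\varepsilon\to\eta$ in $W^{1,q}(\Omega)$.

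\emph{Step 3: Mollification.} Fix $\varepsilon>0$ and extend $\eta_\varepsilon$ by zero to all of $\mathbb{R}^n$; the extension is continuous and compactly supported in $\Omega$. Let $(\psi_\delta)_{\delta>0}$ denote a standard mollifier and define $\eta_\varepsilon^\delta:=\eta_\varepsilon*\psi_\delta$. For $\delta$ smaller than $\mathrm{dist}(\mathrm{supp}(\eta_\varepsilon),\partial\Omega)$ one has $\eta_\varepsilon^\delta\in C_0^\infty(\Omega)$. Standard properties of mollifiers yield $\eta_\varepsilon^\delta\to\eta_\varepsilon$ in $W^{1,q}(\Omega)$, and since $\eta_\varepsilon$ is uniformly continuous (continuous with compact support), also $\eta_\varepsilon^\delta\to\eta_\varepsilon$ uniformly on $\overline{\Omega}$ as $\delta\to 0$.

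\emph{Step 4: Diagonal extraction.} Choosing $\varepsilon_j\to 0$ and $\delta_j\to 0$ sufficiently fast and setting $\eta_j:=\eta_{\varepsilon_j}^{\delta_j}$ yields the required sequence in $C_0^\infty(\Omega)$ converging to $\eta$ simultaneously in $W_0^{1,q}(\Omega)$ and in $C(\overline{\Omega})$.

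The only delicate point is the $W^{1,q}$-convergence of the truncations $\eta_\varepsilon$, which relies on the fact that $\nabla\eta$ vanishes a.e.\ on the (possibly large) level set $\{\eta=0\}$; everything else is routine. The continuity hypothesis on $\eta$ and the $C^1$-regularity of $\partial\Omega$ are used solely to ensure that the level set $\{|\eta|\le\varepsilon\}$ contains a neighborhood of $\partial\Omega$, so that $\eta_\varepsilon$ is compactly supported in $\Omega$.
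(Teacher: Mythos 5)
Your proof is correct and follows essentially the same route as the paper's: the same double truncation $(\eta-\varepsilon)^+-(-\eta-\varepsilon)^+$ (which coincides with the paper's $(\eta-\tfrac1j)^+-(\eta+\tfrac1j)^-$), the same use of Stampacchia's lemma (in your case via \cite{EvGar}) combined with dominated convergence to control the gradients, followed by mollification and a diagonal argument. You also make explicit the small point the paper leaves tacit, namely that $\eta$ vanishes pointwise on $\partial\Omega$ so that the truncation has compact support.
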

\begin{proof}
For each $j \in \mathbb{N}$ the function 
\begin{equation}
    \tilde{\eta}_j = \left( \eta - \frac{1}{j} \right)^+ - \left( \eta + \frac{1}{j} \right)^-
 \end{equation}
 has compact support in $\Omega$. Clearly, $\tilde{\eta}_j \rightarrow \eta$ in $C(\overline{\Omega})$. Moreover, by the dominated convergence theorem and Stampacchia's Lemma (cf. \cite[Lemma A.4,Chapter 2]{KiStam}) one infers 
 \begin{equation}
     \int_\Omega | \nabla \tilde{\eta}_j - \nabla \eta|^q \; \mathrm{d}x = \int_{ \{|\eta| < \frac{1}{j} \}} |\nabla \eta|^q \; \mathrm{d}x \rightarrow \int_{\{ \eta = 0 \}} |\nabla \eta|^q \; \mathrm{d}x = 0. \quad (j \rightarrow \infty).
  \end{equation}
This also implies that $\tilde{\eta}_j \rightarrow \eta$ in $W_0^{1,q}(\Omega)$. Let now $(\psi_\epsilon)_{\epsilon> 0}$ be the standard mollifier. For each $j \in \mathbb{N}$ one can choose $\epsilon_j > 0$ such that $ \eta_j := \tilde{\eta}_j * \psi_{\epsilon_j} \in C_0^\infty(\Omega)$ and
\begin{equation}
||\tilde{\eta}_j * \psi_{\epsilon_j} - \tilde{\eta}_j||_{W^{1,q}(\Omega)} + ||\tilde{\eta}_j * \psi_{\epsilon_j} - \tilde{\eta}_j||_{C^0(\overline{\Omega})} < \frac{1}{j}. 
\end{equation}
A straightforward application of the triangle inequality and the previous observations shows that then 
\begin{equation}
   \eta_j := \tilde{\eta}_j * \psi_{\epsilon_j}  \rightarrow \eta \quad \textrm{in } W^{1,q}(\Omega) \; \textrm{and } C(\overline{\Omega}). 
\end{equation}
\end{proof}

\begin{lemma}\label{lem:closure}
Let $\Omega \subset \mathbb{R}^n$ be a smooth domain. Then the closure of $C^2(\overline{\Omega}) \cap W_0^{1,2}(\Omega)$ with respect to the $W^{2,2}$-norm is $W^{2,2}(\Omega) \cap W_0^{1,2}(\Omega)$. 
\end{lemma}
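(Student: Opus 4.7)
The strategy is to use elliptic regularity: given $u \in W^{2,2}(\Omega)\cap W_0^{1,2}(\Omega)$, we approximate its Laplacian in $L^2$ by smooth data and solve the corresponding Dirichlet problem. The zero boundary trace is preserved by construction, and a global $W^{2,2}$-estimate will upgrade the $L^2$-approximation of data to $W^{2,2}$-approximation of the functions.

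More concretely, let $u \in W^{2,2}(\Omega) \cap W_0^{1,2}(\Omega)$ be arbitrary and set $g := \Delta u \in L^2(\Omega)$. By density of $C^\infty(\overline{\Omega})$ in $L^2(\Omega)$, choose a sequence $(g_k)_{k \in \mathbb{N}} \subset C^\infty(\overline{\Omega})$ with $g_k \to g$ in $L^2(\Omega)$. Since $\Omega$ is smooth, we may solve for each $k$ the Dirichlet problem
\begin{equation}
  \begin{cases}
    \Delta v_k = g_k & \textrm{in } \Omega, \\
    \qquad v_k = 0 & \textrm{on } \partial \Omega
  \end{cases}
\end{equation}
uniquely in $W^{2,2}(\Omega) \cap W_0^{1,2}(\Omega)$, and classical Schauder/elliptic regularity theory (cf.\ \cite[Theorem 6.14]{GilTru}, applied iteratively or via $W^{k,2}$-regularity) yields $v_k \in C^\infty(\overline{\Omega})$; in particular $v_k \in C^2(\overline{\Omega}) \cap W_0^{1,2}(\Omega)$.

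It remains to show $v_k \to u$ in $W^{2,2}(\Omega)$. The difference $w_k := v_k - u$ lies in $W^{2,2}(\Omega) \cap W_0^{1,2}(\Omega)$ and satisfies $\Delta w_k = g_k - g$ pointwise a.e., with vanishing Sobolev trace on $\partial \Omega$. Invoking the global $W^{2,2}$-estimate for the Laplacian on smooth domains (a special case of \cite[Theorem 9.15 and Lemma 9.17]{GilTru}), we obtain a constant $C = C(\Omega)$ such that
\begin{equation}
  ||w_k||_{W^{2,2}(\Omega)} \leq C\, ||\Delta w_k||_{L^2(\Omega)} = C\, ||g_k - g||_{L^2(\Omega)} \longrightarrow 0 \quad (k \to \infty).
\end{equation}
Hence $v_k \to u$ in $W^{2,2}(\Omega)$, completing the proof.

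\textbf{Main obstacle.} There is no serious obstacle here — everything reduces to the global $W^{2,2}$-regularity of the Dirichlet Laplacian on smooth domains, which is a standard result. The only subtlety worth flagging is that one must ensure the approximants $v_k$ are genuinely of class $C^2$ up to the boundary and not merely $W^{2,2}$, which is why we pick smooth data $g_k$ and appeal to higher regularity (bootstrapping from $W^{2,2}$ to $W^{k,2}$ for all $k$, then Sobolev embedding). The fact that $u \in W_0^{1,2}(\Omega)$ is used implicitly to guarantee that $w_k$ has zero trace on $\partial\Omega$, which is needed to apply the homogeneous $W^{2,2}$-estimate without boundary terms.
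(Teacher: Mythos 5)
Your proof is correct. It differs from the paper's in how the smooth approximants are constructed: the paper sets $v_j := e^{\frac{1}{j}\Delta_0}v$ where $\Delta_0$ is the Dirichlet Laplacian and uses analytic semigroup theory (Lunardi) to conclude at once that $v_j$ is $C^\infty$ up to the boundary and that $\Delta_0 v_j \to \Delta_0 v$ in $L^2$. You instead approximate the data $g = \Delta u$ directly in $L^2$ by smooth $g_k$ and solve the Dirichlet problem, invoking elliptic bootstrapping for smoothness of $v_k$. Both arguments pivot on the same fact — that $\Delta_0 : W^{2,2}(\Omega)\cap W_0^{1,2}(\Omega) \to L^2(\Omega)$ is an isomorphism, so that $L^2$-convergence of the Laplacians yields $W^{2,2}$-convergence of the functions — and indeed the paper's semigroup construction is secretly also a data-approximation argument, since $\Delta_0 v_j = e^{\frac{1}{j}\Delta_0}\Delta_0 v$. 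Your route is more elementary (it avoids semigroup theory entirely, at the price of citing density of $C^\infty(\overline{\Omega})$ in $L^2$ and the Gilbarg–Trudinger regularity bootstrap separately); the paper's route is more compact since the semigroup framework delivers both ingredients in one package. One small remark: the estimate $\|w_k\|_{W^{2,2}} \leq C\|\Delta w_k\|_{L^2}$ without a lower-order term is exactly the isomorphism property, and is most transparently obtained by combining the a priori bound $\|w\|_{W^{2,2}} \leq C(\|\Delta w\|_{L^2} + \|w\|_{L^2})$ with the chain $\|w\|_{L^2} \leq C\|\nabla w\|_{L^2}$ and $\|\nabla w\|_{L^2}^2 = -\int w\,\Delta w \leq \|w\|_{L^2}\|\Delta w\|_{L^2}$ valid for $w \in W^{2,2}\cap W_0^{1,2}$; citing \cite[Lemma 9.17]{GilTru} as you do is a fine way to shortcut this.
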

\begin{proof}
Let $v \in W^{2,2}(\Omega) \cap W_0^{1,2}(\Omega)$. Then $v$ lies in the domain of the \emph{Dirichlet Laplacian} $\Delta_0$, a densely defined and sectorial operator. Hence $\Delta_0$ generates an analytic semigroup $(e^{t\Delta_0})_{t \geq 0}$ in $L^2(\Omega)$. Next we set $v_j := e^{\frac{1}{j}\Delta_0} v$ for  all $j \in \mathbb{N}$. We claim that $v_j \in C^2(\overline{\Omega})\cap W_0^{1,2}(\Omega)$ and $v_j \rightarrow v$ in $W^{2,2}(\Omega)$. The first assertion follows easily from the fact that by \cite[Proposition 2.1.1]{Lunardi} $v_j$ lies in the domain of $\Delta_0^k$ for all $k \in \mathbb{N}$ and hence actually in $C^\infty(\overline{\Omega})\cap W_0^{1,2}(\Omega)$. Moreover since $v$ lies in the domain of $\Delta_0$, \cite[Proposition 2.1.4]{Lunardi} yields 
\begin{equation}
    ||\Delta_0 v - \Delta_0 v_j ||_{L^2} = ||\Delta_0 ( e^{\frac{1}{j}\Delta_0} v- v) ||_{L^2} \rightarrow 0 \quad (j \rightarrow \infty).  
\end{equation}
Since $\Delta_0 : W^{2,2}(\Omega) \cap W_0^{1,2}(\Omega) \rightarrow L^2(\Omega)$ is an isomorphism we infer that  also $|| v- v_j||_{W^{2,2}} \rightarrow 0$ as $j \rightarrow \infty$. 
\end{proof}
\begin{lemma}\label{lem:Lipschitzmeanvalue}
Let $f : \mathbb{R}^n \setminus \{0 \} \rightarrow \mathbb{R}$ be locally Lipschitz continuous. Then the map $I : (0,\infty) \rightarrow \mathbb{R}$
\begin{equation}
    s \mapsto \int_{\partial B_s(0)} f(y) \; \mathrm{d}\mathcal{H}^{n-1}(y) 
\end{equation}
is locally Lipschitz continuous on $(0,\infty)$.
\end{lemma}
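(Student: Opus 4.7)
The natural approach is to pull all the integrals back to the unit sphere $\partial B_1(0)$ via the parametrization $y = s\omega$, $\omega \in \partial B_1(0)$. Under this change of variables, $\mathrm{d}\mathcal{H}^{n-1}(y) = s^{n-1}\,\mathrm{d}\mathcal{H}^{n-1}(\omega)$, so
\begin{equation}
    I(s) \;=\; s^{n-1} \int_{\partial B_1(0)} f(s\omega)\,\mathrm{d}\mathcal{H}^{n-1}(\omega) \;=\; s^{n-1} J(s),
\end{equation}
where I set $J(s) := \int_{\partial B_1(0)} f(s\omega)\,\mathrm{d}\mathcal{H}^{n-1}(\omega)$. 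The proof then reduces to showing that $J$ is locally Lipschitz on $(0,\infty)$, since $s \mapsto s^{n-1}$ is smooth and both factors are locally bounded, so their product inherits local Lipschitz continuity from standard product estimates.

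To show $J$ is locally Lipschitz, fix a compact interval $[a,b] \subset (0,\infty)$ and consider the closed annulus $\overline{A} := \{x \in \mathbb{R}^n : a \leq |x| \leq b\}$. Since $\overline{A}$ is a compact subset of $\mathbb{R}^n \setminus \{0\}$ and $f$ is locally Lipschitz on $\mathbb{R}^n \setminus \{0\}$, a standard covering argument yields a constant $L = L(a,b) > 0$ such that $|f(x) - f(y)| \leq L|x - y|$ for all $x,y \in \overline{A}$. Therefore, for $s, t \in [a,b]$ and $\omega \in \partial B_1(0)$, both $s\omega$ and $t\omega$ lie in $\overline{A}$, and $|s\omega - t\omega| = |s-t|$, so
\begin{equation}
    |J(s) - J(t)| \;\leq\; \int_{\partial B_1(0)} |f(s\omega) - f(t\omega)|\,\mathrm{d}\mathcal{H}^{n-1}(\omega) \;\leq\; L \, \omega_n \, |s-t|,
\end{equation}
where $\omega_n = \mathcal{H}^{n-1}(\partial B_1(0))$. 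This establishes that $J$ is Lipschitz on $[a,b]$ with constant $L\omega_n$.

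Combining the estimate for $J$ with the elementary local Lipschitz continuity of $s \mapsto s^{n-1}$ on $[a,b]$ and the obvious bound $|J(s)| \leq \omega_n \|f\|_{L^\infty(\overline{A})}$ (again finite by continuity of $f$ on the compact set $\overline{A}$), the product rule for Lipschitz functions yields that $I = s^{n-1} J(s)$ is Lipschitz on $[a,b]$. Since $[a,b] \subset (0,\infty)$ was arbitrary, $I$ is locally Lipschitz on $(0,\infty)$. No serious obstacle arises here; the only point requiring mild care is the extraction of a uniform Lipschitz constant for $f$ on the annulus $\overline{A}$, which is automatic since $\overline{A} \Subset \mathbb{R}^n \setminus \{0\}$ and local Lipschitz continuity on an open set implies genuine Lipschitz continuity on compact subsets.
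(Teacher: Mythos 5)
Your proof is correct and takes essentially the same approach as the paper: both pull the sphere integral back to a fixed sphere (you use $\partial B_1(0)$, the paper uses $\partial B_r(0)$ with a scaling factor) and split into a geometric factor $s^{n-1}$ and a difference $f(s\omega)-f(t\omega)$, each controlled via the local Lipschitz bound of $f$ on the compact annulus. The factorization $I(s) = s^{n-1}J(s)$ is a slightly cleaner way of organizing the identical estimate.
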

\begin{proof}
\begin{align}
   & I(r)- I(s)  = \int_{\partial B_r(0)} f(y) \; \mathrm{d}\mathcal{H}^{n-1}(y) -  \int_{\partial B_s(0)} f(y) \; \mathrm{d}\mathcal{H}^{n-1}(y)\\ &  = \int_{\partial B_r(0)} f(y) \; \mathrm{d}\mathcal{H}^{n-1}(y) -   \left( \frac{s}{r}\right)^{n-1} \int_{\partial B_r(0)} f(\tfrac{s}{r}y) \; \mathrm{d}\mathcal{H}^{n-1}(y)
    \\ &  =  \left( 1 - \left(\frac{s}{r}\right)^{n-1} \right) \int_{\partial B_r(0)} f(y) \; \mathrm{d}\mathcal{H}^{n-1}(y) + \left(\frac{s}{r}\right)^{n-1} \int_{\partial B_r(0)}  ( f(y) - f(\tfrac{s}{r} y) ) \; \mathrm{d}\mathcal{H}^{n-1}(y). 
\end{align}
Now for each $y \in \partial B_r(0)$ one has
\begin{equation}
    |f(y) - f(\tfrac{s}{r}y)| \leq ||f||_{W^{1,\infty}(B_s(0) \setminus \overline{B_r(0)} )}  |y - \tfrac{s}{r}y| \leq ||f||_{W^{1,\infty}(B_s(0) \setminus \overline{B_r(0)} )}  |r-s|
\end{equation}
and 
\begin{align}
   \left\vert  \left( 1 - \left(\frac{s}{r}\right)^{n-1} \right) \int_{\partial B_r(0)} f(y) \; \mathrm{d}\mathcal{H}^{n-1}(y)  \right\vert  & \leq ||f||_{L^\infty(\partial B_r(y))} \omega_n |r^{n-1}- s^{n-1}|  \\  &= ||f||_{L^\infty(\partial B_r(y))} \omega_n  \left(\sum_{k = 0}^{n-2} r^k s^{n-2-k}   \right) |r-s|.
\end{align}
With the previous two equations we infer 
\begin{equation}
    |I(r)-I(s)| \leq L(r,s) |r-s|,
\end{equation}
where 
\begin{equation}
    L(r,s) := ||f||_{W^{1,\infty}(B_s(0) \setminus \overline{B_r(0)} )} + ||f||_{L^\infty(\partial B_r(y))} \omega_n  \left(\sum_{k = 0}^{n-2} r^k s^{n-2-k}   \right)
\end{equation}
is clearly locally bounded in $(0,\infty)^2$. The local Lipschitz property follows. 
\end{proof}

 \begin{lemma}\label{lem:B5}
 One has $BV(\Omega) \cap W^{1,1}_{loc}(\Omega) = W^{1,1}(\Omega)$.
 \end{lemma}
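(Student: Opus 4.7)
The inclusion $W^{1,1}(\Omega) \subset BV(\Omega) \cap W^{1,1}_{loc}(\Omega)$ is immediate: any $u \in W^{1,1}(\Omega)$ has distributional gradient representable by the vector-valued measure $\nabla u \cdot \mathcal{L}^n$, whose total variation equals $\|\nabla u\|_{L^1(\Omega)} < \infty$, and of course $u \in W^{1,1}_{loc}(\Omega)$.

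For the nontrivial inclusion, let $u \in BV(\Omega) \cap W^{1,1}_{loc}(\Omega)$. Since $BV(\Omega) \subset L^1(\Omega)$, we already have $u \in L^1(\Omega)$, and the distributional gradient $Du$ is a vector-valued Radon measure on $\Omega$ with finite total variation $|Du|(\Omega)<\infty$. The plan is to show that $Du = \nabla u \cdot \mathcal{L}^n$ globally, where $\nabla u$ denotes the weak gradient (well-defined as an element of $L^1_{loc}(\Omega)$), and then to read off the $L^1$-integrability from finiteness of the total variation.

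To identify the measure $Du$, fix an exhaustion $\Omega_1 \subset\subset \Omega_2 \subset\subset \cdots \subset \Omega$ with $\bigcup_k \Omega_k = \Omega$. For each $k$, the hypothesis $u \in W^{1,1}_{loc}(\Omega)$ yields $u|_{\Omega_k} \in W^{1,1}(\Omega_k)$, so the distributional gradient of $u|_{\Omega_k}$ coincides with the $L^1(\Omega_k)$ function $\nabla u$. Testing against $\varphi \in C_c^\infty(\Omega_k)$ gives
\begin{equation}
\int_\Omega \varphi \, dDu_i = -\int_{\Omega_k} u \, \partial_i \varphi \, dx = \int_{\Omega_k} \varphi \, \partial_i u \, dx.
\end{equation}
Thus $Du \mres \Omega_k = \nabla u \cdot \mathcal{L}^n \mres \Omega_k$ for every $k$, so $Du$ is absolutely continuous with respect to $\mathcal{L}^n$ on all of $\Omega$, with Radon-Nikodym density $\nabla u \in L^1_{loc}(\Omega)$.

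Finally, monotone convergence in the exhaustion yields
\begin{equation}
\int_\Omega |\nabla u| \, dx = \lim_{k \to \infty} \int_{\Omega_k} |\nabla u| \, dx = \lim_{k \to \infty} |Du|(\Omega_k) = |Du|(\Omega) < \infty,
\end{equation}
so $\nabla u \in L^1(\Omega)$ and $u \in W^{1,1}(\Omega)$. The only genuine step is the consistency between the distributional gradient (as a measure) and the weak gradient (as an $L^1_{loc}$ function), which is routine once one exhausts $\Omega$ by compactly contained subdomains; no real obstacle arises.
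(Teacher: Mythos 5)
Your proposal is correct and takes essentially the same approach as the paper: both identify the $BV$-derivative with the weak $W^{1,1}_{loc}$-gradient on compactly contained subdomains (you do it by equating measures against test functions, the paper by a duality estimate $\int_U|\partial_i w|\le |Dw|(\Omega)$) and then conclude $\nabla u\in L^1(\Omega)$ by monotone convergence along an exhaustion.
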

 \begin{proof}
 If $w \in BV(\Omega) \cap W^{1,1}_{loc}(\Omega)$ then for all open sets $U \subset \subset \Omega$ and $i= 1,...,n$ we have 
 \begin{align}
     \int_U |\partial_i w| \; \mathrm{d}x & = \sup_{\phi \in C_0^1(U),||\phi||_\infty \leq 1} \int_U \partial_i w \; \phi \; \mathrm{d}x = \sup_{\phi \in C_0^1(U),||\phi||_\infty \leq 1}  - \int_U  w \; \partial_i \phi \; \mathrm{d}x
     \\ & = \sup_{\phi \in C_0^1(U),||\phi||_\infty \leq 1}   \int_U  w \; \mathrm{div}(-\phi e_i) \; \mathrm{d}x \leq |Dw|(U) \leq |Dw|(\Omega). 
 \end{align}
 As $\Omega$ can be monotonically exhausted by countably many sets $(U_j)_{j\in \mathbb{N}}, U_j \subset U_{j+1} \subset \subset \Omega$ we infer by the monotonce convergence theorem that $|\partial_i w | \in L^1(\Omega)$. The claim follows.
 \end{proof}
 
 \begin{lemma}\label{lem:B6}
  Let $\Omega \subset \mathbb{R}^n$ be a bounded domain, $(v_k)_{k \in \mathbb{N}} \subset W^{1,\infty}(\Omega)$ be a sequence such that $(||v_k||_{W^{1,\infty}(\Omega)})_{k \in \mathbb{N}}$ is uniformly bounded in $k$. Then there exists a subsequence $(v_{l_k})_{k \in \mathbb{N}} \in \mathbb{N}$ and some $v \in W^{1,\infty}(\Omega)$ such that $(v_{l_k})_{k \in \mathbb{N}}$ converges to $v$ uniformly on $\overline{\Omega}$, weakly in $W^{1,p}(\Omega)$ for all $p \in (1,\infty)$. Further, one has
  \begin{equation}\label{eq:liplower}
      ||v||_{W^{1,\infty}(\Omega)} \leq \liminf_{k \rightarrow \infty} ||v_{l_k}||_{W^{1,\infty}(\Omega)}.
  \end{equation}
 \end{lemma}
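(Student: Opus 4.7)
The plan is to combine the Arzelà--Ascoli theorem with reflexivity of $W^{1,p}$ for $p < \infty$ and a diagonal argument, and then to pass to the limit $p \to \infty$ for the norm bound.

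\textbf{Step 1 (Uniform convergence).} By assumption $M := \sup_k \|v_k\|_{W^{1,\infty}(\Omega)} < \infty$. In particular, each $v_k$ admits a unique $M$-Lipschitz extension to $\overline{\Omega}$, and the family $(v_k)_{k\in \mathbb{N}}$ is uniformly bounded and equicontinuous on $\overline{\Omega}$. The Arzelà--Ascoli theorem yields a subsequence (still denoted $v_{l_k}$) and some $v \in C(\overline{\Omega})$ with $v_{l_k} \to v$ uniformly on $\overline{\Omega}$. Passing to the pointwise limit in $|v_{l_k}(x) - v_{l_k}(y)| \leq \|\nabla v_{l_k}\|_{L^\infty(\Omega)}|x-y|$ shows that $v$ is Lipschitz with $[v]_{C^{0,1}(\overline{\Omega})} \leq \liminf_{k \to \infty} \|\nabla v_{l_k}\|_{L^\infty(\Omega)}$, so $v \in W^{1,\infty}(\Omega)$.

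\textbf{Step 2 (Weak $W^{1,p}$-convergence).} Since $\Omega$ is bounded, one has $\|v_{l_k}\|_{W^{1,p}(\Omega)} \leq |\Omega|^{1/p} \|v_{l_k}\|_{W^{1,\infty}(\Omega)} \leq |\Omega|^{1/p} M$ for every $p \in (1,\infty)$. Fix a sequence $p_m \nearrow \infty$ and apply reflexivity together with a standard diagonal argument: extract nested subsequences, the $m$-th being weakly convergent in $W^{1,p_m}(\Omega)$, and then take the diagonal. The uniform convergence from Step 1 implies $v_{l_k} \to v$ in $L^{p_m}(\Omega)$ for every $m$, so the $L^{p_m}$-component of each weak $W^{1,p_m}$-limit must be $v$; uniqueness of weak limits then forces $v_{l_k} \rightharpoonup v$ weakly in $W^{1,p_m}(\Omega)$ for all $m$. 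For an arbitrary $p \in (1,\infty)$ choose $p_m > p$: the bound $\|v_{l_k}\|_{W^{1,p}} \leq |\Omega|^{1/p - 1/p_m} \|v_{l_k}\|_{W^{1,p_m}}$ and the uniqueness argument yield weak convergence in $W^{1,p}(\Omega)$ as well.

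\textbf{Step 3 (Lower semicontinuity).} Weak lower semicontinuity of the $L^p$-norm gives, for every $p \in (1,\infty)$,
\begin{equation}
\|\nabla v\|_{L^p(\Omega)} \leq \liminf_{k \to \infty} \|\nabla v_{l_k}\|_{L^p(\Omega)} \leq |\Omega|^{1/p} \liminf_{k \to \infty} \|\nabla v_{l_k}\|_{L^\infty(\Omega)}.
\end{equation}
Since $\nabla v \in L^\infty(\Omega)$ and $\Omega$ is bounded, $\|\nabla v\|_{L^p(\Omega)} \to \|\nabla v\|_{L^\infty(\Omega)}$ as $p \to \infty$, so letting $p \to \infty$ in the above inequality gives $\|\nabla v\|_{L^\infty(\Omega)} \leq \liminf_k \|\nabla v_{l_k}\|_{L^\infty(\Omega)}$. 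The analogous estimate for $\|v\|_{L^\infty(\Omega)}$ is immediate from uniform convergence. Adding the two bounds yields \eqref{eq:liplower}.

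There is no genuine obstacle here; the only point requiring care is ensuring one obtains a \emph{single} subsequence that works simultaneously for uniform convergence and for weak $W^{1,p}$-convergence for every $p \in (1,\infty)$, which is handled by the diagonal argument in Step~2 combined with the uniqueness-of-limits principle exploiting the uniform limit already produced in Step~1.
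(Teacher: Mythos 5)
Your proof is correct and follows essentially the same route as the paper: Arzelà--Ascoli for uniform convergence, a diagonal argument to obtain a single subsequence converging weakly in $W^{1,p}$ for all $p$, and weak lower semicontinuity of the $L^p$-norm followed by $p \to \infty$ for the gradient bound. One small imprecision: the claim in Step~1 that each $v_k$ admits an $M$-Lipschitz extension to $\overline{\Omega}$ with $M = \sup_k \|v_k\|_{W^{1,\infty}}$, and the resulting pointwise inequality $|v_{l_k}(x)-v_{l_k}(y)| \leq \|\nabla v_{l_k}\|_{L^\infty}|x-y|$, hold only when the segment from $x$ to $y$ lies in $\Omega$ (e.g.\ convex domains) --- in general the Euclidean Lipschitz constant on $\overline{\Omega}$ involves a domain-geometry factor --- but this is harmless here, since you use it only for equicontinuity and Step~3 re-derives the gradient bound correctly in the same way the paper does.
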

\begin{proof}
One readily checks that $(v_k)_{k \in \mathbb{N}}$ satisfies all the prerequisites of the Arzela-Ascoli theorem, which explains the existence of a uniformly convergent subsequence and a limit $v \in C^0(\overline{\Omega})$. Choosing a further subsequence and using a standard diagonal sequence argument one can find a subsequence that converges weakly in $W^{1,r}(\Omega)$ for all $r \in \mathbb{N}$. One readily checks that the limit of this subsequence must also be $v$. Let now $(v_{l_k})_{k \in \mathbb{N}}$ be this subsequence.  If now $p \in (1,\infty)$ is arbitrary we can choose some number $r \in \mathbb{N}$ such that $r> p$.  Observing that $W^{1,r}(\Omega) \subset W^{1,p}(\Omega)$ (and hence $W^{1,p}(\Omega)^* \subset W^{1,r}(\Omega)^*$) we obtain that $v_{l_k} \rightharpoonup v$ in $W^{1,p}(\Omega)$. For the norm estimate \eqref{eq:liplower} observe that by uniform convergence we have 
\begin{equation}\label{eq:arzela-ascoli}
    ||v||_{L^\infty(\Omega)} = \lim_{k \rightarrow \infty} ||v_{l_k}||_{L^\infty(\Omega)}.
\end{equation}
Now for the gradients we observe
\begin{align}
    ||\nabla v||_{L^\infty(\Omega)} & = \lim_{p \rightarrow \infty} ||\nabla v||_{L^p(\Omega)} \leq \limsup_{p \rightarrow \infty}  \liminf_{k \rightarrow \infty} ||\nabla v_{l_k}||_{L^p(\Omega)} \\ &  \leq \limsup_{p \rightarrow \infty} \liminf_{k \rightarrow \infty} ||\nabla v_{l_k}||_{L^\infty(\Omega)} |\Omega|^\frac{1}{p}  = \liminf_{k \rightarrow \infty} ||\nabla v_{l_k}||_{L^\infty(\Omega)}.
\end{align}
This and \eqref{eq:arzela-ascoli} yields 
\begin{equation}
    ||v||_{W^{1,\infty}(\Omega)} \leq \liminf_{k \rightarrow \infty}  ||v_{l_k}||_{W^{1,\infty}(\Omega)}.
\end{equation}
\end{proof}

\begin{lemma}\label{lem:hoeldiffeq}
 Let $y \in C^1([a,b],\mathbb{R})$ be such that $y'(t)= f(t,y(t))$ for all $t \in [a,b]$ for some $f \in C^{0,\alpha}(\mathbb{R}^2)$. Then 
 \begin{equation}
     ||y||_{C^{1,\alpha}([a,b])} \leq ||y||_\infty + [2+ (b-a)^{1-\alpha}] (1+||f||_{C^{0,\alpha}(\mathbb{R}^2)})^2
 \end{equation}
\end{lemma}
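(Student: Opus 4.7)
The strategy is to decompose $\|y\|_{C^{1,\alpha}([a,b])}$ as $\|y\|_\infty + [y]_{C^{0,\alpha}} + \|y'\|_\infty + [y']_{C^{0,\alpha}}$ and control each of the three latter terms using the ODE and the Hölder regularity of $f$. The $\|y\|_\infty$ summand is kept as is on the right-hand side of the claimed estimate, so the task reduces to absorbing the three remaining semi-norms into $[2+(b-a)^{1-\alpha}](1+\|f\|_{C^{0,\alpha}(\mathbb{R}^2)})^2$.

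First I would use the ODE pointwise: $|y'(t)|=|f(t,y(t))|\le \|f\|_\infty\le \|f\|_{C^{0,\alpha}(\mathbb{R}^2)}$, which handles $\|y'\|_\infty$. Next, for the Hölder semi-norm of $y$ itself, I convert the Lipschitz bound into a Hölder one by paying the factor $(b-a)^{1-\alpha}$: for $t,s\in[a,b]$, $|y(t)-y(s)|\le \|y'\|_\infty\,|t-s|\le \|y'\|_\infty(b-a)^{1-\alpha}|t-s|^\alpha$, so $[y]_{C^{0,\alpha}([a,b])}\le (b-a)^{1-\alpha}\|f\|_{C^{0,\alpha}(\mathbb{R}^2)}$. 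This is where the $(b-a)^{1-\alpha}$-factor enters the final estimate.

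For the Hölder semi-norm of $y'$, I would again use the ODE and the Hölder continuity of $f$ on $\mathbb{R}^2$:
\begin{align}
|y'(t)-y'(s)| &= |f(t,y(t))-f(s,y(s))| \le [f]_{C^{0,\alpha}(\mathbb{R}^2)}\bigl(|t-s|^2+|y(t)-y(s)|^2\bigr)^{\alpha/2} \\
&\le [f]_{C^{0,\alpha}(\mathbb{R}^2)}\,(1+\|y'\|_\infty^2)^{\alpha/2}|t-s|^\alpha \\
&\le [f]_{C^{0,\alpha}(\mathbb{R}^2)}\,(1+\|f\|_{C^{0,\alpha}(\mathbb{R}^2)})^\alpha|t-s|^\alpha,
\end{align}
using $|y(t)-y(s)|\le \|y'\|_\infty|t-s|$ in the middle step. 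Since $\alpha\in(0,1]$ and $x\ge 0$ imply $(1+x)^\alpha\le 1+x$, this yields $[y']_{C^{0,\alpha}}\le [f]_{C^{0,\alpha}(\mathbb{R}^2)}(1+\|f\|_{C^{0,\alpha}(\mathbb{R}^2)})\le (1+\|f\|_{C^{0,\alpha}(\mathbb{R}^2)})^2$.

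Putting the three estimates together and using $\|f\|_{C^{0,\alpha}(\mathbb{R}^2)}\le (1+\|f\|_{C^{0,\alpha}(\mathbb{R}^2)})^2$ to homogenise the bound gives
\begin{equation}
\|y\|_{C^{1,\alpha}([a,b])} \le \|y\|_\infty + (b-a)^{1-\alpha}(1+\|f\|_{C^{0,\alpha}(\mathbb{R}^2)})^2 + 2(1+\|f\|_{C^{0,\alpha}(\mathbb{R}^2)})^2,
\end{equation}
which is exactly the stated inequality. There is no real conceptual obstacle here; the only mildly subtle point is the interpolation trick in the second step that trades a factor of $|t-s|^{1-\alpha}$ for the length factor $(b-a)^{1-\alpha}$, and being careful with the Euclidean distance $|(t,y(t))-(s,y(s))|$ when applying the two-dimensional Hölder norm of $f$.
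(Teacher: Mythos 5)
Your proof is correct and follows essentially the same route as the paper's: pointwise use of the ODE to bound $\|y'\|_\infty$, the interpolation $|y(t)-y(s)|\le\|y'\|_\infty(b-a)^{1-\alpha}|t-s|^\alpha$ to produce the $(b-a)^{1-\alpha}$ factor, the two-dimensional Hölder estimate for $f(t,y(t))-f(s,y(s))$ combined with the elementary inequality $(1+x^2)^{\alpha/2}\le(1+x)^\alpha\le 1+x$, and a final homogenisation of the terms. No meaningful deviation from the paper's argument.
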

\begin{proof}
First derive  the estimate 
\begin{equation}\label{eq:lipEst}
    |y(t) - y(s) | = \int_{t}^s |f(u,y(u))| \; \mathrm{d}u \leq ||f||_\infty |t-s| . 
\end{equation}
In particular 
\begin{equation}
   |y(t) - y(s)| \leq ||f||_\infty (b-a)^{1-\alpha} |t-s|^\alpha.
\end{equation}
Moreover we infer using \eqref{eq:lipEst}
\begin{align}
    |y'(t) - y'(s)|  & = |f(t,y(t))- f(s,y(s))| \leq ||f||_{C^{0,\alpha}} ( |t-s|^2 + |y(t)- y(s)|^2 )^{\frac{\alpha}{2}}  \\
     & = ||f||_{C^{0,\alpha}} ( 1 + ||f||_\infty^2 )^\frac{\alpha}{2} |t-s|^\alpha.
\end{align}
Now note that $( 1 + ||f||_\infty^2 )^\frac{\alpha}{2} \leq ( 1 + ||f||_\infty )^\alpha \leq 1 + ||f||_\infty.$ The previous computations yield 
\begin{align}
    ||y||_{C^{1,\alpha}}  & \leq ||y||_\infty + ||y'||_\infty + [y]_{C^{0,\alpha}} + [y']_{C^{0,\alpha}} 
    \\ & \leq ||y||_\infty + ||f||_\infty + (b-a)^{1-\alpha} ||f||_\infty + ||f||_{C^{0,\alpha}} ( 1 + ||f||_\infty ).
\end{align}
The claim follows estimating all $||f||_\infty$ by $||f||_{C^{0,\alpha}}$ and some elementary estimates. 
\end{proof}

\begin{lemma}\label{lem:hoelcoord}
 Let $B \subset \mathbb{R}^m$ be an open ball and $h \in C^{0,\alpha}(\overline{B};\mathbb{R})$. For $z \in B$ and $v \in \mathbb{R}^m, |v| = 1$ we define $t_+(z,v)$ to be the supremum of all $t \in \mathbb{R}$ such that $z + t v \in B$ and similarly $t_-(z,v)$ to be the infimum. Define
 \begin{equation}
     y_{z,v}(t) := h(z + tv) \quad t \in [t_-(z,v), t_+(z,v)].
 \end{equation}
 Then 
 \begin{equation}
     ||h||_{C^{0,\alpha}(\overline{B})} \leq 2 \sup_{z \in B, |v| = 1} ||y_{z,v} ||_{C^{0,\alpha}([t_
     -(z,l),t_+(z,l)])}.
 \end{equation}
\end{lemma}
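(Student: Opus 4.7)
The plan is to bound the two pieces of the Hölder norm, $\sup_{\overline{B}} |h|$ and the seminorm $[h]_{C^{0,\alpha}(\overline{B})}$, separately, each by $\sup_{z \in B, |v|=1}\|y_{z,v}\|_{C^{0,\alpha}}$, and then add. Since $\|h\|_{C^{0,\alpha}(\overline{B})}$ is defined as sup-norm plus seminorm, this immediately yields the factor $2$.

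For the sup-norm, fix $x \in \overline{B}$ and any unit vector $v$. Since $B$ is open I can pick a sequence $z_n \in B$ with $z_n \to x$; then $h(z_n) = y_{z_n, v}(0)$, so $|h(z_n)| \leq \|y_{z_n, v}\|_{L^\infty(\cdot)} \leq \|y_{z_n, v}\|_{C^{0,\alpha}}$. Passing to the limit using continuity of $h$ on $\overline{B}$ gives $|h(x)| \leq \sup_{z,v}\|y_{z,v}\|_{C^{0,\alpha}}$.

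For the seminorm, take arbitrary $x, y \in \overline{B}$ with $x \neq y$ and set $v := (y-x)/|y-x|$, $t := |y-x|$. Since $B$ is a (strictly convex) ball, the open segment from $x$ to $y$ lies in $B$; hence I can pick $z_n = x + s_n v \in B$ with $s_n \downarrow 0$, and then $y = z_n + (t - s_n)v$ also lies in $\overline{B}$ on the same line. The restriction $y_{z_n, v}$ satisfies $|y_{z_n, v}(t-s_n) - y_{z_n, v}(0)| \leq [y_{z_n,v}]_{C^{0,\alpha}} (t-s_n)^\alpha$, so
\begin{equation}
|h(y) - h(z_n)| \leq \Bigl( \sup_{z \in B, |v|=1}\|y_{z,v}\|_{C^{0,\alpha}} \Bigr) (t-s_n)^\alpha.
\end{equation}
Letting $n \to \infty$ and using continuity of $h$, I obtain $|h(y) - h(x)| \leq \sup_{z,v}\|y_{z,v}\|_{C^{0,\alpha}}\, |y-x|^\alpha$, which gives the desired bound on $[h]_{C^{0,\alpha}(\overline{B})}$.

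Summing the two estimates and using the definition of the Hölder norm,
\begin{equation}
\|h\|_{C^{0,\alpha}(\overline{B})} = \sup_{\overline{B}}|h| + [h]_{C^{0,\alpha}(\overline{B})} \leq 2 \sup_{z \in B, |v|=1}\|y_{z,v}\|_{C^{0,\alpha}},
\end{equation}
which is the claim. There is no real obstacle: the only mildly delicate point is that $z$ in the statement ranges over the \emph{open} ball $B$ while $x, y$ live in $\overline{B}$, handled by the short approximation argument above using strict convexity of the ball and continuity of $h$.
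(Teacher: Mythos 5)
Your proof is correct and follows essentially the same strategy as the paper: split $\|h\|_{C^{0,\alpha}(\overline{B})}$ into sup-norm plus seminorm, bound each by the right-hand supremum, and add to obtain the factor $2$. The only difference is cosmetic: you treat points $x,y\in\overline{B}$ by approximating from the open ball and invoking continuity, whereas the paper works directly with $x_1,x_2\in B$ and leaves the passage to $\overline{B}$ implicit.
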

\begin{proof}
It is straightforward to show that 
\begin{equation}
    ||h||_\infty = \sup_{z \in B, |v|= 1} ||y_{z,v}||_{L^\infty([t_-(z,v),t_+(z,v)])}.
\end{equation}
We next show the corresponding estimate for $[h]_{C^{0,\alpha}(\overline{B})}$. Let $x_1,x_2 \in B$, $x_1 \neq x_2$. 
Define $z := x_1$ and  $v := \frac{x_2- x_1}{|x_2-x_1|}$ now observe that by convexity of $B$ 
\begin{equation}
    z + tv = x_1 + \tfrac{t}{|x_2 - x_1|} (x_2 - x_1) \in B \quad \forall t \in [0, |x_2-x_1|]. 
\end{equation}
this implies that $t_-(z,v) \leq 0  \leq |x_2-x_1| \leq t_+(z,v)$. Now 
\begin{align}
    |h(x_2) - h(x_1)| & = |y_{z,v}(|x_1-x_2|) - y_{z,v}(0)|  \leq [y_{z,v}]_{C^{0,\alpha}([t_-(z,v), t_+(z,v)]} | \; |x_1 -x_2| - 0|^\alpha  \\ & \leq \sup_{z \in B, |v| = 1 }[y_{z,v}]_{C^{0,\alpha}([t_-(z,v), t_+(z,v)]}  |x_1-x_2|^\alpha. 
\end{align}
We infer that $[h]_{C^{0,\alpha}(B)} \leq \sup_{z \in B, |v| = 1 }[y_{z,v}]_{C^{0,\alpha}([t_-(z,v), t_+(z,v)]}$ and the claim follows. 
\end{proof}

\begin{lemma}\label{lem:Hoelextension}
 Let  for $A \subset \mathbb{R}$ closed and $\alpha > 0$ $f : A \rightarrow \mathbb{R}$ be $\alpha-$Hölder continuous in the sense that 
 \begin{equation}
     |f(x) - f(y) | \leq H|x-y|^\alpha \quad \forall x,y \in A 
 \end{equation}
for some $H> 0$. Then there exists some $\bar{f} : \mathbb{R}^n \rightarrow \mathbb{R}$ such that  $\bar{f}\vert_A = f$ and 
\begin{equation}
    |\bar{f}(x) - \bar{f}(y) | \leq H|x-y|^\alpha \quad \forall x,y \in \mathbb{R}^n. 
\end{equation}
In particular $[\bar{f}]_{C^{0,\alpha}} = [f]_{C^{0,\alpha}}$. 
\end{lemma}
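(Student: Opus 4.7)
The natural strategy is the classical McShane--Whitney extension adapted to the Hölder exponent $\alpha$. Concretely, I would define
\begin{equation}
    \bar f(x) := \inf_{y \in A}\bigl(f(y) + H\,|x-y|^{\alpha}\bigr), \qquad x \in \mathbb{R}^n,
\end{equation}
and verify that this candidate meets both requirements of the lemma. (I read the statement as $A \subset \mathbb{R}^n$, since the only place the lemma is applied is to a Hölder density on the hypersurface $\Gamma \subset \mathbb{R}^n$.)

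First I would check that $\bar f$ is everywhere finite. Fix $y_0 \in A$. From the assumed Hölder bound, $|f(y)| \le |f(y_0)| + H|y-y_0|^{\alpha}$ for every $y \in A$, so
\begin{equation}
    f(y) + H|x-y|^{\alpha} \ge -|f(y_0)| - H|y-y_0|^{\alpha} + H|x-y|^{\alpha}.
\end{equation}
Using the subadditivity $|y-y_0|^{\alpha} \le |y-x|^{\alpha} + |x-y_0|^{\alpha}$ (valid for $\alpha \in (0,1]$, which is the only interesting range since for $\alpha > 1$ on a set with an accumulation point the hypothesis forces $f$ to be locally constant, and the extension problem becomes trivial), the right-hand side is bounded below by $-|f(y_0)| - H|x-y_0|^{\alpha}$, so the infimum is a real number.

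Next I would verify $\bar f|_A = f$. For $x \in A$, taking $y = x$ in the infimum gives $\bar f(x) \le f(x)$; conversely, for any $y \in A$, the Hölder bound for $f$ yields $f(y) + H|x-y|^{\alpha} \ge f(x)$, hence $\bar f(x) \ge f(x)$. The extension property follows.

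The main estimate is the Hölder bound on $\bar f$. Given $x_1, x_2 \in \mathbb{R}^n$ and $\varepsilon > 0$, pick $y_\varepsilon \in A$ with $f(y_\varepsilon) + H|x_2 - y_\varepsilon|^{\alpha} < \bar f(x_2) + \varepsilon$. Then
\begin{equation}
    \bar f(x_1) - \bar f(x_2) < f(y_\varepsilon) + H|x_1-y_\varepsilon|^{\alpha} - f(y_\varepsilon) - H|x_2-y_\varepsilon|^{\alpha} + \varepsilon = H\bigl(|x_1-y_\varepsilon|^{\alpha} - |x_2-y_\varepsilon|^{\alpha}\bigr) + \varepsilon.
\end{equation}
Applying the same subadditivity as above, $|x_1-y_\varepsilon|^{\alpha} \le |x_1-x_2|^{\alpha} + |x_2-y_\varepsilon|^{\alpha}$, so $\bar f(x_1) - \bar f(x_2) \le H|x_1-x_2|^{\alpha} + \varepsilon$. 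Letting $\varepsilon \downarrow 0$ and exchanging $x_1 \leftrightarrow x_2$ yields $|\bar f(x_1) - \bar f(x_2)| \le H|x_1-x_2|^{\alpha}$, as required. The identity $[\bar f]_{C^{0,\alpha}} = [f]_{C^{0,\alpha}}$ is then automatic: ``$\le$'' is the bound just proved applied with $H = [f]_{C^{0,\alpha}}$, and ``$\ge$'' holds since $\bar f$ extends $f$. The one genuine subtlety is the subadditivity step, which pins down why this construction is tailored to $\alpha \in (0,1]$; everything else is bookkeeping on the $\inf$.
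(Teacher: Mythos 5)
Your proposal is correct and uses exactly the construction the paper gives: the McShane--Whitney extension $\bar f(z) = \inf_{x \in A}\{f(x) + H|z-x|^\alpha\}$, with the subadditivity of $s \mapsto s^\alpha$ as the key ingredient. The paper merely states the formula and calls the verification "readily checked"; you have supplied those checks (finiteness, $\bar f|_A = f$, the Hölder bound) correctly.
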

\begin{proof}
This is readily checked choosing 
\begin{equation}
    \bar{f}(z) := \inf_{x \in A}  \left\lbrace f(x) + H|z-x|^\alpha \right\rbrace \quad (z \in \mathbb{R}^n). 
\end{equation}
We remark that the subadditivity of $z \mapsto z^\alpha$ plays an important role in this proof.
\end{proof}

\begin{lemma}\label{lem:Hoelapprox}
 Let $K \subset \Omega$ be compact and $Q \in C^{0,\alpha}(K)$. Then there exists some $(Q_j)_{j \in \mathbb{N}} \subset C^\infty(\Omega)$ such that $Q_j \rightarrow Q$ uniformly on $K$ and $ \lim_{j \rightarrow \infty} ||Q_j||_{C^{0,\alpha}(K)} =  ||Q||_{C^{0,\alpha}(K)}$. 
\end{lemma}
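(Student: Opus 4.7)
The plan is to construct $Q_j$ by a standard mollification procedure, after first extending $Q$ to all of $\mathbb{R}^n$ while preserving its Hölder seminorm. The main observation that makes the convergence of norms work is that convolution with a nonnegative kernel of unit mass cannot \emph{increase} the Hölder seminorm, so mollification provides the easy upper bound $\limsup\|Q_j\|_{C^{0,\alpha}(K)}\leq \|Q\|_{C^{0,\alpha}(K)}$, while uniform convergence on $K$ automatically yields the matching lower bound.

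In more detail, the first step is to extend $Q$ to a function $\widetilde{Q}\in C^{0,\alpha}(\mathbb{R}^n)$ satisfying $[\widetilde{Q}]_{C^{0,\alpha}(\mathbb{R}^n)}=[Q]_{C^{0,\alpha}(K)}$ and $\widetilde{Q}\vert_K=Q$. This is provided by the inf-convolution construction in the proof of Lemma \ref{lem:Hoelextension}, whose argument works verbatim for $K\subset \mathbb{R}^n$. Subadditivity of $t\mapsto t^\alpha$ is the only ingredient, so there is nothing genuinely one-dimensional about it. Next, letting $(\psi_\varepsilon)_{\varepsilon>0}$ be a standard mollifier on $\mathbb{R}^n$, I would define $Q_j:=(\widetilde{Q}*\psi_{1/j})\vert_\Omega \in C^\infty(\Omega)$.

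Uniform convergence $Q_j\to Q$ on $K$ is then the classical fact that continuous functions on $\mathbb{R}^n$ are uniformly approximated by their mollifications on compact sets, applied to $\widetilde{Q}$ (which is continuous, hence uniformly continuous on any compact neighborhood of $K$). For the convergence of the Hölder norms, I would combine two inequalities. For the upper bound, a direct computation
\begin{equation}
|Q_j(x)-Q_j(y)|\leq \int |\widetilde{Q}(x-z)-\widetilde{Q}(y-z)|\psi_{1/j}(z)\,\mathrm{d}z\leq [\widetilde{Q}]_{C^{0,\alpha}(\mathbb{R}^n)}|x-y|^\alpha
\end{equation}
shows $[Q_j]_{C^{0,\alpha}(K)}\leq[\widetilde Q]_{C^{0,\alpha}(\mathbb{R}^n)}=[Q]_{C^{0,\alpha}(K)}$, and uniform convergence on $K$ gives $\|Q_j\|_{L^\infty(K)}\to\|Q\|_{L^\infty(K)}$, so $\limsup_{j}\|Q_j\|_{C^{0,\alpha}(K)}\leq \|Q\|_{C^{0,\alpha}(K)}$. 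For the reverse inequality, given any $x\neq y$ in $K$ the uniform convergence yields
\begin{equation}
\tfrac{|Q(x)-Q(y)|}{|x-y|^\alpha}=\lim_{j\to\infty}\tfrac{|Q_j(x)-Q_j(y)|}{|x-y|^\alpha}\leq \liminf_{j\to\infty}[Q_j]_{C^{0,\alpha}(K)},
\end{equation}
and taking the supremum over $x,y$ gives $[Q]_{C^{0,\alpha}(K)}\leq\liminf_j[Q_j]_{C^{0,\alpha}(K)}$. Adding the $L^\infty$-parts yields $\|Q\|_{C^{0,\alpha}(K)}\leq\liminf_j\|Q_j\|_{C^{0,\alpha}(K)}$, completing the identity.

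I do not expect any real obstacle here; the only point worth double-checking is that the extension Lemma \ref{lem:Hoelextension}, as stated for $A\subset\mathbb{R}$, carries over to compact $K\subset\mathbb{R}^n$, which is immediate from inspecting its proof. Everything else is standard convolution theory.
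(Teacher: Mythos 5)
Your proposal is correct and follows essentially the same route as the paper: extend $Q$ to $\mathbb{R}^n$ via the inf-convolution construction of Lemma \ref{lem:Hoelextension} (which, as you rightly note, is really a statement about compact $K\subset\mathbb{R}^n$ despite the typographical ``$A\subset\mathbb{R}$''), then mollify, obtain the upper bound for the seminorm from the convolution estimate, and the lower bound from lower semicontinuity of $[\cdot]_{C^{0,\alpha}}$ under uniform convergence (which you spell out directly rather than invoking it by name). The only cosmetic difference is that the paper indexes the approximating family by $\varepsilon\to 0$ while you use $j\to\infty$; the content is identical.
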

\begin{proof}
By Lemma \ref{lem:Hoelextension} we may actually assume that $Q \in C^{0,\alpha}(\mathbb{R}^n)$ (without making the Hölder seminorm larger). Next let $Q_\epsilon := Q* \phi_\epsilon$ where $(\phi_\epsilon)_{\epsilon > 0}$ is the standard mollifier.  A standard result yields that $Q_\epsilon \rightarrow Q$ uniformly on $K$. The lower semicontinuity of $[\cdot]_{C^{0,\alpha}}$ with respect to uniform convergence yields \begin{equation}\label{eq:lscQ}
    [Q]_{C^{0,\alpha}(K)} \leq \liminf_{\epsilon \rightarrow 0 }  [Q_\epsilon]_{C^{0,\alpha}(K) }. 
\end{equation}
Now observe also that for all $x_1,x_2 \in K$ there holds 
\begin{align}
    |Q_\epsilon(x_1) - Q_\epsilon(x_2) | & = \left\vert \int Q(y) \phi_\epsilon(x_1-y ) \; \mathrm{d}y - \int Q(y) \phi_\epsilon(x_2- y) \; \mathrm{d}y \right\vert 
    \\ & \leq \left\vert \int (Q(x_1 - z) - Q(x_2-z)) \phi_\epsilon(z ) \; \mathrm{d}z  \right\vert  \\ & \leq \int |Q(x_1 - z) - Q(x_2-z)| \phi_\epsilon(z ) \; \mathrm{d}z
    \\ &  \leq \int [Q]_{C^{0,\alpha}(\mathbb{R}^n)} |x_1-z-(x_2-z)|^\alpha  \phi_\epsilon(z ) \; \mathrm{d}z
      \leq [Q]_{C^{0,\alpha}(K)} |x_1 -x_2|^\alpha .
\end{align}
This yields $[Q_\epsilon]_{C^{0,\alpha}(K)} \leq [Q]_{C^{0,\alpha}(K)}$ and together with \eqref{eq:lscQ} we conclude that 
$[Q]_{C^{0,\alpha}(K)} = \lim_{\epsilon \rightarrow 0 } [Q_\epsilon]_{C^{0,\alpha}(K)}$. All in all we infer that 
\begin{equation}
    ||Q||_{C^{0,\alpha}(K)} = ||Q||_\infty + [Q]_{C^{0,\alpha}(K)} = \lim_{\epsilon \rightarrow 0} ||Q_\epsilon||_\infty + [Q_\epsilon]_{C^{0,\alpha}(K)} = \lim_{\epsilon \rightarrow 0} ||Q_\epsilon||_{C^{0,\alpha}(K)}.  
\end{equation}
\end{proof}

\section{$C^{1,\alpha}$-hypersurfaces}\label{app:C1alpha}

In this section we collect some facts about $C^{1,\alpha}$-boundaries that we use. In the following we will always look at  $\Gamma = \partial \Omega'$ for a compact $C^{1,\alpha}$-domain $\Omega' \subset \subset \mathbb{R}^n$. 

An important tool that we use is the \emph{regularized signed distance function}, $\rho : \mathbb{R}^n \rightarrow \mathbb{R}$ introduced in \cite{Lieberman}. This is a function $\rho \in C^\infty(\mathbb{R}^n \setminus \Gamma) \cap C^{0,1}(\mathbb{R}^n)$ such that $\rho < 0$ on $\Omega'$, $\rho > 0$ on $\mathbb{R}^n \setminus \Omega'$ and  
\begin{equation}
    \frac{1}{2} \leq \frac{|\rho(x)|}{\mathrm{dist}(x,\Gamma)} \leq 2, \quad \textrm{and} \quad
    \frac{1}{2} \leq  |\nabla \rho (x)| \leq 2  \quad \forall x \in \mathbb{R}^n \setminus \Gamma.
\end{equation}
We see from \cite[Theorem 2.1]{Lieberman} (used with $\zeta(z) = z^\alpha$), that for $\Gamma = \partial \Omega' \in C^{1,\alpha}$ there exists a regularized signed distance function $\rho$, which fulfills in addition 
\begin{equation}\label{eq:gradsigndisti}
    |\nabla \rho(x) - \nabla \rho(y) | \leq 10 [\Gamma]_{1,\alpha} |x-y|^\alpha  \quad \forall x,y \in \mathbb{R}^n.
\end{equation}
(We remark that for obtaining the previous equation we need to demand that the graph representations $f_i$ of $\Gamma$ (cf. \eqref{eq:Gaama}) must lie in $C^{1,\alpha}(4 \overline{U}_i)$ instead of just $C^{1,\alpha}( \overline{U}_i)$. This is why the counterintuitive constant $4$ appears in our definition of a representation). 
In particular $\rho$ extends to a $C^{1,\alpha}$-function on $\mathbb{R}^n$. As $\rho \equiv 0$ on $\Gamma$ the derivative $\nabla \rho$ must point in normal direction, which means that  the outward unit normal $\nu: \Gamma \rightarrow \mathbb{R}^n$ satisfies
\begin{equation}
     \nu(x) = \frac{\nabla \rho(x)}{|\nabla \rho(x)|} \quad \forall x \in \Gamma. 
\end{equation}
This, \eqref{eq:gradsigndisti} and the fact that $\frac{1}{2} \leq |\nabla \rho| \leq 2$ on $\mathbb{R}^n$ implies
\begin{equation}\label{eq:hoeldernormal}
    |\nu(x) - \nu(y) | \leq C([\Gamma]_{1,\alpha}) |x-y|^\alpha \quad \forall x,y \in \Gamma.
\end{equation}
As a consequence one finds that if $\Pi_x$ denotes the orthogonal projection on $T_x \Gamma$ then (with $|\cdot|$ denoting the operator norm) there holds 
\begin{equation}\label{eq:C2}
    |\Pi_x- \Pi_y| \leq C([\Gamma]_{1,\alpha}) |x-y|^\alpha \quad \forall x,y \in \Gamma.
\end{equation}
In the following we prove some further geometric estimates for $C^{1,\alpha}$-hypersurfaces and check that the constants appearing in them only depend on $[\Gamma]_{1,\alpha}$. In the end we will show that one can approximate each $C^{1,\alpha}$-hypersurface $\Gamma$ with $C^\infty$-hypersurfaces $(\Gamma_j)_{j \in \mathbb{N}}$ such that $[\Gamma_j]_{1,\alpha}$ is uniformly bounded in terms of $[\Gamma]_{1,\alpha}$.  

\begin{lemma}\label{lem:normale}
 Let $\Gamma$ be a $C^{1,\alpha}$-hypersurface. Then there exists $C([\Gamma]_{1,\alpha}) > 0$ such that 
 \begin{equation}
    (\nu(x), x-y) \leq C([\Gamma]_{1,\alpha}) |x-y|^{1+\alpha} \quad \forall x,y \in \Gamma. 
\end{equation}
\end{lemma}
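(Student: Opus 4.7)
The natural approach is to use the regularized signed distance function $\rho$ introduced just above the lemma, and in particular to exploit the $C^{1,\alpha}$-control \eqref{eq:gradsigndisti} of $\nabla \rho$, together with the fact that $\rho$ vanishes identically on $\Gamma$.

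First I would write, for $x,y \in \Gamma$, the fundamental theorem of calculus along the segment from $y$ to $x$:
\begin{equation}
   0 = \rho(x) - \rho(y) = \int_0^1 (\nabla \rho(y + t(x-y)), x-y) \; \mathrm{d}t.
\end{equation}
Subtracting this from $(\nabla \rho(x), x-y)$ yields
\begin{equation}
   (\nabla \rho(x), x-y)  =  \int_0^1 (\nabla \rho(x) - \nabla \rho(y+t(x-y)), x-y) \; \mathrm{d}t.
\end{equation}
Next I would apply the $C^{0,\alpha}$-estimate \eqref{eq:gradsigndisti} for $\nabla \rho$ pointwise: since $|x - (y + t(x-y))| = (1-t)|x-y|$, the integrand is bounded by $10 [\Gamma]_{1,\alpha} (1-t)^\alpha |x-y|^{1+\alpha}$. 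Integrating in $t$ gives a bound of the form $\frac{10[\Gamma]_{1,\alpha}}{1+\alpha} |x-y|^{1+\alpha}$.

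Finally, I would convert this into the desired estimate for $\nu(x)$ using the identity $\nu(x) = \nabla \rho(x)/|\nabla \rho(x)|$, which holds on $\Gamma$, together with $|\nabla \rho(x)| \geq \frac{1}{2}$. This multiplies the constant by at most $2$, giving the asserted bound with $C([\Gamma]_{1,\alpha}) = \frac{20[\Gamma]_{1,\alpha}}{1+\alpha}$. There is no serious obstacle here; the only point to keep in mind is that the whole argument is possible precisely because \eqref{eq:gradsigndisti} gives $\nabla \rho$-estimates that are truly global on $\mathbb{R}^n$ (and not only local on charts of $\Gamma$), so one need not worry about whether the straight segment from $y$ to $x$ stays close to $\Gamma$.
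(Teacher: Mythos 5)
Your proof is correct and uses essentially the same approach as the paper: the paper applies the mean value theorem along the segment from $y$ to $x$ to find a single $\xi$ with $(\nabla\rho(\xi),x-y)=0$ and then uses \eqref{eq:gradsigndisti}, while you write the same cancellation in integral form and integrate the Hölder bound in $t$. The two are equivalent (your route even gives the marginally smaller constant $\tfrac{20[\Gamma]_{1,\alpha}}{1+\alpha}$), and the final step $\nu = \nabla\rho/|\nabla\rho|$ with $|\nabla\rho|\ge \tfrac12$ is identical.
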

\begin{proof}
Let $x,y \in \Gamma$ be arbitrary.  
The mean value theorem on $\mathbb{R}^n$ yields that there exists some $\xi \in \mathbb{R}^n$ on the line segement connecting $x$ and $y$ which satisfies 
\begin{equation}
    (\nabla \rho (\xi) , x-y) = \rho(x) - \rho(y) = 0.
\end{equation}
We infer
\begin{equation}
    (\nabla \rho(x) ,x-y) = (\nabla \rho(x) - \nabla \rho(\xi) , x-y) \leq |\nabla \rho(x) - \nabla \rho(\xi) | \;  |x-y|.
\end{equation}
Using \eqref{eq:gradsigndisti} we can bound the first factor by $10 [\Gamma]_{1,\alpha} |x-\xi|^\alpha$. Since $\xi$ lies on the line segment connecting $x$ and $y$ we also have $|x-\xi|^\alpha \leq |x-y|^\alpha$. Hence we obtain 
\begin{equation}
    (\nabla \rho(x) , x-y) \leq 10[\Gamma]_{1,\alpha} |x-y|^{1+\alpha}.
\end{equation}
The claim follows then from $\nu= \frac{\nabla \rho}{|\nabla \rho|}$ and $|\nabla \rho| \geq \frac{1}{2}$. 
\end{proof}

Next we show that the intrinsic distance and the Euclidean norm are comparable on a $C^{1,\alpha}$ hypersurface and the constant in the comparison estimate only depends on $[\Gamma]_{1,\alpha}$. 

\begin{lemma}\label{lem:C1}
 Let $\Gamma \subset \subset \mathbb{R}^n$ be an $n-1$ dimensional $C^{1,\alpha}$-hypersurface. Then there exists $C= C([\Gamma]_{1,\alpha}) >1$ such that for all $y, z \in \Gamma$ one has $|y-z| \leq \mathrm{dist}_\Gamma (y,z) \leq C([\Gamma]_{1,\alpha})|y-z|$, where
    \begin{equation}
        \mathrm{dist}_\Gamma(y,z) := \inf \left\lbrace  \int_0^1 |\gamma'(s)| \; \mathrm{d}s : \gamma \in C^{1,\alpha}([0,1];\Gamma): \gamma(0) = y, \gamma(1) = z \right\rbrace. 
    \end{equation}
\end{lemma}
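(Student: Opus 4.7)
The lower bound $|y-z| \leq \mathrm{dist}_\Gamma(y,z)$ is immediate: for any admissible $\gamma \in C^{1,\alpha}([0,1];\Gamma)$ with $\gamma(0) = y$, $\gamma(1) = z$, the fundamental theorem of calculus and the triangle inequality give
\begin{equation}
  |y - z| = \Big| \int_0^1 \gamma'(s)\, ds \Big| \leq \int_0^1 |\gamma'(s)|\, ds.
\end{equation}
For the upper bound I would first prove a local graph estimate and then extend it globally by compactness.

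\textbf{Local step.} I would fix a representation $(O_i, U_i, V_i, f_i)$ of $\Gamma$ with $\sum_i (1 + \sup_{V_i} |\cdot| + |U_i| + \|f_i\|_{C^{1,\alpha}(4\overline{U_i})}) \leq 2[\Gamma]_{1,\alpha}$ and claim there exist $\delta = \delta([\Gamma]_{1,\alpha}) > 0$ and $C_1 = C_1([\Gamma]_{1,\alpha}) > 0$ such that $|y - z| < \delta$ implies $\mathrm{dist}_\Gamma(y,z) \leq C_1 |y-z|$. Given such $y, z$, pick an index $i$ with $y \in \Gamma_i$, so $y = O_i(y', f_i(y'))$ for some $y' \in U_i$. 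The key sub-claim is that for $|y-z| < \delta$ one also has $z = O_i(z', f_i(z'))$ for some $z' \in 4U_i$. Assuming this, the curve
\begin{equation}
  \gamma(t) := O_i\big((1-t)y' + tz',\; f_i((1-t)y' + tz')\big), \qquad t \in [0,1],
\end{equation}
is well defined (by convexity of $4U_i$), is $C^{1,\alpha}$-regular, lies in $\Gamma$, and satisfies
\begin{equation}
  \int_0^1 |\gamma'(s)|\, ds \leq \sqrt{1 + \|Df_i\|_{L^\infty(4U_i)}^2}\, |y' - z'| \leq \sqrt{1 + [\Gamma]_{1,\alpha}^2}\, |y - z|,
\end{equation}
since the projection onto the first $n-1$ coordinates (after rotation by $O_i^T$) is $1$-Lipschitz.

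\textbf{Global step.} If $|y - z| \geq \delta$, I would use that $\Gamma$ is compact and connected and covered by finitely many charts $\Gamma_i$, each of bounded intrinsic diameter $\sqrt{1+[\Gamma]_{1,\alpha}^2}\,|U_i|$. Iterating the local bound along a finite chain of nearby points (whose length depends only on $[\Gamma]_{1,\alpha}$) yields $\mathrm{diam}_\Gamma(\Gamma) \leq C_2([\Gamma]_{1,\alpha})$, and so
\begin{equation}
  \mathrm{dist}_\Gamma(y,z) \leq \mathrm{diam}_\Gamma(\Gamma) \leq \frac{C_2}{\delta}\,|y-z|.
\end{equation}
Combining the two regimes gives the desired $C = \max(C_1, C_2/\delta)$, depending only on $[\Gamma]_{1,\alpha}$.

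\textbf{Main obstacle.} The hard part is the sub-claim in the local step: for $|y-z|$ small (with smallness depending only on $[\Gamma]_{1,\alpha}$), the point $z$ must lie in the same graph chart as $y$, extended to $4U_i$; equivalently, no ``fold'' of $\Gamma$ occurs on this scale. This is precisely the purpose of the enlargement from $U_i$ to $4U_i$ in the definition of a representation: it builds in a uniform safety margin. To establish the sub-claim I would invoke the Hölder continuity of $\nu$ from \eqref{eq:hoeldernormal} (or equivalently the $C^{1,\alpha}$-regularity of the distance function $\rho$, via \eqref{eq:gradsigndisti}): for $|w - y|$ small enough in terms of $[\Gamma]_{1,\alpha}$, the direction $\nu(w)$ remains uniformly bounded away from $O_i e_n^\perp$, so the orthogonal projection of $\Gamma$ near $y$ onto $O_i(\mathbb{R}^{n-1} \times \{0\})$ is a local diffeomorphism onto its image, which must coincide with the graph of $f_i$ restricted to $4U_i$ by uniqueness. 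Quantifying this ``non-folding'' radius uniformly in $y \in \Gamma_i$ (using only $[\Gamma]_{1,\alpha}$) is the sole analytic content of the proof; the remaining estimates are elementary.
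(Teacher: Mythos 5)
Your approach is genuinely different from the paper's. The paper does not attempt to produce explicit connecting curves from the chart data at all; instead it invokes Blatt's quantitative chord-arc criterion (\cite[Theorem~1.1]{Blatt}), verifies that the quantities $\gamma_1(S),\gamma_2(S)$ (which measure oscillation of the normal on $S=\Gamma\cap\overline{B_s(z)}$) are $\lesssim s^\alpha$ using only \eqref{eq:hoeldernormal}, \eqref{eq:C2} and Lemma~\ref{lem:normale}, and thereby gets the local chord-arc estimate directly. The far-apart case is then handled by a chain through the overlap points $x_i\in\Gamma_i\cap\Gamma_{i+1}$, with an intrinsic-diameter bound obtained by a curve construction similar in spirit to your local step but only used there. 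So your proposal and the paper agree on the outer structure (local estimate plus chaining) but differ entirely in the analytic mechanism behind the local estimate.

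The problem is that your local step has a genuine gap, and you have correctly located it: the sub-claim that $|y-z|<\delta([\Gamma]_{1,\alpha})$ forces $z$ to lie on the same graph $\{(x',f_i(x')):x'\in 4U_i\}$ is not established, and your sketch of how to get it does not close the gap. Two concrete issues. First, the ``safety margin'' argument does not have the uniformity you need: $[\Gamma]_{1,\alpha}$ bounds the radii of the balls $U_i$ \emph{from above} (via the terms $|U_i|$), but gives no lower bound on them. So a representation attaining (nearly) the infimum may contain a chart with tiny radius $r_i$, and then $y'\in U_i$, $|y'-z'|<\delta$ does not force $z'\in 4U_i$ unless $\delta<3r_i$, which is not controlled by $[\Gamma]_{1,\alpha}$. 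You are also free to pick the ``best'' chart containing $y$, but a Lebesgue-number argument only gives a $\delta$ depending on the particular covering, not on $[\Gamma]_{1,\alpha}$. Second, the ``non-folding'' step (that a distant sheet of $\Gamma$ cannot re-enter $B_\delta(y)$) is exactly what the paper later proves in Lemma~\ref{lem:ballgraphsystem} --- but that proof explicitly uses the constant from Lemma~\ref{lem:C1}, so you cannot borrow it here without circularity. To make your route work you would have to prove a uniform local graph representation of $\Gamma\cap B_r(x_0)$ over $\nu(x_0)^\perp$ for $r=r([\Gamma]_{1,\alpha})$ directly from the $C^{1,\alpha}$-bound on $\nabla\rho$ (a quantitative inverse-function-theorem argument), and separately match that intrinsic graph with the chart $f_i$; neither is immediate. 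Blatt's theorem is precisely the device that lets the paper avoid carrying out that non-folding argument by hand.
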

\begin{proof}
We use \cite[Theorem 1.1]{Blatt}. For a compact  $C^{1}$-submanifold $S$ of $\mathbb{R}^n$ one can define 
\begin{equation}
    \gamma_0(S) := \max\{\gamma_1(S), \gamma_2(S) \}
\end{equation}
where 
\begin{equation}
    \gamma_1(S) := \sup_{x \in S, R>0} \inf_{v \in \mathbb{R}^n,|v|= 1}  I_{x,R}(v),
\end{equation}
\begin{equation}
   I_{x,R} (v) :=  \frac{1}{\mathcal{H}^{n-1}(S \cap B_R(x))} \int_{S \cap B_R(x)}   |\nu(y) - v| \; \mathrm{d}\mathcal{H}^{n-1}(y)  \quad (v \in \mathbb{R}^n, |v| = 1) 
\end{equation}
and 
\begin{equation}
    \gamma_2(S) := \inf_{x \in S, R >0 } \sup_{y \in S \cap B_R(x), v \in \mathrm{arg inf} I_{x,R} }  \frac{(v,x-y)}{R}, 
\end{equation}
where $\mathrm{arg inf}$ is meant among all $v \in \mathbb{R}^n$ s.t. $|v| = 1$. 
Now \cite[Theorem 1.1]{Blatt} states that there exists $\epsilon = \epsilon(n) > 0$ and $C = C(n) > 0$ such that $\gamma_0(S) \leq  \epsilon$ implies that 
\begin{equation}
    \left\vert \frac{\mathrm{dist}_\Gamma(x,y)}{|x-y|} - 1 \right\vert \leq C(n) \gamma_0(S) \log \tfrac{1}{\gamma_0(S)} \quad \forall x ,y \in \Gamma.
\end{equation}
Next we look at the compact submanifold $S := \Gamma \cap \overline{B_s(z)} \subset \Gamma$ for some arbitrary $z \in \mathbb{R}^n$, $s> 0$. 
Fix $x \in S$. One has (with $C= C([\Gamma]_{1,\alpha})$ defined as in \eqref{eq:hoeldernormal})
\begin{align}
    \inf_{|v|= 1} I_{x,R}(v) \leq  I_{x,R}(\nu(x)) & = \fint_{S \cap B_R(x)} |\nu(y)- \nu(x)| \; \mathrm{d}\mathcal{H}^{n-1}(y)  \\ & \leq C \fint_{S \cap B_R(x)}|y-x|^\alpha \leq C(2s)^\alpha. \label{gamma1klein}
\end{align}
and thus $\gamma_1(\Gamma \cap \overline{B_s(z)}) \leq D([\Gamma]_{1,\alpha}) s^\alpha$. Moreover for each $v \in \mathrm{arg inf} I_{x,R}$ we have 
\begin{align}
    |v- \nu(x)| & = \fint_{S \cap B_r(x)} |v-\nu(x)| \; \mathrm{d}\mathcal{H}^{n-1}(y)  \\ & \leq  \fint_{S \cap B_r(x)} |\nu(y)-v| \; \mathrm{d}\mathcal{H}^{n-1}(y) + \fint_{S \cap B_r(x)} |\nu(y)-\nu(x)| \; \mathrm{d}\mathcal{H}^{n-1}(y)
    \\ & \leq 2 \fint_{S \cap B_r(x)} |\nu(y) - \nu(x)| \leq 2 D([\Gamma]_{1,\alpha}) s^{\alpha}. 
\end{align}
Therefore we have that for each $v \in \mathrm{arg inf} I_{x,R}$ and $y \in S \cap B_R(x)$ there holds 
\begin{align}
    \frac{(v,x-y)}{R}  & = \frac{(v-\nu(x),x-y)}{R} + \frac{(\nu(x),x-y)}{R} \\ & \leq 2D([\Gamma]_{1,\alpha})s^\alpha \frac{|x-y|}{R} + C([\Gamma]_{1,\alpha})  \frac{|x-y|^{1+\alpha}}{R}  \\ & \leq 2D([\Gamma]_{1,\alpha})s^\alpha \frac{|x-y|}{R} + C([\Gamma]_{1,\alpha}) (2s)^\alpha \frac{|x-y|}{R}  \\ & \leq 2D([\Gamma]_{1,\alpha})s^\alpha+ C([\Gamma]_{1,\alpha}) (2s)^\alpha.
\end{align}
This implies that 
\begin{equation}
    \gamma_2(\Gamma \cap \overline{B_s(z)}) = E([\Gamma]_{1,\alpha}) s^\alpha. 
\end{equation}
From this and the discussion after \eqref{gamma1klein} we infer that $s < s_0([\Gamma]_{1,\alpha})$ implies that $\gamma_0(\Gamma \cap \overline{B_s(z)}) \leq \epsilon(n)$ and hence for all $s < s_0([\Gamma]_{1,\alpha})$ one has 
    \begin{equation}\label{eq:distloc}
    \left\vert \frac{\mathrm{dist}_{\Gamma \cap \overline{B_s(z)}}(x,y)}{|x-y|} - 1 \right\vert \leq C(n) \sup_{t \in (0, \epsilon(n))} t \log \tfrac{1}{t} < \infty. 
\end{equation}
Now suppose that $x,y \in \Gamma$. If $|x-y| \geq \frac{1}{2}s_0([\Gamma]_{1,\alpha})$ then one has 
\begin{equation}
    \frac{\mathrm{dist}_\Gamma(x,y)}{|x-y|} \leq \frac{2\diam_\Gamma(\Gamma)}{s_0([\Gamma]_{1,\alpha})}, 
\end{equation}
where $\diam_\Gamma$ denotes the intrinsic diameter of $\Gamma$, which we bound later in terms of $[\Gamma]_{1,\alpha}$. If $|x-y| <\frac{1}{2}s_0([\Gamma]_{1,\alpha}) $ then one has by \eqref{eq:distloc}
\begin{equation}
    \frac{\mathrm{dist}_\Gamma(x,y)}{|x-y|} =  \frac{\mathrm{dist}_{\Gamma \cap B_s(x)}(x,y)}{|x-y|} \leq C(n) \sup_{t \in (0, \epsilon(n))} t \log \tfrac{1}{t}.
\end{equation}
Choosing 
\begin{equation}
    C_1:= \frac{2\diam_\Gamma(\Gamma)}{s_0([\Gamma]_{1,\alpha})}  + C(n) \sup_{t \in (0, \epsilon(n))} t \log \tfrac{1}{t}
\end{equation}
we infer from the previous computations that $\mathrm{dist}_\Gamma(x,y) \leq C_1 |x-y|$. To show that $C_1$ depends only on $[\Gamma]_{1,\alpha}$ it remains to prove that 
 $   \mathrm{diam}_\Gamma (\Gamma)$ is bounded in terms of $[\Gamma]_{1,\alpha}$. To this end pick a representation $(O_i,U_i,V_i, f_i$, $i=1,...,M)$ and let $\Gamma_i$ be as in \eqref{eq:Gaama}, i.e. $\Gamma_i \cap \Gamma_{i+1} \neq \emptyset$. Choose arbitrary $x_i \in \Gamma_i \cap \Gamma_{i+1}$ for all $i = 1,...,M$. Now let $y, z \in \Gamma$ be arbitrary. Without loss of generality we may assume $y \in \Gamma_i$ and $z \in \Gamma_j$ for some $i \leq j$. Now we estimate 
 \begin{equation}\label{eq:Harnackchain}
     \mathrm{dist}_\Gamma(y,z) \leq \mathrm{dist}_\Gamma(y,x_i) + \sum_{l = i}^{j-1} \mathrm{dist}_\Gamma(x_{l+1},x_l)  + \mathrm{dist}_\Gamma(z,x_j).
 \end{equation}
Observe that each summand is the distance of two points lying in the same subset $\Gamma_i$. For any two $y_1,y_2 \in \Gamma_i$, corresponding to the values $(z_1,f(z_1)),(z_2,f(z_2))$ for some $z_1,z_2 \in U_i$ one sees that 
\begin{equation}
    \gamma(t) := O_i ((z_1+t (z_2-z_1), f(z_1 + t(z_2-z_1)))  
\end{equation}
lies in $C^{1,\alpha}([0,1];\Gamma)$ and connects $y_1,y_2$. (Notice that we have used here that $U_i$ is a ball). Therefore 
\begin{align}
    \mathrm{dist}_\Gamma(y_1,y_2) & \leq \int_0^1 |\gamma'(t)| \; \mathrm{d}t   \leq |z_1-z_2| + |f(z_1) - f(z_2)|
    \leq (1+ ||Df_i||_\infty) |z_1- z_2| 
     \\ &\leq (1+ ||Df_i||_\infty) |y_1-y_2|  \leq 
    (1+ ||Df_i||_\infty)\mathrm{diam}(\Gamma) \leq C([\Gamma]_{1,\alpha}) . 
\end{align}
We infer from \eqref{eq:Harnackchain} (and the fact that $M$ is bounded by $[\Gamma]_{1,\alpha}$, cf. \eqref{eq:Gaama} f.)  that 
\begin{equation}
    \mathrm{dist}_\Gamma(y,z) \leq M C([\Gamma]_{1,\alpha}) \leq \tilde{C}([\Gamma]_{1,\alpha}).
\end{equation}
Since $y,z \in \Gamma$ were arbitrary the diameter bound follows.
\end{proof}

\begin{lemma}\label{lem:ballgraphsystem}
Let $\Gamma \subset \mathbb{R}^n$ be an $(n-1)$-dimensional $C^{1,\alpha}$-hypersurface. Then there exists some $r_1 = r_1([\Gamma]_{1,\alpha}) > 0$ such that for all $r < r_1$ and for all  $x_0 \in \Gamma$ the hypersurface $\Gamma \cap B_r(x_0)$ can be covered by a single graph of a $C^{1,\alpha}$-function $u: W \rightarrow \mathbb{R}$ with vanishing gradient at the preimage of $x_0$ and Lipschitz-norm bounded by 1. The radius $r_1$ and $||u||_{C^{1,\alpha}}$ depend only on $[\Gamma]_{1,\alpha}$.
\end{lemma}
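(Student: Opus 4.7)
The plan is to show that, after rotating coordinates so that $\nu(x_0) = e_n$, the vertical projection $\pi: x = (x', x_n) \mapsto x'$ realizes $\Gamma \cap B_{r_1}(x_0)$ as the graph of a $C^{1,\alpha}$ function $u$ defined on the open set $W := \pi(\Gamma \cap B_{r_1}(x_0))$. The whole argument is driven by the Hölder estimate \eqref{eq:hoeldernormal} for $\nu$ and the consequence Lemma \ref{lem:normale}, both of which have constants depending only on $[\Gamma]_{1,\alpha}$.

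First I would fix $r_0 = r_0([\Gamma]_{1,\alpha})$ small enough so that \eqref{eq:hoeldernormal} yields $|\nu(x) - e_n| \leq 1/\sqrt{2}$ for all $x \in \Gamma \cap B_{r_0}(x_0)$, hence $\nu_n(x) \geq 1/\sqrt{2}$ and $|\nu'(x)| \leq 1/\sqrt{2}$ there. Next I would prove injectivity of $\pi$ on $\Gamma \cap B_{r_1}(x_0)$ for $r_1 = r_1([\Gamma]_{1,\alpha}) \leq r_0$ sufficiently small: if $x,y$ have the same projection and $x_n > y_n$, Lemma \ref{lem:normale} gives
\begin{equation}
\tfrac{1}{\sqrt{2}}(x_n - y_n) \leq \nu_n(x)(x_n - y_n) = (\nu(x), x-y) \leq C([\Gamma]_{1,\alpha})(x_n - y_n)^{1+\alpha},
\end{equation}
forcing $(x_n - y_n)^\alpha$ below some positive constant determined by $[\Gamma]_{1,\alpha}$, which contradicts $|x_n - y_n| \leq 2 r_1$ once $r_1$ is small enough. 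Openness of $W$ and $C^{1,\alpha}$-regularity of the graphing function $u$ follow from the local $C^{1,\alpha}$-parametrization of $\Gamma$ coming from the representation \eqref{eq:Gaama}: writing $\Gamma$ near $x_0$ as $O_i(\tilde z', f_i(\tilde z'))$, the composition of this parametrization with $\pi$ is a $C^{1,\alpha}$ map whose Jacobian at the preimage of $x_0$ equals the horizontal part of a basis of $T_{x_0}\Gamma$, which is invertible since $T_{x_0}\Gamma$ is horizontal. The inverse function theorem combined with the injectivity above then yields $u \in C^{1,\alpha}(W)$ with $\Gamma \cap B_{r_1}(x_0) = \{(x', u(x')) : x' \in W\}$.

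For the quantitative bounds I would use the standard graph formula
\begin{equation}
\nu(x', u(x')) = \frac{(-\nabla u(x'), 1)}{\sqrt{1+|\nabla u(x')|^2}},
\end{equation}
which gives $\nabla u(x') = -\nu'(x', u(x'))/\nu_n(x', u(x'))$. Since $\nu(x_0) = e_n$ one immediately obtains $\nabla u(x_0') = 0$, and from the bounds $|\nu'| \leq 1/\sqrt{2}$ and $\nu_n \geq 1/\sqrt{2}$ on $\Gamma \cap B_{r_1}(x_0)$ one gets $|\nabla u| \leq 1$ throughout $W$. The $C^{0,\alpha}$-seminorm of $\nabla u$ is then controlled by combining \eqref{eq:hoeldernormal}, the Lipschitz bound $|u(x') - u(y')| \leq |x' - y'|$, and the lower bound on $\nu_n$, yielding a constant depending only on $[\Gamma]_{1,\alpha}$.

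The most delicate step is ensuring that all constants — in particular the radius $r_1$ and the Hölder constant of $\nu$ — really only depend on $[\Gamma]_{1,\alpha}$ and not on $x_0$ or the ambient local chart $i$. This is where the global Hölder bound \eqref{eq:hoeldernormal} on the normal is essential: it is precisely this global estimate, derived from the regularized signed distance function $\rho$ of \cite{Lieberman} whose $C^{1,\alpha}$-norm is controlled by $[\Gamma]_{1,\alpha}$, that uniformizes the local flattening across the hypersurface. The additional subtlety is obtaining the $C^{0,\alpha}$ bound on $\nabla u$ without invoking the local charts (whose individual $C^{1,\alpha}$-norms are already controlled by $[\Gamma]_{1,\alpha}$ through the representation's infimum definition), but this can be done directly from the formula for $\nabla u$ above.
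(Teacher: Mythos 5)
Your proof is correct and takes a genuinely different route for the injectivity step. The paper establishes $|\Pi_x^\perp y - \Pi_x^\perp z| \leq \tfrac{1}{2}|y-z|$ by integrating $\Pi_x^\perp \gamma'(s)$ along a near-geodesic $\gamma$ in $\Gamma$ joining $y$ and $z$, which requires the chord-arc estimate of Lemma \ref{lem:C1} (to control $\mathrm{dist}_\Gamma(y,z)$ by $|y-z|$) together with the H\"older bound \eqref{eq:C2} on the tangent projections; the graph representation and the Lipschitz constant $\leq 1$ then fall out of the resulting bi-Lipschitz estimate \eqref{eq:inv} in one pass. You bypass the chord-arc lemma entirely via a pointwise contradiction: two distinct points $x,y \in \Gamma \cap B_{r_1}(x_0)$ with the same horizontal projection have $(\nu(x),x-y)=\nu_n(x)(x_n-y_n)$, and Lemma \ref{lem:normale} then forces $(x_n-y_n)^{\alpha}$ above a constant in $[\Gamma]_{1,\alpha}$, impossible once $r_1$ is small. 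This is shorter and avoids the machinery of \cite{Blatt}; the price is that the Lipschitz bound must be recovered separately from $\nabla u = -\nu'/\nu_n$ using the bounds implied by $|\nu - e_n|\leq 1/\sqrt{2}$ (in fact $\nu_n \geq 3/4$ and $|\nu'|\leq \sqrt{7}/4$, a bit tighter than your stated $1/\sqrt 2$, so the conclusion $|\nabla u|\leq 1$ holds). Two small points worth making explicit in a write-up: the invertibility of the Jacobian of $\pi|_\Gamma$ needed for the graph representation and for openness of $W$ should be verified at \emph{every} point of $\Gamma \cap B_{r_1}(x_0)$, not just at $x_0$, which your uniform lower bound on $\nu_n$ gives; and Lemma \ref{lem:normale} is stated one-sided, but its proof actually bounds $|(\nu(x),x-y)|$, so your sign assumption $x_n>y_n$ is harmless. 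The concluding $C^{0,\alpha}$ estimate on $\nabla u$ via the normal formula coincides with the paper's last step.
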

\begin{proof}
Let $C_1= C_1([\Gamma]_{1,\alpha})$ be the constant from Lemma \ref{lem:C1} and $C_2= C_2([\Gamma]_{1,\alpha})$ be the constant from \eqref{eq:C2}. 
Next we let $r_1 = r_1([\Gamma]_{1,\alpha})> 0$ be chosen such that 
\begin{equation}
    r_1^\alpha   \min\{ 2^\alpha  (1+ C_1)^\alpha C_1 C_2 ,   C_2 \}  = \frac{1}{2}.
\end{equation}
Now fix $x \in \Gamma$ arbitrary. Let $\Pi_x^\perp = I- \Pi_x$ be the orthogonal projection on $T_x\Gamma^\perp$. Let $r \leq r_1$ and $y, z \in \Gamma \cap B_r(x)$ be arbitrary. Fix $\epsilon >0$ and choose some $\gamma \in C^{1,\alpha}([0,1]; \Gamma)$ such that $\gamma(0)=y$, $\gamma(1)= z$, $L(\gamma) := \int_0^1 |\gamma'(s)| \; \mathrm{d}s \leq (1+\epsilon) \mathrm{dist}_\Gamma(y,z)$. 
Observe that for all $t \in [0,1]$ one has $\gamma'(t) \in T_{\gamma(t)} \Gamma$ and $\mathrm{dist}_{\Gamma}(\gamma(t),y) \leq L(\gamma\vert_{[0,t]}) \leq L(\gamma) \leq (1+\epsilon) \mathrm{dist}(y,z)$. We compute
\begin{align}
   & |\Pi_x^\perp y - \Pi_x^\perp z|   = \left\vert \Pi_x^\perp \int_0^1 \gamma'(s) \; \mathrm{d}s \right\vert  = \left\vert \int_0^1 \Pi_x^\perp \gamma'(s) \; \mathrm{d}s \right\vert = \left\vert \int_0^1 \Pi_x^\perp \Pi_{\gamma(s)} \gamma'(s) \; \mathrm{d}s \right\vert
    \\ & =   \left\vert \int_0^1 \Pi_x^\perp (\Pi_{\gamma(s)}- \Pi_x) \gamma'(s) \; \mathrm{d}s \right\vert
     \leq \int_0^1 |\Pi_x^\perp| |\Pi_{\gamma(s)}- \Pi_x| |\gamma'(s)| \; \mathrm{d}s 
    \underset{\eqref{eq:C2}}{\leq} C_2 \int_0^1 |\gamma(s) - x|^\alpha |\gamma'(s)| \; \mathrm{d}s 
    \\ & \leq  C_2 \int_0^1 (|\gamma(s) - y| + |y-x|)^\alpha |\gamma'(s)| \; \mathrm{d}s
    \leq C_2  \int_0^1 ( \mathrm{dist}_\Gamma(\gamma(s),y) + r)^\alpha  |\gamma'(s)| \; \mathrm{d}s 
     \\ & \leq C_2 ( (1+\epsilon)\mathrm{dist}_\Gamma(z,y) + r)^\alpha  \int_0^1 |\gamma'(s)| \; \mathrm{d}s
    \leq C_2 ( (1+\epsilon)\mathrm{dist}_\Gamma(z,y) + r)^\alpha  (1+\epsilon) \mathrm{dist}_\Gamma(y,z) 
    \\ &  \leq C_2 ( (1+\epsilon)(\mathrm{dist}_\Gamma(z,x) + \mathrm{dist}_\Gamma(x,y)) + r)^\alpha  (1+\epsilon) \mathrm{dist}_\Gamma(y,z) 
    \\ & \leq C_2 C_1 ( (1+\epsilon)C_1 (|x-z| +  |y-z|) + r)^\alpha  (1+\epsilon) |y-z| \\
     & \leq (1+\epsilon)C_1 C_2 (2(1+\epsilon) C_1 + 1)^\alpha r^\alpha |y-z|.
\end{align}
Letting $\epsilon \rightarrow 0$ and using $r \leq r_1$ we obtain 
\begin{equation}
    |\Pi_x^\perp y - \Pi_x^\perp z| \leq \frac{1}{2}|y-z| \leq \frac{1}{2}|\Pi_x y-\Pi_x z| + \frac{1}{2}|\Pi_x^\perp y -\Pi_x^\perp z |.
\end{equation}
Rearranging we obtain 
\begin{equation}\label{eq:inj}
    |\Pi_x^\perp y - \Pi_x^\perp z| \leq |\Pi_x y -\Pi_x z | \quad \forall y,z \in \Gamma \cap B_r(x),
\end{equation}
and one also concludes immediately 
\begin{equation}\label{eq:inv}
    |y-z| \leq \sqrt{2} |\Pi_x y -\Pi_x z | \quad \forall y,z \in \Gamma \cap B_r(x).
\end{equation}
We conclude from \eqref{eq:inj} that $\Pi_x$ is injective on $\Gamma \cap B_r(x)$, call $W$ its image, and infer from \eqref{eq:inv} that the inverse of $(\Pi_x \vert_{\Gamma \cap B_r(x)})^{-1}: W \rightarrow \Gamma \cap B_r(x)$ is Lipschitz continuous. We infer that $\Pi_x : \Gamma \cap B_r(x) \rightarrow W$ is a homeomorphism.
Hence $\Gamma\cap B_r(x)$ is a Lipschitz graph over $T_x \Gamma$. Since $\Gamma \cap B_r(x)$ is a $C^{1,\alpha}$-submanifold, one obtains that this graph representation must be a $C^{1,\alpha}$-graph.  
This means 
\begin{equation}\label{eq:graphigraphi}
    \Gamma \cap B_r(x) = O \{(x',f(x')) : x' \in W \}
\end{equation}
for some $C^{1,\alpha}$ function $f: W \rightarrow \mathbb{R}$ and some orthogonal matrix $O$ such that $O \cdot  \mathrm{span}(e_1,...,e_{n-1})= T_x\Gamma$. 
By the choice of $O$ we also infer $\nabla f(x') = 0$ if $x'$ is the preimage of $x$. The Lipschitz continuity with constant $1$ of $f$ follows immediately from \eqref{eq:inj}. It only remains to show that the $C^{1,\alpha}$-norm of $f$ can be bounded in terms of $[\Gamma]_{1,\alpha}$. Notice first that the Lipschitz estimate implies $||\nabla f||_{L^\infty(W)} \leq 1$. Define $h(z) := \frac{z}{\sqrt{1+|z|^2}}$ and observe that for all $z_1,z_2 \in B_1(0)$ one has
\begin{equation}
    |z_1 - z_2|  = |h^{-1}(h(z_1)) - h^{-1}(h(z_2)) | \leq ||\nabla h^{-1}||_\infty |h(z_1)- h(z_2)| \leq \sqrt{2} |h(z_1)- h(z_2)|.
\end{equation}
Therefore we obtain for $x',y' \in W$
\begin{align}
    |\nabla f(x') - \nabla f(y') |  & \leq \sqrt{2} | h(\nabla f(x')) - h(\nabla f(y'))| \leq \sqrt{2} |\nu((x',f(x')))- \nu((y',f(y'))|  \\ & \leq \sqrt{2}C([\Gamma]_{1,\alpha}) |(x', f(x')) - (y',f(y'))|^{\alpha} \leq \tilde{C}([\Gamma]_{1,\alpha})\ |x' - y'|^{\alpha},
\end{align}
where we used in the last step again that $f$ is Lipschitz continuous with $[f]_{0,1} \leq 1$. 
\end{proof}

\begin{lemma}
Suppose that $\Gamma \subset \mathbb{R}^n$ is an $(n-1)$-dimensional $C^{1,\alpha}$-hypersurface. Let $r_1 = r_1([\Gamma]_{1,\alpha})$ be as in Lemma \ref{lem:ballgraphsystem}. Then there exists $C = C([\Gamma]_{1,\alpha})$ such that for all $x_0 \in \Gamma$ one has 
\begin{equation}
    \left\vert \int_{\Gamma \cap B_r(x_0)} (x-x_0) \; \mathrm{d}\mathcal{H}^{n-1}(x) \right\vert \leq  C r^{n+\alpha} \quad \forall r < r_1. 
\end{equation}
\end{lemma}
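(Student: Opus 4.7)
The plan is to use the local graph representation from Lemma \ref{lem:ballgraphsystem} and to decompose the integral into tangential and normal components relative to the tangent plane $T_{x_0}\Gamma$. After translating $x_0$ to the origin and applying the rotation $O$ from Lemma \ref{lem:ballgraphsystem} (both of which leave $\left|\int(x-x_0)\; \mathrm{d}\mathcal{H}^{n-1}\right|$ invariant), we may assume $x_0 = 0$, $T_0\Gamma = \mathbb{R}^{n-1}\times\{0\}$, and $\Gamma \cap B_r(0) = \{(x', f(x')) : x' \in W\}$ with $f(0)=0$, $\nabla f(0) = 0$, and $||f||_{C^{1,\alpha}} \leq M$ where $M = M([\Gamma]_{1,\alpha})$. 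The domain is then $W = \{x' \in \mathbb{R}^{n-1} : |x'|^2 + f(x')^2 < r^2\}$, and the area formula gives
\begin{equation}
    \int_{\Gamma \cap B_r(0)} x \; \mathrm{d}\mathcal{H}^{n-1}(x) = \left(\int_W x' \sqrt{1+|\nabla f(x')|^2}\; \mathrm{d}x',\; \int_W f(x') \sqrt{1+|\nabla f(x')|^2}\; \mathrm{d}x'\right).
\end{equation}

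For the normal component, integrating the Hölder estimate $|\nabla f(x')|=|\nabla f(x')-\nabla f(0)| \leq M|x'|^\alpha$ from the origin yields $|f(x')| \leq \frac{M}{1+\alpha}|x'|^{1+\alpha}$, while $\sqrt{1+|\nabla f|^2}$ is uniformly bounded on $W \subset B_r^{n-1}(0)$. Hence the normal component is bounded by $C \int_{B_r^{n-1}(0)} |x'|^{1+\alpha}\; \mathrm{d}x' \leq C r^{n+\alpha}$, which is already of the desired order.

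For the tangential component I would split $\sqrt{1+|\nabla f(x')|^2} = 1 + h(x')$ with $|h(x')|\leq C|x'|^{2\alpha}$. The $h$-contribution is bounded by $C \int_{B_r^{n-1}(0)} |x'|^{1+2\alpha}\; \mathrm{d}x' = Cr^{n+2\alpha}$. The remaining term $\int_W x'\; \mathrm{d}x'$ would vanish by symmetry if $W$ were a ball, so the idea is to compare $W$ to $B_r^{n-1}(0)$. Using $|f(x')| \leq C|x'|^{1+\alpha}$ one has $W \subset B_r^{n-1}(0)$ and, for $r$ sufficiently small (which is available for $r < r_1$), $B^{n-1}_{r\sqrt{1-Cr^{2\alpha}}}(0) \subset W$. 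Thus $B_r^{n-1}(0) \setminus W$ is a thin annulus of measure at most $Cr^{n-1+2\alpha}$, and since $\int_{B_r^{n-1}(0)} x'\; \mathrm{d}x' = 0$ we get $\left|\int_W x'\; \mathrm{d}x'\right| = \left|\int_{B_r^{n-1}(0)\setminus W} x'\; \mathrm{d}x'\right| \leq r \cdot Cr^{n-1+2\alpha} = Cr^{n+2\alpha}$.

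Combining both components and using $r < r_1([\Gamma]_{1,\alpha})$ to absorb the $r^{n+2\alpha}$ terms into $r^{n+\alpha}$ yields the claim with a constant depending only on $[\Gamma]_{1,\alpha}$. The main obstacle is that $\int_W x'\; \mathrm{d}x'$ does not vanish exactly by symmetry --- the domain $W$ is not a ball --- so the gain beyond the purely normal estimate requires the quadratic-order vanishing $|f(x')| \leq C|x'|^{1+\alpha}$ to control the symmetric difference of $W$ with $B_r^{n-1}(0)$.
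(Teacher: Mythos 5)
Your proof is correct and follows essentially the same route as the paper's: pass to local graph coordinates from Lemma \ref{lem:ballgraphsystem}, establish the pointwise decay $|f(x')|\leq C([\Gamma]_{1,\alpha})|x'|^{1+\alpha}$, estimate the normal component directly, and control the tangential component by comparing the domain $W$ with a concentric ball so that the symmetric part of $\int_W x'\,\mathrm{d}x'$ vanishes and only the thin annulus $B_r^{n-1}(0)\setminus W$ contributes. The only cosmetic deviation is that you derive $|f(x')|\leq C|x'|^{1+\alpha}$ by radial integration of the H\"older bound on $\nabla f$ (which tacitly requires the segment $[0,x']$ to lie in the domain of $f$, a point worth a word of justification since $W$ need not be convex), whereas the paper obtains it from the pointwise normal estimate of Lemma \ref{lem:normale}, which avoids that issue.
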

\begin{proof}
After performing a translation and rotation we may assume $x_0 = 0$ and that $\Gamma \cap B_{r_1}(0)$ can be written as 
\begin{equation}\label{eq:xingamma}
    \Gamma \cap B_{r_1}(0) = \{ (x,u(x)) : x \in W_1 \}. 
\end{equation}
for some $u \in C^{1,\alpha}(W_1)$ such that $u(0)= \nabla u(0) = 0$, $u$ is Lipschitz with Lipschitz constant 1 and $||u||_{C^{1,\alpha}(W)} \leq C([\Gamma]_{1,\alpha})$.
As a consequence, for all $r< r_1$ there holds
\begin{equation}
    \Gamma \cap B_r(0) = \{ (x,u(x)) : x \in W \} 
\end{equation}
with $W :=  (\mathrm{id} + u e_n)^{-1}(B_r(0))$. 
We now claim that for all $x' \in W$ one has $|u(x')| \leq \tilde{C}([\Gamma]_{1,\alpha})|x'|^{1+\alpha}.$ Indeed, for all $x' \in W$ there holds with Lemma \ref{lem:normale}
\begin{align}
   | u(x')- \nabla u(x') \cdot x'|  &= |(\nu((x',u(x')), (x',u(x')))| \sqrt{1+ |\nabla u(x')|^2} \\ & = |(\nu((x',u(x')), (x',u(x'))- (0,u(0)))| \sqrt{1+|\nabla u(x')|^2} \\ & \leq C([\Gamma]_{1,\alpha}) |(x',u(x'))|^{1+\alpha}. 
\end{align}
Using $u(0) = 0$ and that $u$ is Lipschitz with Lipschitz constant $1$ one finds that the right hand side is bounded by $\sqrt{2}^{1+\alpha}C([\Gamma]_{1,\alpha})|x'|^{1+\alpha}=: \bar{C}([\Gamma]_{1,\alpha})|x'|^{1+\alpha}$. Using that by Lemma \ref{lem:ballgraphsystem} the $C^{1,\alpha}$-norm of $u$ can be bounded in terms of $[\Gamma]_{1,\alpha}$ we find 
\begin{align}
    |u(x')| & \leq  |\nabla u(x') \cdot x' |+ \bar{C}([\Gamma]_{1,\alpha})|x'|^{1+\alpha}\leq |\nabla u (x')| \; |x'| +  \bar{C}([\Gamma]_{1,\alpha})|x|^{1+\alpha}
    \\ & \leq |\nabla u (x')- \nabla u (0) | \; |x'| +  \bar{C}([\Gamma]_{1,\alpha})|x'|^{1+\alpha}  \leq \tilde{C}([\Gamma]_{1,\alpha}) |x'|^{1+\alpha}.
\end{align}
Now define $f(s) := s\sqrt{ 1 + \tilde{C}([\Gamma]_{1,\alpha})^2 s^{2\alpha} }$. We claim that then 
\begin{equation}
    B_{f^{-1}(r)}(0) \subset W \subset B_r(0).
\end{equation}
Indeed, for the second inclusion observe that if $x \in W$ then $(x, u(x)) \in B_r(0)$ and hence
    $r^2 > |x|^2 + u(x)^2$. In particular $r^2 > |x|^2$ and thus $|x|< r$. Now we turn to the first inclusion. If $f(|x|)< r$ then
\begin{equation}
    |x|^2 + u(x)^2 \leq |x|^2 + \tilde{C}([\Gamma]_{1,\alpha})^2 |x|^{2+2\alpha} = |x|^2 (1+  \tilde{C}([\Gamma]_{1,\alpha})^2 |x|^{2\alpha}) = f(|x|)^2 < r^2 
\end{equation}
and hence $(x,u(x)) \in B_r(0)$, implying $x \in W$ by the definition of $W$. 
Now we turn to the integral we want to estimate 
\begin{equation}
    \int_{\Gamma \cap B_r(0)} x \; \mathrm{d}\mathcal{H}^{n-1}(x) = \int_{W}  \begin{pmatrix} x' \\ u(x') \end{pmatrix} \sqrt{1+ |\nabla u(x')|^2}  \; \mathrm{d}x'.
\end{equation}
For the last component observe that  
\begin{align}
   \left\vert  \int_{W}  u(x')  \sqrt{1+ |\nabla u(x')|^2}  \; \mathrm{d}x'\right\vert  & \leq  \int_W |u(x')| \sqrt{1+ |\nabla u (x')|^2 } \; \mathrm{d}x'  \\ & \leq \sqrt{1+ ||\nabla u||_\infty^2}  \int_{W} |u(x')|  \; \mathrm{d}x'
   \\ & = \sqrt{2} \int_{B_r(0)} \tilde{C}([\Gamma]_{1,\alpha}) |x'|^{1+\alpha} \; \mathrm{d}x'  \\
    & \leq \sqrt{2}  \tilde{C}([\Gamma]_{1,\alpha})c(n) r^{n-1} r^{1+\alpha} \leq \tilde{C}([\Gamma]_{1,\alpha})  r^{n+\alpha}.
\end{align}
For the first $n-1$ components observe 
\begin{align}
     & \int_W x_i' \sqrt{1+ |\nabla u(x')|^2} \; \mathrm{d}x' \\ & = \sqrt{1+|\nabla u(0)|^2} \int_W x_i' \; \mathrm{d}x' + \int_W x_i' ( \sqrt{1+ |\nabla u(x')|^2}- \sqrt{1 + |\nabla u(0)|^2}) \; \mathrm{d}x'. 
\end{align}
Now observe that 
\begin{equation}
    ( \sqrt{1+ |\nabla u(x')|^2}- \sqrt{1 + |\nabla u(0)|^2}) \leq D([\Gamma]_{1,\alpha}) |x'|^\alpha 
\end{equation}
and hence the absolute value of last summand is bounded by 
\begin{align}
   \left\vert  \int_W x_i' ( \sqrt{1+ |\nabla u(x')|^2}- \sqrt{1 + |\nabla u(0)|^2}) \; \mathrm{d}x_i' \right\vert  & \leq \int_{B_r(0)} D|x'| |x'|^\alpha \; \mathrm{d}x' \leq Dr^{n+\alpha}.
\end{align}
For the first summand we conclude by symmetry of $B_{f^{-1}(r)}(0)$ with respect to $x \mapsto -x$ 
\begin{align}
    \left\vert \int_W x_i' \; \mathrm{d}x' \right\vert &  =  \left\vert \int_W x_i' \; \mathrm{d}x' -\int_{B_{f^{-1}(r)}(0)} x_i' \; \mathrm{d}x' \right\vert \leq \int_{W \setminus B_{f^{-1}(r)}(0)} |x_i'| \; \mathrm{d}x'
    \\ & \leq \int_{B_r(0) \setminus B_{f^{-1}(r)}(0)} |x'| \; \mathrm{d}x' \leq \alpha_n r ( r^{n-1} - f^{-1}(r)^{n-1}). \label{eq:r^n-finv}
\end{align}
Now we estimate 
\begin{align}
    r - f^{-1}(r) = f^{-1}(f(r)) - f^{-1}(r) = \int_r^{f(r)} (f^{-1})'(s) \; \mathrm{d}s =  \int_r^{f(r)} \frac{1}{f'(f^{-1}(s))} \; \mathrm{d}s.
\end{align}
Recalling that $f(s) = s \sqrt{1 + \tilde{C}([\Gamma]_{1,\alpha})^2 s^{2\alpha}}$ we find $f'(s) \geq 1$ for all $s > 0$.
Therefore the previous two equations yield
\begin{align}
     r - f^{-1}(r) & \leq f(r) - r = r \left( \sqrt{ 1 + \tilde{C}([\Gamma]_{1,\alpha})^2 r^{2\alpha}} - 1 \right)  \\ & = \frac{\tilde{C}([\Gamma]_{1,\alpha})^2 r^{1+2\alpha}}{ 1 + \sqrt{1+ \tilde{C}([\Gamma]_{1,\alpha})^2 r^{2\alpha}}}  \leq \tilde{C}([\Gamma]_{1,\alpha})^2 r^{1+2\alpha}.
\end{align}
We conclude (using $f^{-1}(r) \leq r$) that 
\begin{equation}
     |r^{n-1} - f^{-1}(r)^{n-1}| = (r- f^{-1}(r)) \left\vert \sum_{i= 0}^{n-2} r^i f^{-1}(r)^{n-2-i}   \right\vert \leq  (r- f^{-1}(r)) nr^{n-2} \leq \tilde{C}([\Gamma]_{1,\alpha})^2 r^{n-1+2\alpha}.
\end{equation}
Together with \eqref{eq:r^n-finv} the claim follows.
\end{proof}

\begin{lemma}\label{lem:approx}
 Let $\Gamma = \partial \Omega' \subset \subset \Omega$ be a $C^{1,\alpha}$-boundary. Then there exists a sequence $(\Gamma_j = \partial \Omega_j')_{j \in \mathbb{N}}$ of $C^\infty$-boundaries such that $\Omega_j' \supset \Omega'$ and
 \begin{enumerate}
     \item $\sup_{x \in \Gamma_j} \mathrm{dist}(x, \Gamma) \rightarrow 0 $ as $j \rightarrow \infty$.
     \item $\mathcal{H}^{n-1} \mres \Gamma_j \overset{*}{\rightharpoonup} \mathcal{H}^{n-1} \mres \Gamma$ in $C(\overline{\Omega})^*$ in the sense that for each $f \in C(\overline{\Omega})$ one has $\int_{\Gamma_j} f \; \mathrm{d}\mathcal{H}^{n-1} \rightarrow \int_\Gamma f \; \mathrm{d}\mathcal{H}^{n-1}$.
     \item There exists some $C = C([\Gamma]_{1,\alpha}) > 0$ independent of $j$ such that $[\Gamma_j]_{1,\alpha} \leq C$. 
 \end{enumerate}
\end{lemma}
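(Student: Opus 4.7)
The plan is to construct $\Gamma_j$ as level sets of a mollified regularized signed distance function. By Lieberman's construction discussed before \eqref{eq:gradsigndisti}, there exists $\rho \in C^{1,\alpha}(\mathbb{R}^n)$ with $\Gamma = \rho^{-1}(0)$, $\Omega' = \{\rho < 0\}$, $\tfrac{1}{2} \leq |\nabla \rho| \leq 2$ on $\mathbb{R}^n\setminus\Gamma$, and $[\nabla \rho]_{C^{0,\alpha}(\mathbb{R}^n)} \leq 10[\Gamma]_{1,\alpha}$. Let $(\psi_\epsilon)_{\epsilon>0}$ be a standard mollifier on $\mathbb{R}^n$ and set $\rho_\epsilon := \rho * \psi_\epsilon \in C^\infty(\mathbb{R}^n)$. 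Since $\nabla \rho_\epsilon = (\nabla \rho) * \psi_\epsilon$, convolution with a non-negative kernel of integral one preserves Hölder seminorms, so $[\nabla \rho_\epsilon]_{C^{0,\alpha}} \leq [\nabla \rho]_{C^{0,\alpha}}$, and moreover $\|\nabla \rho_\epsilon - \nabla \rho\|_{L^\infty} \leq [\nabla \rho]_{C^{0,\alpha}}\epsilon^\alpha$, so for $\epsilon$ small one has $|\nabla \rho_\epsilon| \geq \tfrac{1}{4}$ everywhere. Since $\rho$ is $2$-Lipschitz, $\|\rho_\epsilon - \rho\|_{L^\infty} \leq 2\epsilon$.

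Now set $t_\epsilon := 4\epsilon$ and define
\begin{equation}
\Omega_\epsilon' := \{\rho_\epsilon < t_\epsilon\}, \qquad \Gamma_\epsilon := \{\rho_\epsilon = t_\epsilon\}.
\end{equation}
Because $|\nabla \rho_\epsilon| \geq \tfrac{1}{4}$ on $\Gamma_\epsilon$ and $\rho_\epsilon \in C^\infty$, the implicit function theorem gives that $\Gamma_\epsilon$ is a $C^\infty$ closed hypersurface. If $x \in \Omega'$ then $\rho(x)\leq 0$, so $\rho_\epsilon(x) \leq 2\epsilon < t_\epsilon$, which gives $\Omega' \subset \Omega_\epsilon'$. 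For property (1), on $\Gamma_\epsilon$ one has $|\rho(x)| \leq |\rho_\epsilon(x) - \rho(x)| + |\rho_\epsilon(x)| \leq 2\epsilon + t_\epsilon = 6\epsilon$, and since $|\nabla \rho| \geq \tfrac{1}{2}$ one concludes $\operatorname{dist}(x,\Gamma) \leq 12\epsilon \to 0$. For $\epsilon$ small this also forces $\Gamma_\epsilon \subset \subset \Omega$, so $\Omega_\epsilon' \subset \subset \Omega$.

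For the uniform $C^{1,\alpha}$-bound (3), pick a representation $(O_i,U_i,V_i,f_i)$ of $\Gamma$ and observe that on the corresponding patch the normal of $\Gamma$ points essentially in the $O_i e_n$-direction, so $|(\nabla \rho)(y), O_i e_n)| \geq c([\Gamma]_{1,\alpha}) > 0$. Since $\nabla \rho_\epsilon$ is uniformly close to $\nabla \rho$, the same lower bound (with halved constant) holds for $\rho_\epsilon$ on a neighborhood of the same patch. Applying the implicit function theorem to $\rho_\epsilon(O_i(x', x_n)) = t_\epsilon$ produces a smooth graph function $f_i^\epsilon$ on (a slight enlargement of) $4\overline{U_i}$ representing $\Gamma_\epsilon$, with
\begin{equation}
\nabla' f_i^\epsilon(x') \;=\; -\,\frac{(\nabla' \rho_\epsilon)(O_i(x',f_i^\epsilon(x')))}{(\partial_n \rho_\epsilon)(O_i(x',f_i^\epsilon(x')))}.
\end{equation}
Both numerator and denominator are in $C^{0,\alpha}$ with bounds independent of $\epsilon$ and the denominator is bounded away from zero, so $\|f_i^\epsilon\|_{C^{1,\alpha}(4\overline{U_i})} \leq C([\Gamma]_{1,\alpha})$, yielding $[\Gamma_\epsilon]_{1,\alpha} \leq C([\Gamma]_{1,\alpha})$.

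For (2), the weak-$*$ convergence of $\mathcal{H}^{n-1}\mres \Gamma_\epsilon$: a smooth partition of unity $(\eta_i)$ subordinate to the cover $(O_i(U_i\times V_i))$ reduces the problem, for each continuous $f$, to proving
\begin{equation}
\int_{U_i} f(O_i(x',f_i^\epsilon(x'))) \sqrt{1+|\nabla f_i^\epsilon|^2}\,\eta_i \, dx' \;\to\; \int_{U_i} f(O_i(x',f_i(x'))) \sqrt{1+|\nabla f_i|^2}\,\eta_i \, dx'.
\end{equation}
Uniqueness in the implicit function theorem and the uniform $C^{1,\alpha}$-bound plus Arzelà--Ascoli force $f_i^\epsilon \to f_i$ in $C^1$, and convergence follows by dominated convergence. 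The main delicate point is the uniform $C^{1,\alpha}$-bound (3): working with one globally defined $\rho_\epsilon$ rather than mollifying each $f_i$ separately is what avoids the combinatorial mess of re-gluing perturbed local patches, because all graph representations of $\Gamma_\epsilon$ are automatically induced from the same smooth function on $\mathbb{R}^n$.
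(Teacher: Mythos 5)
Your strategy is essentially the same as the paper's (take a small level set of a regularized signed distance function), but there is a notable difference: you mollify $\rho$ to get a globally $C^\infty$ function $\rho_\epsilon$ and take level sets of $\rho_\epsilon$, whereas the paper simply takes $\Omega_j' := \{\rho < \tfrac{1}{j}\}$ and uses $\rho$ directly. The paper can do this without mollification because Lieberman's regularized signed distance satisfies $\rho \in C^\infty(\mathbb{R}^n\setminus\Gamma) \cap C^{1,\alpha}(\mathbb{R}^n)$ — it is already smooth away from $\Gamma$, so a positive level set is automatically $C^\infty$, and the $C^{1,\alpha}$-bound on the level-set graphs comes entirely from $[\nabla\rho]_{C^{0,\alpha}}$. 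You seem not to have registered this property of $\rho$ (you only record $\rho\in C^{1,\alpha}(\mathbb{R}^n)$); mollifying is a legitimate workaround, and it does not inflate the relevant constants because $[\nabla\rho_\epsilon]_{C^{0,\alpha}}\leq[\nabla\rho]_{C^{0,\alpha}}$, but it is an extra layer the paper avoids.

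Two places where your argument is genuinely incomplete as written, not just less detailed. First, the claim that the implicit function theorem "produces a smooth graph function $f_i^\epsilon$ on $4\overline{U_i}$ representing $\Gamma_\epsilon$" requires a global argument, not just the local IFT: one must show that for each $x'\in 4\overline{U_i}$ the equation $\rho_\epsilon(O_i(x',x_n))=t_\epsilon$ has exactly one solution $x_n\in V_i$. The paper does this by showing that $y\mapsto\rho(O_i(x',y))$ is strictly increasing on a tubular neighborhood of $\Gamma$, that $\rho\geq\tfrac{3}{2}\delta$ outside it, and that the candidate solution actually exists in $V_i$ (via the auxiliary function $\zeta$ and a connectedness argument). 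Without this, your graph representation of $\Gamma_\epsilon$ over the \emph{full} patch $4\overline{U_i}$ is not justified. Second, the conclusion "$\|f_i^\epsilon\|_{C^{1,\alpha}(4\overline{U_i})}\leq C([\Gamma]_{1,\alpha})$" from the formula $\nabla' f_i^\epsilon(x')=-\,(\nabla'\rho_\epsilon/\partial_n\rho_\epsilon)(O_i(x',f_i^\epsilon(x')))$ hides a bootstrap: because the right-hand side is evaluated at $(x',f_i^\epsilon(x'))$, one first deduces that $f_i^\epsilon$ is Lipschitz (from boundedness of the quotient), and only then that $\nabla' f_i^\epsilon$ is Hölder. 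The paper isolates exactly this in Lemma \ref{lem:hoeldiffeq} (a Hölder-ODE estimate applied along line segments) combined with Lemma \ref{lem:hoelcoord}. Your outline is correct in spirit, but these two steps need to be filled in for the $C^{1,\alpha}$-bound in (3) to be complete.
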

\begin{proof}
Let $(O_i,U_i,V_i,f_i$, $i = 1,...,M)$  represent $\Gamma$ in the sense of \eqref{eq:Gaama} and be such that 
\begin{equation}\label{eq:unt}
    \sum_{i = 1}^M \sup_{x \in V_i}|x| + 1 + |U_i| + ||f_i||_{C^{1,\alpha}(4U_i)} \leq [\Gamma]_{1,\alpha} + 1.
\end{equation}
 Fix also a regularized signed distance function $\rho$. We will show that $\Omega_j' := \{ \rho < \frac{1}{j} \}$ and $\Gamma_j = \partial \Omega_j'$ satisfies the desired properties. Property (1) is clear since $|\rho| \geq \frac{1}{2} \mathrm{dist}(\cdot, \Gamma)$. Property (2) is straightforward to show with the definition of $\rho$ and the fact that the level sets of $\rho$ are regular.
Next we proceed to property (3). To this end we first show the following \\
\textbf{Intermediate claim.} $(\nabla \rho(x), O_i e_n) \neq 0$ for all  $ x \in O_i(4U_i \times V_i)\cap \Gamma$ and $i =1,...,M$. To show the intermediate claim notice that for each $x' \in 4U_i$ one has $\rho(O_i(x', f_i(x'))) = 0$. We infer by taking the derivative with respect to $x_l'$  $(l = 1,...,n-1)$
\begin{equation}
    \partial_n (\rho \circ O_i) \partial_{x_l'} f_i(x') = - \partial_l (\rho \circ O_i) .
\end{equation}
Taking squares and summing over all $l$ we find
\begin{equation}
     [\partial_n (\rho \circ O_i)]^2 |\nabla f_i|^2 = |\nabla (\rho \circ O_i)|^2 - [\partial_n(\rho \circ O_i)]^2 .
\end{equation}
This and $|\nabla \rho|^2 \geq \frac{1}{4}$ yields 
\begin{equation}
    [\partial_n(\rho \circ O_i)]^2  \geq \frac{1}{4 ( 1 + ||\nabla f_i||_\infty^2)} \geq \frac{1}{4[\Gamma]_{1,\alpha}^2}. 
\end{equation}
since $\partial_n(\rho \circ O_i) = (\nabla \rho \circ O_i , O_i e_n)$, the intermediate claim is shown.

For $s \in (0,\infty)$ define now 
\begin{equation}
    \zeta(s) := \frac{1}{2\sqrt{1+ [\Gamma]_{1,\alpha}^2}}s - 10 [\Gamma]_{1,\alpha} s^{1+\alpha}.
\end{equation}
Note that there exists some $\delta_0=\delta_0([\Gamma]_{[1,\alpha]})$ such that for each $\delta < \delta_0$  the equation $\zeta(s)= \delta$ has two positive solutions $s_1,s_2\in (0,\infty)$. Let $s_\delta> 0$ be the unique smallest positive solution of $\zeta(s)= \delta$. One readily checks that $\lim_{\delta \rightarrow 0} s_\delta = 0$.

Now choose some $\delta  \in (0,\delta_0)$ small enough such that
\begin{itemize}
    \item On $B_{3\delta}(\Gamma) \cap O_i(4U_i \times V_i)$  there holds $|(\nabla \rho  , O_i e_n)| \geq \frac{1}{8 [\Gamma]_{1,\alpha}^2}$ for all $i \in \{ 1,..., M \}$,
    \item $ B_{3\delta}(\Gamma) \subset \bigcup_{i = 1}^M O_i(U_i \times V_i),$
    \item $2s_\delta < \mathrm{diam}(V_i\cap \{ y: y > f_i(x') \})$ for all $i \in \{ 1, ...., M\}$.
\end{itemize}
 Without loss of generality we assume in the following $(\nabla \rho , O_i e_n) > 0$ on $B_{3\delta}(\Gamma)$.  We next derive a uniform bound for $[\Gamma_j]_{1,\alpha}$ for all $j  \in \mathbb{N}$ such that $\frac{1}{j} <  \delta$. We do so by giving an explicit representation for $\Gamma_j$. Let $i \in \{ 1,..., M\}$. For $x' \in 4U_i$ we define 
\begin{equation}
    h(x') := \inf \{y \in V_i, y > f_i(x'): \rho(O_i(x',y)) = \delta \}.   
\end{equation}
We claim that the number $h(x')$ is well-defined and lies in $V_i$. Indeed, if we assume  $\rho(O_i(x',y))< \delta$  for all $y \in V_i, y> f_i(x')$ then one has (with $\xi$ being a certain point on the line segment between $O_i(x',y)$ and  $O_i(x', f_i(x'))$) 
\begin{align}
    \delta &  > \rho (O_i (x',y))= \rho(O_i(x',y))- \rho(O_i(x', f_i(x'))) 
   = \nabla \rho(\xi) \cdot O_i  \begin{pmatrix}
      0 \\ y - f_i(x')
  \end{pmatrix} \\ & = \nabla \rho(O_i(x',f_i(x'))) \cdot  O_i \begin{pmatrix}
      0 \\ y - f_i(x')
  \end{pmatrix} - (\nabla \rho(\xi) - \nabla \rho(O_i(x',f_i(x')))) \cdot  O_i \begin{pmatrix}
      0 \\ y - f_i(x')
  \end{pmatrix}  
  \\ & = |\nabla \rho|(O_i(x',f_i(x'))) \nu(O_i(x',f_i(x')) \cdot  O_i \begin{pmatrix}
      0 \\ y - f_i(x')
  \end{pmatrix}   \\ & \quad  - (\nabla \rho(\xi) - \nabla \rho(O_i(x',f_i(x')))) \cdot  O_i \begin{pmatrix}
      0 \\ y - f_i(x')
  \end{pmatrix}  
  \\ & =  |\nabla \rho|(O_i(x',f_i(x')))  \frac{1}{\sqrt{1+|\nabla f_i(x')|^2}} O_i \begin{pmatrix}
      -\nabla f_i(x') \\ 1 
  \end{pmatrix} \cdot  O_i \begin{pmatrix}
      0 \\ y - f_i(x')
  \end{pmatrix} \\ & \quad  - (\nabla \rho(\xi) - \nabla \rho(O_i(x',f_i(x')))) \cdot  O_i \begin{pmatrix}
      0 \\ y - f_i(x')\end{pmatrix}
\\ & =  |\nabla \rho|(O_i(x',f_i(x')))  \frac{1}{\sqrt{1+|\nabla f_i(x')|^2}}  \begin{pmatrix}
      -\nabla f_i(x') \\ 1 
  \end{pmatrix} \cdot   \begin{pmatrix}
      0 \\ y - f_i(x')
  \end{pmatrix} \\ & \quad  - (\nabla \rho(\xi) - \nabla \rho(O_i(x',f_i(x')))) \cdot  O_i \begin{pmatrix}
      0 \\ y - f_i(x')\end{pmatrix}  
       \\ & =  |\nabla \rho|(O_i(x',f_i(x')))  \frac{1}{\sqrt{1+|\nabla f_i(x')|^2}}  (y-f_i(x')) 
  \\ & \quad  - (\nabla \rho(\xi) - \nabla \rho(O_i(x',f_i(x')))) \cdot  O_i \begin{pmatrix}
      0 \\ y - f_i(x')\end{pmatrix}  
        \\ & \geq   |\nabla \rho|(O_i(x',f_i(x')))  \frac{1}{\sqrt{1+|\nabla f_i(x')|^2}}  (y-f_i(x')) 
  \\ & \quad  - ||\nabla \rho||_{C^{0,\alpha}} |y-f_i(x')|^{1+\alpha}   
  \\ &    \geq  \frac{1}{2\sqrt{1+ [\Gamma]_{1,\alpha}^2}}|y-f_i(x')| - 10 [\Gamma]_{1,\alpha} | y- f_i(x')|^{1+\alpha} = \zeta(|y- f_i(x')|). 
\end{align}
We have shown that for all $y \in V_i, y >f_i(x')$ one has $|y- f_i(x')| \in \zeta^{-1}((-\infty,\delta)) \cap (0,\infty)$. Notice that $\zeta^{-1}((-\infty,\delta)) \cap (0,\infty) = (0,s_\delta) \cup (s_\delta^{(2)}, \infty)$ for some $s_\delta^{(2)} > s_\delta$ (the other solution of $\zeta(s) = \delta$). Notice that $V_i \ni y \mapsto |y- f_i(x')|$ is continuous and takes arbitrarily small values for $y \in V_i$ close to $f_i(x')$. Since $V_i \cap \{ y : y > f_i(x') \}$ is an interval (i.e. connected) and $(0,s_\delta)\cup (s_\delta^{(2)}, \infty)$ is disconnected we  conclude 
\begin{equation}
    |y- f_i(x')| < s_\delta  \qquad \forall y \in V_i, \; y > f_i(x'). 
\end{equation}
In particular, for all $y_1,y_2 \in V_i \cap \{ y : y > f_i(x')\}$ one has 
\begin{equation}
    |y_1- y_2| \leq |y_1 - f_i(x')| + |y_2 - f_i(x')| < 2s_\delta,
\end{equation}
contradicting $\mathrm{diam}(V_i \cap \{ y : y < f_i(x')\})> 2s_\delta$. Hence $h(x') \in V_i$ is well-defined. 

We also note that $h(x')$ is the only value $y \in V_i$ such that $\rho(O_i(x',y)) = \delta$. Indeed, $\partial_y \rho(O_i(x',y)) = (\nabla \rho(O_i(x',y)), O_i e_n) > 0$ if $O_i(x',y) \in B_{3\delta}(\Gamma)$ and if $O_i(x',y) \in B_{3\delta}(\Gamma)^C$ one has $\rho (O_i(x',y)) \geq \frac{1}{2} \mathrm{dist}(O_i(x',y),\Gamma) \geq \frac{3}{2}\delta$. Hence $y \mapsto \rho(O_i(x',y))$ must strictly increase until it  reaches the level $\frac{3}{2}\delta$ and can never drop below $\frac{3}{2}\delta$ from there. This shows that the value $\delta$ is attained exactly once. 
We therefore obtain 
\begin{equation}\label{eq:bla}
    \{ \rho = \delta \} \cap O_i( 4U_i \times V_i ) = O_i \{ (x', h(x')) : x' \in 4U_i \} \quad \forall i = 1,..., M.
\end{equation}
In particular, $\rho(O_i(x',h(x')))= \delta$ for all $x' \in 4 \overline{U_i}$. 
This also implies that $h \in C^{1,\alpha}(4 \overline{U_i})$. Moreover for all $v \in \mathbb{R}^{n-1} : |v|= 1$ one has (if $\partial_v$ denotes the directional derivative with respect to $v$) 
\begin{equation}\label{eq:gradh}
    \partial_v h(x') = -\frac{\partial_v (\rho \circ O_i)(x',h(x'))}{\partial_n( \rho \circ O_i)(x',h(x'))} \quad (x' \in 4\overline{U_i}).
\end{equation}
We now intend to bound $||h||_{C^{1,\alpha}(4\overline{U_i})}$ independently of $\delta$ with a constant in terms of $[\Gamma]_{1,\alpha}$. For a fixed $z \in 4U_i$ and $v \in \mathbb{R}^{n-1} : |v|=1$ look at the function $y_{z,v}(t) := h(z+ tv)$, $t \in [-a^-_{z,v},a^+_{z,v}]$ 
where (if $z_i$ is the center and $r_i$ is the radius of $4U_i$) one defines
\begin{equation}\label{eq:aplusminus}
    a^{\pm}_{z,v} = -\frac{(z-z_i,v)}{2} \pm \sqrt{\frac{(z-z_i,v)^2}{4} + r_i^2 - |z-z_i|^2}
\end{equation}
which are exactly the two unique numbers $t_{\pm} \in \mathbb{R}$ such that $z + t_{\pm} v$ lie in $\partial (4U_i)$. 
 Then \eqref{eq:gradh} yields that 
\begin{equation}
    y_{z,v}'(t) =- \frac{\partial_v (\rho \circ O_i)(z+tv,y_{z,v}(t))}{\partial_n( \rho \circ O_i)(z+tv,y_{z,v}(t))} \quad (t \in [a_{z,v}^-,a_{z,v}^+])
\end{equation}
and therefore by Lemma \ref{lem:hoeldiffeq} one has  
\begin{equation}\label{eq:y_l}
    ||y_{z,v}'||_{C^{1,\alpha}} \leq ||y_{z,v}||_\infty + (2 + (a_{z,l}^+ - a_{z,l}^-)^{1-\alpha}) (1 + ||\tfrac{\partial_v(\rho \circ O_i)}{\partial_n(\rho \circ O_i) } ||_{C^{0,\alpha}( O_i^T \{\rho = \delta\}) } )^2,
\end{equation}
where we have used that $(x',h(x')) \in O_i^T \{ \rho = \delta \}$ for all $x' \in 4 \overline{U_i}$ by \eqref{eq:bla}. 
Noticing now that on $O_i^T \{ \rho = \delta \} \subset O_i^T B_{3\delta}(\Gamma)$ one has 
\begin{equation}
    \partial_n(\rho \circ O_i)  = (\nabla \rho \circ O_i , O_i e_n ) \geq \frac{1}{8[\Gamma]_{1,\alpha}^2} 
\end{equation}
and $\nabla \rho \in C^{0,\alpha}(\mathbb{R}^n)$ one readily checks that $||\tfrac{\partial_l(\rho \circ O_i)}{\partial_n(\rho \circ O_i) } ||_{C^{0,\alpha}( O_i^T \{\rho = \delta\}) } \leq C([\Gamma]_{1,\alpha})$.  Going back to \eqref{eq:y_l} and using that $y_{z,v} = h(z+ t v) \in V_i$ for all $t \in [a_{z,v}^-, a_{z,v}^+]$ as well as by \eqref{eq:aplusminus} $|a_{z,l}^{+} - a_{z_l}^-| \leq 2 r_i =  2 (\frac{1}{\omega_n} |4U_i|)^\frac{1}{n} = 8 (\frac{1}{\omega_n} |U_i|)^\frac{1}{n}$ one has
\begin{equation}
    ||y_{z,v}'||_{C^{1,\alpha}} \leq \sup_{x \in V_i} |x| + (2+8 \tfrac{1}{\omega_n^{1/n}}(|U_i|)^\frac{1-\alpha}{n}) C([\Gamma]_{1,\alpha}) \leq C([\Gamma]_{1,\alpha}).  
\end{equation}
This and Lemma  \ref{lem:hoelcoord} yield that also $||h||_{C^{1,\alpha}(4 \overline{U_i})} \leq C([\Gamma]_{1,\alpha})$. 
 From now on we will explicitly write the dependence of $h$ on $i$ and $\delta$, i.e. $h= h_i^\delta :4 U_i \rightarrow \mathbb{R}$ for any $i = 1,...,M$ and $\delta$ as above. Observe by the fact that $\{ \rho = \delta \} \subset B_{3\delta}(\Gamma) \subset \bigcup_{i = 1}^M O_i(U_i \times V_i)$ and  \eqref{eq:bla}  that 
 \begin{align}
     \{ \rho = \delta \}  &  = \bigcup_{i = 1}^M  \{ \rho = \delta \} \cap O_i (U_i \times V_i) 
     \\ & = \bigcup_{i = 1}^M  O_i [(U_i \times V_i) \cap \{ (x',h_i^\delta(x')) : x' \in U_i \} ].
 \end{align}
 Recalling that  $h_i^\delta \in C^{1,\alpha}$ and $||h_i^\delta||_{C^{1,\alpha}(4 \overline{U}_i)} \leq C([\Gamma]_{1,\alpha})$ we find  that $O_i, U_i, V_i, h_i^\delta$, $i = 1,...,M$ is a representation of $\{ \rho = \delta \}$. Therefore
 \begin{equation}
     [\{ \rho = \delta \}]_{ 1,\alpha} \leq \sum_{i = 1}^M \sup_{x \in V_i}|x| + 1 + |U_i| + ||h_i^\delta||_{C^{1,\alpha}(U_i)}  \leq C([\Gamma]_{1,\alpha}),  
 \end{equation}
 where we used \eqref{eq:unt} and the fact that $M$ is bounded by $[\Gamma]_{1,\alpha}$ in the last step. Observing that $\delta = \frac{1}{j_0}$ for $j_0$ large enough is a possible choice, Property (3) follows.  
\end{proof}

\begin{lemma}\label{lem:C5}
 Let $\Gamma= \partial \Omega' \in C^{1,\alpha}$. Then $\mathcal{H}^{n-1}(\Gamma) \leq (1+[\Gamma]_{1,\alpha})^2$.
\end{lemma}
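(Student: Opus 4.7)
The plan is to bound $\mathcal{H}^{n-1}(\Gamma)$ directly from the graph representation used in the definition of $[\Gamma]_{1,\alpha}$. Given any $\epsilon>0$, I would pick a representation $(O_i,U_i,V_i,f_i$, $i=1,\ldots,M)$ of $\Gamma$ with
\begin{equation}
    \sum_{i=1}^M \Bigl(1+\sup_{x\in V_i}|x|+|U_i|+\|f_i\|_{C^{1,\alpha}(4\overline{U_i})}\Bigr) \leq [\Gamma]_{1,\alpha}+\epsilon.
\end{equation}

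First, by subadditivity of the Hausdorff measure and the area formula (exactly as in \eqref{eq:growthprop}), each piece $\Gamma_i = O_i[(U_i\times V_i)\cap\{(x',f_i(x'))\}]$ satisfies
\begin{equation}
    \mathcal{H}^{n-1}(\Gamma_i) \leq \int_{U_i}\sqrt{1+|\nabla f_i(x')|^2}\;\mathrm{d}x' \leq \bigl(1+\|f_i\|_{C^{1,\alpha}(4\overline{U_i})}\bigr)\,|U_i|,
\end{equation}
where I used the elementary estimate $\sqrt{1+t^2}\leq 1+t$ and that $\|\nabla f_i\|_\infty\leq \|f_i\|_{C^{1,\alpha}(4\overline{U_i})}$. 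Setting $A_i := 1+\|f_i\|_{C^{1,\alpha}(4\overline{U_i})}$ and $B_i := |U_i|$, I then get $\mathcal{H}^{n-1}(\Gamma)\leq \sum_i A_iB_i$.

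Next, since $A_i+B_i \leq 1+\sup_{V_i}|x|+|U_i|+\|f_i\|_{C^{1,\alpha}(4\overline{U_i})}$, the choice of representation gives $\sum_i A_i \leq [\Gamma]_{1,\alpha}+\epsilon$ and $\sum_i B_i\leq [\Gamma]_{1,\alpha}+\epsilon$. Applying the Cauchy–Schwarz-type bound $\sum_i A_iB_i \leq (\sum_i A_i)(\sum_i B_i)$ (valid since all quantities are nonnegative), one obtains
\begin{equation}
    \mathcal{H}^{n-1}(\Gamma) \leq \sum_{i=1}^M A_i B_i \leq ([\Gamma]_{1,\alpha}+\epsilon)^2.
\end{equation}
Letting $\epsilon\to 0$ yields $\mathcal{H}^{n-1}(\Gamma)\leq [\Gamma]_{1,\alpha}^2 \leq (1+[\Gamma]_{1,\alpha})^2$, which is the claim. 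There is essentially no obstacle here — the statement is simply bookkeeping built directly into the definition of $[\Gamma]_{1,\alpha}$; the slight slack $1+[\Gamma]_{1,\alpha}$ in the conclusion merely allows the cleaner monomial bound for later use (e.g., in the approximation step of the main proof).
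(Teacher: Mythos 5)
Your proof is correct and follows essentially the same route as the paper: decompose $\Gamma$ into the graph pieces $\Gamma_i$, apply the area formula with $\sqrt{1+t^2}\leq 1+t$ to bound each $\mathcal{H}^{n-1}(\Gamma_i)$ by $(1+\|f_i\|_{C^{1,\alpha}})|U_i|$, and then read the conclusion off the definition of $[\Gamma]_{1,\alpha}$. The only cosmetic difference is the final bookkeeping (you use $\sum_i A_iB_i\leq(\sum_i A_i)(\sum_i B_i)$ — which, incidentally, is not Cauchy–Schwarz but just positivity of cross terms — while the paper bounds each $|U_i|$ by $[\Gamma]_{1,\alpha}+1$ before factoring), and your version actually yields the marginally sharper constant $[\Gamma]_{1,\alpha}^2$.
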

\begin{proof}
Let $\epsilon >0$ and $O_i,U_i,V_i,f_i$, $i= 1,...,M$ be a representation of $\Gamma$ such that 
\begin{equation}
    \sum_{i = 1}^M \sup_{x \in V_i} |x| + |U_i| + 1 + ||f_i||_{C^{1,\alpha}(4U_i)} \leq  [\Gamma]_{1,\alpha} + 1. 
\end{equation}
Then one infers from \eqref{eq:Gaama} 
\begin{align}
    \mathcal{H}^{n-1}(\Gamma) & \leq \sum_{i = 1}^M \int_{U_i} \sqrt{1+ |\nabla f_i|^2} \; \mathrm{d}x' \leq \sum_{i = 1}^M \int_{U_i} (1+ |\nabla f_i|)  \; \mathrm{d}x'  \\ & \leq \sum_{i = 1}^M |U_i|+ |U_i| \; ||f_i||_{C^{1,\alpha}}   \leq \sum_{i = 1}^M |U_i|+ (1+ [\Gamma]_{1,\alpha})  ||f_i||_{C^{1,\alpha}(U_i)} \\ &  \leq  (1+ [\Gamma]_{1,\alpha}) \sum_{i= 1}^M (|U_i|+   ||f_i||_{C^{1,\alpha}(U_i)})  \leq  (1+ [\Gamma]_{1,\alpha}) ([\Gamma]_{1,\alpha} + 1).  
\end{align}
The claim follows. 
\end{proof}

\section{The space $L^\infty(\mathbb{R}^n) \cap BV(\mathbb{R}^n)$}

In this section we prove some useful integral formulas that hold for functions in the set $L^\infty(\mathbb{R}^n) \cap BV(\mathbb{R}^n)$. 
Here we derive some important tools to deal with functions in this class. 

We first recall some facts about $BV$-functions.  
For  $v \in BV(\mathbb{R}^n)$ and $U \subset \mathbb{R}^n$ open one can define
\begin{equation}
    |Dv|(U) := \sup_{\phi \in C^1_c(U;\mathbb{R}^n), ||\phi|| \leq 1} \int_{\mathbb{R}^n} v \; \mathrm{div}(\phi) \; \mathrm{d}x.
\end{equation}
By \cite[Chapter 5]{EvGar} $|Dv|$ extends uniquely to a finite Radon measure on $\mathbb{R}^n$, called the \emph{total variation measure}. This measure has the property that there exists a Borel measurable function $\sigma_v : \mathbb{R}^n \rightarrow \mathbb{R}^n$, $|\sigma_v| = 1$ a.e. and 
\begin{equation}
    \int_{\mathbb{R}^n} v \; \mathrm{div}(\phi) \; \mathrm{d}x =  \int_{\mathbb{R}^n}  (\phi, \sigma_v) \; \mathrm{d}|Dv| \quad \forall \phi \in C_c^1(\mathbb{R}^n).   
\end{equation}
We further define  for $v \in BV(\mathbb{R}^n)$ the \emph{precise representative}
\begin{equation}
    v^*(x) := \lim_{r\rightarrow 0}  \fint_{B_r(x)} v(y) \; \mathrm{d}y,
\end{equation}
which exists for $\mathcal{H}^{n-1}$ a.e. $x \in \mathbb{R}^n$ by \cite[Theorem 5.20]{EvGar} (and not just Lebesgue almost-everywhere). If $(\phi_\epsilon)_{\epsilon > 0}$ is the standard mollifier (defined as in \cite[Section 4.2]{EvGar}) one can define $v_\epsilon := v * \phi_\epsilon$ which then  lies in $C^\infty(\mathbb{R}^n)\cap BV(\mathbb{R}^n)$. For $v \in BV(\mathbb{R}^n)$ it is shown in \cite[Theorem 5.20]{EvGar} that $v_\epsilon \rightarrow v^*$ as  $\mathcal{H}^{n-1}$ a.e. as $\epsilon \rightarrow 0$ (and not just Lebesgue almost everywhere). We will need also the following
\begin{lemma}
Let $v \in BV(\mathbb{R}^n)$ and $U \subset \mathbb{R}^n$ be open. Then for any $\epsilon > 0$ there holds 
\begin{equation}\label{lem:E1}
    \int_{U} |\nabla v_\epsilon(x)| \; \mathrm{d}x \leq |Dv|(B_\epsilon(U)) 
\end{equation}
\end{lemma}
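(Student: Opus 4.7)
The plan is to estimate $\nabla v_\epsilon(x)$ pointwise by a convolution-type integral against the total variation measure $|Dv|$, then integrate and apply Fubini.

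First I would write $\nabla v_\epsilon(x)$ in terms of $Dv$. Since $\nabla v_\epsilon(x) = (v * \nabla \phi_\epsilon)(x)$, a formal integration by parts suggests $\nabla v_\epsilon(x) = \int_{\mathbb{R}^n} \phi_\epsilon(x-y) \, dDv(y)$. To justify this rigorously for fixed $x \in \mathbb{R}^n$, I would apply the definition of the distributional derivative of $v$ componentwise: the map $y \mapsto \phi_\epsilon(x-y) e_i$ lies in $C_c^1(\mathbb{R}^n; \mathbb{R}^n)$, so
\begin{equation}
\partial_i v_\epsilon(x) = -\int_{\mathbb{R}^n} v(y)\, \partial_{y_i} \phi_\epsilon(x-y) \, dy = \int_{\mathbb{R}^n} \phi_\epsilon(x-y) (\sigma_v(y))_i \, d|Dv|(y),
\end{equation}
using the polar decomposition $dDv = \sigma_v \, d|Dv|$ recalled in the excerpt.

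Next, taking the Euclidean norm and using $|\sigma_v| = 1$ $|Dv|$-a.e., I obtain the pointwise bound
\begin{equation}
|\nabla v_\epsilon(x)| \leq \int_{\mathbb{R}^n} \phi_\epsilon(x-y)\, d|Dv|(y).
\end{equation}
Then I would integrate this over $U$ and apply Fubini (valid because $\phi_\epsilon \geq 0$ and $|Dv|$ is a finite Radon measure) to get
\begin{equation}
\int_U |\nabla v_\epsilon(x)| \, dx \leq \int_{\mathbb{R}^n} \left( \int_U \phi_\epsilon(x-y) \, dx \right) d|Dv|(y).
\end{equation}

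Finally I would exploit that $\mathrm{spt}(\phi_\epsilon) \subset \overline{B_\epsilon(0)}$: if $y \notin B_\epsilon(U)$, then $\phi_\epsilon(x-y) = 0$ for all $x \in U$, so the inner integral vanishes; and for arbitrary $y$, the inner integral is at most $\int_{\mathbb{R}^n} \phi_\epsilon(x-y) \, dx = 1$. Hence the right-hand side is bounded by $\int_{B_\epsilon(U)} 1 \, d|Dv|(y) = |Dv|(B_\epsilon(U))$, which gives the claim.

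The only genuinely subtle step is the first one, namely moving the derivative off $v$ and onto the mollifier: one has to be careful to use the definition of $BV$ (via $|Dv|$ and $\sigma_v$) rather than trying to integrate by parts directly, since $v$ need not be weakly differentiable. Everything after that is Fubini and the support/mass properties of $\phi_\epsilon$.
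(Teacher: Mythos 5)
Your proof is correct and takes essentially the same route as the paper: express $\nabla v_\epsilon$ by transferring the derivative onto the mollifier via the polar decomposition $\mathrm{d}Dv = \sigma_v\,\mathrm{d}|Dv|$, bound $|\sigma_v|$ by $1$, apply Fubini, and use the support and unit mass of $\phi_\epsilon$. The only (cosmetic) difference is that you work with the full vector $\nabla v_\epsilon$ directly, whereas the paper estimates component by component $|\partial_i v_\epsilon|$; your version is if anything slightly cleaner, since the vector-valued estimate is exactly what the stated inequality $\int_U|\nabla v_\epsilon|\,\mathrm{d}x\le|Dv|(B_\epsilon(U))$ requires.
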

\begin{proof}
Let $U,v$ be as in the statement. Then  
\begin{align}
    \int_U |\partial_i v_\epsilon(x)| \; \mathrm{d}x & = \int_U \left\vert \int_{\mathbb{R}^n} v(y) \partial_{x_i}\phi_{\epsilon}(x-y) \; \mathrm{d}y \right\vert \; \mathrm{d}x = 
   \int_U \left\vert \int_{\mathbb{R}^n} v(y) \partial_{y_i}\phi_{\epsilon}(x-y) \; \mathrm{d}y \right\vert \; \mathrm{d}x
   \\ & =  \int_U \left\vert \int_{\mathbb{R}^n} v(y) \mathrm{div}(\phi_{\epsilon}(x-\cdot) e_i)(y) \; \mathrm{d}y \right\vert \; \mathrm{d}x 
   \\ & = \int_U \left\vert \int_{\mathbb{R}^n} \phi_\epsilon(x-y) (\sigma_v(y), e_i)  \; \mathrm{d}|Dv|(y) \right\vert \; \mathrm{d}x
   \leq  \int_U \int_{\mathbb{R}^n} \phi_\epsilon(x-y) \mathrm{d}|Dv|(y) \; \mathrm{d}x
   \\ & = \int_{\mathbb{R}^n} \left( \int_U \phi_\epsilon(x-y) \; \mathrm{d}x \right) \; \mathrm{d}|Dv|(y) \leq \int_{\mathbb{R}^n} \chi_{\overline{B_\epsilon(U)}}(y) \; \mathrm{d}|Dv|(y) \\ & \leq   |Dv|(\overline{B_\epsilon(U)}),
\end{align}
where the last step is due to the fact that 
\begin{equation}
    0 \leq \int_{U} \phi_\epsilon (x-y) \; \mathrm{d}x \leq  \int_{\mathbb{R}^n} \phi_\epsilon (x-y) \; \mathrm{d}x = 1
\end{equation}
and if $y \not \in \overline{B_\epsilon(U)}$ then the fact that $\mathrm{spt}(\phi_\epsilon) \subset \overline{B_\epsilon(0)}$ implies 
 \begin{equation}
     \int_{U} \phi_\epsilon(x-y) \; \mathrm{d}x = 0. 
 \end{equation}
\end{proof}

\begin{lemma}[Radial integration for $BV$-functions] \label{lem:layercake}
Let $v \in BV(\mathbb{R}^n) \cap L^\infty(\mathbb{R}^n)$ have compact support in $\mathbb{R}^n$. Then 
\begin{equation}
    \int_{\mathbb{R}^n} v(x) \; \mathrm{d}x = \int_0^\infty \int_{\partial B_s(0)} v^*(x) \; \mathrm{d}\mathcal{H}^{n-1}(x) \; \mathrm{d}s
\end{equation}
\end{lemma}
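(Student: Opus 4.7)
\textbf{Proof plan for Lemma \ref{lem:layercake}.}

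The natural approach is to establish the identity first for the smooth mollifications $v_\epsilon := v * \phi_\epsilon$, where the classical polar integration formula is trivially available, and then carefully pass to the limit $\epsilon \to 0$ on both sides. Since $v \in L^\infty$ has compact support (say inside $B_{R_0}(0)$), for all small $\epsilon$ one has $v_\epsilon \in C_c^\infty(\mathbb{R}^n)$ with $\|v_\epsilon\|_{L^\infty} \leq \|v\|_{L^\infty}$ and $\operatorname{supp}(v_\epsilon) \subset B_R(0)$ for some fixed $R > R_0$. The classical polar coordinates formula thus gives
\begin{equation}
    \int_{\mathbb{R}^n} v_\epsilon(x) \, \mathrm{d}x = \int_0^\infty \int_{\partial B_s(0)} v_\epsilon(x) \, \mathrm{d}\mathcal{H}^{n-1}(x) \, \mathrm{d}s.
\end{equation}

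For the left-hand side, $v_\epsilon \to v$ in $L^1(\mathbb{R}^n)$ by standard mollifier properties (since $v \in L^\infty$ with compact support, hence $v \in L^1$), so $\int v_\epsilon \to \int v$.

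The heart of the matter is the right-hand side, where I would select a sequence $\epsilon_k \to 0$ and use the cited fact (\cite[Theorem~5.20]{EvGar}) that $v_{\epsilon_k} \to v^*$ pointwise off an exceptional set $N$ with $\mathcal{H}^{n-1}(N) = 0$. The key subtlety is that we need $\mathcal{H}^{n-1}$-a.e.\ convergence of $v_{\epsilon_k}$ to $v^*$ on the specific hypersurface $\partial B_s(0)$ for almost every $s > 0$. This is ensured by applying the coarea formula to $f(x) = |x|$ with integrand $\chi_N$: since $\mathcal{H}^{n-1}(N) = 0$ implies $\mathcal{H}^n(N) = 0$ (a standard monotonicity property of Hausdorff measures) and hence $\mathcal{L}^n(N) = 0$, one gets
\begin{equation}
    0 = \int_{\mathbb{R}^n} \chi_N \, \mathrm{d}x = \int_0^\infty \mathcal{H}^{n-1}(N \cap \partial B_s(0)) \, \mathrm{d}s,
\end{equation}
so $\mathcal{H}^{n-1}(N \cap \partial B_s(0)) = 0$ for a.e.\ $s > 0$.

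For such good $s$, $v_{\epsilon_k} \to v^*$ holds $\mathcal{H}^{n-1}$-a.e.\ on $\partial B_s(0)$, and since $|v_{\epsilon_k}| \leq \|v\|_{L^\infty}$ and $\mathcal{H}^{n-1}(\partial B_s(0)) < \infty$, the bounded convergence theorem yields
\begin{equation}
    \int_{\partial B_s(0)} v_{\epsilon_k} \, \mathrm{d}\mathcal{H}^{n-1} \longrightarrow \int_{\partial B_s(0)} v^* \, \mathrm{d}\mathcal{H}^{n-1}.
\end{equation}
A second application of the dominated convergence theorem then allows passage to the limit in the outer integral, with dominating function $s \mapsto \omega_n \|v\|_{L^\infty} s^{n-1} \chi_{[0,R]}(s) \in L^1(0,\infty)$, since $\operatorname{supp}(v_{\epsilon_k}) \subset B_R(0)$ uniformly in $k$. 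Combining the two limits yields the claimed identity.

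The only genuinely delicate point is the coarea step ensuring pointwise a.e.\ convergence on spheres; everything else is routine mollification plus uniform bounds from $v \in L^\infty$ and compact support. Note that the $BV$ hypothesis enters only in guaranteeing existence of the precise representative $v^*$ and the $\mathcal{H}^{n-1}$-a.e.\ convergence $v_\epsilon \to v^*$; the $L^\infty$ and compact-support assumptions provide the dominating functions for both dominated convergence applications.
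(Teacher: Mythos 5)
Your proof is correct, but it takes a genuinely different route from the paper's, and it contains one unnecessary detour. The paper approximates $v$ by the ball averages $x \mapsto \fint_{B_r(x)} v$, which are \emph{continuous} (not just smooth), applies the classical polar-coordinates formula to these, and then passes to the limit with a single application of dominated convergence on the double integral $\int_0^\infty \int_{\partial B_s(0)}$ — viewing it as nothing but the Lebesgue integral in polar form, so that Lebesgue-a.e.\ convergence suffices and no sphere-by-sphere analysis is needed. You instead mollify, and then pass to the limit first sphere-by-sphere and then in $s$, which forces you to verify that the exceptional set $N$ of \cite[Theorem~5.20]{EvGar} meets almost every sphere in an $\mathcal{H}^{n-1}$-null set. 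That verification is correct as you wrote it, but your coarea-formula step is superfluous: since $N \cap \partial B_s(0) \subset N$ and $\mathcal{H}^{n-1}$ is a monotone outer measure, $\mathcal{H}^{n-1}(N) = 0$ already gives $\mathcal{H}^{n-1}(N \cap \partial B_s(0)) = 0$ for \emph{every} $s > 0$, not merely almost every $s$ — so the ``delicate point'' you identify dissolves immediately. Both approaches are valid; the paper's is slicker because recognizing the double polar integral as the Lebesgue integral makes the second DCT application trivial, whereas your iterated argument (with the simplification just noted) is more explicit about where the $\mathcal{H}^{n-1}$-a.e.\ control on $v^*$ is actually used, which some readers may find clarifying.
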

\begin{proof}
Notice that  $\fint_{B_r(x)} v(y) \; \mathrm{d}y \rightarrow v^*(x)$ as $r \rightarrow 0 $ for Lebesgue a.e $x \in \mathbb{R}^n$ and $|v^*(x)| \leq ||v||_{L^\infty} \chi_{B_1(\mathrm{spt}(v))}(x)$ for all $x \in \mathbb{R}^n$ and $r \in (0,1)$. Thus the dominated convergence theorem yields
\begin{equation}
    \int_{\mathbb{R}^n} v(x) \; \mathrm{d}x = \int_{\mathbb{R}^n} v^*(x) \; \mathrm{d}x = \lim_{r \rightarrow 0} \int_{\mathbb{R}^n} \left( \fint_{B_r(x)} v(y) \; \mathrm{d}y \right) \; \mathrm{d}x. 
\end{equation}
Now observe that for fixed $r\in (0,1)$ the map $ f: \mathbb{R}^n \rightarrow \mathbb{R}, x \mapsto  \fint_{B_r(x)} v(y) \; \mathrm{d}y$ is continuous. Indeed this follows immediately from the estimate 
\begin{equation}
    |f(x_1) - f(x_2)|  \leq \frac{|B_r(x_1) \Delta B_r(x_2)|}{|B_r(0)|} ||v||_{L^\infty}. 
\end{equation}
Since radial integration is available for continuous functions one has 
\begin{equation}
     \int_{\mathbb{R}^n} v(x) \; \mathrm{d}x  = \lim_{r \rightarrow 0} \int_0^\infty \int_{\partial B_s(0)}  \left( \fint_{B_r(x)} v(y) \; \mathrm{d}y \right) \; \mathrm{d}\mathcal{H}^{n-1}(x) \; \mathrm{d}s.
\end{equation}
We can now use the dominated convergence theorem 
again to switch the limit and  the first two integrals (with the same dominating function as above) and obtain 
\begin{equation}
    \int_{\mathbb{R}^n} v(x) \; \mathrm{d}x = \int_0^\infty \int_{\partial B_s(0)} v^*(x) \; \mathrm{d}\mathcal{H}^{n-1}(x) \; \mathrm{d}s.
\end{equation}

\end{proof}


\begin{lemma}\label{lem:contimeanval}
Let $v \in BV(\mathbb{R}^n) \cap L^\infty(\mathbb{R}^n)$ and $f \in C^0(\mathbb{R}^n \times \mathbb{R})$. 
Let \[ A := \{ s \in (0,\infty) : t \mapsto |Dv|(B_t(0)) \textrm{is continuous at $s$} \}. \] 
Then $(0,\infty) \setminus A$ is countable and the map $I : A \rightarrow \mathbb{R}$
\begin{equation}\label{eq:inteq}
   I(r) := \int_{\partial B_r(0)}  f(x,v^*(x)) \; \mathrm{d}\mathcal{H}^{n-1}(x)
\end{equation}
is continuous on $A$. 
\end{lemma}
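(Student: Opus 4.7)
The plan combines an elementary monotonicity argument for the first claim with a radial slicing argument (in the sense of $BV$-theory) for the continuity of $I$.

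First, observe that $F(t) := |Dv|(B_t(0))$ is nondecreasing in $t$ and bounded above by the finite quantity $|Dv|(\mathbb{R}^n)$. Thus $F$ has at most countably many discontinuities, and continuity of $F$ at $s$ is equivalent to $|Dv|(\partial B_s(0)) = 0$. Hence $(0,\infty)\setminus A$ is countable, and $r \in A$ precisely when the total variation measure charges no mass on the sphere $\partial B_r(0)$.

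For continuity of $I$ at $r \in A$, let $r_k \to r$ and use polar coordinates $(s,\omega) \mapsto s\omega$ to write
\[
I(s) = s^{n-1}\int_{\mathbb{S}^{n-1}} f(s\omega, v^*(s\omega)) \, d\mathcal{H}^{n-1}(\omega).
\]
Since $r_k^{n-1} \to r^{n-1}$, $|v^*| \leq \|v\|_{L^\infty(\mathbb{R}^n)}$, and $f$ is uniformly continuous and bounded on $\overline{B_{2r}(0)} \times [-\|v\|_\infty, \|v\|_\infty]$, the dominated convergence theorem on $\mathbb{S}^{n-1}$ reduces the claim to showing that
\[
v^*(r_k \omega) \longrightarrow v^*(r\omega) \quad \text{for $\mathcal{H}^{n-1}$-a.e.\ $\omega \in \mathbb{S}^{n-1}$.}
\]

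To establish this pointwise convergence I would invoke the radial slicing property of $BV$ functions: for $\mathcal{H}^{n-1}$-a.e.\ $\omega \in \mathbb{S}^{n-1}$, the map $g_\omega(s) := v^*(s\omega)$ is of bounded variation on each compact subinterval of $(0,\infty)$, and the Fubini-type slicing inequality (obtained by transporting the standard hyperplane slicing through polar coordinates with Jacobian $s^{n-1}$) yields, for every Borel $E \subset (0,\infty)$,
\[
\int_{\mathbb{S}^{n-1}} |Dg_\omega|(E) \, d\mathcal{H}^{n-1}(\omega) \leq C\,|Dv|\bigl(\{s\omega : s \in E,\, \omega \in \mathbb{S}^{n-1}\}\bigr).
\]
Applying this with $E = \{r\}$ and using $|Dv|(\partial B_r(0)) = 0$ forces $|Dg_\omega|(\{r\}) = 0$ for $\mathcal{H}^{n-1}$-a.e.\ $\omega$; in particular $g_\omega$ has no jump at $r$, so the precise representative property of $v^*$ gives $g_\omega(r_k) \to g_\omega(r)$ for such $\omega$, as desired.

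The main obstacle is the rigorous justification of this radial slicing inequality, since standard $BV$-slicing is stated for hyperplanes and the polar change of variables is nonlinear. An alternative, and perhaps technically smoother, route is to use the mollifications $v_\epsilon := v*\phi_\epsilon$: each $I_\epsilon(s) := \int_{\partial B_s(0)} f(x, v_\epsilon(x)) \, d\mathcal{H}^{n-1}$ is continuous in $s$, $v_\epsilon \to v^*$ $\mathcal{H}^{n-1}$-a.e.\ by \cite[Theorem~5.20]{EvGar}, and Lemma~\ref{lem:E1} provides the control $\int_{U} |\nabla v_\epsilon| \leq |Dv|(B_\epsilon(U))$ needed to exploit $|Dv|(\partial B_r(0))=0$; one then splits
\[
|I(r_k)-I(r)| \leq |I(r_k) - I_\epsilon(r_k)| + |I_\epsilon(r_k) - I_\epsilon(r)| + |I_\epsilon(r) - I(r)|
\]
and shows that the outer terms tend to zero uniformly in $k$ as $\epsilon \to 0$ while the middle term tends to zero for fixed $\epsilon$ as $k \to \infty$.
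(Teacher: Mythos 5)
Your characterization of $A$ — that $(0,\infty)\setminus A$ is countable and $r \in A$ iff $|Dv|(\partial B_r(0))=0$ — is correct and matches the paper's. For the continuity of $I$ you offer two sketches, and neither is complete. The first (radial slicing) rests on a ``radial slicing inequality'' that is not part of the standard BV toolbox: the usual slicing results are for families of parallel lines, and transporting this to rays through the origin involves the polar Jacobian $s^{n-1}$ and is genuinely more delicate near $s=0$. Even granting such an inequality, identifying $v^*(s\omega)$ (the $n$-dimensional precise representative) with the precise representative of the one-dimensional slice $g_\omega(s)=v(s\omega)$ along $\mathcal{H}^{n-1}$-a.e.\ ray is itself a nontrivial step that you take for granted.

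Your second sketch is closer in spirit to what the paper does, but leaves the crucial step unargued: you would need $|I(r_k) - I_\epsilon(r_k)|\to 0$ uniformly in $k$ as $\epsilon\to 0$, and the $\mathcal{H}^{n-1}$-a.e.\ convergence $v_\epsilon\to v^*$ from \cite[Theorem 5.20]{EvGar} is not quantitative enough to give this directly; effectively one would need an $L^1(\partial B_s(0))$-estimate for $v_\epsilon - v^*$ uniform over $s$ near $r$, which is essentially the original difficulty in disguise. The paper's proof avoids estimating $|I-I_\epsilon|$ on spheres altogether: after reducing to $f\in C^1$ (and replacing $v^*$ by $v_\epsilon$ via dominated convergence), it applies the divergence theorem to write $\int_{\partial B_r(0)} f(x,v_\epsilon)\,\mathrm{d}\mathcal{H}^{n-1} = \tfrac{1}{r}\int_{B_r(0)}\mathrm{div}\bigl(f(x,v_\epsilon(x))\,x\bigr)\,\mathrm{d}x$; the resulting volume integral splits into bounded parts and a single term involving $\nabla v_\epsilon$, and the latter is controlled uniformly in $\epsilon$ by $|Dv|$ through Lemma~\ref{lem:E1}, producing a modulus of continuity for $I$ on $A$ expressed via $t\mapsto |Dv|(B_t(0))$. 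That conversion of the surface integral into a volume integral via the divergence theorem is the key idea missing from your proposal.
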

\begin{proof}
Since $t \mapsto |Dv|(B_t(0))$ is monotone it can only have a countable set of discontinuities. 
Hence $(0,\infty) \setminus A$ is countable. 
We notice that at each point $s \in A$ there holds 
\begin{align}
    |Dv|(\partial B_s(0)) &  = |Dv|(\overline{B_s(0)} \setminus  B_s(0))  = \lim_{n \rightarrow \infty} |Dv| ( B_{s+1/n}(0) \setminus B_s(0)) \\ & = \lim_{n \rightarrow \infty} |Dv| ( B_{s+1/n}(0) )- |Dv|( B_s(0))  = 0. 
\end{align}
We first assume additionally that $f= f(x,p) \in C^1(\mathbb{R}^n \times \mathbb{R})$. Let $(\phi_\epsilon)_{\epsilon > 0}$ be the standard mollifier. Since by \cite[Theorem 5.20]{EvGar} $v_\epsilon := v * \phi_\epsilon$ converges to $v^*$ $\mathcal{H}^{n-1}$ a.e. we have (by the dominated convergence theorem)
\begin{equation}
    \int_{\partial B_r(0)} f(x, v^*(x)) \; \mathrm{d}\mathcal{H}^{n-1}(x) = \lim_{\epsilon\downarrow 0}  \int_{\partial B_r(0)} f(x,v_\epsilon(x)) \; \mathrm{d}\mathcal{H}^{n-1}(x). 
\end{equation}
To find an integrable dominating function we have used that by Young's convolution inequality $||v_\epsilon||_{L^\infty} = ||v * \phi_\epsilon||_{L^\infty} \leq  ||v||_{L^\infty} ||\phi_\epsilon||_{L^1} \leq ||v||_{L^\infty}$ and  $f$ is uniformly bounded on $\partial B_r(0) \times [- ||v||_{L^\infty}, ||v||_{L^\infty}]$. Now for $0 < \underline{r} \leq \overline{r} < \infty$ we estimate for $r,s \in A \cap [\underline{r},\overline{r}], r \leq s$
\begin{align}
    & I(r)- I(s) =  \left\vert \int_{\partial B_r(0)}  f(x, v^*(x)) \; \mathrm{d}\mathcal{H}^{n-1}(x) - \int_{\partial B_s(0)}  f(x, v^*(x)) \; \mathrm{d}\mathcal{H}^{n-1}(x) \right\vert 
    \\ & =  \lim_{ \epsilon \downarrow 0 } \left\vert \int_{\partial B_r(0)}  f(x, v_\epsilon(x)) \; \mathrm{d}\mathcal{H}^{n-1}(x) - \int_{\partial B_s(0)}  f(x, v_\epsilon(x)) \; \mathrm{d}\mathcal{H}^{n-1}(x) \right\vert
    \\ & = \lim_{ \epsilon \downarrow 0 } \left\vert  \frac{1}{r}\int_{\partial B_r(0)}  (f(x, v_\epsilon(x))x , \tfrac{x}{r} )  \; \mathrm{d}\mathcal{H}^{n-1}(x) - \frac{1}{s}\int_{\partial B_s(0)}  (f(x, v_\epsilon(x)) x, \tfrac{x}{s}) \; \mathrm{d}\mathcal{H}^{n-1}(x) \right\vert
    \\ & = \lim_{\epsilon \downarrow 0} \left\vert \frac{1}{r} \int_{B_r(0)} \mathrm{div}(f(x,v_\epsilon(x)) x) \; \mathrm{d}x - \frac{1}{s} \int_{B_s(0)} \mathrm{div}(f(x,v_\epsilon(x)) x) \; \mathrm{d}x \right\vert 
    \\ &  \leq \lim_{\epsilon \downarrow 0} \left\vert \frac{n}{r} \int_{B_r(0)}  f(x,v_\epsilon(x)) \; \mathrm{d}x - \frac{n}{s} \int_{B_s(0)} f(x,v_\epsilon(x)) \; \mathrm{d}x \right\vert 
    \\ & \quad  +  \lim_{\epsilon \downarrow 0} \left\vert \frac{1}{r} \int_{B_r(0)}  (D_xf(x,v_\epsilon(x)),x) \; \mathrm{d}x - \frac{1}{s} \int_{B_s(0)} (D_xf(x,v_\epsilon(x)),x) \; \mathrm{d}x \right\vert 
    \\ &  \quad  +  \lim_{\epsilon \downarrow 0} \left\vert \frac{1}{r} \int_{B_r(0)}  \partial_pf(x,v_\epsilon(x)) (\nabla v_\epsilon(x),x)  \; \mathrm{d}x - \frac{1}{s} \int_{B_s(0)} \partial_pf(x,v_\epsilon(x)) (\nabla v_\epsilon(x), x)  \; \mathrm{d}x \right\vert\\ & = \textrm{(I) + (II) + (III)}.
\end{align}
In the first two terms one can (thanks to the fact that $f$ and $ (x,p)  \mapsto (D_xf(x,p),x),$ is uniformly bounded on $\overline{B_{\overline{r}}(0)} \times [- ||v||_{L^\infty}, ||v||_{L^\infty}]$) let $\epsilon \rightarrow 0$ and obtain 
\begin{align}
    \textrm{(I)+ (II)}  & =  \left\vert \frac{n}{r} \int_{B_r(0)}  f(x,v^*(x)) \; \mathrm{d}x - \frac{n}{s} \int_{B_s(0)} f(x,v^*(x)) \; \mathrm{d}x \right\vert 
    \\ &  +   \left\vert \frac{1}{r} \int_{B_r(0)}  (D_xf(x,v^*(x)),x) \; \mathrm{d}x - \frac{1}{s} \int_{B_s(0)} (D_xf(x,v^*(x)),x) \; \mathrm{d}x \right\vert.
\end{align}
This expression is obviously continuous in $r,s$, again by boundedness of $f$ and $(x,p) \mapsto (D_xf (x,p),x)$ on $\overline{B_{\overline{r}}(0)} \times [- ||v||_{L^\infty}, ||v||_{L^\infty}]$. For $(III)$ we estimate 
\begin{align}
   \textrm{(III)} =  & \left\vert \frac{1}{r} \int_{B_r(0)}  \partial_pf(x,v_\epsilon(x)) (\nabla v_\epsilon(x),x)  \; \mathrm{d}x - \frac{1}{s} \int_{B_s(0)} \partial_pf(x,v_\epsilon(x)) (\nabla v_\epsilon(x), x)  \; \mathrm{d}x \right\vert
     \\ & \leq  \left\vert  \frac{1}{r} - \frac{1}{s} \right\vert \;   \int_{B_r(0)} |\partial_pf(x,v_\epsilon(x)) (\nabla v_\epsilon(x),x)|  \; \mathrm{d}x
    \\ & \quad  + \frac{1}{s} \int_{B_s(0) \setminus B_r(0)} |\partial_p f(x,v_\epsilon(x)) ( \nabla v_\epsilon(x), x)| \; \mathrm{d}x.
\end{align}
Now we let $M := \sup_{(x,p) \in \overline{B_{\overline{r}}(0)} \times [- ||v||_{L^\infty}, ||v||_{L^\infty}] } |\partial_p f(x,p)|$ and infer 
\begin{align}
    \textrm{(III)}  & \leq M \left\vert 1 - \frac{r}{s} \right\vert \int_{B_r(0)} |\nabla v_\epsilon(x)| \; \mathrm{d}x  + M  \int_{B_s(0) \setminus B_r(0)} |\nabla v_\epsilon(x)| \; \mathrm{d}x.
\end{align}
 We now use Lemma \ref{lem:E1} to estimate 
\begin{align}
    \textrm{(III)} & \leq M \left\vert 1 - \frac{r}{s} \right\vert |Dv|(\mathbb{R}^n) + M |Dv|(\overline{B_\epsilon(B_s(0) \setminus B_r(0)})  \\ & \leq M \left\vert 1 - \frac{r}{s} \right\vert |Dv|(\mathbb{R}^n) + M |Dv|(\overline{B_{s+\epsilon}(0)} \setminus B_{r-\epsilon}(0)).
\end{align}
Letting $\epsilon \downarrow 0$ continuity properties of Radon measures and the fact that $s \in A$ yield that 
\begin{align}
     \textrm{(III)}   & \leq M \left\vert 1 - \frac{r}{s} \right\vert |Dv|(\mathbb{R}^n) + M |Dv|(\overline{B_{s}(0)} \setminus B_{r}(0))
      \\ & = M \left\vert 1 - \frac{r}{s} \right\vert |Dv|(\mathbb{R}^n) + M (|Dv|(B_s(0)) - |Dv|(B_r(0))) + M |Dv|(\partial B_s(0))  \\  & = M \left\vert 1 - \frac{r}{s} \right\vert |Dv|(\mathbb{R}^n) + M (|Dv|(B_s(0)) - |Dv|(B_r(0))) .
\end{align}
Since $t \mapsto |Dv|(B_t(0))$ is continuous on $A$ we have thus found a continuous function $g: A \rightarrow \mathbb{R}$ such that 
\begin{equation}
    |I(r) - I(s)| \leq |g(r)- g(s)| \quad \forall r,s \in A. 
\end{equation}
This implies continuity of $I$ on $A$. Finally, to obtain the result for any continuius $f \in C^0(\mathbb{R}^n \times \mathbb{R})$, we approximate $f$ uniformly by a sequence of $C^1$ function and argue by the Definiton of $I$ that continuity of $I$ on $A$ is preserved by this uniform approximation. 
\end{proof}


\section{The signed distance function}\label{app:signdist}

In this section 
let $\Gamma = \partial \Omega' \in C^k$ for some $k \geq 2$. In this appendix we recall some properties of the \emph{signed distance function} of $\Gamma$ and derive some useful formulas. The signed distance function  $d_\Gamma : \Omega \rightarrow \mathbb{R}$ is defined as
\begin{equation}
    d_\Gamma(x) := \begin{cases}
      \mathrm{dist}(x, \Gamma) & x \in \Omega'' := \Omega \setminus  \overline{\Omega'}, \\ 0 & x \in \Gamma, \\ - \mathrm{dist}(x,\Gamma) & x \in \Omega'. 
    \end{cases}
\end{equation}
One readily checks that $d_\Gamma$ and $|d_\Gamma| = \mathrm{dist}(\cdot, \Gamma)$ are Lipschitz continuous functions. In a small tubular neighborhood $B_{\epsilon_0}(\Gamma) := \{ x \in \mathbb{R}^n : \mathrm{dist}(x,\Gamma) < \epsilon_0 \}$ one can however still obtain more regularity for $d_\Gamma$. By \cite[Lemma 14.16]{GilTru} one has that $d_\Gamma \in C^k(B_{\epsilon_0}(\Gamma))$ and $\nabla d_\Gamma = \nu \circ \pi_\Gamma$, where $\nu$ denotes the outward pointing unit normal of $\Omega'$ on $\Gamma$ and $\pi_\Gamma : B_{\epsilon_0}(\Gamma) \rightarrow \Gamma$ denotes the \emph{nearest point projection} on $\Gamma$ which lies in $C^{k-1}(B_{\epsilon_0}(\Gamma))$, cf. \cite[Eq. (14.96)]{GilTru}. Next we derive some helpful formulas for test functions involving $d_\Gamma$. We remark that $\epsilon_0= \epsilon_0(\Gamma)$ can be chosen independent of $k$. 

\begin{lemma}\label{lem:distprep}
Let $Q_0 \in W^{2,s}(\Omega)$ for some $s>n$ and let $\Gamma  = \partial \Omega' \in C^2$ with outer unit normal $\nu$. Further let $d_\Gamma : B_{\epsilon_0}(\Gamma) \rightarrow \mathbb{R}$ be the signed distance function of $\Gamma$ and $\epsilon< \epsilon_0$. Then (distributionally in $C_0^\infty(B_\epsilon(\Gamma))'$) there holds 
\begin{equation}\label{eq:distancedistr}
    \partial^2_{ij}\left( \frac{Q_0}{2}|d_\Gamma| \right)  = Q_0 \nu_i \nu_j \mathcal{H}^{n-1}\mres \Gamma + \partial^2_{ij}( \frac{Q_0}{2} d_\Gamma) ( \chi_{\Omega''} - \chi_{\Omega'}).
\end{equation}
\end{lemma}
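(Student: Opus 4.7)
The plan is to test the putative identity against an arbitrary $\phi \in C_0^\infty(B_\epsilon(\Gamma))$ and verify it by two integrations by parts, splitting the integral across the interface $\Gamma$. By the distributional definition of $\partial^2_{ij}$ (and the symmetry of mixed partials for smooth test functions) it suffices to show
\begin{equation}
    \int_{B_\epsilon(\Gamma)} \tfrac{Q_0}{2}|d_\Gamma| \, \partial^2_{ij}\phi \, \mathrm{d}x = \int_\Gamma Q_0 \nu_i\nu_j \phi \, \mathrm{d}\mathcal{H}^{n-1} + \int_{B_\epsilon(\Gamma)} \partial^2_{ij}\!\left(\tfrac{Q_0}{2} d_\Gamma\right)(\chi_{\Omega''}-\chi_{\Omega'})\phi \, \mathrm{d}x.
\end{equation}

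The first step is to use $|d_\Gamma| = d_\Gamma \chi_{\Omega''} - d_\Gamma \chi_{\Omega'}$ to decompose the left hand side into an integral over $\Omega'' \cap B_\epsilon(\Gamma)$ and one over $\Omega' \cap B_\epsilon(\Gamma)$. Since $d_\Gamma \in C^2(\overline{B_{\epsilon_0}(\Gamma)})$ (by the discussion preceding the lemma) and $\phi$ has compact support in $B_\epsilon(\Gamma)$, we can legitimately integrate by parts on each of these two sets using the Gauss divergence theorem (both are $C^2$-regular in a neighborhood of $\Gamma$, and $\phi$ kills the boundary contribution on $\partial B_\epsilon(\Gamma)$). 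The outward unit normal to $\Omega'$ on $\Gamma$ is $\nu$ and to $\Omega''$ on $\Gamma$ is $-\nu$.

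In the first integration by parts, we move $\partial_j$ off $\phi$. The ensuing boundary contribution on $\Gamma$ is proportional to $d_\Gamma$ evaluated on $\Gamma$, hence vanishes, leaving
\begin{equation}
    \int_{B_\epsilon(\Gamma)} \tfrac{Q_0}{2}|d_\Gamma| \, \partial^2_{ij}\phi \, \mathrm{d}x = -\int_{B_\epsilon(\Gamma)} \partial_j\!\left(\tfrac{Q_0}{2}d_\Gamma\right)(\chi_{\Omega''}-\chi_{\Omega'}) \partial_i\phi \, \mathrm{d}x.
\end{equation}
In the second integration by parts, we move $\partial_i$ off $\phi$. Now the boundary terms on $\Gamma$ no longer vanish: on $\Omega''$ we pick up $\int_\Gamma \partial_j(\tfrac{Q_0}{2}d_\Gamma) \nu_i \phi \, \mathrm{d}\mathcal{H}^{n-1}$, and on $\Omega'$ (using the sign $-\chi_{\Omega'}$ together with outward normal $\nu$) we pick up the same quantity, yielding a total of $2\int_\Gamma \partial_j(\tfrac{Q_0}{2}d_\Gamma) \nu_i \phi \, \mathrm{d}\mathcal{H}^{n-1}$. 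Evaluating $\partial_j(\tfrac{Q_0}{2}d_\Gamma) = \tfrac{1}{2}(\partial_j Q_0) d_\Gamma + \tfrac{Q_0}{2}\partial_j d_\Gamma$ on $\Gamma$ and using $d_\Gamma\big\vert_\Gamma = 0$ together with $\nabla d_\Gamma\big\vert_\Gamma = \nu$ yields $\tfrac{Q_0}{2}\nu_j$, so the total boundary contribution becomes exactly $\int_\Gamma Q_0 \nu_i\nu_j \phi \, \mathrm{d}\mathcal{H}^{n-1}$. Combining with the interior term $\int_{B_\epsilon(\Gamma)} \partial^2_{ij}(\tfrac{Q_0}{2}d_\Gamma)(\chi_{\Omega''}-\chi_{\Omega'})\phi \, \mathrm{d}x$ produced by the second integration by parts gives the desired identity.

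No real obstacle is expected here; the only care needed is with signs of normals on $\Omega'$ vs.\ $\Omega''$ and with the regularity needed to apply the divergence theorem (which is guaranteed by $Q_0 \in W^{2,s}$, $s>n$, so that $\partial_j(\tfrac{Q_0}{2}d_\Gamma) \in W^{1,s} \subset C^0$, and by $\Gamma \in C^2$).
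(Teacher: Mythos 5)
Your proof is correct and follows essentially the same route as the paper's: decompose $|d_\Gamma|$ via $|d_\Gamma| = d_\Gamma(\chi_{\Omega''}-\chi_{\Omega'})$, integrate by parts twice over $\Omega'$ and $\Omega''$ separately, use $d_\Gamma|_\Gamma = 0$ to kill the first boundary contribution, and evaluate $\nabla d_\Gamma|_\Gamma = \nu$ in the second. The only cosmetic difference is the order in which the two derivatives are moved off $\phi$ (you move $\partial_j$ first, the paper moves $\partial_i$ first), which is immaterial by symmetry of $\partial^2_{ij}$ and of $\nu_i\nu_j$; your sign bookkeeping for the normals on $\Omega'$ and $\Omega''$ and the resulting factor of $2$ canceling the $\tfrac{1}{2}$ are all correct.
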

\begin{proof}
Let $\phi \in C_0^\infty(B_\epsilon(\Gamma))$ be arbitrary but fixed. Denote by $\nu = \nu^{\Omega'}$ and $\nu^{\Omega''}$ the outer unit normals of $\Omega', \Omega''$ respectively.
Then using integration by parts and $d_\Gamma \vert_\Gamma = 0$ 
\begin{align}
    \int \frac{Q_0}{2}|d_\Gamma| \partial^2_{ij} \phi \; \mathrm{d}x & = -\int_{\Omega'} \frac{Q_0}{2} d_\Gamma \partial^2_{ij} \phi \; \mathrm{d}x + \int_{\Omega''} \frac{Q_0}{2} d_\Gamma \partial^2_{ij} \phi \; \mathrm{d}x
    \\ & = - \int_{\partial \Omega'} \frac{Q_0}{2}d_\Gamma \nu_i^{\Omega'} \partial_j \phi \; \mathrm{d}\mathcal{H}^{n-1} + \int_{\partial \Omega''} \frac{Q_0}{2}d_\Gamma \nu_i^{\Omega''} \partial_j \phi  \; \mathrm{d}\mathcal{H}^{n-1}\\ 
    & \quad + \int_{\Omega'} \partial_i \left( \frac{Q_0}{2} d_\Gamma \right) \partial_j \phi \; \mathrm{d}x -  \int_{\Omega''} \partial_i \left( \frac{Q_0}{2} d_\Gamma \right) \partial_j \phi \; \mathrm{d}x
    \\ & = \int_{\Omega'} \partial_i \left( \frac{Q_0}{2} d_\Gamma \right) \partial_j \phi \; \mathrm{d}x -  \int_{\Omega''} \partial_i \left( \frac{Q_0}{2} d_\Gamma \right) \partial_j \phi \; \mathrm{d}x
    \\ & = \int_{\partial \Omega'} \partial_i \left(\frac{Q_0}{2}d_\Gamma\right) \nu^{\Omega'}_j  \phi \; \mathrm{d}\mathcal{H}^{n-1} - \int_{\partial \Omega''} \partial_i \left(\frac{Q_0}{2}d_\Gamma\right) \nu^{\Omega''}_j \phi \; \mathrm{d}\mathcal{H}^{n-1}\\
    & \quad - \int_{\Omega'} \partial^2_{ij} \left( \frac{Q_0}{2} d_\Gamma \right)  \phi \; \mathrm{d}x + \int_{\Omega''} \partial^2_{ij} \left( \frac{Q_0}{2} d_\Gamma \right)  \phi \; \mathrm{d}x.
\end{align}
Noticing that $\partial \Omega' \cap \mathrm{supp}(\phi) = \partial \Omega'' \cap \mathrm{supp}(\phi) = \Gamma$ and $\nu_j^{\Omega''} = - \nu_j^{\Omega'}$ on $\Gamma$ we infer 
\begin{equation}
     \int_\Omega \frac{Q_0}{2}|d_\Gamma| \partial^2_{ij} \phi \; \mathrm{d}x  = \int_\Gamma \partial_i (Q_0 d_\Gamma) \nu_j \; \mathrm{d} \mathcal{H}^{n-1} + \int g_{ij} \phi \; \mathrm{d}x,
\end{equation}
where $g_{ij} := \partial^2_{ij}( \frac{Q_0}{2} d_\Gamma) ( \chi_{\Omega''} - \chi_{\Omega'}) \in L^s(B_\epsilon(\Gamma))$. Now notice that on $\Gamma$ one has 
\begin{equation}
    \partial_i (Q_0 d_\Gamma) = \partial_i (Q_0) d_\Gamma + Q_0 \partial_i d_\Gamma = 0 + Q_0 \nu_i = Q_0 \nu_i.
\end{equation}
Thus we infer 
\begin{equation}
     \int_\Omega \frac{Q_0}{2}|d_\Gamma| \partial^2_{ij} \phi \; \mathrm{d}x = \int_\Gamma Q_0 \nu_i \nu_j  \phi \; \mathrm{d}\mathcal{H}^{n-1} + \int_\Omega g_{ij} \phi \; \mathrm{d}x,
\end{equation}
which was asserted.
\end{proof}

\begin{cor}\label{cor:signdistBV}
Let $\Gamma = \partial \Omega' \in C^2$ and $Q_0 \in W^{2,s}(\Omega)$ for some $s > n$. Then $\nabla \left( \frac{Q_0}{2}|d_\Gamma| \right) \in BV(B_\epsilon(\Gamma))$.
\end{cor}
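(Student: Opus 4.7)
The plan is to read the corollary as a direct consequence of the distributional identity in Lemma \ref{lem:distprep}. To prove that $\nabla\!\left(\tfrac{Q_0}{2}|d_\Gamma|\right) \in BV(B_\epsilon(\Gamma))$ it suffices to check, for each component $v_i := \partial_i\!\left(\tfrac{Q_0}{2}|d_\Gamma|\right)$, that $v_i \in L^1(B_\epsilon(\Gamma))$ and that its distributional gradient is a finite $\mathbb{R}^n$-valued Radon measure on $B_\epsilon(\Gamma)$.

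First I would establish $L^1$-integrability of $v_i$. The Sobolev embedding $W^{2,s}(\Omega) \hookrightarrow C^0(\overline{\Omega})$ (valid since $s > n$) yields $Q_0 \in L^\infty(\Omega)$; together with the Lipschitz continuity of $|d_\Gamma|$ on $\overline{B_\epsilon(\Gamma)}$ (with gradient $\nabla|d_\Gamma| = \mathrm{sgn}(d_\Gamma)\, \nabla d_\Gamma$ a.e.\ and $|\nabla d_\Gamma|=1$) and with $Q_0 \in W^{2,s} \hookrightarrow W^{1,\infty}$, the product rule shows $\tfrac{Q_0}{2}|d_\Gamma| \in W^{1,\infty}(B_\epsilon(\Gamma))$. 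Since $B_\epsilon(\Gamma)$ is bounded, each $v_i \in L^\infty \subset L^1$.

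Next I would invoke Lemma \ref{lem:distprep}, which gives (distributionally on $C_0^\infty(B_\epsilon(\Gamma))$)
\begin{equation}
    \partial_j v_i = \partial^2_{ij}\!\left(\tfrac{Q_0}{2}|d_\Gamma|\right) = Q_0\, \nu_i \nu_j\, \mathcal{H}^{n-1}\mres\Gamma + g_{ij},
\end{equation}
where $g_{ij} := \partial^2_{ij}\!\left(\tfrac{Q_0}{2} d_\Gamma\right)(\chi_{\Omega''}-\chi_{\Omega'})$. The first summand is a finite signed Radon measure on $B_\epsilon(\Gamma)$: its total variation is bounded by $\|Q_0\|_{L^\infty(\Gamma)}\,\mathcal{H}^{n-1}(\Gamma)$, which is finite because $\Gamma \in C^2$ is compact and $Q_0$ is continuous. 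The second summand $g_{ij}$ lies in $L^s(B_\epsilon(\Gamma))$ — indeed $d_\Gamma \in C^2(B_{\epsilon_0}(\Gamma))$ by the standard regularity of the signed distance function, and $Q_0 \in W^{2,s}$, so the product $\tfrac{Q_0}{2}d_\Gamma$ lies in $W^{2,s}(B_\epsilon(\Gamma))$ — and since $B_\epsilon(\Gamma)$ is bounded, $g_{ij} \in L^s \hookrightarrow L^1(B_\epsilon(\Gamma))$, hence defines a finite Radon measure via integration against test functions.

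Combining the two contributions, $\partial_j v_i$ is a finite Radon measure for every $j$, so $v_i \in BV(B_\epsilon(\Gamma))$. As this holds for each $i = 1,\dots,n$, we obtain $\nabla\!\left(\tfrac{Q_0}{2}|d_\Gamma|\right) \in BV(B_\epsilon(\Gamma);\mathbb{R}^n)$, as claimed. No step here is a real obstacle: the only thing to be careful about is checking that both contributions on the right-hand side of the Lemma \ref{lem:distprep} identity are genuine finite measures on all of $B_\epsilon(\Gamma)$ (not merely locally), which follows from the compactness of $\Gamma$ and the boundedness of $B_\epsilon(\Gamma)$.
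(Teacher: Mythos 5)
Your proof is correct and takes essentially the same route as the paper: both invoke the distributional identity from Lemma \ref{lem:distprep}, decompose $\partial_j v_i$ into the surface-measure piece and the $L^s$ piece $g_{ij}$, and conclude finiteness of the total variation from compactness of $\Gamma$ and boundedness of $B_\epsilon(\Gamma)$. The only cosmetic difference is that you phrase the conclusion directly as ``the distributional gradient is a finite Radon measure'' while the paper verifies the equivalent dual estimate $\int \partial_j(\tfrac{Q_0}{2}|d_\Gamma|)\,\mathrm{div}(\phi)\,\mathrm{d}x \le C_j\|\phi\|_\infty$ over test vector fields.
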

\begin{proof}
Let $\phi = (\phi_1,...,\phi_n) \in C_0^\infty(B_\epsilon(\Gamma);\mathbb{R}^n)$. Then we compute for $j = 1,...,n$ 
\begin{align}
    \int_\Omega \partial_j \left( \tfrac{Q_0}{2}|d_\Gamma| \right) \mathrm{div}(\phi) \; \mathrm{d}x  = \sum_{k = 1}^n \int_\Omega   \partial_j \left( \tfrac{Q_0}{2}|d_\Gamma| \right) \partial_k \phi_k  \; \mathrm{d}x 
    = - \sum_{k = 1}^n \int_\Omega    \left( \tfrac{Q_0}{2}|d_\Gamma| \right) \partial^2_{jk} \phi_k  \; \mathrm{d}x. 
\end{align}
Now we use \eqref{eq:distancedistr} to find (with the notation $g_{ij} := \partial^2_{ij} ( \frac{Q_0}{2} d_\Gamma ) ( \chi_{\Omega''} - \chi_{\Omega'}) \in L^s( B_\epsilon(\Gamma))$ 
that 
\begin{equation}
    \int_\Omega \partial_j \left( \tfrac{Q_0}{2}|d_\Gamma| \right) \mathrm{div}(\phi) \; \mathrm{d}x = - \sum_{k =1}^n  \left( \int_\Gamma  Q_0 \nu_j \nu_k \phi_k \; \mathrm{d}\mathcal{H}^{n-1} + \int_\Omega g_{ik}\phi_k \right). 
\end{equation}
One readily checks that the term on the right hand side is bounded by 
\begin{equation}
     \left( \sum_{k =1}^n ||Q_0||_{L^\infty} \mathcal{H}^{n-1}(\Gamma)  + ||g_{jk}||_{L^1(B_\epsilon(\Gamma))} \right) ||\phi||_{L^\infty} =: C_j ||\phi||_{L^\infty}.
\end{equation}
Notice that $C_j < \infty$ since $L^s(B_\epsilon(\Gamma)) \subset L^1(B_\epsilon(\Gamma))$. Using the fact that each $C^1_c(B_\epsilon(\Gamma))$-function can be uniformly approximated by $C^\infty_c(B_\epsilon(\Gamma))$-functions we find  
\begin{equation}
     \int_\Omega \partial_j \left( \tfrac{Q_0}{2}|d_\Gamma| \right) \mathrm{div}(\phi) \; \mathrm{d}x \leq C_j ||\phi||_\infty \quad \forall \phi \in C^1_c(B_\epsilon(\Gamma);\mathbb{R}^n),
\end{equation}
implying that  $\partial_j \left( \tfrac{Q_0}{2}|d_\Gamma| \right) \in BV(B_\epsilon(\Gamma))$. Since $j \in \{ 1,...,n\}$ was arbitrary the claim is shown. 
\end{proof}

\begin{lemma}\label{lem:F3}
Let $\Gamma = \partial \Omega' \in C^k$, $k \geq 2$ and $Q_0 \in W^{k,s}(\Omega)$, $s> n$. Then $\left( \frac{Q_0}{2}|d_\Gamma| \right) \in W^{k,s}(\Omega' \cap B_\epsilon(\Gamma))  \cap W^{k,s}(\Omega'' \cap B_\epsilon(\Gamma))$.
\end{lemma}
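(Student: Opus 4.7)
The approach is elementary: the absolute value $|d_\Gamma|$ is only non-smooth on $\Gamma$ itself, but once one works on a single side of $\Gamma$ it agrees with $\pm d_\Gamma$, which is genuinely $C^k$ on the tubular neighborhood. After this observation the claim reduces to showing that the product of a $W^{k,s}$ function and a $C^k$ function with bounded derivatives is again in $W^{k,s}$.

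First I would split the domain: on $\Omega' \cap B_\epsilon(\Gamma)$ one has $d_\Gamma < 0$ and hence $|d_\Gamma| = -d_\Gamma$, while on $\Omega'' \cap B_\epsilon(\Gamma)$ one has $|d_\Gamma| = d_\Gamma$. Therefore
\begin{equation}
    \tfrac{Q_0}{2}|d_\Gamma| = -\tfrac{Q_0}{2} d_\Gamma \quad \textrm{on } \Omega' \cap B_\epsilon(\Gamma), \qquad \tfrac{Q_0}{2}|d_\Gamma| = \tfrac{Q_0}{2} d_\Gamma \quad \textrm{on }\Omega'' \cap B_\epsilon(\Gamma).
\end{equation}
It thus suffices to prove that $Q_0 d_\Gamma \in W^{k,s}(\Omega' \cap B_\epsilon(\Gamma))$ and $Q_0 d_\Gamma \in W^{k,s}(\Omega'' \cap B_\epsilon(\Gamma))$.

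Next I would invoke the regularity of the signed distance function recalled at the beginning of Appendix \ref{app:signdist}: since $\Gamma \in C^k$, we have $d_\Gamma \in C^k(B_{\epsilon_0}(\Gamma))$, and since $\epsilon < \epsilon_0$ and $\Gamma$ is compact, the closure $\overline{B_\epsilon(\Gamma)}$ is a compact subset of $B_{\epsilon_0}(\Gamma)$. Consequently all partial derivatives of $d_\Gamma$ of order up to $k$ are bounded on $\overline{B_\epsilon(\Gamma)}$, so $d_\Gamma \in W^{k,\infty}(B_\epsilon(\Gamma))$.

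The key remaining step — and the only nontrivial point, though it is standard — is the Leibniz rule for weak derivatives of products: for any multi-index $\alpha$ with $|\alpha| \leq k$,
\begin{equation}
   D^\alpha (Q_0 d_\Gamma) = \sum_{\beta \leq \alpha} \binom{\alpha}{\beta} D^\beta Q_0 \; D^{\alpha-\beta} d_\Gamma.
\end{equation}
For every $\beta \leq \alpha$ the factor $D^\beta Q_0$ lies in $L^s$ (since $Q_0 \in W^{k,s}(\Omega)$ and $|\beta| \leq k$), while $D^{\alpha-\beta} d_\Gamma$ is bounded on $\overline{B_\epsilon(\Gamma)}$ by the preceding step. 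Hence each summand lies in $L^s(B_\epsilon(\Gamma))$, and so does $D^\alpha(Q_0 d_\Gamma)$. Restricting to either $\Omega' \cap B_\epsilon(\Gamma)$ or $\Omega'' \cap B_\epsilon(\Gamma)$ preserves membership in $W^{k,s}$, and combining with the sign identities above completes the proof. The main (mild) obstacle is simply justifying the Leibniz formula for $Q_0 d_\Gamma$, which follows by a standard approximation of $Q_0$ by smooth functions in the $W^{k,s}$ norm, using that $d_\Gamma \in C^k(\overline{B_\epsilon(\Gamma)})$ to pass to the limit in each term.
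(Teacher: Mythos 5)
Your argument is correct and follows the same route as the paper: split by the sign of $d_\Gamma$ on either side of $\Gamma$, use that $d_\Gamma \in C^k$ with bounded derivatives on $\overline{B_\epsilon(\Gamma)} \subset B_{\epsilon_0}(\Gamma)$, and apply the product (Leibniz) rule in $W^{k,s}$. The only difference is that you spell out the Leibniz formula and its justification explicitly, which the paper leaves implicit.
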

\begin{proof}
This follows immediately from the fact that on $\Omega' \cap B_\epsilon(\Gamma)$ there holds 
\begin{equation}
     \left( \frac{Q_0}{2}|d_\Gamma| \right) =  - \frac{Q_0}{2}d_\Gamma, 
\end{equation}
and $d_\Gamma$ extends to a function in $C^k(\overline{\Omega}' \cap B_\epsilon(\Gamma)) = C^k((\Omega' \cap B_\epsilon(\Gamma)) \cup \Gamma)$. Hence all derivatives up to order $k$ are bounded. The product rule in $W^{k,s}$ yields then the desired $W^{k,s}(\Omega' \cap B_\epsilon(\Gamma))$-regularity.
On $\overline{\Omega''}\cap B_\epsilon(\Gamma)$ the same regularity can be obtained analogously. 
\end{proof}

\section{A maximum principle for $BV$-solutions}\label{app:maxpr}

In this appendix we introduce a nonclassical maximum principle for so-called \emph{$BV$-solutions} of
\begin{equation}\label{eq:AharmBV}
\begin{cases}
  -\mathrm{div}(A(x) \nabla u ) = 0  & \textrm{in } D, \\ u = h & \textrm{on } \partial D.
\end{cases}
\end{equation}
This concept of solution is defined as follows. 
\begin{definition}\label{def:BVsol}
Let $D \subset \mathbb{R}^n$ be a $C^\infty$-smooth domain, $A \in W^{2,s}(D;\mathbb{R}^{n\times n})$ $(s > n)$ and $h \in L^1(\partial D)$. We say $u \in BV(D)$ is a \emph{$BV$-solution} of  \eqref{eq:AharmBV} if $\mathrm{tr}_{\partial D}(u) = h$ and 
\begin{equation}
    \int_D    u(x) \; \mathrm{div}(A(x) \nabla \eta(x)) \; \mathrm{d}x = 0 \quad \forall \eta \in C_0^\infty(D). 
\end{equation}
 \end{definition}
If a $BV$-solution $u$ lies additionally in $W^{1,p}(D)$ for some $p \in (1,\infty)$ then it is easy to see that it is a usual weak solution of the problem and hence established methods like regularity theory, maximum principles etc. can be used. 
However it is not clear whether a $BV$-solution lies in some higher Sobolev space. 
It was a longstanding conjecture by Serrin (cf. \cite{Serrin}) under which conditions on the coefficient matrix $A(x)$ further interior regularity can be obtained. In \cite[Theorem A.1.2]{Ancona}, Brezis presents a presumably optimal condition: For Dini-continuous coefficients $A(x)$ further interior regularity can be obtained. We will use this further regularity to obtain a maximum principle.

\begin{lemma}\label{lem:maxBV} Let $D \subset \mathbb{R}^n$ be a $C^\infty$-smooth domain, $A \in W^{2,s}(D;\mathbb{R}^{n\times n})$, $(s >n)$ and suppose that $\psi \in BV(D)$ is a BV-solution of \eqref{eq:AharmBV} for some $h \in W^{1,s}(\mathbb{R}^n)$.
 Then 
\begin{equation}
    ||\psi||_{L^\infty(D)} \leq ||h||_{L^\infty(\partial D)}
\end{equation}
\end{lemma}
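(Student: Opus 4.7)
My strategy is to compare $\psi$ with the unique classical $W^{1,s}$-weak solution $w$ of the same Dirichlet problem. Since $A \in W^{2,s}(D;\mathbb{R}^{n\times n})$ with $s > n$ embeds into $C^{1,\gamma}(\overline{D})$ for $\gamma = 1-n/s$, and $h \in W^{1,s}(\mathbb{R}^n)$, standard Lax--Milgram plus Schauder theory yields $w \in W^{1,s}(D) \cap C^{2,\gamma}_{loc}(D)$ solving $-\mathrm{div}(A\nabla w) = 0$ weakly with $w|_{\partial D} = h$. The classical weak maximum principle (e.g.\ \cite[Theorem 8.1]{GilTru}) applies to $w$ and gives $\|w\|_{L^\infty(D)} \le \|h\|_{L^\infty(\partial D)}$. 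Since $w \in W^{1,s} \subset BV(D)$ and an integration by parts shows $\int_D w\,\mathrm{div}(A\nabla\eta)\,dx = -\int_D (A\nabla w,\nabla\eta)\,dx = 0$ for every $\eta \in C_0^\infty(D)$, $w$ itself is a BV-solution in the sense of Definition \ref{def:BVsol}. If one can show $\psi = w$, the desired bound transfers.

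To that end, set $u := \psi - w \in BV(D)$; by linearity $\mathrm{tr}_{\partial D}(u) = 0$ and $\int_D u\,\mathrm{div}(A\nabla\eta)\,dx = 0$ for all $\eta \in C_0^\infty(D)$. I would first invoke the Brezis interior regularity theorem \cite[Theorem A.1.2]{Ancona}, whose Dini-continuity hypothesis on $A$ is comfortably satisfied by $A \in C^{1,\gamma}$. This upgrades $u$ to a classical pointwise $C^{2,\gamma}_{loc}(D)$ solution of $\mathrm{div}(A\nabla u) = 0$ in $D$.

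The core step is a uniqueness assertion: a BV-solution of the homogeneous equation with zero trace vanishes identically. I would prove this by duality. Given arbitrary $g \in C_0^\infty(D)$, Lemma \ref{lem:stronsolution} furnishes $\phi_g \in W^{2,p}(D) \cap W_0^{1,p}(D)$ with $\mathrm{div}(A\nabla\phi_g) = g$ pointwise a.e., for some fixed $p > n$. Sobolev embedding plus boundary Schauder theory upgrade this to $\phi_g \in C^{1,\alpha}(\overline{D})$ with $\phi_g \equiv 0$ on $\partial D$. The goal is to prove $\int_D u\,g\,dx = 0$, which (as $g$ ranges over $C_0^\infty(D)$) forces $u \equiv 0$. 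On the exhaustion $D_\epsilon := \{x \in D : \mathrm{dist}(x,\partial D) > \epsilon\}$, two integrations by parts convert $\int_{D_\epsilon} u\,\mathrm{div}(A\nabla\phi_g)\,dx$ into interior terms (which vanish because $\mathrm{div}(A\nabla u) = 0$ classically) plus two boundary integrals over $\partial D_\epsilon$, one of the form $\int_{\partial D_\epsilon} u\,(A\nabla \phi_g,\nu_\epsilon)\,d\mathcal{H}^{n-1}$ and one of the form $\int_{\partial D_\epsilon} \phi_g\,(A\nabla u,\nu_\epsilon)\,d\mathcal{H}^{n-1}$. Passing to the limit $\epsilon \to 0$ is then the crux.

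The main obstacle is controlling these boundary integrals as $\epsilon \to 0$: the second integral benefits from $\phi_g \to 0$ uniformly on $\partial D_\epsilon$ by continuity, but $A\nabla u$ need not be bounded up to $\partial D$. I would handle the first integral via the BV trace formula \eqref{eq:Giusti}: since $\mathrm{tr}_{\partial D}(u) = 0$, the extension of $u$ by zero is in $BV(\mathbb{R}^n)$, mollifications $u_\delta := \bar{u} * \rho_\delta$ satisfy $u_\delta \to 0$ in $L^1$ of a boundary strip at rate controlled by the total-variation measure of $\bar{u}$ on the strip, and the $C^{1,\alpha}(\overline{D})$ regularity of $A\nabla\phi_g$ lets one pass to the limit. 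The second integral can be estimated using the standard fact that for a $W^{1,p}$ vector field $A\nabla u$ on $D_\epsilon$ with $\mathrm{div}(A\nabla u) = 0$, the normal flux $(A\nabla u,\nu_\epsilon)$ on $\partial D_\epsilon$ tests against the Lipschitz function $\phi_g$ (which vanishes on $\partial D$) with total contribution tending to zero, via Meyers--Serrin type approximation combined with Gauss--Green in BV. Combining these gives $\int_D u g\,dx = 0$, hence $\psi = w$ and the conclusion follows.
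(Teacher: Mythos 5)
Your overall strategy mirrors the paper's: upgrade $\psi$ via Brezis' interior regularity theorem, identify it with a genuine $W^{1,s}$-solution of the same Dirichlet problem, and then invoke the classical weak maximum principle on that regular solution. The one substantive departure is the uniqueness step. The paper, after reducing to $\psi \in W^{1,1}(D)$ (via Lemma \ref{lem:B5}), simply cites the ``second sentence'' of Theorem A5.1 in Brezis' appendix to \cite{Ancona} to conclude that the homogeneous Dirichlet problem has only the trivial $W_0^{1,1}$-solution. You instead try to reprove this uniqueness by a duality argument with the strong solutions $\phi_g$ from Lemma \ref{lem:stronsolution}. Your route, if completed, is more self-contained; the paper's is shorter and outsources exactly this delicate point.

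There is, however, a gap in your duality argument at the step that handles the flux integral $\int_{\partial D_\epsilon}\phi_g\,(A\nabla u,\nu_\epsilon)\,\mathrm{d}\mathcal{H}^{n-1}$. Calling it ``estimated via Meyers--Serrin type approximation combined with Gauss--Green in BV'' does not produce the required decay: $A\nabla u$ is not a $W^{1,p}$ vector field up to $\partial D$ (only on compact subsets $\overline{D_\epsilon}\subset\subset D$, with norms that may blow up as $\epsilon\to 0$), and the existence of a normal trace for a divergence-measure field does not by itself force the boundary contribution to vanish. The correct mechanism is a coarea/subsequence argument. Since $u\in W^{1,1}(D)$ (by Brezis plus Lemma \ref{lem:B5}), the coarea formula applied to the distance function gives $\int_0^{\epsilon_0}\bigl(\int_{\partial D_\epsilon}|\nabla u|\,\mathrm{d}\mathcal{H}^{n-1}\bigr)\mathrm{d}\epsilon<\infty$, which forces $\liminf_{\epsilon\to 0}\,\epsilon\int_{\partial D_\epsilon}|\nabla u|\,\mathrm{d}\mathcal{H}^{n-1}=0$; combined with $|\phi_g|\leq C\epsilon$ on $\partial D_\epsilon$ (from $\phi_g\in C^{1,\alpha}(\overline D)$, $\phi_g\vert_{\partial D}=0$), the flux integral vanishes along a subsequence, which suffices since the left-hand side $\int_{D_\epsilon}u\,g\,\mathrm{d}x$ is eventually constant in $\epsilon$. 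One further minor imprecision: your construction of $w$ via ``Lax--Milgram plus Schauder'' does not by itself yield $W^{1,s}$-regularity up to the boundary; the paper obtains it by writing $\bar\psi=\tilde\psi+h$ with $\tilde\psi\in W_0^{1,s}(D)$ solving $-\mathrm{div}(A\nabla\tilde\psi)=\mathrm{div}(A\nabla h)$, where the $W_0^{1,s}$-regularity comes from \cite[Theorem 1.1]{Auscher}.
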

\begin{proof}
Observe first that by \cite[Theorem A.1.2]{Ancona} $\psi \in BV(D) \cap W^{1,2}_{loc}(D)$. This in particular implies (cf. Lemma \ref{lem:B5}) that $\psi \in W^{1,1}(D)$ and 
\begin{equation}\label{eq:weakW11}
    \int_\Omega (A(x) \nabla \psi(x) , \nabla \eta(x) ) \; \mathrm{d}x = 0 \quad \forall \eta \in C_0^\infty(\Omega).
\end{equation}
We infer that $\psi$ is a \emph {weak $W^{1,1}$-solution} of 
\begin{equation}\label{eq:G2}
    \begin{cases}
        \mathrm{div}(A(x) \nabla \psi(x)) = 0  & \textrm{in } D, \\ \psi = h & \textrm{on } \partial D.
    \end{cases}
\end{equation}
in the sense that 
$\mathrm{tr}_{\partial D}(\psi) = h$ and \eqref{eq:weakW11} holds. We next observe that the concept of a $W^{1,1}$-solution just introduced is a notion that characterises $\psi$ uniquely.
Indeed, if $\psi_1,\psi_2$ are two $W^{1,1}$-solutions of \eqref{eq:G2} (i.e. satisfying \eqref{eq:weakW11} and having the same boundary trace) then $\psi_1 - \psi_2$ lies in $W_0^{1,1}(D)$ and also satisfies \eqref{eq:weakW11}. By \cite[Theorem A5.1, second sentence]{Ancona} we infer $\psi_1- \psi_2 = 0$. 
We claim next that for each $h \in W^{1,s}(\mathbb{R}^n)$ there exists a weak $W^{1,1}$-solution of \eqref{eq:G2} which additionally lies in $W^{1,s}(D)$. To this end notice that $A\nabla h \in L^s(D)$ and hence by \cite[Theorem 1.1]{Auscher} there exists a unique $\tilde{\psi} \in W_0^{1,s}(D)$ weak solution of 
\begin{equation}\label{eq:divdiv}
    \begin{cases}
      -\mathrm{div}(A \nabla \tilde{\psi}) = \mathrm{div}(A  \nabla h ) & \textrm{in } D, \\ \tilde{\psi} = 0 & \textrm{on } \partial D.
    \end{cases}
\end{equation}
We now define $\bar{\psi} := \tilde{\psi} + h$ and assert that $\bar{\psi}$ is a weak $W^{1,1}$-solution. Indeed, $\mathrm{tr}_{\partial D} (\bar{\psi}) = \mathrm{tr}_{\partial D} (\tilde{\psi} + h) = 0 + h \vert_{\partial D} = h$ on $\partial D$. Moreover for each $\eta \in C_0^\infty(\Omega)$ there holds by \eqref{eq:divdiv}
\begin{align}
    \int_D (A(x) \nabla \bar{\psi}(x), \nabla \eta(x) ) \; \mathrm{d}x  & =  \int_D (A(x) \nabla \tilde{\psi}(x) , \nabla \eta(x) ) \; \mathrm{d}x + \int_D (A \nabla h(x), \nabla \eta(x))  \; \mathrm{d}x \\ & = -\int_D (A \nabla h(x), \nabla \eta(x)) \; \mathrm{d}x +  \int_D (A \nabla h(x), \nabla \eta(x)) \; \mathrm{d}x = 0.
\end{align}
By the previous uniqueness discussion for $W^{1,1}$-solutions of \eqref{eq:G2} we have $\psi = \bar{\psi}$ and thus $\psi \in W^{1,s}(D)$. In particular, $\psi \in  C(\overline{D})$ by Sobolev embedding. Now, the classical maximum principle implies the claim.  
\end{proof}

\end{document}